\documentclass[12pt]{article}
\title{Around definable types in $p$-adically closed fields}
\author{Pablo And\'ujar Guerrero \and Will Johnson}

\usepackage{amsmath, amssymb, amsthm}    	
\usepackage{fullpage} 	
\usepackage{hyperref}
\usepackage[all]{xy}
\usepackage{centernot}

\newcommand{\Qp}{\mathbb{Q}_p}
\newcommand{\Qq}{\mathbb{Q}}
\newcommand{\Rr}{\mathbb{R}}
\newcommand{\Zz}{\mathbb{Z}}
\newcommand{\Nn}{\mathbb{N}}

\DeclareMathOperator{\id}{id}
\DeclareMathOperator{\cl}{cl}

\newcommand{\ba}{{\bar{a}}}
\newcommand{\bb}{{\bar{b}}}

\newcommand{\bx}{{\bar{x}}}
\newcommand{\by}{{\bar{y}}}
\newcommand{\tX}{{\tilde{X}}}

\DeclareMathOperator{\Aut}{Aut}
\DeclareMathOperator{\tp}{tp}
\DeclareMathOperator{\qftp}{qftp}
\DeclareMathOperator{\acl}{acl}
\DeclareMathOperator{\dcl}{dcl}

\newcommand{\df}{\mathrm{def}}
\newcommand{\Mm}{\mathbb{M}}
\newcommand{\eq}{\mathrm{eq}}
\DeclareMathOperator{\vc}{vc}

\newcommand{\pCF}{{p\mathrm{CF}}}

\newcommand{\Oo}{\mathcal{O}}
\newcommand{\mm}{\mathfrak{m}}
\DeclareMathOperator{\val}{val}
\DeclareMathOperator{\res}{res}
\DeclareMathOperator{\rv}{rv}
\DeclareMathOperator{\RV}{RV}

\newcommand{\Ldiv}{\mathcal{L}_{\mathrm{div}}}

\newtheorem{theorem}{Theorem}[section] 
\newtheorem{lemma}[theorem]{Lemma}
\newtheorem{proposition}[theorem]{Proposition}
\newtheorem{corollary}[theorem]{Corollary}
\newtheorem{claim}[theorem]{Claim}
\newtheorem{fact}[theorem]{Fact}

\theoremstyle{definition}
\newtheorem{definition}[theorem]{Definition}

\newtheorem{remark}[theorem]{Remark}
\newtheorem{question}[theorem]{Question}

\newtheorem*{acknowledgment}{Acknowledgments}

\newenvironment{claimproof}[1][\proofname]
               {
                 \proof[#1]
                 
               }
               {
                 \endproof
               }

\begin{document}

\maketitle

\begin{abstract}
  We prove some technical results on definable types in $p$-adically
  closed fields, with consequences for definable groups and definable
  topological spaces.  First, the code of a definable $n$-type (in the field sort)
  can be taken to be a real tuple (in the field sort) rather than an
  imaginary tuple (in the geometric sorts).  Second, any definable type in
  the real or imaginary sorts is generated by a countable union of
  chains parameterized by the value group.  Third, if $X$ is an interpretable set, then the space of global definable types on $X$ is strictly pro-interpretable, building off work of Cubides Kovacsics, Hils, and Ye \cite{cubides-ye,cubides-ye-hils}.  Fourth, global definable types can be lifted (in a non-canonical way) along interpretable surjections.  Fifth, if $G$ is a definable group with definable f-generics (\textit{dfg}), and $G$ acts on a definable set $X$, then the quotient space $X/G$ is definable, not just interpretable.  This explains some phenomena observed by Pillay and Yao \cite{pillay-yao}.  Lastly, we show that interpretable topological spaces satisfy analogues of first-countability and curve
  selection.  Using this, we show that all reasonable notions of definable compactness agree on interpretable topological spaces, and that definable compactness is definable in families.
\end{abstract}

\section{Introduction}

This paper is a collection of closely-related results on $p$-adically
closed fields, related to three themes: definable types, definable
groups, and definable topological spaces.

\subsection{Definable types}
If $M$ is a structure and $q \in S_n(M)$ is a definable type, then $q$
has a ``code'' $\ulcorner q \urcorner$, a tuple (possibly infinite) in
$M^\eq$ which codes the definable sets $\{\bb \in M : \varphi(\bx,\bb) \in
q(\bx)\}$ for each formula $\varphi$.  Our first theorem says that in
$p$-adically closed fields, definable types are coded by \emph{real}
tuples, rather than imaginary tuples:
\begin{theorem}[{= Theorem~\ref{tp-code}}] \label{code0}
  If $K$ is a $p$-adically closed field and $q \in S_n(K)$ is a
  definable type, then $\ulcorner q \urcorner$ is interdefinable with
  a tuple in $K$ (rather than $K^\eq$).
\end{theorem}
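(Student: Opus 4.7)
The plan is to combine the Hrushovski--Martin elimination of imaginaries in $\pCF$ with an automorphism argument showing that the ``lattice part'' of the code is redundant for codes of definable types in the field sort.

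By the Hrushovski--Martin theorem, every imaginary in $\pCF$ is interdefinable with a tuple in the field sort $K$ together with a tuple in the lattice sorts $S_n = \mathrm{GL}_n(K)/\mathrm{GL}_n(\Oo)$. Applying this to $\ulcorner q \urcorner$, write it (up to interdefinability) as $(\ba, \bar\Lambda)$ where $\ba$ is a real tuple and $\bar\Lambda$ is a tuple of lattices. The theorem then reduces to showing that each lattice in $\bar\Lambda$ lies in $\dcl^{\eq}(\ba)$, so that $\ulcorner q \urcorner$ is interdefinable with the real tuple $\ba$ alone.

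To prove this, let $B := \dcl^{\eq}(\ulcorner q \urcorner) \cap K$ be the ``real trace'' of the code. One always has $B \subseteq \dcl^{\eq}(\ulcorner q \urcorner)$, so it suffices to prove the reverse inclusion: that $q$ is in fact $B$-definable, which forces $\ulcorner q \urcorner \in \dcl^{\eq}(B)$ and yields interdefinability with a real tuple. Equivalently, for each formula $\varphi(\bx,\by)$ in field variables, the defining set $D_\varphi := \{\by \in K^{|\by|} : \varphi(\bx,\by) \in q\}$ should be $B$-definable. Since $D_\varphi$ lies entirely in the field-sort power $K^{|\by|}$, an automorphism $\sigma \in \Aut(\Mm/B)$ that moved $\ulcorner D_\varphi \urcorner$ would move the subset $D_\varphi \subseteq K^{|\by|}$ itself, giving $\sigma(q) \neq q$; so $\sigma$ would move $\ulcorner q \urcorner$, and because $\sigma$ fixes $B \supseteq \ba$, it would have to act non-trivially on $\bar\Lambda$.

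\textbf{Main obstacle.} The hard part is ruling out this last scenario, i.e., showing that if $\sigma \in \Aut(\Mm/B)$ moves $\bar\Lambda$ then it already moves some element of $B$. This is the genuinely non-trivial content of the theorem: lattice sorts should not be ``seen'' by codes of field-sort definable types beyond what is already recorded by the real trace $B$. I would expect this step to require either a direct analysis of how lattices can appear as canonical parameters of subsets of $K^n$ that arise from a definable type, or a bootstrapping via the structural description of definable types as countable unions of chains indexed by the value group (the paper's second main theorem), using the chain structure to replace lattice-valued data by field-valued data picked out along the chain.
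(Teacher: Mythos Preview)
Your proposal is not a proof; it is a reduction that stops precisely at the content of the theorem. Using Hrushovski--Martin EI to write $\ulcorner q \urcorner$ as $(\ba,\bar\Lambda)$ is just a rephrasing, and your automorphism paragraph establishes only the tautology ``if $\sigma \in \Aut(\Mm/B)$ moves $q$, then $\sigma$ moves $\bar\Lambda$.'' That is not a contradiction: nothing you have said prevents automorphisms that fix the real trace $B$ from moving lattices. Individual defining sets $D_\varphi \subseteq K^m$ can genuinely have lattice-valued codes (field sorts alone do \emph{not} eliminate imaginaries in $\pCF$), so the whole point is to explain why the \emph{infinite system} of these codes is captured by reals. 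Your ``Main obstacle'' is the theorem itself, and neither of your suggested attacks is a plan: invoking the $\Gamma$-family generation result would be circular (in the paper that theorem is proved from the same machinery), and ``direct analysis of how lattices can appear'' is not an argument.

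The paper's proof is entirely different and does not touch Hrushovski--Martin EI. It proceeds in two steps. First, for the quantifier-free part $\hat q$ (in $\Ldiv$, where $\pCF$ does \emph{not} have QE), one observes that $\hat q$ is determined by the valued vector space structures $\tau_d$ on $K[x]_{<d}/I_d$ induced by evaluating at a realization. Definability of $q$ forces each $\tau_d$ to be \emph{split} (this is a concrete argument using pseudocompleteness of definable types over $K$), and a split VVS structure is encoded by a complete flag of subspaces of $K^N$, hence by a real tuple. So $\ulcorner \hat q \urcorner$ is real. Second, one shows $\ulcorner q \urcorner$ and $\ulcorner \hat q \urcorner$ are interdefinable: taking $K_0 = \dcl(\ulcorner \hat q \urcorner) \preceq K$, the key lemma is that a quantifier-free type over $K$ has only boundedly many ($\le 2^{\aleph_0}$) completions, which together with a heir argument forces $q$ itself to be $K_0$-definable. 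This second step is where the real work hides, and your proposal contains no analogue of it.
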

The real tuple in Theorem~\ref{code0} is infinite in some cases---see
Proposition~\ref{long}.

Our second main result on definable types shows that they are
generated by small collections of definable families of a specific
form.  First, we fix some conventions.
\begin{remark}\label{type-fam}
In what follows, we identify partial types $\{\varphi_i(x) : i
\in I\}$ with collections of definable sets $\{\varphi_i(M) : i \in
I\}$.  In particular, a partial type $\Sigma(x)$ \emph{extends} a
family $\mathcal{F}$ of definable sets if $\Sigma(x)$ extends the
partial type corresponding to $\mathcal{F}$, meaning that $\Sigma(x)
\vdash x \in D$ for each $D \in \mathcal{F}$.  Likewise, $\mathcal{F}$
\emph{generates} $\Sigma(x)$ if the partial type $\{x \in D : D \in
\mathcal{F}\}$ generates $\Sigma(x)$.
\end{remark}
\begin{definition}\label{refdef}
A family of sets $\mathcal{F}'$ \emph{refines} another
family $\mathcal{F}$ if for any $X \in \mathcal{F}$ there is $Y \in
\mathcal{F}'$ with $Y \subseteq X$.  
\end{definition}
\begin{definition} \label{gfam}
In a $p$-adically closed field, a \emph{$\Gamma$-family} is an interpretable family of the form $\{X_\gamma\}_{\gamma \in \Gamma}$, where $\Gamma$ is the value group,
each $X_\gamma$ is non-empty, and $\gamma' \ge \gamma \implies
X_{\gamma'} \subseteq X_\gamma$.  
\end{definition}
\begin{theorem}[{= Theorems~\ref{tp-gen}, \ref{tp-gen1}}] \label{tp-gen0}
  If $K$ is a $p$-adically closed field and $q \in S_n(K)$ is a
  definable type, then $q$ is generated by the union of countably many
  $\Gamma$-families.  This also holds for definable types in $K^\eq$.
\end{theorem}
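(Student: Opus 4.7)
The overall plan is to prove the statement first for definable types in the field sort via $p$-adic cell decomposition, then bootstrap to $K^\eq$ via the lifting of definable types along interpretable surjections (item 4 of the abstract). For the field-sort case, I would proceed by induction on arity.

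For a definable 1-type $q \in S_1(K)$, each formula $\varphi(x, b) \in q$ contains a cell $C \in q$ by cell decomposition. Cells in $K$ take the form $\{x : \val(a) \le \val(x-c) < \val(b),\ x - c \in \lambda P_n\}$ for some center $c \in K$, so the natural $\Gamma$-family refining $C$ is $\{B(c,\gamma) \cap \lambda P_n\}_{\gamma \in \Gamma}$, made constant above the ``concentration threshold'' $\gamma_0 = \sup\{\gamma : B(c,\gamma) \in q\}$ when necessary to keep all members in $q$. Since $q$ has only countably many $\varphi$-definitions, only countably many such families are needed. For $n$-types, cells in $K^n$ are iteratively described by conditions $\val(x_i - c_i(\bx')) \in S_i(\bx')$ for definable center functions $c_i$ (with $\bx' = (x_1, \ldots, x_{i-1})$); each coordinate contributes a $\Gamma$-family of the form $\{\bx : \val(x_i - c_i(\bx')) \ge \gamma\}_{\gamma \in \Gamma}$, and a finite intersection refines the cell. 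Taking the union over all countably many formulas yields the desired countable collection.

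The $K^\eq$ case is the step I expect to require the most care. The natural approach is to lift $q$ to a definable type $q'$ on a field-sort preimage $Y$ of the interpretable set $Y/E$ on which $q$ lives, apply the field-sort result to get countably many $\Gamma$-families $\mathcal{F}_i'$ generating $q'$, and then push forward under $\pi : Y \to Y/E$. The images $\pi(\mathcal{F}_i')$ are $\Gamma$-families on $Y/E$ with members in $q$, but push-forward does not commute with finite intersections ($\pi(A \cap B) \subsetneq \pi(A) \cap \pi(B)$ in general), so it is not immediate that they generate $q$. To fix this, I would try to refine $q'$ or the families $\mathcal{F}_i'$ so that finite intersections of their members are $E$-saturated (i.e., unions of $E$-classes), which would restore commutativity of push-forward with intersection. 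This refinement may also draw on the code-reality result Theorem~\ref{code0} to identify $E$-saturated definable sets with those pulled back from $Y/E$.
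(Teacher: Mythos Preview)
Your field-sort argument has a gap at ``since $q$ has only countably many $\varphi$-definitions, only countably many such families are needed.'' For a fixed formula $\varphi(x,y)$ there are in general uncountably many parameters $b$ with $\varphi(x,b) \in q$, and the cell you extract from $\varphi(K,b)$ has a center depending on $b$ through the center function of the uniform cell decomposition. You have produced one $\Gamma$-family per formula \emph{instance}, not per formula; nothing you wrote explains why finitely many $\Gamma$-families handle all $b$ simultaneously for a given $\varphi$. Making this precise is essentially the content of Proposition~\ref{refine-g}, which in the paper is \emph{deduced from} Theorem~\ref{tp-gen} rather than used to prove it, so invoking it here would be circular. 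The paper's route avoids cells entirely: it analyzes the quantifier-free part $\hat q$ via the valued-vector-space structures $\tau_d$ on $K[x]_{<d}/I_d$, extracts splitting bases $P_{d,1},\ldots,P_{d,n_d}$, and reads off $\Gamma$-families of the shape $\{x : \val(P_{d,i}(x)) - \val(P_{d,j}(x)) > \gamma\}$ directly (Proposition~\ref{qftp-gen}); then Lemma~\ref{qf-def-2}(\ref{qd2}) shows that $\hat q$ together with the restriction of $q$ to the countable submodel $K_0 = \dcl(\ulcorner \hat q\urcorner)$ already generates $q$.

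For $K^\eq$ you correctly identify that pushing forward a generating family need not generate the pushforward type, but the $E$-saturation fix is vague and the appeal to Theorem~\ref{tp-code} is unmotivated (real codes say nothing about which definable sets are $E$-saturated). The paper sidesteps the obstacle by working one interpretable family at a time rather than pushing forward all generators at once: given an interpretable family $\mathcal{F} \subseteq q$, pull it back to the definable cover $\tilde X$, apply the already-proven field-sort Proposition~\ref{refine-g} to the lifted type $\tilde q$ to get a $\Gamma$-family $\mathcal{G}_0 \subseteq \tilde q$ refining the pullback, and push $\mathcal{G}_0$ forward (Proposition~\ref{refine-g3}). Since $q$ is generated by countably many interpretable families---one per formula of the language---replacing each by such a $\Gamma$-family yields Theorem~\ref{tp-gen1}. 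Note that the lift $\tilde q$ is only guaranteed to exist over a sufficiently saturated extension (Lemma~\ref{s2}), and a short definability argument is needed to bring the resulting $\Gamma$-family back down to $K$.
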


Theorem~\ref{tp-gen0} (mostly) falls out of the proof of Theorem~\ref{code0}.
By combining Theorem~\ref{tp-gen0} with work of Simon and
Starchenko~\cite{simon_star_14}, we get a technical result on
definable and interpretable families of sets, which may be of
independent interest:
\begin{theorem}[{= Theorems~\ref{tri}, \ref{tri1}}] \label{tri0}
  Let $\mathcal{F}$ be a definable family of sets in $K \models \pCF$.
  The following are equivalent:
  \begin{enumerate}
  \item \label{t01} $\mathcal{F}$ extends to a definable type $p \in
    S_n(K)$.
  \item $\mathcal{F}$ is refined by a $\Gamma$-family.
  \item \label{t03} $\mathcal{F}$ is refined by a downward directed definable
    family of non-empty sets.
  \end{enumerate}
  The analogous statements also hold in $K^\eq$.
\end{theorem}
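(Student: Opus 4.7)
The plan is to prove the three conditions as a cycle $(1) \Rightarrow (2) \Rightarrow (3) \Rightarrow (1)$, combining Theorem~\ref{tp-gen0} with work of Simon and Starchenko~\cite{simon_star_14}.

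The implication $(2) \Rightarrow (3)$ is essentially by unwinding definitions: a $\Gamma$-family $\{X_\gamma\}_{\gamma \in \Gamma}$ is already a downward directed definable family of non-empty sets, since $\Gamma$ is totally ordered and $X_{\max(\gamma_1,\gamma_2)} \subseteq X_{\gamma_1} \cap X_{\gamma_2}$. For $(3) \Rightarrow (1)$, I would invoke the result of Simon and Starchenko that in distal NIP theories (which includes $\pCF$) every downward directed definable family of non-empty sets extends to a complete definable type, and apply it directly to the refining family.

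The main direction is $(1) \Rightarrow (2)$. Given a definable type $p$ extending $\mathcal{F}$, Theorem~\ref{tp-gen0} furnishes countably many $\Gamma$-families $(\mathcal{G}_n)_{n \in \Nn}$ generating $p$. For each $X \in \mathcal{F}$, compactness yields a finite intersection of sets from $\bigcup_n \mathcal{G}_n$ that is contained in $X$. The technical heart is then to merge these countably many families into a \emph{single} $\Gamma$-family refining $\mathcal{F}$. I would try to exploit the $\Zz$-group structure of $\Gamma$ to encode $\Nn \times \Gamma$ into $\Gamma$ in an interpretable, order-cofinal way (for instance using division-with-remainder modulo an integer), yielding a ``master'' $\Gamma$-family $\{Z_\gamma\}$ that absorbs all the $\mathcal{G}_n$. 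The definability of $\mathcal{F}$ as a family parametrized by a single definable set $B$ would then be used, via uniform application of compactness and definability of $p$, to produce a definable function $B \to \Gamma$ selecting for each $X \in \mathcal{F}$ a level $Z_\gamma \subseteq X$.

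The principal obstacle is exactly this merging step in $(1) \Rightarrow (2)$: Theorem~\ref{tp-gen0} only gives a countable union of $\Gamma$-families, and compressing this to one interpretable $\Gamma$-family while retaining uniformity across the parameter set of $\mathcal{F}$ is the delicate part. The analogous $K^\eq$ statement (Theorem~\ref{tri1}) proceeds identically, substituting the imaginary version of the generation theorem (Theorem~\ref{tp-gen1}) for Theorem~\ref{tp-gen0} and reading ``definable'' as ``interpretable'' throughout; the Simon--Starchenko input carries over since distality is preserved in $K^\eq$.
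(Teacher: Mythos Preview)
Your cycle $(2)\Rightarrow(3)\Rightarrow(1)$ is fine and matches the paper (with the caveat that Simon--Starchenko give the extension of a single non-forking formula to a definable type; one still needs the easy partition-plus-directedness argument of Corollary~\ref{cor:sim-star-fact} and Lemma~\ref{lem:down-dir} to pass from a whole directed family to a single definable type). The real gap is in $(1)\Rightarrow(2)$. The countably many $\Gamma$-families $\mathcal{G}_n$ produced by Theorem~\ref{tp-gen0} are \emph{not uniformly} definable: each $\mathcal{G}_n$ comes with its own defining formula and parameter tuple, and there is no reason the sequence of parameters sits inside any definable set. Consequently no interpretable encoding of $\Nn\times\Gamma$ into $\Gamma$ can help, because the map $n\mapsto\mathcal{G}_n$ itself is external; a ``master'' family $\{Z_\gamma\}_\gamma$ built from all the $\mathcal{G}_n$ would not be definable. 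The paper sidesteps this by passing to a highly saturated elementary extension $L\succeq K$, taking the heir $r$ of $p$, and using saturation: for each $a\in Y(L)$ compactness gives some finite set of indices $\alpha_1,\ldots,\alpha_m$ with $\bigcap_i Z_{\alpha_i,\gamma_i}\subseteq D_a$, and saturation of $L$ (against the countable external index set) forces a single finite $S\subseteq\omega$ to work for all $a$. One then sets $X_\gamma=\bigcap_{\alpha\in S}Z_{\alpha,\gamma}$, which \emph{is} a genuine $\Gamma$-family. This saturation step is exactly the ``merging'' you flagged as delicate, and your proposed encoding does not accomplish it.

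For the $K^\eq$ statement, your plan to rerun the argument verbatim also has a gap at $(3)\Rightarrow(1)$: the Simon--Starchenko result (Fact~\ref{fact:simon-star}) is proved for dp-minimal theories with definable Skolem functions, and $\pCF^\eq$ does \emph{not} have definable Skolem functions. The paper instead pulls an interpretable family back along a surjection $\pi:\tilde X\to X$ with $\tilde X$ in the home sort, applies the definable case there, and pushes forward (Lemmas~\ref{s0}--\ref{s2} and Propositions~\ref{refine-g3}, \ref{refine-i}). Distality of $K^\eq$ alone is not enough to invoke Simon--Starchenko directly.
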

The equivalence of (\ref{t01}) and (\ref{t03}) is
analogous to \cite[Theorem~2.12]{types-transversals} in the o-minimal
case.

Next, we extend the results of Cubides Kovacsics, Hils, and Ye
\cite{cubides-ye, cubides-ye-hils} from the real sorts to the
imaginary sorts of $\pCF$.  See Section~\ref{prodsec} for the
definition of uniform definability of definable types, pro-definable
sets, and strictly pro-definable sets.  A \emph{pro-interpretable set}
in a structure $M$ is a pro-definable set in $M^\eq$.
\begin{theorem}[{= Theorem~\ref{uddt-i}}] \label{uddt0}
  The theory $\pCF^\eq$ has uniform definability of definable types.
\end{theorem}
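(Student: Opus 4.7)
The plan is to reduce to the theorem of Cubides Kovacsics, Hils, and Ye \cite{cubides-ye, cubides-ye-hils} for the field sort of $\pCF$, using Theorem~\ref{code0} (real-tuple coding) and the lifting theorem for definable types along interpretable surjections (announced as the fourth contribution of the introduction).

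I would interpret uniform definability of definable types concretely: for each formula $\varphi(x,y)$ of $\pCF^\eq$, the family of $\varphi$-definitions $d_p\varphi(y)$, as $p$ ranges over global definable types in the variable $x$, should be uniformly definable. First, the case where both $x$ and $y$ are in the field sort is exactly the Cubides-Ye-Hils result; Theorem~\ref{code0} further guarantees that the defining parameters can be taken in $K$ rather than $K^\eq$. Next, if $y$ ranges over an imaginary sort $K^m/F$, I would write $\varphi(x,y)=\varphi'(x,[y']_F)$ for some real-variable formula $\varphi'(x,y')$; then $d_p\varphi'(y')$ is automatically $F$-invariant in $y'$, so the uniformly definable formula representing it descends to a legitimate $\pCF^\eq$-formula on the quotient sort of $y$.

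The main step would be allowing $x$ to be imaginary. Writing the imaginary sort as $S=K^n/E$ with quotient projection $\pi:K^n\to S$, I would observe that for any global definable type $\tilde p$ on $K^n$, the pushforward $\pi_*\tilde p$ is a global definable type on $S$ whose $\varphi$-definition (by a direct unfolding) equals the $(\varphi\circ\pi)$-definition of $\tilde p$. The lifting theorem says $\pi_*$ is surjective onto the space of global definable types on $S$, so
\[ \{d_p\varphi(y) : p \text{ a global definable type on } S\} = \{d_{\tilde p}(\varphi\circ\pi)(y) : \tilde p \text{ a global definable type on } K^n\}, \]
and the right-hand family is uniformly definable by the previous paragraph applied to $\varphi\circ\pi$.

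The noncanonicity of the lift is harmless: surjectivity of $\pi_*$ alone suffices to match the two families, so a uniform choice of lift is never needed. The main subtlety I anticipate is bookkeeping around the Cubides-Ye-Hils statement---ensuring that their strict pro-definability really gives uniform definability of $\varphi$-definitions with parameters in the appropriate sort of $\pCF^\eq$---which should be resolved by combining their result with Theorem~\ref{code0} and the standard dictionary between strict pro-definability and uniform definability of definable types outlined in Section~\ref{prodsec}.
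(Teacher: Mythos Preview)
Your approach is correct and essentially identical to the paper's: both reduce to UDDT in the field sort (Fact~\ref{uddt}) via surjectivity of the pushforward $\pi_*$ along the quotient map $\pi:K^n\to S$ (the paper cites this as Lemma~\ref{s2}, which is exactly the special case of the lifting theorem you invoke, with definable domain). Theorem~\ref{code0} and the strict pro-definability discussion in your final paragraph are unnecessary detours---UDDT already permits the defining parameters to live in $K^\eq$, and Fact~\ref{uddt} is directly the UDDT statement for the field sort, not something to be extracted from strict pro-definability.
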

This extends \cite[Theorem~6.3(4)]{cubides-ye} from the home sort to
the imaginary sorts.  If $X$ is an interpretable set, let $X^\df$
denote the space of definable types on $X$.  Theorem~\ref{uddt0} lets
us regard $X^\df$ as a pro-interpretable set, by work of Hrushovski
and Loeser (see Fact~\ref{pro-d}).
\begin{theorem}[{= Theorem~\ref{strict2}}]\label{strict0}
  If $X$ is interpretable, then the pro-interpretable set $X^\df$ is
  strictly pro-interpretable.
\end{theorem}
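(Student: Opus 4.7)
The plan is to deduce strict pro-interpretability of $X^\df$ from the known strict pro-definability of the home-sort case, by combining it with the lifting theorem for definable types along interpretable surjections (another of the main results stated in the introduction of this paper) and a compactness argument.

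First, write $X$ as an interpretable quotient $Y/E$ with $Y$ a definable set and $E$ a definable equivalence relation, and let $f \colon Y \twoheadrightarrow X$ be the quotient map. Composition with $f$ gives a pushforward $f_\ast \colon Y^\df \to X^\df$, sending a definable type $q(\by)$ to the definable type whose $\varphi$-definition for $\varphi(\bx,\bz)$ is the $\varphi(f(\by),\bz)$-definition of $q$. This map is pro-interpretable. By the lifting theorem every definable type on $X$ is of the form $f_\ast(q')$ for some $q' \in Y^\df$, so $f_\ast$ is surjective.

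Second, invoke the home-sort case of the theorem (due to Cubides Kovacsics, Hils, and Ye \cite{cubides-ye, cubides-ye-hils}) to conclude that $Y^\df$ is strictly pro-definable. By Theorem~\ref{uddt0} and the Hrushovski--Loeser machinery (Fact~\ref{pro-d}), $X^\df$ is pro-interpretable, presenting as a filtered inverse limit $X^\df = \varprojlim_i X^\df_i$ indexed by finite sets of formulas, with projections $\pi_i \colon X^\df \to X^\df_i$ taking a type to the tuple of canonical parameters of its $\varphi$-definitions for $\varphi$ in the corresponding set. Strict pro-interpretability amounts to showing that each image $\pi_i(X^\df) \subseteq X^\df_i$ is interpretable.

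Third, using surjectivity of $f_\ast$, rewrite $\pi_i(X^\df) = (\pi_i \circ f_\ast)(Y^\df)$. The composite $\pi_i \circ f_\ast \colon Y^\df \to X^\df_i$ is a pro-interpretable map from the strictly pro-definable set $Y^\df$ to the interpretable set $X^\df_i$. Since $X^\df_i$ is carved out by finitely many formulas, this map factors through some finite-level interpretable quotient $Y^\df \twoheadrightarrow Y^\df_j$, and by strict pro-definability this quotient is surjective onto the interpretable set $Y^\df_j$. Hence $\pi_i(X^\df)$ equals the image of an interpretable map $Y^\df_j \to X^\df_i$ between interpretable sets, which is interpretable.

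The main obstacle I expect is the factoring-through step: one must argue carefully that a pro-interpretable map from a pro-interpretable set to an interpretable set factors through a finite-level quotient, and then exploit strict pro-definability of $Y^\df$ to conclude that the image on $Y^\df_j$ equals the whole interpretable set $Y^\df_j$, so that its image in $X^\df_i$ is realized as the image of an honest interpretable map. A secondary point requiring justification is that pushforward along a definable map preserves definability of types and induces a pro-interpretable map of spaces of definable types, but this follows routinely from the functoriality of the definitions and should be recorded in a short preliminary remark before invoking the lifting theorem.
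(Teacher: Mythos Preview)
Your proposal is correct and follows essentially the same route as the paper. The paper's proof is terser: it takes a definable cover $\pi\colon \tilde X \to X$, invokes Lemma~\ref{s2} directly to get surjectivity of $\tilde X^{\df} \to X^{\df}$ over the monster, and then concludes by the home-sort strict pro-definability (Fact~\ref{strict}), implicitly using characterization~(2) of strict pro-definability (images under pro-definable maps to definable sets are definable) rather than spelling out the factoring-through argument as you do. The only substantive difference is that you cite the general lifting theorem (Theorem~\ref{lift0}) for surjectivity of $f_\ast$, whereas the paper uses the special case Lemma~\ref{s2}; since Theorem~\ref{lifts} is itself an immediate corollary of Lemma~\ref{s2} and does not depend on Theorem~\ref{strict2}, there is no circularity, but citing Lemma~\ref{s2} directly is slightly cleaner and respects the paper's logical ordering.
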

This extends \cite[Theorem~7.4.8]{cubides-ye-hils} from the home sort
to the geometric sorts.  Meanwhile, Theorem~\ref{code0} can be rephrased in terms
of $X^\df$ as follows:
\begin{theorem}[{= Theorem~\ref{pro-d-literal}}]\label{pro-d-literal0}
  If $X$ is definable (not just interpretable), then $X^\df$ is
  pro-definable (not just pro-interpretable).
\end{theorem}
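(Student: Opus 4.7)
My plan is to upgrade the pointwise real coding of definable types given by Theorem~\ref{code0} into a pro-interpretable bijection between $X^\df$ and a pro-definable set in the home sort.  By Theorems~\ref{uddt0} and \ref{strict0}, for $X \subseteq K^n$ definable, $X^\df$ is already strictly pro-interpretable, with canonical presentation $X^\df = \varprojlim_\varphi Y_\varphi$ indexed by formulas $\varphi(x,y)$ with $x$ in the sort of $X$, where $Y_\varphi$ is the interpretable set of possible $\varphi$-definitions $d_\varphi q$.  To establish pro-definability it suffices to exhibit a pro-interpretable bijection $\Phi : X^\df \to W$ with $W$ pro-definable, since $X^\df$ then inherits the pro-definable structure of $W$ via $\Phi^{-1}$.

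To build $\Phi$, I would use Theorem~\ref{code0}: each $q \in X^\df(K)$ has a real tuple $\Phi(q) \in K^\lambda$ interdefinable with its canonical code $\ulcorner q \urcorner$, where $\lambda$ may be infinite (Proposition~\ref{long}).  Take $W$ to be the pro-definable image of $\Phi$ inside $K^\lambda$.  Pointwise, interdefinability already provides a two-sided inverse to $\Phi$ on $K$-points; the work is to promote both $\Phi$ and $\Phi^{-1}$ to pro-interpretable morphisms between the inverse systems.  Once this uniformity is in hand, $\Phi$ realizes the desired bijection $X^\df \cong W$, and the pro-definable structure of $W$ transfers to $X^\df$.

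The main obstacle is exactly this uniformity: Theorem~\ref{code0} is stated pointwise, but for a pro-interpretable morphism we need the choice of $\Phi(q)$ to be uniformly definable in $q$.  For this I would open up the proof of Theorem~\ref{code0}, which factors through Theorem~\ref{tp-gen0}: each $q$ is generated by a countable union of $\Gamma$-families, whose (a priori interpretable) parameters, after the further reduction from $K^\eq$ to $K$ that constitutes the real content of Theorem~\ref{code0}, assemble into the tuple $\Phi(q)$.  Uniform definability of types (Theorem~\ref{uddt0}), applied at the level of the canonical pro-interpretable presentation of $X^\df$, should ensure that both the extraction of generating $\Gamma$-families from $q$ and the subsequent replacement of their imaginary parameters by real tuples can be carried out uniformly in $q$, closing the argument.
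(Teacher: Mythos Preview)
Your overall strategy---upgrade Theorem~\ref{code0} from a pointwise statement to a uniform one---is reasonable and can be made to work, but the paper takes a different and considerably slicker route that \emph{avoids} opening up the proof of Theorem~\ref{code0} entirely.  The paper proves a general claim: if $Z$ is any pro-interpretable set over $A$ such that every point of $Z$ is interdefinable over $A$ with a real tuple, then $Z$ is in pro-interpretable bijection with a pro-definable set.  This is established by taking the family $\mathcal{F}$ of \emph{all} $A$-pro-interpretable maps $f : Z \to D$ with $D$ an $A$-definable (real) set, and showing $\mathcal{F}$ is jointly injective.  For distinct $x, x' \in Z$ in the same type over $A$, the real tuple interdefinable with $x$ supplies a coordinate $z \in \dcl^\eq(Ax)$ moved by any $\sigma \in \Aut(\Mm/A)$ sending $x$ to $x'$; extending the partial function $x \mapsto z$ to a map in $\mathcal{F}$ separates $x$ from $x'$.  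This neatly sidesteps any need for uniformity in the real coding, and uses neither Theorem~\ref{tp-gen0} nor strict pro-interpretability.

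Your sketch, by contrast, has a genuine gap in its proposed implementation.  The proof of Theorem~\ref{code0} does \emph{not} factor through Theorem~\ref{tp-gen0}: the real code for $q$ is the sequence of codes for the ideals $I_d$ and the split valued-vector-space structures $\tau_d$ (via their associated filtrations), not the parameters of generating $\Gamma$-families.  Those $\Gamma$-family parameters arise in the proof of Theorem~\ref{tp-gen0} from \emph{non-canonical} choices of splitting bases $P_{d,i}$ inside $K_0 = \dcl(\ulcorner \hat q \urcorner)$, so they do not determine a well-defined function $\Phi(q)$.  If you instead traced the map $q \mapsto (\ulcorner I_d \urcorner, \ulcorner \tau_d \urcorner)_{d<\omega}$ directly, you would get a genuinely pro-interpretable map into a product of real sorts (each $I_d$ and each filtration step is a linear subspace of a fixed $K^N$, coded by a real tuple of fixed length), and this does yield a uniform $\Phi$.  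So your strategy can be repaired, but the paper's abstract argument is both shorter and more robust, since it works for any pro-interpretable set whose points happen to be real-interdefinable.
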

We also prove the following technical result:
\begin{theorem}[{= Theorem~\ref{lifts}}] \label{lift0}
  Work in a monster model $\Mm$ of $\pCF$.  Let $f : X \to Y$ be an
  interpretable surjection between two interpretable sets.  The
  pushforward map $X^\df \to Y^\df$ is surjective (on $\Mm$-points).
  In other words, any global definable type in $Y$ can be lifted to a
  global definable type in $X$.
\end{theorem}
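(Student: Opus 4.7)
Let $q \in Y^\df(\Mm)$; I aim to produce $p \in X^\df(\Mm)$ with $f_*p = q$. The plan is to pull back a sufficient collection of interpretable generators of $q$ along $f$, extend each pulled-back family to a definable type on $X$ via Theorem~\ref{tri0}, then stitch the partial solutions together using the strict pro-interpretability of $X^\df$ (Theorem~\ref{strict0}) and saturation of $\Mm$.

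By Theorem~\ref{tp-gen0}, $q$ is generated by countably many $\Gamma$-families $\mathcal{F}_n = \{Y_{n,\gamma}\}_{\gamma \in \Gamma}$ in $Y$. Replacing $\mathcal{F}_n$ by $\{\bigcap_{i \le n} Y_{i,\gamma}\}_\gamma$---which is still a $\Gamma$-family of nonempty sets, since finite intersections of members of $q$ lie in $q$---I may assume the nesting $Y_{n+1,\gamma} \subseteq Y_{n,\gamma}$. Pulling back along $f$, each $\{f^{-1}(Y_{n,\gamma})\}_\gamma$ is a $\Gamma$-family of nonempty interpretable subsets of $X$ (using surjectivity of $f$). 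Applying Theorem~\ref{tri0} in $\Mm^\eq$, each such family extends to a definable type $p_n \in X^\df$, and the nesting guarantees that $p_m$ extends $\{f^{-1}(Y_{n,\gamma})\}_\gamma$ whenever $m \ge n$.

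The central task is to find a single $p \in X^\df$ extending all the pulled-back families simultaneously. The condition ``$f^{-1}(Y_{n,\gamma}) \in p$ for every $\gamma$'' fixes a single coordinate of $p$ in the strict pro-interpretable presentation of $X^\df$: namely, the coordinate coding the definition of the formula $(x,\gamma) \mapsto x \in f^{-1}(Y_{n,\gamma})$ must equal a specific element (the code of $\Gamma$). The resulting subset $F_n \subseteq X^\df$ is pro-interpretable and nonempty (it contains $p_n$), and the $F_n$ decrease with $n$. By strict pro-interpretability of $X^\df$ (Theorem~\ref{strict0}) together with saturation of $\Mm$, the countable decreasing intersection $\bigcap_n F_n$ is nonempty: each finite subintersection equals some $F_m$ and is witnessed by $p_m$. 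Any $p \in \bigcap_n F_n$ then pushes forward to a complete type containing every $Y_{n,\gamma}$, hence equal to $q$.

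The main obstacle is the concluding saturation step, for which strict pro-interpretability is essential: it is what allows the conditions ``$p \in F_n$'' to be realized as projective-limit constraints in a saturated monster, so that a standard compactness argument supplies the required $p$. The use of Theorem~\ref{tp-gen0} to reduce to countably many conditions, rather than the uncountably many that $q$ naively imposes, is crucial for this saturation argument to apply.
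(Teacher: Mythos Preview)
Your argument is sound if Theorems~\ref{tp-gen0}, \ref{tri0}, and \ref{strict0} are taken as black boxes, and it shares the paper's overall skeleton: generate $q$ by a small collection of interpretable families, pull them back along $f$, check that each finite piece extends to a definable type on $X$, and then stitch by compactness inside the pro-interpretable set $X^{\df}$. Two remarks on the details. First, strict pro-interpretability is unnecessary for the final step: each $F_n$ is relatively definable in $X^{\df}$ (cut out by a condition on a single $\varphi$-definition coordinate, just as you say), and compactness in a merely pro-interpretable set already yields $\bigcap_n F_n \ne \varnothing$ from finite consistency. This is exactly the paper's Lemma~\ref{strange}. Second, and more seriously, in the paper's logical order all three of your cited inputs---the imaginary cases of Theorems~\ref{tp-gen0} and \ref{tri0}, and Theorem~\ref{strict0}---are themselves proved \emph{using} Lemma~\ref{s2}, which is the special case of Theorem~\ref{lifts} where the source is definable. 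So relative to the paper your argument is circular.

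The paper's route avoids this and is shorter: Theorem~\ref{lifts} reduces in two lines to Lemma~\ref{s2} by precomposing with a definable cover $W \to X$. Lemma~\ref{s2} then runs the same ``generate, pull back, extend each piece, stitch'' plan, but with two differences from yours: (i) it does not require the generators of $q$ to be $\Gamma$-families---any interpretable generating families will do---and (ii) the step ``a single pulled-back family extends to some $\pi_* q$'' (Lemma~\ref{s1}) is handled not via directedness but by distal cell decomposition together with the Simon--Starchenko fact (Corollary~\ref{cor:sim-star-fact}). Your idea of forcing the pulled-back families to be $\Gamma$-families, hence directed, is a pleasant way to bypass Lemma~\ref{s1}, but it front-loads machinery (Theorems~\ref{tp-gen1}, \ref{tri1}, \ref{strict2}) that the paper deliberately establishes only \emph{after} Lemma~\ref{s2}.
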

If we understand correctly, this means that $\pCF^\eq$ has
``surjectivity transfer'' in the sense of
\cite[Definition~2.4.9]{cubides-ye-hils}.  In Theorem~\ref{lift0}, we
don't know whether $M$-definable types can be lifted to $M$-definable
types, when $M$ is not saturated.

\subsection{Definable groups}
Theorem~\ref{code0} has some consequences for
the theory of definable groups.  Recall the following general
definition:
\begin{definition} \label{dfg}
  Work over a monster model $\Mm$.
  An interpretable group $G$ has \emph{definable f-generics}
  (\textit{dfg}) if there is a small model $M_0 \preceq \Mm$ and an
  $M_0$-definable global type $q$ concentrated on $G$, such that every
  left translate of $q$ under the action of $G = G(\Mm)$ is also
  $M_0$-definable.
\end{definition}
Equivalently, $G$ has \textit{dfg} if there is a (global) definable type $q$ on
$G$ with boundedly many left translates.  Pushing $q$ forward along
the map $x \mapsto x^{-1}$, we also get a definable type with
boundedly many right translates.

If one replaces ``$M_0$-definable'' with ``finitely satisfiable in
$M_0$'' in Definition~\ref{dfg}, one gets the notion of \emph{finitely
satisfiable generics} (\textit{fsg}).  In a distal theory like $\pCF$,
the two notions of \textit{dfg} and \textit{fsg} are in some sense
polar opposites.  For example, if $G$ is a 1-dimensional definable
group, then $G$ has exactly one of the two properties.  More
generally, infinite definable groups can have at most one of the two
properties \textit{dfg} and \textit{fsg}.\footnote{The fact that any 1-dimensional definable group has \textit{fsg} or \textit{dfg} is \cite[Lemma~2.9]{pillay-yao}.  The fact that infinite definable groups cannot have both properties simultaneously is alluded to in the proof of \cite[Corollary~2.11]{pillay-yao}, but not explained there, as far as we can tell.  This fact can be proved as follows.  Suppose $G$ has \textit{dfg} and \textit{fsg}.  Take a global types $q_1$ and $q_2$ and a small model $M$ such that every right-translate of $q_1$ is $M$-definable, and every left-translate of $q_2$ is finitely satisfiable in $M$.  In an elementary extension of the monster model $\Mm$, take $(a,b)$ realizing $q_1 \otimes q_2$, meaning that $a$ realizes $q_1$ and $b$ realizes $q_2 | \Mm a$.  Definable types commute with finitely satisfiable types \cite[Lemma~3.4]{HP}, so $(b,a)$ realizes $q_2 \otimes q_1$.  In particular, $a$ realizes $q_1 | \Mm b$.  Now one can see easily that $\tp(a \cdot b / \Mm)$ is the right-translate of $q_1 = \tp(a/\Mm)$ by $b$, so $\tp(a \cdot b / \Mm)$ is $M$-definable.  Similarly, $\tp(a \cdot b / \Mm)$ is the left-translate of $q_2 = \tp(b/\Mm)$ by $a$, so $\tp(a \cdot b / \Mm)$ is finitely satisfiable in $M$.  Then $\tp(a \cdot b / \Mm)$ is generically stable \cite[Remark~3.3]{HP}.  By \cite[Proposition~2.27]{distal}, generically stable types in distal theories are constant (i.e., realized), and so $\tp(a \cdot b / \Mm)$ must be constant. But then $q_1$ and $q_2$ are themselves constant types, and have unboundedly many translates, a contradiction.}   It turns out that a
definable group $G$ has \textit{fsg} if and only if it is definably
compact \cite{O-P,johnson-fsg}.  Therefore, \textit{dfg} can be
understood as the ``opposite'' of definable compactness.

Using Theorem~\ref{code0}, we can prove the following:
\begin{theorem}[{= Theorem~\ref{quot}}] \label{quot0}
  In a model of $\pCF$, suppose $G$ is an interpretable \textit{dfg} group and $X$ is
  a definable set with a definable action of $G$.  Then the quotient
  space $X/G$ (the set of orbits) is definable.  More precisely, the
  interpretable set $X/G$ is in interpretable bijection with a
  definable set.
\end{theorem}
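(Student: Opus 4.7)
The plan is to canonically associate each $G$-orbit in $X$ with a bounded set of definable types on $X$, and then use Theorem~\ref{code0} to code this set by a real tuple. Let $q_0$ be a dfg type on $G$, and let $q := i_* q_0$ be its pushforward under inversion $i(g) = g^{-1}$; by the footnote preceding Theorem~\ref{quot0}, $q$ has boundedly many right-translates $q^h := (R_h)_* q$, where $R_h(g) = gh$. For $x \in X$, let $\alpha_x : G \to X$ denote $g \mapsto g \cdot x$, and set
\[
  S_x := \{(\alpha_x)_*(q^h) : h \in G\},
\]
a bounded set of definable types on $X$, each concentrated on the orbit $O_x := G \cdot x$.

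Next I would verify that $S_x$ is both orbit-invariant and orbit-separating. For $y = k \cdot x$, the identity $\alpha_y = \alpha_x \circ R_k$ yields $(\alpha_y)_*(q^h) = (\alpha_x)_*((R_k)_*(q^h)) = (\alpha_x)_*(q^{hk})$, and since $\{hk : h \in G\} = G$, we conclude $S_y = S_x$. Conversely, if $S_x = S_y$, any type $p$ in their common value concentrates on both of the definable sets $O_x$ and $O_y$, forcing $O_x = O_y$ and hence $x$ and $y$ lie in the same orbit. Thus $[x] \mapsto S_x$ is a well-defined injection from $X/G$ into the collection of bounded sets of definable types on $X$.

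Finally, I would apply Theorem~\ref{code0} (in the uniform version delivered by Theorem~\ref{pro-d-literal0}) to code each definable type in $S_x$ by a real tuple; the set $S_x$ itself is then coded by a symmetric combination of these codes, producing $\ulcorner S_x \urcorner$ depending definably on $x$. The map $x \mapsto \ulcorner S_x \urcorner$ factors through $X/G$ and realizes it as a subset of real tuples, giving the desired interpretable bijection between $X/G$ and a definable set. The main obstacles I anticipate are (i) establishing that the set of right-translates of $q$ is in fact finite (rather than merely bounded), so that $S_x$ admits a clean symmetric code---this should follow from standard stabilizer arguments in NIP---and (ii) ensuring that the coding map lands in $K^m$ for some fixed finite $m$ rather than in an infinite product of real sorts, which will rely on the uniform pro-definable structure provided by Theorem~\ref{pro-d-literal0} together with $X$ being a genuinely definable (finite-dimensional) set.
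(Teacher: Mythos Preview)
Your core idea---pushing the \textit{dfg} type forward along the orbit map and invoking Theorem~\ref{code0}---is exactly right, and matches the paper. However, your obstacle~(i) is a genuine gap, not a technicality to be patched later. The set of right-translates of $q$ has cardinality $|G/\mathrm{Stab}(q)| = |G/G^{00}|$, which in $\pCF$ is typically an infinite (profinite) bounded group, so $S_x$ is generally an infinite set of types and admits no ``symmetric combination'' code in the home sort. Your proposed fix via stabilizer arguments cannot make $S_x$ finite.

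The paper sidesteps this entirely. Rather than coding the whole set $S_x$, it picks a \emph{single} type $p := (\alpha_a)_* q$ and observes two things: (a) the orbit $e = [a]$ lies in $\dcl^\eq(\ulcorner p \urcorner)$ since $p$ concentrates on $G\cdot a$, and (b) the $\Aut(\Mm/eM_0)$-orbit of $p$ is contained in your $S_a$ and hence is bounded, so $\ulcorner p \urcorner \in \acl^\eq(eM_0)$. Thus $e$ is \emph{interalgebraic} over $M_0$ with the real tuple $\ulcorner p \urcorner$. The key step you are missing is Lemma~\ref{interdef}(\ref{id2}) (and Remark~\ref{interdef2}), which upgrades interalgebraicity with a real tuple to interdefinability with a (possibly different) real tuple, using definable Skolem functions and coding of finite sets. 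This also dissolves your obstacle~(ii): once each $e \in X/G$ is interdefinable over $M_0$ with a finite real tuple, a routine compactness argument over $X/G$ produces a single interpretable injection into some $K^m$.
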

This allows us to answer a question asked by Pillay and Yao \cite{pillay-yao}:
\begin{corollary}[{= Corollary~\ref{dim0-quot}}]
  Suppose $G$ is a definable group in a $p$-adically closed field and
  $H$ is a definable open subgroup.  If $H$ has \textit{dfg}, then $H$
  has finite index in $G$.
\end{corollary}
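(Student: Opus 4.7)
The plan is to apply Theorem~\ref{quot0} to the action of $H$ on $G$ by right multiplication. Since $H$ is an interpretable (in fact definable) \textit{dfg} group and $G$ is a definable set on which it acts definably, the theorem produces an interpretable bijection between the orbit space $G/H$ and an honest definable subset of $K^n$ for some $n$. Thus we may regard $G/H$ as a definable set and $\pi : G \to G/H$ as a definable surjection.

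Next I would invoke the standard dimension theory for definable sets in $\pCF$ coming from Denef's cell decomposition. It implies that if $f : X \to Y$ is a definable surjection between definable sets whose fibers all have the same dimension $d$, then $\dim X = \dim Y + d$. Applying this to $\pi$, every fiber is a coset of $H$ and so has dimension $\dim H$, giving
\[
  \dim(G/H) \;=\; \dim G - \dim H.
\]
Since $H$ is \emph{open} in $G$, we have $\dim H = \dim G$, so $\dim(G/H) = 0$. A zero-dimensional definable set in a $p$-adically closed field is finite, so $[G:H] = |G/H| < \infty$, as required.

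Once Theorem~\ref{quot0} has been invoked, the remainder is routine dimension bookkeeping and no real obstacle remains. The substance of the corollary lies precisely in the first step: without Theorem~\ref{quot0} one would only know that $G/H$ is interpretable, where dimension additivity along quotient maps is more delicate; upgrading $G/H$ to a definable set in the field sort is exactly what allows cell decomposition to be applied directly.
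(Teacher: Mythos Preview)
Your proposal is correct and follows essentially the same route as the paper: apply Theorem~\ref{quot0} to see that $G/H$ is a genuine definable set, use dimension additivity $\dim(G/H)+\dim(H)=\dim(G)$, note that openness of $H$ gives $\dim(H)=\dim(G)$, and conclude that the zero-dimensional set $G/H$ is finite. The paper phrases this slightly more generally (assuming only $\dim(H)=\dim(G)$ and then observing that openness is a special case), but the argument is the same.
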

Here, $H$ is ``open'' with respect to the definable manifold structure
on $G$ respecting the group topology, constructed by Pillay
\cite{pillay}.

In work by the second author and Yao \cite{jy-abelian}, Theorem~\ref{quot0} is applied to prove new results about abelian definable groups in
$p$-adically closed fields, including a decomposition of abelian definable groups into \textit{dfg} and \textit{fsg} components.

\subsection{Definable and interpretable topological spaces}
A \emph{definable topology} is a topology with a (uniformly) definable
basis of open sets, and a \emph{definable topological space} is a
definable set with a definable topology.  An \emph{interpretable
topological space} in $M$ is a definable topological space in $M^\eq$.
Definable and interpretable topological spaces arise naturally when studying definable and interpretable groups in $\pCF$.  In fact, every definable group  or interpretable group  has the structure of a definable or interpretable topological space in a canonical way \cite[Lemma~3.8]{pillay} \cite[Theorem~1.2]{johnson-admissible}.

Theorems~\ref{tp-gen0} and \ref{tri0} have consequences for definable
and interpretable topological spaces in $p$-adically closed fields.
In particular we show that these satisfy a definable analogue of first
countability, namely: every point has a neighborhood basis that is a
$\Gamma$-family (Proposition~\ref{prop:pCF_1st_countability}).  Over
$\Qp$, this implies actual first countability.  Using definable first
countability, we get a form of definable curve selection
(Lemma~\ref{curve-selection}).  We also show the equivalence of
various notions of ``definable compactness'' as follows.

\begin{theorem}[{= Theorem~\ref{thm:compactness}}]\label{thm:compactness-intro}
Let $(Z,\tau)$ be a definable (or interpretable) topological space in a $p$-adically closed field $K$. The following are equivalent. 
\begin{enumerate}
    \item Every downward directed interpretable family of non-empty closed sets in $Z$ has non-empty intersection.  
    \item Every interpretable $\Gamma$-family of closed sets in $Z$ has non-empty intersection.
    \item Every interpretable curve $f:D\subseteq K\rightarrow Z$ has a converging interpretable restriction.  (See Definition~\ref{curve}).
    \item Every definable type in $S_Z(K)$ has a specialization.  (See
      Definition~\ref{spec}.)
    \item Every interpretable family of closed sets $\{C_y\}_{y\in Y}$ in $Z$ with the finite intersection property has a finite transversal, i.e. there exists a finite set $T$ with $T\cap C_y\neq \varnothing$ for every $y\in Y$.
\end{enumerate}
\end{theorem}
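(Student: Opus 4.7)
The plan is to establish the cycle $(1) \Leftrightarrow (2) \Rightarrow (4) \Rightarrow (1)$, close the loop for $(3)$ via definable curve selection, and handle $(5)$ separately using distality of $\pCF^\eq$. The main ingredients are Theorem~\ref{tp-gen0} on $\Gamma$-family generation of definable types, Theorem~\ref{tri0} on families that extend to definable types, definable first countability (Proposition~\ref{prop:pCF_1st_countability}), and the definable curve selection lemma (Lemma~\ref{curve-selection}).

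The implications $(1) \Leftrightarrow (2)$ and $(4) \Rightarrow (1)$ are the easy ones. A $\Gamma$-family is totally ordered by reverse inclusion, giving $(1) \Rightarrow (2)$; conversely $(2) \Rightarrow (1)$ follows by refining a downward directed interpretable family of non-empty closed sets by a $\Gamma$-family via Theorem~\ref{tri0}, taking closures (which preserves the refinement since the original members are closed), and applying $(2)$. For $(4) \Rightarrow (1)$: Theorem~\ref{tri0} extends the directed family to a definable type $q$, and any specialization of $q$ lies in every closed member of $q$ (otherwise a complementary open neighborhood of the purported specialization would belong to $q$, a contradiction).

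The substantive direction is $(2) \Rightarrow (4)$. Given a definable type $q \in S_Z(K)$, Theorem~\ref{tp-gen0} writes $q$ as generated by a countable union of $\Gamma$-families $\mathcal{F}_n = \{X_\gamma^{(n)}\}_{\gamma \in \Gamma}$, which after replacing $\mathcal{F}_n$ by $\{\bigcap_{m \leq n} X_\gamma^{(m)}\}_\gamma$ we may assume nested in $n$. The key step is to diagonalize the countable sequence into a single $\Gamma$-family $Z_\gamma := X_\gamma^{(h(\gamma))}$, for a fixed definable non-decreasing unbounded function $h : \Gamma \to \Gamma_{\geq 0}$. One checks that $\{Z_\gamma\}$ is a $\Gamma$-family of non-empty sets (each is a finite intersection of elements of $q$) with $\bigcap_\gamma Z_\gamma = \bigcap_{n,\gamma} X_\gamma^{(n)}$; applying $(2)$ to the closures $\{\overline{Z_\gamma}\}$ produces a point $z$ in their intersection. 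That $z$ specializes $q$ then follows from generation: any closed $C \in q$ contains some $X_{\gamma_0}^{(n)}$, so $\overline{Z_\gamma} \subseteq C$ for all large $\gamma$, whence $z \in C$. The principal technical obstacle here is ensuring the countable sequence of $\Gamma$-families from Theorem~\ref{tp-gen0} is uniformly parametrized so that $\{Z_\gamma\}$ is itself interpretable in $\gamma$; this should be arranged by appealing to the canonical construction of the families inside the proof of Theorem~\ref{tp-gen0}, or by using the strictly pro-interpretable structure of $Z^\df$ from Theorem~\ref{strict0}.

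For $(3) \Leftrightarrow (4)$, the definable curve selection lemma (Lemma~\ref{curve-selection}) provides the dictionary between interpretable curves and definable types: a converging restriction of a curve corresponds to a specialization of its associated pushforward definable type on $Z$, and conversely every definable type on $Z$ is ``realized along'' an interpretable curve whose converging restriction witnesses a specialization. The equivalence of $(5)$ has a distinct flavor. The direction $(5) \Rightarrow (1)$ is straightforward: a downward directed non-empty family has the finite intersection property, so $(5)$ provides a finite transversal $T$, and directedness combined with the finiteness of $T$ forces a single element of $T$ to lie in the intersection. The converse $(1) \Rightarrow (5)$ appeals to the distality of $\pCF^\eq$ via a Simon--Starchenko-type transversal theorem, to the effect that in distal theories interpretable families of sets with FIP admit finite transversals; the main obstacle for $(5)$ is invoking this result in the interpretable (rather than home-sort) setting of closed sets in an interpretable topological space.
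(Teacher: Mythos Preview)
Your $(2) \Rightarrow (4)$ argument has a real gap. The countably many $\Gamma$-families produced by Theorem~\ref{tp-gen0} are indexed by the \emph{external} natural numbers, with no uniform interpretable parametrization in $n$; indeed, part of the generating set in the proof of Theorem~\ref{tp-gen} consists of one constant $\Gamma$-family per formula in $q \restriction K_0$. Your diagonal $Z_\gamma = X_\gamma^{(h(\gamma))}$ therefore requires $h$ to land in this external index set, so $\{Z_\gamma\}_\gamma$ is not an interpretable family and $(2)$ cannot be applied to it. Neither the ``canonical construction'' nor strict pro-interpretability of $Z^\df$ repairs this --- the obstruction is intrinsic to compressing an external $\omega$-sequence into a single definable object. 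The paper avoids the issue entirely by a much simpler observation: to find a specialization of $q$ it suffices (Remark~\ref{basic-closed}) to find a point in every \emph{basic closed} set lying in $q$, and those form a \emph{single} interpretable family, which Theorem~\ref{tri0} refines by \emph{one} $\Gamma$-family $\{X_\gamma\}$. One can then either take closures and apply $(2)$ directly, or (as the paper does, routing the cycle through $(3)$) build a curve via Skolem functions with $f(x) \in X_{\val(x)}$ and pass to a converging restriction.

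Your $(1) \Rightarrow (5)$ is also not right as stated: there is no distal theorem saying that FIP families admit finite transversals --- for instance, the closed sets $\{x \in K^\times : \val(x) \ge \gamma\}$ in $K^\times$ have FIP and no finite transversal. What Corollary~\ref{cor:sim-star-fact} (or Lemma~\ref{ssf2}) actually gives is a partition of an FIP family into finitely many pieces each extending to a definable type; you must then invoke type-compactness $(4)$ to extract a specialization from each piece, and those specializations form the transversal. This is exactly the paper's $(4) \Rightarrow (5)$, not a freestanding $(1) \Rightarrow (5)$.
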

Say that an interpretable topological space is \emph{definably
compact} if it satisfies these equivalent conditions.  We prove two
additional, useful facts about definable compactness:
\begin{theorem}[{= Theorem~\ref{qp-case}}]\label{qp-case0}
  An interpretable topological space $Z$ in $\Qp$ is definably compact
  if and only if it is compact.
\end{theorem}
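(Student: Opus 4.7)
The forward direction is immediate: if $Z$ is topologically compact, then any downward directed family of non-empty closed sets has the finite intersection property, hence non-empty intersection by compactness, verifying condition~(1) of Theorem~\ref{thm:compactness-intro}. So $Z$ is definably compact.

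For the backward direction, the plan is to leverage three features special to $\Qp$. First, since $\Qp$ is countable, any interpretable set over $\Qp$ is countable; in particular $Z$ is countable as a set. Second, the value group of $\Qp$ is $\Gamma = \Zz$, which is countable, so by Proposition~\ref{prop:pCF_1st_countability} each point of $Z$ has a countable neighborhood basis of interpretable opens; i.e.\ $Z$ is first countable in the classical topological sense. Third, the interpretable basis witnessing that $\tau$ is a definable topology is indexed by a countable interpretable set, so $Z$ is second countable, hence Lindel\"of. By elementary general topology, a first-countable Lindel\"of space is compact if and only if it is sequentially compact, so it suffices to show that every infinite subset $A \subseteq Z$ admits a limit point in $Z$.

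Suppose for contradiction that $A = \{a_n : n \in \Nn\}$ is an infinite closed discrete subset of $Z$. The idea is to distill from $A$ an interpretable object that contradicts one of the equivalent forms of definable compactness in Theorem~\ref{thm:compactness-intro}. Since there are only countably many interpretable closed subsets of $Z$, a diagonal argument produces a subsequence $(a_{n_k})$ such that, for every interpretable closed $E \subseteq Z$, either $E$ contains cofinitely many $a_{n_k}$ or $E$ omits cofinitely many of them. Let $\mathcal{F}$ be the collection of interpretable closed subsets of the first kind; then $\mathcal{F}$ is a filter of non-empty interpretable closed sets with the finite intersection property. Applying Theorem~\ref{tri0} (the equivalence of (\ref{t01}) and (\ref{t03})) together with the uniform definability of definable types in $\pCF^\eq$ (Theorem~\ref{uddt0}), one can extend $\mathcal{F}$ to a $\Qp$-definable type $p \in S_Z(\Qp)$. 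By condition~(4) of Theorem~\ref{thm:compactness-intro}, $p$ admits a specialization $z \in Z$; then every interpretable open neighborhood of $z$ lies in $\mathcal{F}$, hence meets $A$ cofinitely, so $z$ is a limit point of $A$, contradicting the assumption that $A$ is closed discrete.

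The main obstacle is verifying that the filter $\mathcal{F}$ indeed extends to a type definable \emph{over $\Qp$} (rather than over a monster model), which is delicate precisely because $\Qp$ itself is not saturated. This is where the countability of the Boolean algebra of $\Qp$-interpretable subsets of $Z$ must interact carefully with the uniform definability machinery. As a backup route, I would instead use condition~(3) of Theorem~\ref{thm:compactness-intro}: construct an interpretable curve $f : D \subseteq \Qp \to Z$ through (a cofinal part of) $A$ by exploiting the countability of $A$ and the rich supply of interpretable functions from open subsets of $\Qp$, and use Lemma~\ref{curve-selection} to extract a converging interpretable restriction whose limit would again be a limit point of $A$.
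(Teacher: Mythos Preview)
Your easy direction (compact $\implies$ definably compact) is fine and matches the paper.

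For the other direction you have correctly identified both the strategy (extend a family of closed sets to a definable type in $S_Z(\Qp)$ and then invoke type-compactness) and the gap (why should the resulting type be \emph{definable} over $\Qp$?). The missing ingredient is precisely Delon's theorem: \emph{every} type over $\Qp$ is definable (Fact~\ref{delon}, extended to $\Qp^\eq$ in Lemma~\ref{delon-lift}). Once you know this, the proof collapses to two lines: take any family of (basic) closed sets with the finite intersection property, extend it to a complete type $q \in S_Z(\Qp)$ by compactness of the type space, observe that $q$ is automatically definable, and apply type-compactness to get a specialization $z \in \bigcap \mathcal{C}$. This is exactly the paper's argument, and it shows $Z$ is compact in the ordinary sense without any detour through sequential compactness, Lindel\"ofness, or diagonalization.

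Your attempted workaround via Theorem~\ref{tri0} does not succeed as written: that theorem applies to a single \emph{definable} (i.e.\ uniformly interpretable) family, whereas your filter $\mathcal{F}$ is an arbitrary collection of interpretable closed sets, not indexed by an interpretable parameter set. Uniform definability of definable types (Theorem~\ref{uddt0}) also does not help here---it bounds the complexity of the definition scheme of a type that is already known to be definable, but says nothing about whether a given partial type extends to a definable one. The backup route through curves has the same problem: there is no reason an arbitrary enumeration of a countable discrete set should be an \emph{interpretable} map from a subset of $\Qp$. In short, everything you are trying to engineer by hand is rendered unnecessary by Delon's theorem.
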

\begin{theorem}[{= Theorem~\ref{compactness-def}}] \label{compactness-def0}
  Definable compactness is definable in families:
    Let $\{X_b : b \in Y\}$ be an interpretable family of interpretable
  topological spaces in $\Mm$.  Then $\{b \in Y : X_b \text{ is
    definably compact}\}$ is interpretable.
\end{theorem}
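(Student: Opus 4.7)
The plan is to combine the characterizations of definable compactness in Theorem~\ref{thm:compactness-intro} with the uniformity and pro-interpretability results of Theorems~\ref{code0}, \ref{tp-gen0}, \ref{uddt0}, and \ref{strict0}. The strategy is to show both the set $\{b \in Y : X_b \text{ is definably compact}\}$ and its complement in $Y$ are type-definable over $b$; a clopen subset of the Stone space $S_Y(\Mm^\eq)$ is then cut out by a single formula, hence interpretable.

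Type-definability of ``definably compact'' is immediate from condition~(2) of Theorem~\ref{thm:compactness-intro}. For each formula $\phi(x, \gamma, c, b)$ in the language of $\pCF^\eq$, the statement
\[
\psi_\phi(b) := \forall c\ \bigl(\{\phi(X_b, \gamma, c, b)\}_{\gamma \in \Gamma} \text{ is a }\Gamma\text{-family of non-empty closed sets} \Rightarrow \exists x\ \forall \gamma\ x \in \phi(X_b, \gamma, c, b)\bigr)
\]
is first-order in $b$, and definable compactness amounts to the conjunction $\bigwedge_\phi \psi_\phi(b)$ over all formulas $\phi$.

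For the reverse direction, we aim to show ``not definably compact'' is likewise type-definable, equivalently, that ``definably compact'' is $(\bigvee)$-definable. By condition~(4) of Theorem~\ref{thm:compactness-intro}, failure of definable compactness corresponds to a definable type $p \in X_b^\df$ without a specialization. By Theorem~\ref{tp-gen0}, such a $p$ is generated by countably many $\Gamma$-families; by Theorem~\ref{code0}, its code is a real tuple. Moreover, combining Theorems~\ref{uddt0} and \ref{strict0}, we can present $X_b^\df = \varprojlim_\alpha Z_{b, \alpha}$ as a strict inverse limit of interpretable sets with surjective transition maps, uniformly in $b$. We argue that failure of specialization is detected at some finite level $\alpha$: for each $\alpha$, the condition ``there exists $p_\alpha \in Z_{b, \alpha}$ that lifts to a definable type in $X_b^\df$ without a specialization'' is first-order in $b$, so the disjunction over $\alpha$ yields $(\bigvee)$-definability of ``not definably compact'', which is type-definability of its complement.

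The main obstacle is justifying this stabilization step: showing that lack of specialization occurs at a finite level of the strict pro-interpretable presentation of $X_b^\df$. Surjectivity of the transition maps is crucial, since it ensures that if $p$ has no specialization, already some projection $p_\alpha$ has no specialized lift; and interpretability of the specialization relation at each level follows from condition~(4) applied uniformly. Making this rigorous will likely require a careful combinatorial analysis, modeled in spirit on the analogous ACVF arguments of Cubides Kovacsics, Hils, and Ye \cite{cubides-ye-hils}. Once established, combining both type-definability results via compactness of $S_Y(\Mm^\eq)$ yields that $\{b \in Y : X_b \text{ is definably compact}\}$ is interpretable; Theorem~\ref{qp-case0} serves as a sanity check in the special case $\Mm = \Qp$.
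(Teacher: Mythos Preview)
Your sketch has the right ingredients---strict pro-interpretability of $X^\df$ and the type-compactness characterization---but it is more roundabout than necessary, and the ``stabilization step'' you flag as an obstacle is in fact where the argument is decided. The paper's proof avoids your two-sided type-definability maneuver entirely: it writes definable compactness \emph{directly} as a single first-order condition on $b$, using the general principle (Remark~\ref{quant}) that one may quantify over a strictly pro-interpretable set and remain first-order.

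Concretely, fix a sort $D$ with $X_b \subseteq D$ for all $b$, and take an interpretable family $\{C_{b,i}\}_{i \in I_b}$ of basic closed sets for $(X_b,\tau_b)$. Then $X_b$ is type-compact if and only if
\[
\forall p \in D^\df\ \bigl(X_b \in p \ \rightarrow\ \exists x \in X_b\ \forall i \in I_b\ (C_{b,i} \in p \rightarrow x \in C_{b,i})\bigr).
\]
The inner condition on $(p,b)$ is \emph{relatively definable} in $D^\df \times Y$: each clause $X_b \in p$ or $C_{b,i} \in p$ references a single coordinate of $p$ in the pro-system (Remark~\ref{deffam}), and the remaining quantifiers range over interpretable sets. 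Strict pro-interpretability of $D^\df$ then lets the outer $\forall p$ be absorbed into a genuine first-order formula (Remark~\ref{quant}). This \emph{is} your stabilization: the specialization relation already lives at a fixed finite level of the pro-system---namely the coordinate indexed by the formula cutting out the basic closed sets---so no limiting argument is required, and there is nothing left to prove.

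Your separate type-definability argument for ``compact'' via $\Gamma$-families is correct but becomes superfluous once the above is in hand. The appeals to Theorems~\ref{code0} and~\ref{tp-gen0} are likewise unnecessary here; only UDDT and strict pro-interpretability (Theorems~\ref{uddt0} and~\ref{strict0}) enter the proof.
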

Note that Theorems~\ref{qp-case0} and \ref{compactness-def0} uniquely
characterize definable compactness---it is the only
automorphism-invariant definable concept extending genuine compactness
on $\Qp$.  This further supports the idea that we have isolated the
correct notion of ``definable compactness.''

\subsection{Outline}
In Sections~\ref{tools} and \ref{analysis} we analyze definable
$n$-types, proving Theorem~\ref{code0} on real codes, and the
non-imaginary case of Theorem~\ref{tp-gen0} on generation by
$\Gamma$-families.  Subsection~\ref{infinite} gives an example of a
definable $n$-type whose code is an infinite tuple, rather than a
finite tuple.  In Section~\ref{group-app}, we apply
Theorem~\ref{code0} to definable groups, proving Theorem~\ref{quot0}.

In Section~\ref{ss} we combine Theorem~\ref{tp-gen0} with results of
Simon and Starchenko \cite{simon_star_14} to prove the non-imaginary
case of Theorem~\ref{tri0} on definable families.  In
Section~\ref{prodsec} we review the machinery of uniform definability
of definable types, pro-definable sets, and strict pro-definability,
which will be applied later in the paper.  While discussing this, we
deduce Theorem~\ref{pro-d-literal0} from Theorem~\ref{code0}.  In
Section~\ref{imaginary}, we extend the facts of
Sections~\ref{ss}--\ref{prodsec} from $\pCF$ to $\pCF^\eq$, proving
Theorems~\ref{uddt0}--\ref{strict0} on strict pro-definability, and
the technical Theorem~\ref{lift0} on lifting.  In the process, we get
the imaginary cases of Theorems~\ref{tp-gen0} and \ref{tri0}.

In Section~\ref{topsec} we apply everything to the study of definable
and interpretable topological spaces, proving definable first
countability, curve selection, and
Theorems~\ref{thm:compactness-intro}--\ref{compactness-def0}.
If one is only interested in definable topological spaces, rather than
interpretable topological spaces, then Section~\ref{imaginary} can be
skipped, and Section~\ref{prodsec} is only used in the proof of
Theorem~\ref{compactness-def0}.

We also include an appendix containing the proofs of three known
results due to Delon, Cubides Kovacsics, Ye, and Hils
\cite{delon,cubides-ye,cubides-ye-hils}, specifically the following:
\begin{itemize}
\item The definability of types over $\Qp$ (Fact~\ref{delon}).
\item The uniform definability of definable types in $\pCF$ (Fact~\ref{uddt}).
\item The strict pro-definability of $X^\df$ (Fact~\ref{strict}).
\end{itemize}
These three facts play an important role in the paper, and have short
proofs from the machinery in Sections~\ref{tools}--\ref{ss}, so it
seemed worthwhile to include the proofs.  In the case of strict
pro-definability, our proof seems to be novel.

\subsection{Notation}
We use letters like $M$ and $K$ to denote models, and we use
$\Mm$ to denote monster models.  We distinguish $M$ from
$M^{\eq}$.  We write $\dcl^{\eq}$ for definable closure in
$M^{\eq}$, and $\dcl$ for definable closure in $M$, and similarly with
$\acl^{\eq}$ and $\acl$ for algebraic closure.  We distinguish ``definable sets'' (in $M$) from ``interpretable sets'' (in $M^\eq$).  Generally speaking, we use ``interpretable in $M$'' as a synonym for ``definable in $M^\eq$'' when referring to sets, functions, subsets, and families.  However, we say ``definable type'' rather than ``interpretable type'' when referring to definable types in $M^\eq$.

We call elements of $M$ ``reals'' and
elements of $M^\eq$ ``imaginaries.''  A ``real tuple'' is a tuple of
reals, possibly infinite.  An ``imaginary tuple'' is a tuple of
imaginaries, possibly infinite.  We write tuples as $a, b, x, y, \ldots$ rather than $\ba, \bb, \bx, \by, \ldots$.  
We write $\varphi(M)$ rather than $\varphi(M^{|x|})$ for the set defined by an $\mathcal{L}(M)$-formula $\varphi(x)$.  If $x$ is a tuple of variables, then $M^x$ denotes $M^{|x|}$, and $S_x(A)$ denotes the type space $S_{|x|}(A)$.  If $X$ is an $A$-definable or $A$-interpretable set, then $S_X(A)$ denotes the set of types over $A$ concentrating on $X$.

We distinguish ``families of definable sets'' from ``definable families of sets.''  The former is an arbitrary collection of sets, each of which is definable, without any uniform definability.  In contrast, a ``definable family of sets'' is a uniformly definable family of sets parameterized in a definable way by a definable index set.  That is, a ``definable family of sets'' is a family of the form $\{D_i\}_{i \in X}$ such that $\{(i,j) : i \in X, ~ y \in D_i\}$ is a definable set.  An ``interpretable family of sets'' is a definable family of sets in $M^\eq$.

A ``type'' means a complete type, by default.  A ``partial type'' means a partial type.  A ``quantifier-free type'' means a complete quantifier-free type, that is, a partial type of the form $\qftp(a/B)$ for some tuple $a$ and set $B$.  ``Definable'' always means ``definable with parameters.''  We abbreviate $\varnothing$-definability (definability without parameters) as ``$0$-definability.''

We let $S^{\df}_n(M)$ denote the space of definable $n$-types over a model $M$, and we define $S^\df_x(M)$ and $S^\df_X(M)$ similarly for a tuple of variables $x$ or a definable set $X$.
We write $\ulcorner X \urcorner$ for the code in $M^\eq$ of a
definable or interpretable set $X$, and we write $\ulcorner p \urcorner$ for the code
of a definable type $p \in S^{\df}_n(M)$.  We can take $\ulcorner X
\urcorner$ to be a single element in $M^\eq$, but $\ulcorner p
\urcorner$ potentially needs to be an infinite tuple.

We identify a partial type $\{\varphi_i(x,b_i) : i \in I\}$ with the corresponding family of definable sets $\{\varphi_i(M,b_i) : i \in I\}$, and vice versa.  Thus $D \in p$ means that $p(x) \vdash x \in D$.  Similarly, if $\mathcal{F}$ is a family of definable sets, then $p \supseteq \mathcal{F}$ means that $D \in \mathcal{F} \implies p(x) \vdash x \in D$.

We denote languages with symbols like $\mathcal{L}$, preferring to use $L$ for fields.  If $M$ is an $\mathcal{L}$-structure and $A \subseteq M$, then $\mathcal{L}(A)$ denotes the language obtained by adding elements of $A$ as constants.  If $T$ is an $\mathcal{L}$-theory, then $T_\forall$ is the set of
universally quantified $\mathcal{L}$-sentences implied by $T$.  A
model of $T_\forall$ is a
substructure of a model of $T$.

If $K$ is a valued field, we write the valuation as $\val(-)$, the
value group as $\Gamma$ or $\Gamma(K)$, the valuation ring as $\Oo$,
the maximal ideal as $\mm$, the residue field $\Oo/\mm$ as $k$, and
the residue map as $\res : \Oo \to k$.  We write valuations
additively.

In this paper, a \emph{$p$-adically closed field} is a field
elementarily equivalent to $\Qp$, and the theory of $p$-adically
closed fields is written $\pCF$.  One could also consider the theory
$\pCF_d$ of $p$-adically closed fields of $p$-rank $d$, i.e., fields
elementarily equivalent to finite extensions $K/\Qp$ with $[K : \Qp] =
d$.  We will not consider such fields here, but we believe all the
results in this paper generalize\footnote{In order to get quantifier elimination in the Macintyre language and definable Skolem
functions, one needs to name some elements of the
prime model as constants.}
from $\pCF = \pCF_1$ to $\pCF_d$.

Let $\Ldiv$ be the language of valued fields with the divisibility
predicate $\val(x) \le \val(y)$.  The theory ACVF has quantifier elimination in $\Ldiv$, but
$\pCF$ does \emph{not}.  On the other hand, Macintyre showed that $\pCF$ has quantifier
elimination if we add a predicate $P_m$ for the set of $m$th powers, for
each $m$ \cite{macintyre}.  We will consider $\pCF$ as an
$\Ldiv$-theory.  Therefore, a ``quantifier-free type'' means a
quantifier-free type in the language $\Ldiv$.  (In
Section~\ref{qftp-tp} we will consider a larger language
$\mathcal{L}_\chi$ and write $\pCF^\chi$ for the theory of $\pCF$ in
this language.)

\begin{acknowledgment}
The first author was supported by the Fields Institute for
Research in Mathematical Sciences, specifically by the 2021 Thematic Program on Trends in Pure and Applied Model Theory and the 2022 Thematic Program on Tame Geometry, Transseries and Applications to Analysis and Geometry.  The second author was supported by the National Natural Science
  Foundation of China (Grant No.\@ 12101131).  Parts of this paper
  arose out of joint work with Ningyuan Yao, to appear in
  \cite{jy-abelian}.  Vincent Ye and Silvain Rideau provided some
  helpful information about $p$-adically closed fields, including the
  reference to \cite[Lemma~5.11]{cubides-ye}.
\end{acknowledgment}

\section{Tools} \label{tools}
\subsection{Interdefinability and interalgebraicity}
Let $\Mm$ be a monster model of $p$CF.  Recall that ``tuples" can be infinite by default.
\begin{lemma} \label{interdef}
  Let $a$ be an imaginary tuple (in $\Mm^\eq$) and $b$ be a real tuple (in $\Mm$).
  \begin{enumerate}
  \item \label{id1} If $a \in \acl^\eq(b)$, then $a \in
    \dcl^\eq(b)$.
  \item \label{id2} If $a \in \acl^\eq(b)$ and $b \in \acl^\eq(a)$, then
    there is a real tuple $c$ such that $a \in \dcl^\eq(c)$ and
    $c \in \dcl^\eq(a)$.
  \end{enumerate}
\end{lemma}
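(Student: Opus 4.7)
The plan is to deduce both parts from two standard features of $\pCF$: the existence of definable Skolem functions on the field sort (Macintyre), and elimination of finite imaginaries on the field sort, i.e.\ that every finite set of real tuples is coded by a real tuple (via elementary symmetric polynomials and their generalizations).

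For part (\ref{id1}), I would fix a representation $a = [a']_E$ with $a'$ a real tuple and $E$ a $\varnothing$-definable equivalence relation. Let $F = \{\sigma(a) : \sigma \in \Aut(\Mm/b)\}$ be the $\Aut(\Mm/b)$-orbit of $a$; this is finite and $b$-definable because $a \in \acl^\eq(b)$. The preimage $F' = \{y' \in \Mm^{|a'|} : [y']_E \in F\}$ is a $b$-definable non-empty set of real tuples, so by definable Skolem it contains a $b$-definable point $y'_0$. The imaginary $[y'_0]_E$ then lies in $F \cap \dcl^\eq(b)$, so it has trivial $\Aut(\Mm/b)$-orbit; but it also lies in $F$, which \emph{is} a single $\Aut(\Mm/b)$-orbit. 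Hence $F = \{[y'_0]_E\} = \{a\}$, and $a = [y'_0]_E \in \dcl^\eq(b)$.

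For part (\ref{id2}), I first apply (\ref{id1}) to conclude $a \in \dcl^\eq(b)$. By compactness there is then an $\mathcal{L}$-formula $\psi_0(x,y)$ over $\varnothing$ such that $a$ is the unique realization of $\psi_0(x,b)$. Let $B = \{b = b_1, \ldots, b_n\}$ be the finite $a$-definable set of $a$-conjugates of $b$, and choose $\tau_i \in \Aut(\Mm/a)$ with $\tau_i(b_1) = b_i$. Applying each $\tau_i$ to the statement ``$a$ is the unique $x$ with $\psi_0(x,b_1)$'' (noting that $\psi_0$ is over $\varnothing$) shows that $a$ is simultaneously the unique $x$ with $\psi_0(x,b_i)$, for every $i$. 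Now invoke elimination of finite imaginaries in $\pCF$ to obtain a real tuple $c$ coding $B$. Then $c \in \dcl^\eq(B) \subseteq \dcl^\eq(a)$, and conversely $a \in \dcl^\eq(B) = \dcl^\eq(c)$, since $a$ is recoverable from $B$ as the unique $x$ satisfying $\exists y \in B\, \psi_0(x,y)$.

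The main technical obstacle is the coding step when $b$ is an infinite real tuple, so that $B$ is a finite set of infinite real tuples. This is handled by coding each finite sub-projection of $B$ separately (a finite set of finite real tuples, hence coded via symmetric functions by a finite real tuple) and then concatenating all these finite codes into a single, possibly infinite, real tuple $c$.
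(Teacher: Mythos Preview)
Your proof is correct and rests on the same two ingredients as the paper's (definable Skolem functions and real coding of finite sets of real tuples). The execution differs in two places. For part~(\ref{id1}), the paper packages the argument more conceptually: by Skolem functions $\dcl(b)$ is a model $M \preceq \Mm$, so $\dcl^\eq(b) = M^\eq \preceq \Mm^\eq$ is already algebraically closed in $\Mm^\eq$, giving $\acl^\eq(b) = \dcl^\eq(b)$ in one line. For part~(\ref{id2}), the paper sidesteps your final step of coding a finite set of \emph{infinite} tuples by first fixing a finite subtuple $a_0 \subseteq a$, writing $a_0 = f(b_0)$ for a \emph{finite} subtuple $b_0 \subseteq b$, and then taking the $\Aut(\Mm/a)$-orbit $S$ of $b_0$; now $S$ is a finite set of finite real tuples, its real code $e$ is immediate, and one has $a_0 \in \dcl^\eq(e) \subseteq \dcl^\eq(a)$. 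Concatenating these $e$'s over all finite $a_0$ yields $c$. This localization also handles infinite $a$ cleanly, whereas your single formula $\psi_0(x,y)$ tacitly assumes $a$ is finite (the fix is routine --- argue coordinatewise on $a$ --- but worth flagging).
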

\begin{proof}
  The proof uses two facts.  First, $p$CF (but not $p\mathrm{CF}^\eq$)
  has definable Skolem functions.  Second, any finite subset of
  $\Mm^n$ is coded by a tuple in $\Mm$ (this holds in any theory of
  fields, and can be deduced from elimination of
  imaginaries in ACF, for example).
  \begin{enumerate}
  \item By definable Skolem functions,
    $\dcl(b)$ is a model $M \preceq \Mm$, and then
    $\dcl^\eq(b) = M^\eq \preceq \Mm^\eq$.  This implies
    $\dcl^\eq(b)$ is algebraically closed within $\Mm^\eq$ (as
    models are always algebraically closed), and so $\acl^\eq(b)
    = \dcl^\eq(b)$.
  \item It suffices to show for each finite subtuple $a_0
    \subseteq a$, there is a finite tuple $e$ of reals
    with $a_0 \in \dcl^\eq(e) \subseteq
    \dcl^\eq(a)$.  Fix such a finite subtuple $a_0
    \subseteq a$.  By (\ref{id1}), $a_0 \in \dcl^\eq(b)$, so
    there is a finite tuple $b_0 \subseteq b$ with
    $a_0 \in \dcl^\eq(b_0)$.  Write $a_0 =
    f(b_0)$ for some 0-interpretable function $f$.  Let $S$ be the
    orbit of $b_0$ under $\Aut(\Mm/a)$.  If $c \in
    S$, then $c \equiv_{a} b_0$, so $c
    a_0 \equiv b_0 a_0$.  In particular, $a_0
    = f(c)$ for any $c \in S$, which implies
    $a_0 \in \dcl^\eq(\ulcorner S \urcorner)$, where $\ulcorner
    S \urcorner$ is the code for $S$.  Clearly $\ulcorner S
    \urcorner \in \dcl^\eq(a)$, since $S$ is an
    $a$-definable finite set.  Take $e$ to be a real tuple
    interdefinable with $\ulcorner S \urcorner$, using the second fact
    mentioned above. \qedhere
  \end{enumerate}
\end{proof}
\begin{remark} \label{interdef2}
  Lemma~\ref{interdef} also holds after naming a set $E$ of real
  parameters.  In particular, if $a$ is an imaginary tuple and $b$ is
  a real tuple, then the following hold.
  \begin{enumerate}
  \item[$1'.$] If $a \in \acl^\eq(Eb)$, then $a \in \dcl^\eq(Eb)$.
  \item[$2'.$] If $a \in \acl^\eq(Eb)$ and $b \in \acl^\eq(Ea)$, then there
    is a real tuple $c$ such that $a \in \dcl^\eq(Ec)$ and $c \in
    \dcl^\eq(Ea)$.
  \end{enumerate}
  In fact, the proof of Lemma~\ref{interdef} continues to apply after
  naming parameters.  Alternatively, if we view $E$ as an infinite
  tuple, then (1$'$) is an instance of Lemma~\ref{interdef}(\ref{id1}), and
  (2$'$) follows by applying Lemma~\ref{interdef}(\ref{id2}) to the
  interdefinable tuples $Ea$ and $Eb$ (if $Ea$ is interdefinable with
  $c$, then $Ea$ is interdefinable with $Ec$).
\end{remark}

\subsection{Completions of quantifier-free types} \label{qftp-tp}
As mentioned in the introduction, we consider $\pCF$ as an
$\Ldiv$-theory, and so a ``quantifier-free type'' means a
quantifier-free type in the language $\Ldiv$.
\begin{proposition} \label{qf-count}
    Suppose $K \models \pCF$ and $q$ is a quantifier-free $n$-type over $K$.
  Then $q$ has at most $2^{\aleph_0}$ completions in $S_n(K)$.
\end{proposition}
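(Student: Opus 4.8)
The plan is to reduce the count of completions of a quantifier-free type $q$ over $K$ to a count of something more tractable, ideally something controlled by the value group and residue field, which are small in $\pCF$. The key observation should be that $\pCF$ admits quantifier elimination in the Macintyre language, i.e. after adding the power predicates $P_m$. So a complete type in $S_n(K)$ is determined by which instances of $P_m(t(x,b))$ hold, as $t$ ranges over $\Ldiv(K)$-terms (polynomials over $K$) and $m$ ranges over positive integers. Since the $P_m$ are nested up to common refinement (every $P_{mm'}$-class refines the $P_m$-class), and the quotient $K^\times / (K^\times)^m$ is finite, the extra data beyond the quantifier-free type is, heuristically, a choice of coset in a profinite-type group — hence at most continuum-many choices.

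First I would set up the book-keeping: enumerate the $\Ldiv(K)$-terms in the variables $x = (x_1,\dots,x_n)$; there are only countably many up to the parameters, but the parameters range over $K$, so there are $|K|$ of them in total. This looks like it could overshoot $2^{\aleph_0}$ if $|K|$ is large, so the real content is that the quantifier-free type $q$ already pins down enough. Given a polynomial $f(x) \in K[x]$, the quantifier-free type $q$ determines $\val(f(x))$ relative to $\val$ of the other polynomials (this is part of $q$ as an $\Ldiv$-type), and in particular determines whether $\val(f(x))$ is divisible by $m$ in $\Gamma(K)$. When $\val(f(x))$ is a multiple of $m$, whether $P_m(f(x))$ holds depends on finer data (a residue-type datum); when it is not a multiple of $m$, $P_m(f(x))$ fails outright and is determined by $q$. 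So over $q$, a completion is determined by a function assigning to each polynomial $f$ with $\val(f(x)) \in m\Gamma(K)$ (mod $q$) an element of the finite group $\mu_m := \Oo^\times/(\Oo^\times)^m \cong K^\times/(K^\times)^m$ — but not an arbitrary function: it must be multiplicative and compatible across different $m$, so it factors through a character-like object on the countably-generated (over $q$) multiplicative data.

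The main obstacle — and the step I'd expect to spend the most care on — is making precise the claim that the completion is determined by a choice in something of size at most $2^{\aleph_0}$, rather than by an arbitrary function on all of $K[x]$ (which a priori has size $2^{|K|}$). The clean way is: let $V$ be the divisible hull of the value group of the field $K(x)$ where $x$ realizes $q$ — more concretely, work with the group $W$ generated over $K^\times$ by the $f(x)$ for $f \in K[x]$, modulo the relations already implied by $q$; this $W/K^\times$ is, via $\val$, embedded in $\Gamma(K(x))/\Gamma(K)$, which is generated by at most $n$ elements as a $\Qq$-vector space, hence countable once we mod out. The residue-field contribution is similarly countably generated over $k(K)$. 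Then a completion of $q$ corresponds to a choice, for each $m$, of a homomorphism from a fixed countable group into the finite group $K^\times/(K^\times)^m$, subject to compatibility; the set of all such compatible systems injects into a countable product of finite sets, which has size at most $2^{\aleph_0}$. Alternatively, and perhaps more cheaply, one can argue by a Löwenheim–Skolem / compactness packaging: any completion $p$ of $q$ is determined by its restriction to $K_0 := $ a countable elementary substructure containing all the parameters appearing in $q$ together with the relevant roots, since $p$ is then the unique heir/coheir-type extension determined by $q$ and $p|_{K_0}$ — but I would need to check that heirs over $q$ are unique, which is essentially the same QE computation, so I'd present the direct argument. I would then conclude: $|S_n(K) \cap \{\text{completions of } q\}| \le \prod_m |K^\times/(K^\times)^m|^{\aleph_0} \le \aleph_0^{\aleph_0} = 2^{\aleph_0}$.
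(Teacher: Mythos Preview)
Your approach is the same as the paper's in spirit --- reduce to counting extensions of the Macintyre characters $\chi_m : K^\times \to K^\times/(K^\times)^m$ to $L^\times$, where $L = K(a)$ realizes $q$, and then argue this count is bounded by $2^{\aleph_0}$ via Abhyankar-type considerations. The gap is in the step where you pass to a countable quotient.

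You claim that $W/K^\times$ (with $W$ the multiplicative group generated by the $f(a)$ over $K^\times$, i.e.\ $W = L^\times$) embeds via $\val$ into $\Gamma(L)/\Gamma(K)$, and is therefore countable. This is false: the map $L^\times/K^\times \to \Gamma(L)/\Gamma(K)$ has kernel $\Oo_L^\times K^\times / K^\times$, and $\Oo_L^\times$ can be as large as $|K|$ (think of $1 + p^N u$ for varying $u \in \Oo_K$ when $L = K(t)$). Your follow-up remark that ``the residue-field contribution is similarly countably generated'' does not repair this: the residue field extension is indeed trivial in $\pCF_\forall$, but that still leaves the principal units $1 + \mm_L$, which are uncountable and not controlled by the residue field alone.

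The missing ingredient --- and this is exactly what the paper supplies --- is that $\chi_m$ kills a deep-enough congruence subgroup: there is $n = n(m)$ such that $1 + p^{n+1}\Oo \subseteq (K^\times)^m$ in any model of $\pCF$ (Hensel's lemma), so $\chi_m$ factors through the quotient $\RV_n(L) = L^\times/(1 + p^{n+1}\Oo_L)$. Now the short exact sequence $1 \to (\Oo_L/p^{n+1}\Oo_L)^\times \to \RV_n(L) \to \Gamma(L) \to 1$ shows $\RV_n(L)/\RV_n(K)$ is countable: the $\Gamma$-part is countable by Abhyankar plus the fact that $\Gamma(K)$ is a $\Zz$-group, and the residue-ring part is finite because $\Oo_L/p^{n+1}\Oo_L \cong \Oo_K/p^{n+1}\Oo_K \cong \Zz/p^{n+1}\Zz$ (this uses $L \models \pCF_\forall$ and model completeness). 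Once you have this, your final inequality goes through: there are at most $|Q_m|^{\aleph_0}$ extensions of $\chi_m$, hence at most $2^{\aleph_0}$ completions.
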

This is apparently well-known to experts, but we had difficulty locating a proof
in the literature, so we include one here for completeness.  If $K
\models \pCF$ and $m \ge 1$, then
\begin{equation*}
  K^\times/(K^\times)^m \cong \Qp^\times/(\Qp^\times)^m =: Q_m,
\end{equation*}
because $\Qp^\times/(\Qp^\times)^m$ is finite.  Let
\begin{equation*}
  \chi_m : K^\times \to Q_m
\end{equation*}
be the natural map.  Let $\mathcal{L}_{\chi}$ be the expansion of
$\Ldiv$ by unary predicates $P_{m,c}$ for $m \ge 1$ and $c \in Q_m$,
where $P_{m,c}(x)$ is interpreted as $\chi_m(x) = c$.  Note that
$P_{m,1}$ is the Macintyre predicate (the predicate naming the $m$th
powers), and so $\pCF$ has quantifier elimination in the language
$\mathcal{L}_{\chi}$.  We will write $\pCF^\chi$ for $\pCF$ as an
$\mathcal{L}_{\chi}$-theory.  If $K \models \pCF^\chi_\forall$, then the $P_{m,c}$ predicates come from homomorphisms $\chi_m : K^\times \to Q_m$, but the kernel of $\chi_m$ needn't be $(K^\times)^m$.

Recall the following definition:
\begin{definition}
  If $K$ is an unramified valued field of mixed characteristic $(0,p)$
  and $n \ge 0$, then $\RV_n(K) = K^\times/(1 + p^{n+1}\Oo_K)$, and
  $\rv_n : K^\times \to \RV_n(K)$ denotes the natural map. 
\end{definition}
There is a natural homomorphism $\RV_n(K) \to \Gamma(K)$ with kernel
$\Oo_K^\times/(1 + p^{n+1}\Oo_K)$ or equivalently
$(\Oo_K/p^{n+1}\Oo_K)^\times$.  If $L/K$ is a valued field extension, then
there is a natural injection $\RV_n(K) \hookrightarrow \RV_n(L)$.
\begin{lemma} \label{mac-rv}
  There is a function $f : \Nn \to \Nn$ with the following properties:
  \begin{enumerate}
  \item If $K \models \pCF$ and $x \in 1 + p^{f(n)+1}\Oo_K$, then $x$ is an
    $n$th power.
  \item If $K \models \pCF^\chi_\forall$, then $\chi_n : K^\times \to Q_n$
    factors through $\rv_{f(n)} : K^\times \to \RV_{f(n)}(K)$.
  \end{enumerate}
\end{lemma}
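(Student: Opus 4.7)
The plan is to derive both claims from Hensel's lemma, using (1) to bootstrap (2).

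For (1), I would apply Hensel's lemma to the polynomial $P(Y) = Y^n - x$ based at $y_0 = 1$. We have $P(1) = 1-x$ and $P'(1) = n$, and $v(n) = v_p(n)$ in any $K \models \pCF$ (since $v$ extends the $p$-adic valuation on $\Qq$). Hensel's lemma produces a root whenever $v(1-x) > 2v_p(n)$, so the choice $f(n) := 2v_p(n)$ works: any $x \in 1 + p^{f(n)+1}\Oo_K$ satisfies $v(1-x) \geq 2v_p(n)+1$ and hence admits an $n$th root in $\Oo_K$. (Alternatively, one can note that for fixed $n$ the existence of $f(n)$ is a first-order statement about $(K,\val)$, so it suffices to verify it in $\Qp$ itself, where it is classical.)

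For (2), I would reduce to (1) by embedding $K \models \pCF^\chi_\forall$ as an $\mathcal{L}_\chi$-substructure of some $L \models \pCF^\chi$. Since the predicates $P_{m,c}$ are atomic, they are preserved under this inclusion, so $\chi_m^K$ is the restriction of $\chi_m^L$ to $K^\times$; likewise $\Oo_K = K \cap \Oo_L$ because $\val(x) \geq 0$ is a quantifier-free $\Ldiv$-formula. In $L$ the kernel of $\chi_n^L$ equals $(L^\times)^n$ by the interpretation of $P_{n,1}$ as the Macintyre predicate. Now if $\rv_{f(n)}^K(x) = \rv_{f(n)}^K(y)$, then $xy^{-1} \in 1 + p^{f(n)+1}\Oo_K \subseteq 1 + p^{f(n)+1}\Oo_L$, which by (1) applied to $L$ lies in $(L^\times)^n$, so $\chi_n^L(xy^{-1}) = 1$ and hence $\chi_n^K(x) = \chi_n^K(y)$. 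Thus $\chi_n^K$ factors through $\rv_{f(n)}^K$, as required.

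The main obstacle is really just bookkeeping: confirming that an inclusion $K \hookrightarrow L$ of $\mathcal{L}_\chi$-structures respects both the valuation ring and the predicates $P_{m,c}$, so that part (1) in $L$ can be transported back to $K$. Once this is in place, the argument is pure Hensel's lemma plus a one-line reduction.
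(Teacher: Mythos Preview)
Your proof is correct and matches the paper's approach: both parts rely on Hensel's lemma for (1) and on embedding $K$ into an $\mathcal{L}_\chi$-model $L \models \pCF^\chi$ to transport (1) back for (2). The only cosmetic difference is that you give the explicit value $f(n) = 2v_p(n)$ directly, whereas the paper reduces to $\Qp$ and invokes Hensel's lemma (or dimension theory) there without naming $f$---but you note this alternative yourself.
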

\begin{proof}
  \begin{enumerate}
  \item If $f$ works for $\Qp$ then it works for any $K \equiv
    \Qp$, so we may assume $K = \Qp$.  When $K = \Qp$, we only
    need to show that $1$ is in the interior of the set of $n$th
    powers.  This follows easily by Hensel's lemma or dimension theory.
  \item Embed $K$ into a model $L \models \pCF^\chi$.  The $P_{n,c}$
    predicates on $L$ extend those on $K$, so $\chi_n : L \to Q_n$
    extends $\chi_n : K \to Q_n$.  If $x \in K$ and $\rv_{f(n)}(x) =
    1$, then $\chi_n(x) = 1$ by the first part applied to $L$. \qedhere
  \end{enumerate}
\end{proof}

\begin{lemma} \label{qf-count0}
  Suppose $K \models \pCF$ and $L/K$ is a finitely generated extension
  of valued fields.  Then there are at most $2^{\aleph_0}$ expansions
  of $L$ to a model of $\pCF^\chi_\forall$.
\end{lemma}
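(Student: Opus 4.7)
The plan is to exploit that models of $\pCF^\chi_\forall$ are by definition substructures of models of $\pCF^\chi$, in order to force a key finiteness: once $L$ admits any expansion to $\pCF^\chi_\forall$, the quotient $\RV_n(L)/\RV_n(K)$ is finite for every $n$. Combined with Lemma~\ref{mac-rv}, this will bound the total number of expansions.

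First I reduce the counting. An expansion of $L$ to a model of $\pCF^\chi_\forall$ extending the $\mathcal{L}_\chi$-structure on $K$ is determined by the family of homomorphisms $\chi_m : L^\times \to Q_m$ for $m \ge 1$, each extending $\chi_m|_{K^\times}$. By Lemma~\ref{mac-rv}(2) each such $\chi_m$ factors through $\rv_{f(m)}$, so it amounts to choosing a homomorphism $\bar\chi_m : \RV_{f(m)}(L) \to Q_m$ extending the fixed map on $\RV_{f(m)}(K)$. The number of such extensions is bounded by $|\operatorname{Hom}(\RV_{f(m)}(L)/\RV_{f(m)}(K),\, Q_m)|$.

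The heart of the argument is to show $\RV_n(L)/\RV_n(K)$ is finite for each $n$. If $L$ admits no expansion there is nothing to prove; otherwise, fix one expansion and view $L$ as an $\mathcal{L}_\chi$-substructure of some $L^* \models \pCF^\chi$. Since $L^* \equiv \Qp$, elementary equivalence gives $\Oo_{L^*}/p^{n+1}\Oo_{L^*} \cong \Zz/p^{n+1}\Zz$, a finite ring, and $\Gamma(L^*) = \Zz$ with $v(p) = 1$. The identity $p^{n+1}\Oo_{L^*} \cap \Oo_L = p^{n+1}\Oo_L$ (immediate from $v_{L^*}|_L = v_L$) shows that $\Oo_L/p^{n+1}\Oo_L$ injects into $\Oo_{L^*}/p^{n+1}\Oo_{L^*}$, hence is finite; the same argument shows $\Oo_K/p^{n+1}\Oo_K$ injects into $\Oo_L/p^{n+1}\Oo_L$. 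Similarly, $\Gamma(K) = \Zz \subseteq \Gamma(L) \subseteq \Gamma(L^*) = \Zz$ with the inclusions matching $v(p)$, so $\Gamma(L) = \Gamma(K)$. Comparing the natural exact sequence $1 \to (\Oo_L/p^{n+1}\Oo_L)^\times \to \RV_n(L) \to \Gamma(L) \to 0$ with its analogue for $K$ via the snake lemma, $\RV_n(L)/\RV_n(K)$ is isomorphic to $(\Oo_L/p^{n+1}\Oo_L)^\times/(\Oo_K/p^{n+1}\Oo_K)^\times$, which is a quotient of a finite group and hence finite.

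Putting this together, each factor $|\operatorname{Hom}(\RV_{f(m)}(L)/\RV_{f(m)}(K),\, Q_m)|$ is finite, so the total count of expansions is at most $\prod_{m \in \Nn}(\text{finite}) \le 2^{\aleph_0}$, as required. I expect the main technical point to be the step showing $\Oo_L/p^{n+1}\Oo_L$ is finite; this is delicate because without the expansion hypothesis the residue field and value group of $L$ can be arbitrarily large, but the valuation-restriction identity $p^{n+1}\Oo_M \cap \Oo_N = p^{n+1}\Oo_N$ reduces the question to the finiteness of $\Oo_{L^*}/p^{n+1}\Oo_{L^*}$, which is built into $\pCF$.
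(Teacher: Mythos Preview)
Your overall architecture matches the paper's proof: factor $\chi_m$ through $\rv_{f(m)}$ via Lemma~\ref{mac-rv}, then bound the number of extensions by controlling $\RV_n(L)/\RV_n(K)$ via the short exact sequence $1 \to (\Oo/p^{n+1}\Oo)^\times \to \RV_n \to \Gamma \to 0$. Your argument that $\Oo_L/p^{n+1}\Oo_L$ is finite (by embedding into $\Oo_{L^*}/p^{n+1}\Oo_{L^*}$, which is finite because this is a first-order fact about $\Qp$) is correct and is essentially what the paper does.

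There is, however, a genuine error in the value group step. You write that elementary equivalence gives $\Gamma(L^*) = \Zz$ and $\Gamma(K) = \Zz$, and hence $\Gamma(L) = \Gamma(K)$. But ``the value group is $\Zz$'' is \emph{not} a first-order property; models of $\pCF$ have value groups that are $\Zz$-groups (models of Presburger arithmetic), which can be arbitrarily large elementary extensions of $\Zz$. Concretely, take $K = \Qp$, let $M \succ \Qp$ be a proper elementary extension, pick $t \in M$ with $\val(t)$ nonstandard, and set $L = K(t)$: then $\Gamma(L) \supseteq \Zz + \Zz\val(t)$, so $\Gamma(L)/\Gamma(K)$ is infinite. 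Thus your conclusion that $\RV_n(L)/\RV_n(K)$ is \emph{finite} is false in general, and the snake-lemma step as written does not go through.

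The paper's fix is to aim only for \emph{countable} index, which suffices since $Q_m$ is finite. One shows $[\Gamma(L):\Gamma(K)]$ is countable: because $\Gamma(K)$ is a $\Zz$-group it has countable index in its divisible hull $\Gamma(K)\otimes\Qq$, and by Abhyankar's inequality $(\Gamma(L)\otimes\Qq)/(\Gamma(K)\otimes\Qq)$ is a finite-dimensional $\Qq$-vector space, hence countable; combining these, $\Gamma(K)$ has countable index in $\Gamma(L)\otimes\Qq$ and a fortiori in $\Gamma(L)$. Feeding this into the exact-sequence comparison (together with your correct finiteness of the unit quotient) gives $|\RV_n(L)/\RV_n(K)| \le \aleph_0$, and then $\prod_m |\operatorname{Hom}(\RV_{f(m)}(L)/\RV_{f(m)}(K),\,Q_m)| \le \aleph_0^{\aleph_0} = 2^{\aleph_0}$, as required. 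This is exactly the place where the hypothesis that $L/K$ is finitely generated is used.
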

\begin{proof}
  Note that any $\pCF^\chi_\forall$-structure on $L$ must extend the
  natural $\pCF^\chi$-structure on $K$.  Indeed, taking a model $M
  \models \pCF^\chi$ extending $L$, we see that the $\chi_n$ functions
  on $M$ (and therefore $L$) extend those on $K$, because $M \succeq K$ by model
  completeness of $\pCF$ in $\Ldiv$.

  It suffices to show for fixed $m$ that there are at most
  $2^{\aleph_0}$ ways of extending $\chi_m$ from $K$ to $L$.  By
  Lemma~\ref{mac-rv}, $\chi_m$ factors through $\rv_n$ for some $n = f(m)$
  not depending on $K$ or $L$.  As $\chi_m$ is a homomorphism to the
  finite group $Q_m$, it suffices to show that $\RV_n(L)/\RV_n(K)$ is
  countable for each $n$.

  As $\Gamma(K)$ is a $\Zz$-group, $\Gamma(K)$ has countable index in
  its divisible hull $\Gamma(K) \otimes_\Zz \Qq$.  The $\Qq$-vector
  space $(\Gamma(L) \otimes_\Zz \Qq)/(\Gamma(K) \otimes_\Zz \Qq)$ has
  finite dimension by Abhyankar's inequality.  Then $\Gamma(K)$ and
  $\Gamma(K) \otimes_\Zz \Qq$ have countable index in $\Gamma(L)
  \otimes_\Zz \Qq$.  A fortiori, $\Gamma(K)$ has countable index in
  $\Gamma(L)$.

  If $L \not \models \pCF_\forall$ then there are zero expansions of $L$ to a model of $\pCF^\chi_\forall$, and there is nothing to prove.  Suppose $L \models \pCF_\forall$, and take a model $M \models \pCF$ extending $L$.  As above, $M \succeq K$ by model completeness of $\pCF$.  Then the inclusions
  \begin{equation*}
    \Oo_K/p^{n+1}\Oo_K \to \Oo_L/p^{n+1}\Oo_L \to \Oo_M/p^{n+1}\Oo_M
  \end{equation*}
  must be isomorphisms, because the leftmost ring is finite and the
  rightmost ring is an elementary extension of it.  Finally, the diagram
  \begin{equation*}
    \xymatrix{
      1 \ar[r] & (\Oo_K/p^{n+1}\Oo_K)^\times \ar[r] \ar@{=}[d] & \RV_n(K) \ar[r] \ar[d] & \Gamma(K) \ar[r] \ar[d] & 1 \\
      1 \ar[r] & (\Oo_L/p^{n+1}\Oo_L)^\times \ar[r]  & \RV_n(L) \ar[r]  & \Gamma(L) \ar[r]  & 1
    }
  \end{equation*}
  shows that $\RV_n(K)$ has countable index in $\RV_n(L)$.
\end{proof}
Finally, Proposition~\ref{qf-count} follows formally from
Lemma~\ref{qf-count0}, using quantifier elimination in $\pCF^\chi$.
Next, we deduce some consequences of Proposition~\ref{qf-count}.  In
the following lemmas, if $q$ is a (complete) type over a model of
$\pCF$, then $\hat{q}$ denotes its quantifier-free part.

\begin{lemma} \label{qf-def}
  Suppose $K \models \pCF$ and $q \in S_n(K)$.  If $\hat{q}$ is
  definable, then $q$ is definable.
\end{lemma}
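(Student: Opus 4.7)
The strategy is to use Macintyre's quantifier elimination in the auxiliary language $\mathcal{L}_\chi$ together with the $\rv_n$-machinery developed for Proposition~\ref{qf-count}. Since $\pCF^\chi$ has quantifier elimination, the type $q$ agrees with its quantifier-free $\mathcal{L}_\chi$-part, which augments $\hat{q}$ by the truth values of all formulas of the form $P_{m,c}(t(x,b))$ for $\Ldiv$-terms $t$ and parameters $b \in K$. After handling Boolean combinations, the problem reduces to showing that for each $\Ldiv$-term $t$, each $m \ge 1$, and each $c \in Q_m$, the set $E_{m,c,t} := \{b \in K^{|y|} : P_{m,c}(t(x,b)) \in q\}$ is $K$-definable.

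Fix $a \models q$ in a saturated elementary extension, and set $n = f(m)$ as in Lemma~\ref{mac-rv}, so that $\chi_m$ factors as $\bar{\chi}_m \circ \rv_n$ for some homomorphism $\bar{\chi}_m : \RV_n(K(a)) \to Q_m$. From the proof of Lemma~\ref{qf-count0}, $\RV_n(K(a))$ fits in a short exact sequence with the same finite kernel $(\Oo_K/p^{n+1}\Oo_K)^\times$ as $\RV_n(K)$ and quotient $\Gamma(K(a))$. Moreover $\Gamma(K(a))/m\Gamma(K(a))$ is finite: $\Gamma(K(a))/\Gamma(K)$ is finitely generated (a standard fact for finitely generated valued field extensions, yielding the finiteness ingredient on the $\Gamma$-side), and $\Gamma(K)/m\Gamma(K)$ is finite because $\Gamma(K)$ is a $\Zz$-group. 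Consequently $\RV_n(K(a))/\RV_n(K(a))^m$ is finite, and since $\bar{\chi}_m$ annihilates $m$-th powers, $\chi_m(t(a,b))$ depends only on the class of $\rv_n(t(a,b))$ in this finite quotient.

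The remaining task is to partition $K^{|y|}$ according to this class in a $K$-definable manner; on each piece $\chi_m(t(a,b))$ is then a constant in $Q_m$ prescribed by $q$. Using definable Skolem functions in $\pCF$, one selects for each coset a $K$-definable representative $c(b) \in K^\times$ peeling off the valuation of $t(a,b)$ modulo $\Gamma(K)$ together with its residue modulo $p^{n+1}$. The resulting coset-membership condition on $b$ then becomes a conjunction of quantifier-free $\Ldiv$-formulas, whose $K$-definability follows directly from $\hat{q}$ being $K$-definable. Assembling the finitely many pieces yields $K$-definability of $E_{m,c,t}$, and hence of $q$.

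The main obstacle is this partition step: $\RV_n(K(a))$ is not itself a $K$-definable set, so coset-membership in the finite quotient $\RV_n(K(a))/\RV_n(K(a))^m$ must be detected by $K$-definable conditions on $b$ without directly referring to $\RV_n(K(a))$. The key enabling ingredients are the finiteness of the quotient (so only boundedly many cases arise) and the availability of definable Skolem functions (supplying $K$-definable field representatives in each coset); together these reduce an abstract coset check to concrete quantifier-free valuative and residual data governed by $\hat{q}$.
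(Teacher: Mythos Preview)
Your approach is quite different from the paper's, and I think it has a genuine gap at the crucial ``partition step'' you flag yourself.

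The paper's proof is much shorter and less constructive: it invokes the general fact that a type $q \in S_n(K)$ is definable if and only if it has at most $2^{\aleph_0}$ heirs over every elementary extension $L \succeq K$. Since $\hat{q}$ is definable, every heir $p$ of $q$ over $L$ has the same quantifier-free part $\hat{p}$ (defined by the same schema as $\hat{q}$); then Proposition~\ref{qf-count} bounds the number of completions of $\hat{p}$ by $2^{\aleph_0}$, and we are done. No computation of the $\chi_m$-definitions is attempted.

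Your attempt to compute $E_{m,c,t}$ directly is natural, but the step where you claim coset-membership in $\RV_n(K(a))/\RV_n(K(a))^m$ becomes a $K$-definable condition on $b$ is not justified. The problem is that testing whether $\rv_n(t(a,b)/v_i) \in \RV_n(K(a))^m$ for a fixed representative $v_i \in K(a)$ is an existential condition over all of $K(a)$: one needs some $u \in K(a)^\times$ with $t(a,b)/v_i \cdot u^{-m} \in 1 + p^{n+1}\Oo$. There is no a priori bound on the complexity of $u$ as a rational function of $a$, so this is not obviously reducible to finitely many quantifier-free $\Ldiv$-conditions governed by $\hat{q}$. Your invocation of definable Skolem functions to produce a representative $c(b) \in K^\times$ does not address this, because the coset you want to represent lives in $\RV_n(K(a))$, not in $\RV_n(K)$; even ``peeling off the valuation modulo $\Gamma(K)$'' still leaves you with the question of whether $\val(t(a,b)) - \gamma_i \in m\Gamma(K(a))$, which again quantifies over the a priori non-definable group $\Gamma(K(a))$. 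It may be possible to push this through with substantial extra work (choosing explicit generators for $\Gamma(K(a))/\Gamma(K)$ and carefully controlling the integer exponents modulo $m$), but as written the argument is a sketch with its hardest step missing.
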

\begin{proof}
  It suffices to show that $q$ has at most $2^{\aleph_0}$-many heirs
  over any elementary extension $L/K$.  Suppose $p \in S_n(L)$ is an
  heir of $q$.  The fact that $p$ is an heir of $q$ implies that
  $\hat{p}$ is definable, defined by the same definition schema as
  $\hat{q}$.  In particular, $\hat{p}$ is the same across all heirs $p
  \in S_n(L)$.  By Proposition~\ref{qf-count}, there are at most
  $2^{\aleph_0}$-many heirs.
\end{proof}

\begin{lemma} \label{qf-def-2}
  Suppose $K_0 \preceq K \models \pCF$ and $q \in S_n(K)$.  Suppose
  $\hat{q}$ is $K_0$-definable.
  \begin{enumerate}
  \item \label{qd1} $q$ is $K_0$-definable.
  \item \label{qd2} $(q \restriction K_0)(x) \cup \hat{q}(x) \vdash q(x)$.
  \end{enumerate}
\end{lemma}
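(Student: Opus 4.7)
The plan is to prove (1) first via a bounded-orbit argument in the monster model $\Mm$, and then deduce (2) from (1) using uniqueness of heirs for $K_0$-definable types.

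For (1), I will take the heir $q^* \in S_n(\Mm)$ of $q$ to a monster $\Mm \succeq K$, which exists and is $K$-definable by Lemma~\ref{qf-def}. The quantifier-free part $\hat{q^*}$ is the heir of $\hat q$ given by the same defining schema, and since $\hat q$ is $K_0$-definable, so is $\hat{q^*}$. For any $\sigma \in \Aut(\Mm/K_0)$, the $K_0$-invariance of $\hat{q^*}$ forces $\widehat{\sigma(q^*)} = \sigma(\hat{q^*}) = \hat{q^*}$, so $\sigma(q^*)$ is a completion of $\hat{q^*}$ in $S_n(\Mm)$. Proposition~\ref{qf-count} bounds the number of such completions by $2^{\aleph_0}$, so the $\Aut(\Mm/K_0)$-orbit of $q^*$ is bounded. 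It follows that for each formula $\varphi(x, y)$, the code $e_\varphi := \ulcorner \{b \in \Mm^y : \varphi(x, b) \in q^*\} \urcorner$ has bounded $\Aut(\Mm/K_0)$-orbit. In a sufficiently saturated monster, bounded orbits of single imaginaries are finite, so $e_\varphi \in \acl^\eq(K_0)$. By Lemma~\ref{interdef}(\ref{id1}), $\acl^\eq(K_0) = \dcl^\eq(K_0)$ (since $K_0$ is a model, viewed as a real tuple), so $e_\varphi \in \dcl^\eq(K_0)$. This places the defining schema of $q^*$ over $K_0$, making both $q^*$ and $q = q^* \restriction K$ $K_0$-definable.

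For (2), suppose $r \in S_n(K)$ is any completion of $(q \restriction K_0)(x) \cup \hat q(x)$. Then $\hat r = \hat q$ is $K_0$-definable and $r \restriction K_0 = q \restriction K_0$. Applying (1) to $r$ in place of $q$ shows that $r$ is itself $K_0$-definable. Since $q$ and $r$ are both $K_0$-definable extensions of the common type $q \restriction K_0 = r \restriction K_0$, and a $K_0$-definable type over $K$ is the unique canonical extension (heir) of its restriction to $K_0$ via the $K_0$-schema, we conclude $q = r$. This yields $(q \restriction K_0)(x) \cup \hat q(x) \vdash q(x)$.

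The most delicate step is the passage from bounded orbit to finite (hence algebraic) orbit for $e_\varphi$ in (1), which depends on $\Mm$ being more saturated than $2^{\aleph_0}$; this is standard for monster models, so it is not a substantive obstacle.
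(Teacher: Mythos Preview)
Your argument is correct and is essentially the paper's own proof: both parts proceed by passing to the heir over the monster, using Proposition~\ref{qf-count} to bound the $\Aut(\Mm/K_0)$-orbit of the heir, and then concluding $K_0$-definability via $\acl^\eq(K_0)=\dcl^\eq(K_0)$; part~(2) is likewise proved in the paper exactly as you do, by applying (1) to an arbitrary completion $r$ and invoking uniqueness of the $K_0$-definable extension. The only cosmetic differences are that the paper cites Lemma~\ref{qf-def} first to get $K$-definability of $q$ before taking the heir, and it justifies $\acl^\eq(K_0)=\dcl^\eq(K_0)$ by ``$K_0$ is a model'' rather than Lemma~\ref{interdef}(\ref{id1}).
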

\begin{proof}
  \begin{enumerate}
  \item By Lemma~\ref{qf-def}, $q$ is $K$-definable.  Embed $K$ into
    a monster model $\Mm$.  Let $q^\Mm \in S_n(\Mm)$ be the heir of
    $q$ over $\Mm$.  Then $q^\Mm$ is definable with the same
    definition as $q$.  The quantifier-free part $\widehat{q^\Mm}$ is
    $K_0$-definable, so it is $\Aut(\Mm/K_0)$-invariant.  By
    Proposition~\ref{qf-count}, $q^\Mm$ has at most
    $2^{\aleph_0}$-many images under $\Aut(\Mm/K_0)$.  If $D$ is a
    definable set with a small number of images under $\Aut(\Mm/K_0)$,
    then $D$ must be $\acl^{\eq}(K_0)$-definable, hence
    $K_0$-definable, as $K_0$ is a model.  Therefore $q^\Mm$ is
    $K_0$-definable.
  \item Take $b \in \Mm^n$ realizing $q \restriction K_0$ and
    $\hat{q}$.  We claim $\tp(b/K) = q$.  Let $r =
    \tp(b/K)$.  Then $\hat{r} = \qftp(b/K) = \hat{q}$,
    which is $K_0$-definable.  By part (\ref{qd1}), $r$ is $K_0$-definable.
    Additionally, $r \restriction K_0 = \tp(b/K_0) = q
    \restriction K_0$.  Since $r$ and $q$ are $K_0$-definable types
    with the same restriction to $K_0$, they must be equal, and so
    $\tp(b/K) = r = q$. \qedhere
  \end{enumerate}
\end{proof}

\subsection{Valued vector spaces}
Let $K$ be a valued field.  The following definition appears in
\cite{hrushovski,johnson}, among other places.
\begin{definition} \label{vvs}
  Let $V$ be a $K$-vector space.  A \emph{valued vector space
  structure} or (\emph{VVS structure}) on $V$ consists of the
  following data:
  \begin{enumerate}
  \item A linearly ordered set $\Gamma(V)$.
  \item An action of $\Gamma$ on $\Gamma(V)$
    \begin{equation*}
      + : \Gamma \times \Gamma(V) \to \Gamma(V),
    \end{equation*}
    strictly increasing in each variable.
  \item A surjective function $\val : V \setminus \{0\} \to \Gamma(V)$
    satisfying the axioms
    \begin{gather*}
      \val(av) = \val(a) + \val(v) \tag{for $a \in K$, $v \in V$} \\
      \val(v + w) \ge \min(\val(v),\val(w)) \tag{for $v, w \in V$}
    \end{gather*}
    where we formally extend $\val$ to $0 \in V$ by taking $\val(0) =
    +\infty > \Gamma(V)$.
  \end{enumerate}
  We identify two VVS structures if they induce the same divisibility
  relation $\val(v) \le \val(w)$ on $V$.  A \emph{valued $K$-vector
  space} is a $K$-vector space with a VVS structure.  If $V$ is a
  definable $K$-vector space, then a VVS structure on $V$ is
  \emph{definable} if the divisibility relation $\val(v) \le \val(w)$
  is definable.
\end{definition}
\begin{remark} \label{abhyankar}
  If $V$ is a valued $K$-vector space and $\dim_K(V) = n < \infty$,
  then there are at most $n$ orbits of $\Gamma$ in $\Gamma(V)$, by
  \cite[Section~2.5]{hrushovski} or \cite[Remark~2.2]{johnson}.  In fact, if
  $v_1,\ldots,v_m \in V$ are such that $\val(v_1),\ldots,\val(v_m)$
  are in distinct orbits of $\Gamma$, then $v_1,\ldots,v_m$ are
  $K$-linearly independent.
\end{remark}
\begin{remark}
  If $\dim_K V = 1$, then there is a unique VVS structure on $V$.
\end{remark}

\begin{definition}
  Let $V$ be an $n$-dimensional valued $K$-vector space.  A
  \emph{splitting basis} is a set $\{v_1,\ldots,v_n\} \subseteq V$
  such that
  \begin{equation*}
    \Gamma + \val(v_1) > \Gamma + \val(v_2) > \cdots > \Gamma + \val(v_n).
  \end{equation*}
  A valued $K$-vector space or a VVS structure is \emph{split} if a
  splitting basis exists.
\end{definition}
If $\{v_1,\ldots,v_n\}$ is a splitting basis, then
$\{v_1,\ldots,v_n\}$ is a basis and every coset has the form $\Gamma +
\val(v_i)$, by Remark~\ref{abhyankar}.
\begin{definition}
  If $V$ is an $n$-dimensional $K$-vector space, a \emph{complete
  filtration} is a chain of $K$-linear subspaces of length $n+1$:
  \begin{equation*}
    0 = V_0 \subset V_1 \subset \cdots \subset V_{n-1} \subset V_n = V.
  \end{equation*}
\end{definition}
\begin{proposition} \label{filter}
  Let $V$ be a split $n$-dimensional valued $K$-vector space.  Let
  $C_1,\ldots,C_n$ be the cosets of $\Gamma$, ordered so that $C_1 >
  C_2 > \cdots > C_n$.  For $0 \le i \le n$, let $V_i = \{0\} \cup \{x
  \in V : \val(x) \in \bigcup_{j = 1}^i C_j\}$.  Then $V_i$ is a
  $K$-linear subspace, $\{V_i\}_{0 \le i \le n}$ is a complete
  filtration, and the VVS structure on $V$ is determined by the
  filtration $\{V_i\}_{0 \le i \le n}$.
\end{proposition}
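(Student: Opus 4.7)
The plan is to verify the three assertions in sequence: closure of each $V_i$ under $K$-linear combinations, strictness of the inclusions $V_0 \subsetneq V_1 \subsetneq \cdots \subsetneq V_n$, and recoverability of the divisibility relation from the filtration. For scalar multiplication, the axiom $\val(av) = \val(a) + \val(v)$ says that scaling preserves the $\Gamma$-orbit of the value, so $V_i$ is closed under $K$. For addition, given $x, y \in V_i$ with $x + y \ne 0$, the ultrametric inequality $\val(x+y) \ge \min(\val(x), \val(y))$ together with the fact that the cosets $C_1 > \cdots > C_n$ partition $\Gamma(V)$ in a totally ordered way implies that if $\min(\val(x), \val(y)) \in C_j$ with $j \le i$, then $\val(x+y) \in C_k$ for some $k \le j \le i$.

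For the strictness of the filtration, I would fix a splitting basis $v_1, \ldots, v_n$ indexed so that $\val(v_i) \in C_i$, and show $V_i = \mathrm{span}_K(v_1, \ldots, v_i)$. The containment $\supseteq$ is immediate. For $\subseteq$, expand $x = \sum_j a_j v_j \in V_i$: the nonzero summands $a_j v_j$ have valuations lying in pairwise distinct cosets, hence pairwise distinct, so by the standard ultrametric ``domination'' principle $\val(x)$ equals the minimum $\val(a_{j^*} v_{j^*})$, attained at $j^* = \max\{j : a_j \ne 0\}$ (since larger $j$ means lower coset, hence smaller value). As $\val(x) \in C_{j^*}$ must lie in $C_1 \cup \cdots \cup C_i$, we conclude $j^* \le i$. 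This simultaneously gives $\dim_K V_i = i$ and exhibits $v_i \in V_i \setminus V_{i-1}$, so the inclusions are strict and the chain has length $n+1$.

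For the last clause, let $v, w \in V$ be nonzero, and set $i(v) = \min\{i : v \in V_i\}$, a datum of the filtration. Then $\val(v) \in C_{i(v)}$. If $i(v) \ne i(w)$, the total order on cosets forces $\val(v) < \val(w) \iff i(v) > i(w)$, recovering the comparison directly from the filtration. If $i(v) = i(w) = k$, I would pass to the $1$-dimensional quotient $V_k / V_{k-1}$, which carries a unique VVS structure. Writing $\bar v = a \bar w$ in the quotient and lifting to $v = a w + u$ with $u \in V_{k-1}$, the ultrametric yields $\val(v) = \val(aw) = \val(a) + \val(w)$, since $\val(u) \in \bigcup_{j < k} C_j$ sits strictly above $C_k \ni \val(aw)$. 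Hence $\val(v) \le \val(w) \iff a \in \Oo$, a condition intrinsic to $V_k/V_{k-1}$ as a $K$-vector space. I do not anticipate a serious obstacle; the only bookkeeping hurdle is the inverted correspondence between small coset index and large value.
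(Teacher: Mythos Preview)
Your proof is correct and follows essentially the same strategy as the paper: both arguments use a splitting basis to identify $V_i$ with $\mathrm{span}_K(v_1,\ldots,v_i)$ via the ultrametric domination principle, and then recover the divisibility relation from the filtration by reducing the equal-index case to the unique VVS structure on the one-dimensional quotient $V_k/V_{k-1}$. One small slip: in your final line the inequality is reversed, since $\val(v) = \val(a) + \val(w) \le \val(w)$ is equivalent to $\val(a) \le 0$, not $a \in \Oo$; this does not affect the conclusion, which only requires the comparison to be intrinsic to the quotient.
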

\begin{proof}
  Take a splitting basis $\{v_1,\ldots,v_n\}$.  By the ultrametric
  inequality, $V_i$ is the $K$-linear span of $\{v_1,\ldots,v_i\}$, so
  $\{V_i\}_{0 \le i \le n}$ is a complete filtration.  The ultrametric
  inequality also shows that $\val : V_i \setminus V_{i-1} \to C_i$ is
  induced by a VVS structure on $V_i/V_{i-1}$.  As $V_i/V_{i-1}$ is
  one-dimensional, there is a unique such VVS structure.
  Consequently, the VVS structure is determined by the filtration as
  follows.  Suppose $x,y \in V$ are non-zero.  Let $i, j$ be such that
  $x \in V_i \setminus V_{i-1}$ and $y \in V_j \setminus V_{j-1}$.  If
  $i < j$ then $\val(x) > \val(y)$.  If $i > j$ then $\val(x) <
  \val(y)$.  Finally, if $i = j$, the $\val(x) \le \val(y) \iff
  \val'(x) \le \val'(y)$, where $\val'$ is the unique VVS structure on
  the quotient space $V_i/V_{i-1}$.
\end{proof}
\begin{definition}
  If $\tau$ is a split VVS structure on $V$, the \emph{associated
  filtration} is the complete filtration $\{V_i\}_{0 \le i \le n}$ on
  $V$ from Proposition~\ref{filter}.
\end{definition}
\begin{remark} \label{filter2}
  Let $V$ be a finite-dimensional $K$-vector space and let $\tau$ be a
  split VVS structure on $V$ with splitting basis $v_1,\ldots,v_n$.
  Let $\tau'$ be another VVS structure on $V$.  Then $\tau' = \tau$ if
  and only if
  \begin{equation*}
    \Gamma + \val_{\tau'}(v_1) > \cdots > \Gamma + \val_{\tau'}(v_n),
  \end{equation*}
  i.e., $v_1,\ldots,v_n$ is a splitting basis of $\tau'$.  Indeed, if
  $v_1,\ldots,v_n$ is a splitting basis of $\tau'$, then $\tau'$ is
  split and has the same associated filtration as $\tau$, so $\tau' =
  \tau$ by Proposition~\ref{filter}.
\end{remark}

\begin{proposition} \label{split-code}
  Work in the $\Ldiv$-structure $K$.  If $\tau$ is a split definable VVS
  structure on $K^n$, then the code $\ulcorner \tau \urcorner$ is
  interdefinable with a real tuple in $K$.
\end{proposition}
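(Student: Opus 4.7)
The plan is to use Proposition~\ref{filter} to reduce coding $\tau$ to coding its associated complete filtration $0 = V_0 \subset V_1 \subset \cdots \subset V_n = K^n$, and then to observe that each linear subspace of $K^n$ is itself coded by a real tuple.

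First I would show that $\tau$ and the filtration $\{V_i\}$ are interdefinable in $K^\eq$. Proposition~\ref{filter} constructs the filtration from $\tau$ by a canonical formula: $V_i = \{0\} \cup \{x \in V : \val(x) \in C_1 \cup \cdots \cup C_i\}$, where the cosets $C_1 > \cdots > C_n$ of $\Gamma$ in $\Gamma(V)$ and their linear ordering are themselves $0$-definable from the divisibility relation that constitutes $\tau$. Hence each $\ulcorner V_i \urcorner$ lies in $\dcl^\eq(\ulcorner \tau \urcorner)$. In the reverse direction, the same proposition explicitly recovers $\tau$ from the filtration: for non-zero $x, y \in V$, the relation $\val_\tau(x) \le \val_\tau(y)$ is determined by the relative positions of $x$ and $y$ in the filtration, with the case $x, y \in V_i \setminus V_{i-1}$ settled by the unique VVS structure on the one-dimensional quotient $V_i/V_{i-1}$. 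Thus $\ulcorner \tau \urcorner \in \dcl^\eq(\ulcorner V_1 \urcorner, \ldots, \ulcorner V_{n-1} \urcorner)$.

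Second, each linear subspace $V_i \subseteq K^n$ is coded by a finite real tuple. The reduced row echelon form gives a unique $i \times n$ matrix whose rows span $V_i$, and its entries form a finite real tuple interdefinable with $\ulcorner V_i \urcorner$ in $K^\eq$. Concatenating these tuples for $i = 1, \ldots, n-1$ yields a real tuple $c$ interdefinable with $\ulcorner \tau \urcorner$, as required.

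I do not anticipate a substantive obstacle, since the proof is essentially mechanical once Proposition~\ref{filter} is in hand. The only point requiring care is that the passage from subspaces to real tuples be truly canonical, i.e., respect $\dcl^\eq$ in both directions; the RREF provides this, but one could equally well invoke the general fact (used implicitly in the proof of Lemma~\ref{interdef}) that any subvariety of affine space over a field is coded by a real tuple.
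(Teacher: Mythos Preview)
Your proof is correct and follows the same approach as the paper: reduce to the associated filtration via Proposition~\ref{filter}, then invoke real codes for linear subspaces. The paper's proof is terser---it cites \cite[Lemma~4.3]{johnson} for the coding of subspaces rather than spelling out the RREF argument---but the substance is identical.
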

\begin{proof}
  The code for $\tau$ is the code for the associated filtration
  $\{V_i\}_{0 \le i \le n}$.  Subspaces of $K^n$ have real codes by
    \cite[Lemma~4.3]{johnson}.
\end{proof}

\textbf{For the rest of the section, assume $K \models \pCF$.}
\begin{fact} \label{tcom} 
  Suppose $M \succeq K$, $b \in M^1$, and $\tp(b/K)$ is definable.  If
  $\val(b) \ge 0$, then there is $a \in K$ such that $b - a$ is
  $K$-infinitesimal, in the sense that $\val(b - a) > \Gamma(K)$.
\end{fact}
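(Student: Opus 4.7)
The strategy has two main phases. In the first, I would establish that for every $\gamma \in \Gamma(K)$, the set $B_\gamma := \{a \in K : \val(b-a) \ge \gamma\}$ is non-empty; in the second, I would find a common ``infinitesimal limit'' of the $B_\gamma$ in $K$.

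For the first phase, since $\tp(b/K)$ is definable, $\{B_\gamma\}_{\gamma \in \Gamma(K)}$ is a uniformly $K$-definable family, and so the set $E := \{\gamma \in \Gamma(K) : B_\gamma \ne \varnothing\}$ is $K$-definable. It is downward closed (as $B_\gamma \supseteq B_{\gamma'}$ when $\gamma \le \gamma'$) and contains $0$ (take $a = 0$; $\val(b) \ge 0$). A residue field argument shows $E$ is closed under $+\val(p)$: if $a \in K$ has $\val(b - a) = \delta \in \Gamma(K)$, pick $\pi \in K$ with $\val(\pi) = \delta$, set $u := (b-a)/\pi \in \Oo^\times$, and choose $d \in \Oo_K$ with $\res(d) = \res(u)$, which is possible because $k(M) = \Ff_p = k(K)$. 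Then $\val(b - (a + \pi d)) > \delta$, so $a + \pi d \in B_{\delta + \val(p)}$. Since $\Gamma(K)$ is a $\Zz$-group, Presburger quantifier elimination forces a $K$-definable downward-closed subset of $\Gamma(K)$ to be of the form $(-\infty, c)$ or $(-\infty, c]$ for some $c \in \Gamma(K) \cup \{+\infty\}$; closure under $+\val(p)$ rules out any finite $c$, so $E = \Gamma(K)$.

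For the second phase, definable Skolem functions in $\pCF$ yield a $K$-definable function $\alpha: \Gamma(K) \to K$ with $\alpha(\gamma) \in B_\gamma$. The triangle inequality gives the Cauchy condition $\val(\alpha(\gamma) - \alpha(\gamma')) \ge \min(\gamma, \gamma')$. Any $a \in K$ satisfying $\val(a - \alpha(\gamma)) \ge \gamma$ for all $\gamma \in \Gamma(K)$ would then satisfy $\val(b - a) > \Gamma(K)$, as required.

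The hard part is producing such a limit $a$ in $K$: for general, non-spherically-complete $K$, definable Cauchy functions $\Gamma(K) \to K$ need not converge in $K$, so this step must use the full definability of $\tp(b/K)$ rather than just its quantifier-free part. I would invoke Lemma~\ref{qf-def-2}(\ref{qd2}): taking a small $K_0 \preceq K$ over which $\hat{q} := \hat{\tp}(b/K)$ is definable, the Skolem choice $\alpha$ can be refined to be $K_0$-definable, reducing the question to convergence of a $K_0$-definable Cauchy function. For $K_0 = \Qp$, completeness yields a limit $a_0 \in \Qp$ with $\val(b - a_0) > \Zz$. To upgrade this to $\val(b - a) > \Gamma(K)$ when $\Gamma(K) \supsetneq \Zz$, I would iterate the argument on $b - a_0$ after coarsening the valuation by the convex hull of $\Zz$ in $\Gamma(K)$, using at each stage the residue-field argument at the relevant coarsening together with the definability of the type of the remainder. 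This iteration eventually exhausts $\Gamma(K)$ and produces the desired $a$.
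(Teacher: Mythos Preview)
Your first phase is correct.  The set $E$ is $K$-definable by definability of $\tp(b/K)$, downward closed, contains $0$, and the residue-field step shows closure under $+\val(p)$, forcing $E = \Gamma(K)$.  (One small point: when you write ``$\val(b-a) = \delta \in \Gamma(K)$'', you should note that $\gamma \le \val(b-a) < \gamma + \val(p)$ together with $\val(p)$ being the least positive element of the $\Zz$-group $\Gamma(M)$ forces $\val(b-a) = \gamma$, so $\delta$ really does lie in $\Gamma(K)$.)

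The second phase is where the argument breaks down.  Your claim that ``for general, non-spherically-complete $K$, definable Cauchy functions $\Gamma(K) \to K$ need not converge in $K$'' is simply false in $\pCF$.  After Phase~1, the family $\{B_\gamma\}_{\gamma \in \Gamma(K)}$ is a $K$-definable nested chain of non-empty closed balls (each non-empty $B_\gamma$ is exactly the closed ball of radius $\gamma$ about any of its points).  The assertion $\exists a \,\forall \gamma \,(a \in B_\gamma)$ is a first-order property of the parameter defining this family; it holds in $\Qp$ by compactness of $\Zz_p$, hence holds in $K$ by elementary equivalence.  This single transfer step finishes the proof, and replaces your entire second phase.

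Your proposed workaround through Lemma~\ref{qf-def-2}, a small $K_0$, and an iterated coarsening has two genuine gaps.  First, there is no reason $K_0$ can be taken equal to $\Qp$: the code of $\hat q$ may involve parameters outside $\Qp$, so you never legitimately reach the ``$K_0 = \Qp$, completeness'' step.  Second, even granting an $a_0$ with $\val(b - a_0) > \Zz$, the coarsening by the convex hull of $\Zz$ in $\Gamma(K)$ is not definable in the $\pCF$-structure on $K$ alone, so definability of the relevant types is lost at the next stage; and $\Gamma(K)$ need not be exhausted by any finite chain of convex subgroups, so ``this iteration eventually exhausts $\Gamma(K)$'' is unsupported.

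The paper does not give its own proof of Fact~\ref{tcom}, merely citing \cite{johnson-yao} and \cite{cubides-ye}; the intended argument is essentially the one-line transfer above (cf.\ also Remark~\ref{tcom2} for the $\Qp$ case).
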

Fact~\ref{tcom} is well-known, and can be extracted from the proofs of
\cite[Lemma~2.22]{johnson-yao} or the $\pCF$ case of
\cite[Theorem~5.9]{cubides-ye}.
\begin{lemma} \label{split}
  Suppose $M \succeq K$, $a$ is a tuple in $M$, and $\tp(a/K)$ is definable.
  Let $V \subseteq K(a)$ be a finite-dimensional $K$-linear
  subspace, with the induced VVS structure.  Then $V$ is split.
\end{lemma}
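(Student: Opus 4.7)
The plan is to build a complete filtration of $V$ whose graded quotients are all $1$-dimensional, which forces the number of $\Gamma(K)$-cosets realized by $\val$ on $V\setminus\{0\}$ to be exactly $\dim_K V$, and thus produces a splitting basis.

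Concretely, let $C_1>C_2>\cdots>C_k$ enumerate the finitely many $\Gamma = \Gamma(K)$-orbits in $\Gamma(V)$ (finite by Remark~\ref{abhyankar}, with $k\le n$). For $0\le i\le k$ define
\[
V_i \;=\; \{0\}\cup\{v\in V : \val(v)\in C_1\cup\cdots\cup C_i\}.
\]
Each $V_i$ is a $K$-linear subspace: scalar closure is immediate because the $C_j$ are $\Gamma$-invariant, and additive closure follows from the ultrametric inequality together with the total ordering $C_1>\cdots>C_k$, since $\val(v+w)\ge\min(\val(v),\val(w))$ then forces $\val(v+w)$ to lie in $C_1\cup\cdots\cup C_i$. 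So $0=V_0\subsetneq V_1\subsetneq\cdots\subsetneq V_k=V$ is a filtration; the goal becomes $\dim(V_i/V_{i-1})=1$ for every $i$, which will give $k=n$.

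The main step, and the only non-routine one, is the bound $\dim(V_i/V_{i-1})\le 1$, and this is exactly where the hypothesis $V\subseteq K(a)$ (a field) and the definability of $\tp(a/K)$ are used via Fact~\ref{tcom}. Fix $v,w\in V_i\setminus V_{i-1}$. Their valuations both lie in $C_i$, so there is $\lambda\in K^\times$ with $\val(\lambda w)=\val(v)$. Because $V\subseteq K(a)$ sits inside the field $M$, the element $v/(\lambda w)$ makes sense and lies in $\dcl(Ka)$, so its type over $K$ is definable (pushforward of the definable type $\tp(a/K)$); moreover $\val(v/(\lambda w))=0$. Fact~\ref{tcom} therefore supplies $\mu\in K$ with $v/(\lambda w)-\mu$ being $K$-infinitesimal, i.e.\ $\val(v-\mu\lambda w)>\Gamma+\val(w)$, so $v-\mu\lambda w\in V_{i-1}$. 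Hence $v$ and $w$ are $K$-linearly dependent modulo $V_{i-1}$, proving $\dim(V_i/V_{i-1})\le 1$. Since each quotient is also nonzero (every $C_i$ is realized), each equals $1$.

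Adding up the dimensions of the successive quotients yields $n=\dim_K V=k$, so $V$ realizes exactly $n$ distinct $\Gamma$-cosets. Choosing any $v_i\in V_i\setminus V_{i-1}$ gives $n$ elements whose values lie in the distinct cosets $C_1>C_2>\cdots>C_n$; by Remark~\ref{abhyankar} they are $K$-linearly independent, hence form a basis, and by construction $\Gamma+\val(v_1)>\cdots>\Gamma+\val(v_n)$, i.e.\ a splitting basis of $V$. The principal difficulty, as noted, is the $1$-dimensionality of the graded pieces, which genuinely relies on being able to form the quotient $v/(\lambda w)$ inside a field and then apply the definable-type approximation of Fact~\ref{tcom}; without the ambient field structure the argument would fail.
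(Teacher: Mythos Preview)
Your proof is correct and uses the same essential ingredient as the paper---Fact~\ref{tcom} applied to a ratio of elements of $K(a)$---but the organization differs. The paper argues by an iterative basis-modification: starting from an arbitrary basis $c_1,\ldots,c_n$, at step $i$ it picks the element of smallest valuation among $c_{i+1},\ldots,c_n$ and subtracts a $K$-multiple of it from each of the others (using Fact~\ref{tcom} on the ratios $c_j c_{i+1}^{-1}$) so as to push their valuations into strictly higher $\Gamma(K)$-cosets; after $n$ steps one has a splitting basis. You instead go straight to the intrinsic filtration $0=V_0\subsetneq\cdots\subsetneq V_k=V$ by valuation cosets and show each graded piece is one-dimensional, which forces $k=n$ and yields a splitting basis immediately. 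Your route is arguably more conceptual and aligns naturally with Proposition~\ref{filter}, while the paper's route is more explicitly algorithmic; the underlying mechanism (approximate a valuation-$0$ quotient by an element of $K$, then clear denominators) is identical in both.
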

\begin{proof}
  By definable Skolem functions, $\dcl(Ka) \preceq M$.  Shrinking
  $M$ we may assume $M = \dcl(Ka)$.  Then $\tp(b/K)$ is definable
  for any finite tuple $b$ in $M$.  Indeed, we can write $b$ as
  $f(a)$ for some $K$-definable function $f$, and then $\tp(b/K)$
  is the pushforward of the definable type $\tp(a/K)$ along the
  definable function $f$.  Let $n = \dim_K(V)$.
  \begin{claim}
    For $0 \le i \le n$, there is a basis $b_1,\ldots,b_n$ of $V$ such
    that \[\Gamma(K) + \val(b_1) < \Gamma(K) + \val(b_2) < \cdots < \Gamma(K) +
    \val(b_i),\] and $\Gamma(K) + \val(b_i) < \Gamma(K) + \val(b_j)$ for $j > i$.
  \end{claim}
  \begin{claimproof}
    Proceed by induction on $i$.  For the base case $i = 0$, any basis
    is suitable.  Suppose $c_1,\ldots,c_n$ is a suitable basis for
    some $i < n$.  Permuting $c_{i+1},\ldots,c_n$, we may assume
    $\val(c_1) < \cdots < \val(c_n)$.  For $j > i+1$, the model $M$
    contains $c_jc_{i+1}^{-1}$, and so $\tp(c_j c_{i+1}^{-1} / K)$ is
    definable.  Note $\val(c_j c_{i+1}^{-1}) = \val(c_j) -
    \val(c_{i+1}) \ge 0$.  By Fact~\ref{tcom}, there is $u_j \in K$ such that
    $c_j c_{i+1}^{-1} - u_j$ is $K$-infinitesimal, in the sense that
    \begin{equation*}
      \val(c_j c_{i+1}^{-1} - u_j) > \Gamma(K).
    \end{equation*}
    Then $\val(c_j - u_j c_{i+1}) > \Gamma(K) + \val(c_{i+1})$.
    Replacing $c_j$ with $c_j - u_j c_{i+1}$ for $j = i+2,\ldots,n$,
    we get a suitable basis for $i+1$.
  \end{claimproof}
  Taking $i = n$, we get a splitting basis for $V$.
\end{proof}
The proof of Lemma~\ref{split} is based on the idea of \cite[Lemma~5.11]{cubides-ye}.

\section{Analysis of definable types in $\pCF$} \label{analysis}

\subsection{Quantifier-free types}
Work in the $\Ldiv$ language, where $\pCF$ does \emph{not}
have quantifier elimination.  Let $K$ be a small model of $\pCF$,
embedded in a monster model $\Mm$.  Let $K[x_1,\ldots,x_n]_{< d}$ be
the set of polynomials with homogeneous degree less than $d$.  We abbreviate the tuple $(x_1,\ldots,x_n)$ as $x$.
\begin{lemma} \label{qf-an}
  Suppose $a \in \Mm^n$ and $q = \qftp(a/K)$.  Let $I_d$ be
  the kernel of the map
  \begin{align*}
    K[x]_{<d} &\to \Mm \\
    P(x) &\mapsto P(a).
  \end{align*}
  Let $\tau_d$ be the VVS structure on $K[x]_{<d}/I_d$ induced as a
  subspace of $\Mm$.
  \begin{enumerate}
  \item \label{qan1} The subspaces $I_d$ and VVS structures $\tau_d$ depend only $q
    = \qftp(a/K)$.
  \item \label{qan2} Conversely, $q$ is determined by the collection
    of all $I_d$ and $\tau_d$.
  \item \label{qan3} If $q$ is definable, then each $\tau_d$ is definable.
  \item \label{qan4} If $q$ is definable, each $\tau_d$ is split.
  \end{enumerate}
\end{lemma}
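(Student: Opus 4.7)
The plan is to dispatch the first three parts as bookkeeping about atomic $\Ldiv$-formulas, and then prove part (\ref{qan4}) by combining Lemma~\ref{qf-def} with Lemma~\ref{split}. Throughout, the key observation is that an atomic $\Ldiv$-formula in variables $x=(x_1,\ldots,x_n)$ with parameters from $K$ has exactly one of two forms: either $P(x)=0$ or $\val(P(x))\le \val(Q(x))$, for some $P,Q\in K[x]$.

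Parts (\ref{qan1})--(\ref{qan3}): The ideal $I_d$ is precisely the set of polynomials $P\in K[x]_{<d}$ such that ``$P(x)=0$'' belongs to $\qftp(a/K)$, and the divisibility relation defining $\tau_d$ on $K[x]_{<d}/I_d$ is precisely the set of pairs $(P,Q)$ such that ``$\val(P(x))\le \val(Q(x))$'' belongs to $\qftp(a/K)$. This gives (\ref{qan1}) immediately. For (\ref{qan2}), any atomic $\Ldiv$-formula mentions only finitely many polynomials, each of some bounded degree $<d$, so it is determined by the pair $(I_d,\tau_d)$; since atomic formulas generate the quantifier-free type, $q$ is determined by the whole sequence. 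For (\ref{qan3}), definability of $q$ says by definition that $\{P:P(a)=0\}$ and $\{(P,Q):\val(P(a))\le \val(Q(a))\}$ are $K$-definable (uniformly in the coefficients); this is exactly the definability of $I_d$ as a $K$-linear subspace and of the divisibility relation attached to $\tau_d$.

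Part (\ref{qan4}): Extend the definable quantifier-free type $q$ to some complete type $r\in S_n(K)$. Its quantifier-free part $\hat r$ equals $q$, hence is definable, so by Lemma~\ref{qf-def} the complete type $r$ is itself definable. Pick a realization $a'\in\Mm^n$ of $r$; by part (\ref{qan1}), the ideals $I_d$ and VVS structures $\tau_d$ computed from $a'$ agree with those computed from $a$, so we may replace $a$ with $a'$ and assume $\tp(a/K)$ itself is definable. Consider the finite-dimensional $K$-linear subspace
\[
  V=\{P(a):P\in K[x]_{<d}\}\subseteq K(a),
\]
which via $P\mapsto P(a)$ is $K$-linearly isomorphic to $K[x]_{<d}/I_d$; the VVS structure $V$ inherits as a subspace of $\Mm$ pulls back to $\tau_d$ by construction. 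Applying Lemma~\ref{split} to $V\subseteq K(a)$ shows that $V$, and therefore $\tau_d$, is split.

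There is no substantial obstacle: the main work has already been done in Lemmas~\ref{qf-def} and~\ref{split}. The only mildly subtle point is the bridge in part (\ref{qan4}) from ``$q$ is a definable quantifier-free type'' to ``$q$ has a realization whose full type is definable,'' which is precisely the content of Lemma~\ref{qf-def}; once this is in hand, Lemma~\ref{split} applies verbatim to the subspace $V\subseteq K(a)$ corresponding to $K[x]_{<d}/I_d$.
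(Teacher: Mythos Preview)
Your proof is correct and follows the same approach as the paper: parts (\ref{qan1})--(\ref{qan3}) via the observation that atomic $\Ldiv(K)$-formulas are exactly polynomial equalities and valuation comparisons, and part (\ref{qan4}) via Lemma~\ref{split}. The paper is terser on part (\ref{qan4}), simply calling it ``a rephrasing of Lemma~\ref{split}'' and leaving the passage from ``$\qftp(a/K)$ definable'' to ``$\tp(a/K)$ definable'' (i.e., Lemma~\ref{qf-def}) implicit; you spell this out, which is helpful. One small simplification: your detour through an arbitrary completion $r$ and a new realization $a'$ is unnecessary, since Lemma~\ref{qf-def} applied to $\tp(a/K)$ itself already shows $\tp(a/K)$ is definable, so you can invoke Lemma~\ref{split} directly on the original $a$.
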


\begin{proof}
  Every atomic $\Ldiv(K)$-formula has the form
  $P(x) = Q(x)$ or $\val(P(x)) \le \val(Q(x))$ for some $d$
  and some $P(x), Q(x) \in K[x]_{< d}$.  Note
  \begin{align*}
    q(x) \vdash (P(x) = Q(x)) &\iff P - Q \in I_d \\
    q(x) \vdash (\val(P(x)) \le \val(Q(x))) & \iff
    \val_{\tau_d}(P) \le \val_{\tau_d}(Q) \\
    P \in I_d & \iff q(x) \vdash (P(x) = 0).
  \end{align*}
  Then (\ref{qan1})--(\ref{qan3}) are clear.  Part (\ref{qan4}) is a rephrasing of Lemma~\ref{split}.
\end{proof}
\begin{proposition} \label{qftp-code}
  Let $q$ be a definable quantifier-free $n$-type over $K$.  Then
  $\ulcorner q \urcorner$ is interdefinable with a real tuple
  (possibly infinite).
\end{proposition}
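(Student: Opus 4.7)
The plan is to invoke Lemma~\ref{qf-an} to reduce the proposition to separately coding each piece of the sequence $(I_d, \tau_d)_{d \geq 1}$ by a real tuple. I realize $q$ as $\qftp(a/K)$ for some $a \in \Mm^n$. By parts (1)--(3) of Lemma~\ref{qf-an}, each $I_d$ and each $\tau_d$ is $K$-definable from $\ulcorner q \urcorner$, since for any $P, Q \in K[x]_{<d}$ the atomic conditions $P(x) = Q(x)$ and $\val(P(x)) \le \val(Q(x))$ being in $q$ are read off directly from $\ulcorner q \urcorner$ together with $K$. Conversely, by part (2), the entire sequence $(I_d, \tau_d)_{d \geq 1}$ determines $q$, hence $\ulcorner q \urcorner$. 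Thus $\ulcorner q \urcorner$ is interdefinable over $K$ with the concatenation $(\ulcorner I_d \urcorner, \ulcorner \tau_d \urcorner)_{d \geq 1}$.

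Next, I show that each $\ulcorner I_d \urcorner$ and each $\ulcorner \tau_d \urcorner$ is interdefinable over $K$ with a real tuple. Fix the canonical $K$-definable basis of monomials for $K[x]_{<d}$, giving $K[x]_{<d} \cong K^{N_d}$ over $K$. Then $I_d$ is a $K$-linear subspace of $K^{N_d}$ and has a real code by the result cited in the proof of Proposition~\ref{split-code} (Johnson's Lemma~4.3). For $\tau_d$: by Lemma~\ref{qf-an}(3),(4) and the definability of $q$, the VVS structure $\tau_d$ on $K[x]_{<d}/I_d$ is split and definable. Proposition~\ref{split-code} then applies after a $K$-definable identification of $K[x]_{<d}/I_d$ with some $K^{m_d}$ (e.g., via any $K$-definable complement of $I_d$ inside $K[x]_{<d}$), and produces a real tuple interdefinable with $\ulcorner \tau_d \urcorner$ over $K$. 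Equivalently, and perhaps cleaner, the associated filtration of $\tau_d$ given by Proposition~\ref{filter} lifts to a chain
\[
I_d = W_0 \subsetneq W_1 \subsetneq \cdots \subsetneq W_{m_d} = K[x]_{<d}
\]
of $K$-linear subspaces of $K[x]_{<d}$, each $W_i$ having a real code by the same Johnson lemma, and the chain together with $I_d$ determines $\tau_d$.

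Concatenating these real codes as $d$ ranges over $\Nn$ yields an infinite real tuple interdefinable over $K$ with $\ulcorner q \urcorner$; appending the elements of $K$ (or invoking Remark~\ref{interdef2} together with Lemma~\ref{interdef}(\ref{id2})) removes the parameters and gives a real tuple interdefinable with $\ulcorner q \urcorner$. The only mild obstacle in the proof is the quotient step in the second paragraph, since Proposition~\ref{split-code} is stated for $K^n$ rather than for an arbitrary finite-dimensional $K$-space; but this is handled by either reduction indicated above, and no machinery beyond Section~\ref{tools} is needed.
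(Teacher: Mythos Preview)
Your proof is correct and follows essentially the same route as the paper: realize $q$ as $\qftp(a/K)$, invoke Lemma~\ref{qf-an} to reduce to coding the sequence $(I_d,\tau_d)_d$, then code each $I_d$ by \cite[Lemma~4.3]{johnson} and each $\tau_d$ by Proposition~\ref{split-code}. Your explicit handling of the quotient issue (lifting the associated filtration of $\tau_d$ back to a chain of subspaces of $K[x]_{<d}\cong K^{N_d}$) is a point the paper glosses over, and your fix is the natural one.

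One remark: your final worry about interdefinability being ``over $K$'' and needing to append the elements of $K$ is unnecessary. The monomial basis of $K[x]_{<d}$ is $0$-definable, so the identification $K[x]_{<d}\cong K^{N_d}$ is parameter-free, and the passage $q \leftrightarrow (I_d,\tau_d)_d \leftrightarrow (\ulcorner I_d\urcorner,\ulcorner \tau_d\urcorner)_d$ is already $0$-definable in both directions; there is nothing to remove.
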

\begin{proof}
  Take $a \in \Mm^n$ realizing $q$.  Let $I_d$ and $\tau_d$ be
  as in Lemma~\ref{qf-an}.  The $I_d$ are definable (as
  $K$-linear subspaces of some $K^N$), and the VVS structures $\tau_d$ are definable
  by Lemma~\ref{qf-an}(\ref{qan3}).  By part (\ref{qan4}) of the lemma, each
  $\tau_d$ is split.
  By
  parts (\ref{qan1}) and (\ref{qan2}) of the lemma, $\ulcorner q \urcorner$ is
  interdefinable with the infinite tuple $(\ulcorner I_d \urcorner
  \ulcorner \tau_d \urcorner : d < \omega)$.  We can take $\ulcorner I_d \urcorner$ to be real by \cite[Lemma~4.3]{johnson} and $\ulcorner \tau_d\urcorner$ to be real by Proposition~\ref{split-code}.
\end{proof}

Recall from Remark~\ref{type-fam} that we identify partial types with families of definable sets, so that we can talk about a partial type \emph{extending} a family of definable sets, or a family of definable sets \emph{generating} a partial type.  Also recall the notion of \emph{$\Gamma$-families} from Definition~\ref{gfam}.

\begin{proposition} \label{qftp-gen}
  Let $q(x)$ be a definable quantifier-free $n$-type over $K$.  Then
  $q$ is generated by countably many $\Gamma$-families.  Moreover, we
  can take each $\Gamma$-family to be definable over $\ulcorner q
  \urcorner$.
\end{proposition}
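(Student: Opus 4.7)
The plan is to use Lemma~\ref{qf-an} to encode $q$ via $(I_d, \tau_d)_{d \ge 1}$, then build the $\Gamma$-families using a canonical splitting basis of each $\tau_d$ obtained from definable Skolem functions. Fix a realization $a \in \Mm^n$ of $q$, and set $W_d := K[x]_{<d}$. By Lemma~\ref{qf-an}, $q$ is determined by $(I_d, \tau_d)_{d \ge 1}$, where $\tau_d$ is a split VVS on $W_d/I_d$ with associated complete filtration $I_d = \tilde V_{d,0} \subsetneq \tilde V_{d,1} \subsetneq \cdots \subsetneq \tilde V_{d,m_d} = W_d$ (Proposition~\ref{filter}), and all these objects lie in $\dcl^\eq(\ulcorner q \urcorner)$. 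Because $\pCF$ has definable Skolem functions and $\ulcorner q \urcorner$ is interdefinable with a real tuple (Proposition~\ref{qftp-code}), one can fix, for each $d$ and $1 \le i \le m_d$, a specific polynomial $v_{d,i} \in \tilde V_{d,i} \setminus \tilde V_{d,i-1}$ lying in $\dcl(\ulcorner q \urcorner)$. Then $\{v_{d,1},\ldots,v_{d,m_d}\}$ is a splitting basis of $\tau_d$, and $\Gamma(K) + \val(v_{d,1}(a)) > \cdots > \Gamma(K) + \val(v_{d,m_d}(a))$ in $\Gamma(\Mm)/\Gamma(K)$.

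For each $d \ge 1$ I introduce the following $\ulcorner q \urcorner$-definable $\Gamma$-families:
\begin{itemize}
\item The constant family $\mathcal{F}_d^{(1)} = \{Z_d\}_{\gamma \in \Gamma}$, where $Z_d = \{x \in \Mm^n : P(x) = 0 \text{ for all } P \in I_d\}$. This is $\ulcorner I_d \urcorner$-definable and contains $a$.
\item For each $1 \le i < j \le m_d$, the family $\mathcal{F}_{d,i,j}^{(2)} = \{Y_{d,i,j,\gamma}\}_{\gamma \in \Gamma}$ with $Y_{d,i,j,\gamma} = \{x : \val(v_{d,i}(x)) - \val(v_{d,j}(x)) > \gamma\}$. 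Each fiber contains $a$ (since $\val(v_{d,i}(a)) - \val(v_{d,j}(a))$ lies in a coset of $\Gamma(K)$ strictly above $\Gamma(K)$), the fibers are nested in $\gamma$, and the family is definable over the parameters $v_{d,i}, v_{d,j} \in \dcl(\ulcorner q \urcorner)$.
\end{itemize}
This yields a countable collection of $\Gamma$-families definable over $\ulcorner q \urcorner$.

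To verify generation, take $x \in \Mm^n$ satisfying all the families above. From $\mathcal{F}_d^{(1)}$, $I_d \subseteq \ker(P \mapsto P(x))$. From the $\mathcal{F}_{d,i,j}^{(2)}$ as $i < j$ ranges, $\val(v_{d,1}(x)),\ldots,\val(v_{d,m_d}(x))$ lie in strictly decreasing cosets of $\Gamma(K)$ in $\Gamma(\Mm)$. For any $P \in W_d$, decompose $P = \sum_i c_i v_{d,i} + R$ with $c_i \in K$ and $R \in I_d$; then $P(x) = \sum_i c_i v_{d,i}(x)$, and the ultrametric inequality together with the strict coset ordering yields $\val(P(x)) = \min_i(\val(c_i) + \val(v_{d,i}(x)))$, attained at a unique index. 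In particular $P(x) = 0 \iff P \in I_d$, so $I_d = \ker(P \mapsto P(x))$. Moreover, the VVS induced on $W_d/I_d$ by evaluation at $x$ has $\{v_{d,i}\}$ as a splitting basis with strictly decreasing cosets of $\Gamma(K)$, so by Remark~\ref{filter2} it coincides with $\tau_d$. Applying Lemma~\ref{qf-an}(\ref{qan2}) across all $d$ yields $\qftp(x/K) = q$.

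The main technical obstacle is that a naive universal quantification over $W_d$ in the formulas (e.g., to encode the $\tau_d$-divisibility by a single biconditional) would fail at $a$ for $\Mm$-coefficients, where cancellation phenomena can break the biconditional. The fix is to use definable Skolem functions to extract a canonical splitting basis $v_{d,i} \in \dcl(\ulcorner q \urcorner)$ and express the quantifier-free data only through those specific polynomials, thereby avoiding the problematic quantification entirely.
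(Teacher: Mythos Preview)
Your approach is essentially the same as the paper's: reduce to the data $(I_d,\tau_d)$, pick a splitting basis $v_{d,1},\ldots,v_{d,m_d}$ over $\dcl(\ulcorner q\urcorner)$ using definable Skolem functions, and express membership in $q$ via the conditions ``$I_d$ vanishes at $x$'' (a constant $\Gamma$-family) and ``$\val(v_{d,i}(x)) - \val(v_{d,j}(x)) > \gamma$'' (a genuine $\Gamma$-family). The paper does exactly this, but includes one additional ingredient that you omit: the constant families $\{x : v_{d,i}(x) \ne 0\}$.

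This omission leaves a small gap in your verification. Your claim that the $\mathcal{F}^{(2)}_{d,i,j}$ force $\val(v_{d,1}(x)),\ldots,\val(v_{d,m_d}(x))$ to lie in strictly decreasing cosets of $\Gamma(K)$ presupposes that each $v_{d,i}(x)$ is nonzero. The inequalities $\val(v_{d,i}(x)) - \val(v_{d,j}(x)) > \gamma$ for $i<j$ do force $v_{d,j}(x)\ne 0$ for $j\ge 2$, but nothing at level $d$ rules out $v_{d,1}(x)=0$: the inequality is vacuously satisfied when $\val(v_{d,1}(x))=+\infty$ and $v_{d,j}(x)\ne 0$. If $v_{d,1}(x)=0$ then your conclusion ``$P(x)=0 \iff P\in I_d$'' fails at $P=v_{d,1}$, and the rest of the argument collapses. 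The simplest fix is to add the constant $\Gamma$-families $\{x : v_{d,i}(x)\ne 0\}$, exactly as the paper does. (One can alternatively argue that your families already suffice by working across degrees: if $\val(v_{d,1}(a))\in\Gamma(K)$ then $1-cv_{d,1}\in I_d$ for some $c\ne 0$, so $\mathcal{F}^{(1)}_d$ forces $v_{d,1}(x)=1/c$; and if $\val(v_{d,1}(a))>\Gamma(K)$ then $v_{d,1}^2\in W_{2d-1}$ sits in a strictly higher layer, pushing $v_{d,1}$ down to layer $\ell\ge 2$ at level $2d-1$, whence $v_{d,1}(x)\ne 0$ follows from $\mathcal{F}^{(2)}_{2d-1}$. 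But this argument is not in your write-up.)
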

In other words, there are $\ulcorner q \urcorner$-definable $\Gamma$-families
$\{X_{i,\gamma}\}_{\gamma}$ for $i < \omega$ such that $q(\Mm) =
\bigcap_i \bigcap_{\gamma \in \Gamma(K)} X_{i,\gamma}(\Mm)$.
\begin{proof}
  By Proposition~\ref{qftp-code} we can assume $\ulcorner q \urcorner$
  is a real tuple.  Let $K_0$ be $\dcl(\ulcorner q \urcorner)$; then
  $K_0 \preceq K$ by definable Skolem functions.
  
  Take $a \in \Mm^n$ realizing $q$.  Let $I_d$ and $\tau_d$ be
  as in Lemma~\ref{qf-an}.  By parts (\ref{qan3}) and (\ref{qan4}) of the
  lemma, each $\tau_d$ is split and definable.
  Take $Q_{d,1},\ldots,Q_{d,m_d} \in
  K[x]_{<d}$ a basis of $I_d$.  Take $P_{d,1},\ldots,P_{d,n_d} \in
  K[x]_{<d}$ such that $\{P_{d,1},\ldots,P_{d,n_d}\}$ is a splitting
  basis of $(K[x]_{<d}/I_d,\tau_d)$.  As $I_d$ and $\tau_d$ are
  $K_0$-definable and $K_0 \preceq K$, we can take $Q$ and $P$ to have
  coefficients in $K_0$.

  Suppose $a' \in \Mm^n$.  By parts (\ref{qan1}) and (\ref{qan2}) of
  Lemma~\ref{qf-an}, $\qftp(a'/K) = q$ if and only if
  $a'$ yields the same subspaces $I_d$ and the same VVS
  structures $\tau_d$ as $a$.  By Remark~\ref{filter2},
  $a'$ induces the VVS structure $\tau_d$ if and only if
  \begin{equation*}
    \Gamma(K) + \val(P_{d,1}(a')) > \Gamma(K) +
    \val(P_{d,2}(a')) > \cdots > \Gamma(K) +
    \val(P_{d,n_d}(a')).
  \end{equation*}
  In summary, $a'$ realizes $q = \qftp(a/K)$ if and only
  if the following conditions hold:
  \begin{itemize}
  \item $Q_{d,i}(a') = 0$ for any $d$ and $1 \le i \le m_d$.
  \item $P_{d,i}(a') \ne 0$ for any $d$ and $1 \le i \le n_d$.
  \item $\val(P_{d,i}(a')) - \val(P_{d,j}(a')) > \gamma$ for any $d$,
    any $1 \le i < j \le n_d$, and any $\gamma \in \Gamma(K)$.
  \end{itemize}
  Each of these conditions (for fixed $d, i, j$) is expressed by a
  $K_0$-definable $\Gamma$-family, possibly a constant family not depending on the parameter $\gamma \in \Gamma$.
\end{proof}

\subsection{Complete types} \label{s:def}
\begin{theorem} \label{tp-code}
  Let $K$ be a model of $\pCF$.  Let $q$ be a definable $n$-type over
  $K$.  Then $q$ is coded by a real tuple, possibly infinite.
\end{theorem}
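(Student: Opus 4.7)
The plan is to reduce the theorem to the quantifier-free case, which is already handled by Proposition~\ref{qftp-code}. Let $\hat{q}$ denote the quantifier-free part of $q$; it is a definable quantifier-free $n$-type over $K$, and its code $\ulcorner \hat{q} \urcorner$ is clearly in $\dcl^\eq(\ulcorner q \urcorner)$. By Proposition~\ref{qftp-code}, one can find a (possibly infinite) real tuple $e$ interdefinable with $\ulcorner \hat{q} \urcorner$ over $\varnothing$. Since $\hat{q}$ is $K$-definable we have $\ulcorner \hat{q} \urcorner \in K^\eq$, and hence the real tuple $e$ in fact lies in $K$.

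Next I would set $K_0 = \dcl(e) \subseteq K$. By definable Skolem functions in $\pCF$, $K_0$ is a model with $K_0 \preceq K$, and moreover $\dcl^\eq(K_0) = \dcl^\eq(e)$. Because $\hat{q}$ is $e$-definable, it is also $K_0$-definable, so Lemma~\ref{qf-def-2}(\ref{qd1}) upgrades this to: $q$ itself is $K_0$-definable. This gives $\ulcorner q \urcorner \in \dcl^\eq(K_0) = \dcl^\eq(e)$. In the opposite direction, $\hat{q}$ is recovered from $q$ by restricting to quantifier-free formulas, so $\ulcorner \hat{q} \urcorner \in \dcl^\eq(\ulcorner q \urcorner)$, and the interdefinability of $e$ with $\ulcorner \hat{q} \urcorner$ yields $e \in \dcl^\eq(\ulcorner q \urcorner)$ as well. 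Combining the two inclusions, $\ulcorner q \urcorner$ and $e$ are interdefinable, so $q$ is coded by the real tuple $e$.

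The substantive work has already been carried out in Section~\ref{tools} and the preceding part of Section~\ref{analysis}: Proposition~\ref{qftp-code} supplies a real code for the quantifier-free part (using the splitness of induced valued vector spaces from Lemma~\ref{split} together with the coding result Proposition~\ref{split-code}), and Lemma~\ref{qf-def-2} ensures that definability and $K_0$-rationality pass from $\hat{q}$ back up to $q$ (ultimately via the counting bound of Proposition~\ref{qf-count} on completions of a quantifier-free type). Granted these two inputs, the passage from the quantifier-free case to the full type is essentially bookkeeping, and I do not expect a fresh obstacle at this step.
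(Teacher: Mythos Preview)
Your argument is correct and follows essentially the same route as the paper: pass to the quantifier-free part $\hat q$, code it by a real tuple via Proposition~\ref{qftp-code}, let $K_0$ be the $\dcl$ of that tuple (a model by Skolem functions), and then use Lemma~\ref{qf-def-2}(\ref{qd1}) to see that $q$ is $K_0$-definable, whence $\ulcorner q\urcorner$ and $\ulcorner\hat q\urcorner$ are interdefinable. The only difference is cosmetic: you name the real tuple $e$ explicitly and check $e\in K$, whereas the paper simply declares $\ulcorner\hat q\urcorner$ to be real.
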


\begin{proof}
  Let $\hat{q}$ be the quantifier-free part of $q$, and let $\ulcorner
  \hat{q} \urcorner$ be its code.  By Proposition~\ref{qftp-code}, we
  can take $\ulcorner \hat{q} \urcorner$ to be a real tuple.  Let $K_0
  = \dcl(\ulcorner \hat{q} \urcorner)$.  Then $K_0 \preceq K$ because
  $\pCF$ has definable Skolem functions.  The fact that $\hat{q}$ is
  $K_0$-definable implies that $q$ is $K_0$-definable, by
  Lemma~\ref{qf-def-2}(\ref{qd1}).  Then $\ulcorner q \urcorner \in
  \dcl(\ulcorner \hat{q} \urcorner)$.  On the other hand, $\hat{q}$ is
  determined by $q$, so $\ulcorner \hat{q} \urcorner \in
  \dcl(\ulcorner q \urcorner)$.  Therefore $\hat{q}$ and $q$ are
  interdefinable, and $q$ is coded by a real tuple because $\hat{q}$
  is.
\end{proof}
In particular, we see from the proof that any definable type is
interdefinable with its quantifier-free part.
\begin{theorem} \label{tp-gen}
  Let $K$ be a model of $\pCF$.  Let $q$ be a definable $n$-type over
  $K$.  Then $q$ is generated by countably many
  $\Gamma$-families definable over $\ulcorner q \urcorner$.
\end{theorem}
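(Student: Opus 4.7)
The plan is to combine Theorem~\ref{tp-code}, Proposition~\ref{qftp-gen}, and Macintyre's quantifier elimination in the $\mathcal{L}_\chi$-language. By Theorem~\ref{tp-code}, take $\ulcorner q \urcorner$ as a real tuple and set $K_0 = \dcl(\ulcorner q \urcorner)$, which is an elementary submodel of $K$ by definable Skolem functions. Proposition~\ref{qftp-gen} produces countably many $K_0$-definable $\Gamma$-families generating $\hat{q}$, built from splitting-basis polynomials $P_{d,1}, \ldots, P_{d,n_d} \in K_0[x]_{<d}$ of $K_0[x]_{<d}/I_d$ for each $d$. By Lemma~\ref{qf-def-2}(\ref{qd2}), $\hat{q} \cup (q \restriction K_0)$ generates $q$, so it suffices to generate $q \restriction K_0$ modulo $\hat{q}$ by a further countable collection of $K_0$-definable $\Gamma$-families.

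Since $\pCF$ has quantifier elimination in $\mathcal{L}_\chi$, the type $q \restriction K_0$ is determined by its $\mathcal{L}_\chi$-quantifier-free part, which beyond $\hat{q} \restriction K_0 \subseteq \hat{q}$ reduces to specifying $\chi_m(f(x))$ for each $f \in K_0[x]$ with $f(a) \ne 0$ on realizations $a$ of $q$, and each $m \ge 1$. The main step is to show that, modulo $\hat{q}$, all such values are determined by the countable sub-family $\{\chi_m(P_{d,j}(x)) : d, m < \omega, \ j \le n_d\}$. Fix $f \in K_0[x]_{<d}$ with $f \notin I_d$ and write $f \equiv \sum_i \lambda_i P_{d,i} \pmod{I_d}$; since $I_d$ and the $P_{d,i}$ are $K_0$-definable, the coefficient-extraction map is a $K_0$-definable $K$-linear map $K[x]_{<d} \to K^{n_d}$, hence sends $K_0$-points to $K_0$-points, so $\lambda_i \in K_0$. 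The splitting property places the $\val(P_{d,i}(a))$ in distinct cosets of $\Gamma(K)$, so among the nonzero terms $\lambda_i P_{d,i}(a)$ there is a unique index $j$ of minimal valuation, and $j$ is determined by $\hat{q}$ and $f$ alone. Writing $f(a) = \lambda_j P_{d,j}(a)(1 + \epsilon)$, one checks $\val(\epsilon) > \Gamma(K)$ using the splitting property, so Lemma~\ref{mac-rv} gives $\chi_m(1 + \epsilon) = 1$ and therefore $\chi_m(f(a)) = \chi_m(\lambda_j)\, \chi_m(P_{d,j}(a))$, as required.

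It follows that $q$ is generated by $\hat{q}$ together with the countably many $K_0$-definable formulas $P_{m, c_{m,d,j}}(P_{d,j}(x))$, where $c_{m,d,j} \in Q_m$ is the common value of $\chi_m(P_{d,j}(a))$ on realizations of $q$. Each such formula defines a constant $K_0$-definable $\Gamma$-family (permitted by Definition~\ref{gfam} and already used in the proof of Proposition~\ref{qftp-gen}), yielding the required countable collection of $\Gamma$-families generating $q$. The main obstacle is the dominant-term analysis combined with Lemma~\ref{mac-rv}: this is precisely what reduces the potentially uncountable collection of $\chi_m$-conditions over $K_0$ to countable data indexed by the splitting-basis polynomials; everything else is organizational work on top of the infrastructure already developed in Sections~\ref{tools} and \ref{analysis}.
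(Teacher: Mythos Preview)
Your argument is correct, but it is considerably more elaborate than the paper's proof, and the extra work stems from a point you seem to have overlooked: $K_0$ is automatically \emph{countable}. Since the language $\Ldiv$ is countable, the code $\ulcorner q\urcorner$ (equivalently $\ulcorner \hat q\urcorner$, by the proof of Theorem~\ref{tp-code}) is a countable real tuple, and therefore $K_0 = \dcl(\ulcorner q\urcorner)$ is a countable model. Once you know this, the entire type $q\restriction K_0$ already consists of countably many $\Ldiv(K_0)$-formulas, each of which can be regarded as a constant $\Gamma$-family. This is exactly what the paper does: after invoking Proposition~\ref{qftp-gen} for $\hat q$ and Lemma~\ref{qf-def-2}(\ref{qd2}) to reduce to $\hat q \cup (q\restriction K_0)$, it simply observes that the second factor is countable and stops.

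Your dominant-term analysis via Lemma~\ref{mac-rv} is valid and does show that, modulo $\hat q$, the $\chi_m$-data over $K_0$ is controlled by the countably many values $\chi_m(P_{d,j}(a))$. But this is unnecessary: your worry about a ``potentially uncountable collection of $\chi_m$-conditions over $K_0$'' never materializes, because $K_0[x]$ is itself countable. What your argument does buy is a somewhat more explicit description of a generating family (indexed by the splitting-basis polynomials rather than by all of $K_0$), which might be useful elsewhere, but it is not needed for the theorem as stated.
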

\begin{proof}
  Let $\hat{q}$ be the quantifier-free part of $q$.  As in the proof
  of Theorem~\ref{tp-code}, let $K_0 = \dcl(\ulcorner \hat{q}
  \urcorner) = \dcl(\ulcorner q \urcorner)$.  As the language is countable, the code $\ulcorner \hat{q} \urcorner$ is countable and $K_0$ is countable.
  By
  Lemma~\ref{qf-def-2}(\ref{qd2}), $q$ is generated by $\hat{q}$ and $q
  \restriction K_0$:
  \begin{equation*}
    \hat{q}(x) \cup (q \restriction K_0)(x) \vdash
    q(x).
  \end{equation*}
  By Proposition~\ref{qftp-gen}, $\hat{q}(x)$ is generated by countably many $\Gamma$-families.  On the other hand $q \restriction K_0$ consists of countably many $\Ldiv(K_0)$-formulas, each of which can be regarded as a degenerate, constant $\Gamma$-family.  Therefore $q(x)$ is generated by countably many $\Gamma$-families.
\end{proof}

Recall from Definition~\ref{refdef} that one definable family $\{D_a\}_{a \in X}$ \emph{refines} another family $\{D'_b\}_{b \in Y}$ if for any $b \in Y$, there is $a\in X$ such that $D_a \subseteq D'_b$.
\begin{proposition}\label{refine-g}
  Let $K$ be a model of $\pCF$.  Let $q \in S_n(K)$ be a definable
  type.  Let $\{D_a\}_{a \in Y}$ be a definable family of sets such
  that $q$ extends $\{D_a\}_{a \in Y}$.  Then there is a $\Gamma$-family $\{X_\gamma\}_{\gamma \in \Gamma}$ such that $q$ extends $\{X_\gamma\}_{\gamma \in \Gamma}$ and $\{X_\gamma\}_{\gamma \in \Gamma}$ refines $\{D_a\}_{a \in Y}$.
\end{proposition}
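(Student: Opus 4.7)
The plan is to invoke Theorem~\ref{tp-gen} to write $q$ as generated by countably many $\Gamma$-families, and then use a saturation argument to collapse the relevant generators into a single $\Gamma$-family refining $\{D_a\}_{a \in Y}$.

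First I would fix a sequence of $\ulcorner q \urcorner$-definable $\Gamma$-families $\mathcal{F}_i = \{X_{i,\gamma}\}_{\gamma \in \Gamma}$ for $i < \omega$ whose union generates $q$, furnished by Theorem~\ref{tp-gen}. For each finite $F \subseteq \omega$, define the $\ulcorner q \urcorner$-definable subset of $Y$
\[
S_F := \Bigl\{a \in Y : \exists\, \vec\gamma \in \Gamma^F \text{ with } \bigcap_{i \in F} X_{i,\gamma_i} \subseteq D_a \Bigr\}.
\]
One immediately checks the monotonicity $S_{F'} \subseteq S_F$ whenever $F' \subseteq F$: any witness $\vec\gamma' \in \Gamma^{F'}$ extends to $\Gamma^F$ by padding arbitrarily, and the extra intersections on the left only shrink it.

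Next, I would pass to a sufficiently saturated elementary extension $\Mm \succeq K$ with heir $q^\Mm \in S_n(\Mm)$. The defining schema of $q^\Mm$ agrees with that of $q$, so the same families $\{X_{i,\gamma}\}_{\gamma \in \Gamma(\Mm)}$ (now indexed over $\Gamma(\Mm)$) generate $q^\Mm$. By definability of $q$, the hypothesis transfers: for every $a \in Y(\Mm)$ one has $D_a \in q^\Mm$. Compactness applied to the generators then yields, for each such $a$, a finite $F$ with $a \in S_F(\Mm)$; equivalently $Y(\Mm) \subseteq \bigcup_{F \subseteq_{\mathrm{fin}} \omega} S_F(\Mm)$. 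Since $Y$ is definable and the union is countable, $\aleph_1$-saturation of $\Mm$ produces a finite subcover, and the monotonicity above lets us collapse it to a single finite $F$ with $Y(\Mm) \subseteq S_F(\Mm)$. Elementarity then descends this to $Y(K) \subseteq S_F(K)$.

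Finally, I would set $Z_\gamma := \bigcap_{i \in F} X_{i,\gamma}$ for $\gamma \in \Gamma(K)$. Each $Z_\gamma$ lies in $q$ and is non-empty in $K$ by consistency of $q$ plus elementarity, and downward directedness is inherited from the $\mathcal{F}_i$, making $\{Z_\gamma\}_{\gamma \in \Gamma}$ a genuine $\Gamma$-family extended by $q$. For any $a \in Y$, picking $\vec\gamma \in \Gamma(K)^F$ witnessing $a \in S_F$ and setting $\gamma^\ast := \max_{i \in F} \gamma_i$ gives $Z_{\gamma^\ast} \subseteq \bigcap_{i \in F} X_{i,\gamma_i} \subseteq D_a$, so $\{Z_\gamma\}$ refines $\{D_a\}_{a \in Y}$. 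The main obstacle is this uniformity step---producing a \emph{single} finite $F$ that works for every $a \in Y$ rather than an $F$ depending on $a$---which is exactly where the saturation argument, combined with definability of $Y$ and the monotonicity of the $S_F$, earns its keep.
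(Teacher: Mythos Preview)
Your proposal is correct and follows essentially the same approach as the paper: pass to a saturated extension, invoke Theorem~\ref{tp-gen}, use compactness plus saturation to extract a single finite index set $F$, and set $X_\gamma = \bigcap_{i \in F} X_{i,\gamma}$. The only difference is organizational: the paper applies Theorem~\ref{tp-gen} directly to the heir $r = q^{\Mm}$ (which has the same code $\ulcorner q \urcorner$, so the resulting $\Gamma$-families are still $K$-definable), whereas you apply it to $q$ over $K$ and then assert that the resulting families, re-indexed over $\Gamma(\Mm)$, still generate $q^{\Mm}$.

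That lifting assertion is true but your justification (``the defining schema of $q^{\Mm}$ agrees with that of $q$'') is not quite the right reason. It is not automatic that an arbitrary countable collection of $K$-definable $\Gamma$-families generating $q$ will generate the heir; what makes it work here is that the construction in Theorem~\ref{tp-gen} depends only on $\ulcorner q \urcorner = \ulcorner q^{\Mm} \urcorner$, so running the same construction for $q^{\Mm}$ over $\Mm$ literally produces the same $\ulcorner q \urcorner$-definable families. The paper sidesteps this by applying Theorem~\ref{tp-gen} to the heir in the first place, which is slightly cleaner. Either way the argument goes through.
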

\begin{proof}
  Let $L \succeq K$ be a $\kappa$-saturated elementary extension for
  some $\kappa \gg \aleph_0$.  Let $r \in S_n(L)$ be the heir of
  $q$.  Note that $r(x) \vdash x \in D_a$ for any $a \in Y(L)$.
  By Theorem~\ref{tp-gen}, the $K$-definable type
  $r(x)$ is generated by $\{Z_{\alpha,\gamma} : \alpha <
  \omega, ~ \gamma \in \Gamma(L)\}$ where
  $\{Z_{\alpha,\gamma}\}_{\gamma \in \Gamma}$ is a $K$-definable
  $\Gamma$-family for each $\alpha < \omega$.  Then for each $a
  \in Y(L)$, $r(x) \vdash x \in D_a$, so by compactness there are
  $\alpha_1,\ldots,\alpha_m < \omega$ and
  $\gamma_1,\ldots,\gamma_m \in \Gamma(L)$ such that
  \begin{equation*}
    \bigcap_{i = 1}^m Z_{\alpha_i,\gamma_i} \subseteq D_a.
  \end{equation*}
  By saturation, we can assume the $\alpha_i$ always come from some
  finite set $S \subseteq \omega$.  Take $X_\gamma =
  \bigcap_{\alpha \in S} Z_{\alpha,\gamma}$.  Then
  $\{X_\gamma\}_{\gamma \in \Gamma}$ refines $\{D_a\}_{a \in Y}$.  The
  fact that $r(x) \vdash x \in X_\gamma$ for $\gamma \in
  \Gamma(L)$ implies that $q(x) = (r \restriction K)(x) \vdash x
  \in X_\gamma$ for $\gamma \in \Gamma(K)$.
\end{proof}

\subsection{Comparison with ACVF} \label{sec:acvf}
The proof of Theorem~\ref{tp-code} is formally similar to the analysis
of definable types in ACVF in \cite[\S5.2]{johnson}.  (ACVF is the
theory of algebraically closed valued fields.)  In both cases, one
finds a code for a definable type $\tp(b/K)$ by looking at
finite-dimensional subspaces of $K[b]$ as valued vector spaces.  In
both cases, one can analyze the structure of these valued vector
spaces, showing that they are coded in a natural way by tuples from
the home sort and geometric sorts (\cite[Theorem~5.3]{johnson},
Proposition~\ref{split-code}).  For $\pCF$, the geometric sorts turn
out to be unnecessary, and the resulting code is a tuple in the home
sort.  This contrasts with ACVF, where the geometric sorts are
strictly necessary.  In fact, in ACVF \emph{every} imaginary is
interalgebraic with the code of a definable type \cite[Theorems~4.1,
  5.14]{johnson}, so one could not hope for definable types to be
coded by tuples in the home sort alone.

\subsection{The necessity of infinite codes} \label{infinite}
Let $K$ be a $p$-adically closed field and $q \in S_n(K)$ be a
definable type.  By Theorem~\ref{tp-code}, the code $\ulcorner q
\urcorner$ can be taken to be a tuple in $K$, rather than $K^\eq$.  If $\dim(q) = 1$, then
we can even take a finite tuple \cite[Theorem~2.7]{jy-abelian}.  This
suggests the question of whether infinite tuples are necessary in
Theorem~\ref{tp-code}.
\begin{proposition} \label{long}
  There is a definable 2-type $q(x,y) \in S_2(\Qp)$ such that
  $\ulcorner q \urcorner$ is not interdefinable with any finite tuple
  in $\Qp$.
\end{proposition}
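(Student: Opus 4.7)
The plan is to construct $q$ as the type over $\Qp$ of a pair $(\alpha,\beta)$ in a saturated extension, where $\beta$ is a ``formal power series'' $\sum a_n\alpha^n$ in a pseudo-infinitesimal $\alpha$, and the coefficients $(a_n)_{n\geq 0}\in\Zz_p^\omega$ are chosen algebraically independent over $\Qq$ (possible since $\trdeg(\Qp/\Qq)$ is uncountable). The idea is that $\ulcorner q\urcorner$ will encode every $a_n$, so it cannot be interdefinable with a finite tuple in $\Qp$ on transcendence-degree grounds.

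Concretely: in a monster $\Mm\succeq\Qp$, take $(\alpha_0,\beta_0)$ realizing the consistent partial type $\{\val(x)>\gamma:\gamma\in\Gamma(\Qp)\}\cup\{\val(y-\sum_{i=0}^k a_ix^i)\geq(k+1)\val(x):k\geq 0\}$ (consistent because a finite subcollection with $k\leq k_m$ is satisfied by $y=\sum_{i=0}^{k_m}a_i\alpha_0^i$, using $\val(a_i)\geq 0$), and set $q=\tp(\alpha_0,\beta_0/\Qp)$, which is definable by Fact~\ref{delon}. Suppose for contradiction that $\ulcorner q\urcorner$ is interdefinable with a finite tuple $c\in\Qp^N$, and let $\tilde q\in S_2(\Mm)$ be the (then $c$-definable) heir of $q$. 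In a further monster $\Mm^*\succeq\Mm$, take $(\alpha,\beta)$ realizing $\tilde q$; because the defining schema for ``$\val(x)>z$'' is identically true, we obtain $\val(\alpha)>\Gamma(\Mm)$, not merely $>\Gamma(\Qp)$.

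The key step is to show each $a_n\in\dcl(c)$, which gives the desired contradiction: $\dcl(c)\subseteq\acl(c)$ is algebraic over $\Qq(c)$ in the field-theoretic sense (a standard property of $\pCF$), hence has transcendence degree at most $N$ over $\Qq$, incompatible with the algebraic independence of the $a_n$. To prove the claim, take any $\sigma\in\Aut(\Mm^*/c)$: $\sigma$ fixes $\ulcorner q\urcorner$ and hence preserves $\tilde q$ setwise, so $\phi(x,y,b)\in\tilde q\iff\phi(x,y,\sigma(b))\in\tilde q$ for every formula $\phi(x,y,z)$ and every $b\in\Mm^z$. Applying this to $\phi_k(x,y,z_0,\ldots,z_k):=(\val(y-\sum_{i=0}^k z_ix^i)\geq(k+1)\val(x))$ with $b=(a_0,\ldots,a_k)\in\tilde q$ yields $\val(\beta-\sum_{i=0}^k\sigma(a_i)\alpha^i)\geq(k+1)\val(\alpha)$, and subtracting,
\[
\val\bigl(\textstyle\sum_{i=0}^k(a_i-\sigma(a_i))\alpha^i\bigr)\geq(k+1)\val(\alpha).
\]
If $j\leq k$ were minimal with $\sigma(a_j)\neq a_j$, the ultrametric inequality together with $\val(\alpha)>\Gamma(\Mm)$ would force the left side to equal $\val(a_j-\sigma(a_j))+j\val(\alpha)$, strictly less than $(k+1)\val(\alpha)$, a contradiction. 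Hence $\sigma$ fixes every $a_n$, so $a_n\in\dcl^{\eq}(c)\cap\Qp=\dcl(c)$. The main subtlety is the use of the heir to secure $\val(\alpha)>\Gamma(\Mm)$: without it, the ultrametric comparison could fail when $\sigma(a_i)\in\Mm\setminus\Qp$, since $\val(a_i-\sigma(a_i))$ would no longer be constrained to $\Gamma(\Qp)$.
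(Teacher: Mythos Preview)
Your construction is the same as the paper's (up to the cosmetic difference that the paper uses powers $b^{2i}$ rather than $\alpha^i$), and the overall strategy---recover each $a_n$ from $\ulcorner q\urcorner$ and finish by a transcendence-degree count---is identical.  However, there is a small but genuine slip in the automorphism step, and the whole heir manoeuvre is avoidable.

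\textbf{The slip.}  You take $\sigma\in\Aut(\Mm^*/c)$, assert that $\sigma$ fixes $\tilde q\in S_2(\Mm)$, and conclude $\phi_k(x,y,\sigma(a_0),\ldots,\sigma(a_k))\in\tilde q$.  But $\sigma$ need not preserve $\Mm$ setwise, so $\sigma(a_i)$ may lie in $\Mm^*\setminus\Mm$, and then $\phi_k(x,y,\sigma(a))$ is not a formula over $\Mm$ at all; it makes no sense to ask whether it belongs to $\tilde q$.  Passing to the heir over $\Mm^*$ doesn't help either, since $(\alpha,\beta)\in\Mm^*$ cannot realize a non-algebraic type over $\Mm^*$.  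The fix is simply to take $\sigma\in\Aut(\Mm/c)$ instead: then $\sigma(a_i)\in\Mm$, the invariance of $\tilde q$ is unproblematic, $(\alpha,\beta)\models\tilde q$ gives $\phi_k(\alpha,\beta,\sigma(a))$, and your valuation computation goes through verbatim (you already arranged $\val(\alpha)>\Gamma(\Mm)$).  Since $a_n\in\Qp\preceq\Mm$, invariance under $\Aut(\Mm/c)$ suffices to conclude $a_n\in\dcl(c)$.

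\textbf{Comparison with the paper.}  The paper avoids both the heir and the automorphism argument.  It observes directly that, for each $n$, the $\ulcorner q\urcorner$-definable set
\[
\Bigl\{(\alpha_0,\ldots,\alpha_n)\in\Qp^{n+1}: q(x,y)\vdash \val\bigl(y-\textstyle\sum_{i=0}^n\alpha_i x^{2i}\bigr)>\val(x^{2n+1})\Bigr\}
\]
is exactly the singleton $\{(a_0,\ldots,a_n)\}$; the computation is the same ultrametric argument you give, but since one only quantifies over tuples in $\Qp^{n+1}$, the comparison $\val(a_j-\alpha_j)\in\Zz<\val(b)$ is immediate and no heir is needed.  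Your ``main subtlety'' is therefore self-inflicted: it arises because in the automorphism formulation $\sigma(a_i)$ can leave $\Qp$, forcing you to pass to a realization with $\val(\alpha)>\Gamma(\Mm)$.  The direct route is shorter, but what you wrote (with the corrected automorphism group) is a valid alternative.
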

\begin{proof}
  By Fact~\ref{delon} below, \emph{any} type over $\Qp$ is definable.
  Take $a_0, a_1, a_2, \ldots \in \Qp$ algebraically independent over
  $\Qq$.  Take a monster model $\Mm \succeq \Qp$.  Take non-zero $b
  \in \Mm$ infinitesimal over $\Qp$, in the sense that $\val(b) >
  \Gamma(\Qp) = \Zz$.  Take $c \in \Mm$ a pseudolimit of the sequence
  $e_n = \sum_{i = 0}^n a_ib^{2i}$.  As noted above, $q(x,y) :=
  \tp(b,c/\Qp)$ is definable.  By Theorem~\ref{tp-code}, $q$ is coded
  by a tuple $\ulcorner q \urcorner$ in $\Qp$.  By
  definability of $q(x,y)$, the set
  \begin{equation*}
    \left\{ (\alpha_0,\ldots,\alpha_n) \in \Qp^{n+1} : \val\left(c -
    \sum_{i = 0}^n \alpha_i b^{2i}\right) > \val(b^{2n+1}) \right\}
  \end{equation*}
  is definable over $\ulcorner q \urcorner$ for each $n$.  By choice of $c$,
  \begin{equation*}
    \val\left(c - \sum_{i = 0}^n \alpha_i b^{2i}\right) > \val(b^{2n+1}) \iff
    (\alpha_0,\ldots,\alpha_n) = (a_0,\ldots,a_n).
  \end{equation*}
  Therefore $(a_0,\ldots,a_n) \in \dcl(\ulcorner q
  \urcorner)$ for each $n$.  Then
  \begin{equation*}
    n+1 = \dim(a_0,\ldots,a_n/\varnothing) \le \dim(\ulcorner
    q \urcorner / \varnothing)
  \end{equation*}
  for all $n$, which implies $\dim(\ulcorner q \urcorner/\varnothing)$
  is infinite, which implies $\ulcorner q \urcorner$ cannot be a
  finite tuple.
\end{proof}
\begin{fact}[{Delon~\cite{delon}}] \label{delon}
  Any type $p \in S_n(\Qp)$ is definable.
\end{fact}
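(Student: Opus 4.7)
The plan is to show that every quantifier-free $n$-type $\hat{q}$ over $\Qp$ is already definable; Lemma~\ref{qf-def} then yields definability of the complete type $q \in S_n(\Qp)$. By Lemma~\ref{qf-an}, $\hat{q}$ is encoded by the subspaces $I_d \subseteq \Qp[x]_{<d}$ together with the VVS structures $\tau_d$ on $\Qp[x]_{<d}/I_d$. Each $I_d$, being a $\Qp$-subspace of a finite-dimensional $\Qp$-vector space, is automatically $\Qp$-definable, so the real content is to show each $\tau_d$ is $\Qp$-definable.

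The key observation is that Fact~\ref{tcom} holds \emph{unconditionally} when $K = \Qp$: for any $\Mm \succeq \Qp$ and any $b \in \Oo_\Mm$, there exists $a \in \Zz_p$ with $\val(b - a) > \Zz$. Indeed, because $\Mm$ has residue field $\Ff_p$ and $\Zz_p$ is metrically complete, one inductively chooses $a_i \in \Ff_p \subseteq \Zz_p$ so that $b - \sum_{i=0}^N a_i p^i \in p^{N+1}\Oo_\Mm$, and takes $a = \sum_{i \ge 0} a_i p^i \in \Zz_p$. With Fact~\ref{tcom} available without any definability hypothesis, the proof of Lemma~\ref{split} goes through verbatim for \emph{any} $a \in \Mm^n$, yielding that every finite-dimensional $\Qp$-subspace of $\Qp(a)$ is split.

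In particular each $\tau_d$ is split, so by Proposition~\ref{filter} it is determined by its associated complete filtration, a chain of $\Qp$-subspaces of the finite-dimensional $\Qp$-vector space $\Qp[x]_{<d}/I_d$. Each such subspace is $\Qp$-definable, and the VVS comparison relation is then expressible over $\Qp$ purely in terms of this filtration: comparisons between vectors in different layers are read off from the filtration indices, while within a single one-dimensional quotient layer $W_i/W_{i-1}$, we have $\val_{\tau_d}(v) \le \val_{\tau_d}(w)$ iff there exists $\lambda \in \Zz_p$ with $v - \lambda w \in W_{i-1}$. Hence each $\tau_d$ is $\Qp$-definable, $\hat{q}$ is $\Qp$-definable, and Lemma~\ref{qf-def} finishes the proof.

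The main obstacle is upgrading Lemma~\ref{split}: its original statement assumes $\tp(a/K)$ is definable, which is exactly what we are trying to prove. The upgrade rests solely on the unconditional validity of Fact~\ref{tcom} over $\Qp$, which in turn comes down to the metric completeness of $\Qp$ together with the finiteness of its residue field $\Ff_p$; once this is in place, the remaining work is bookkeeping with the filtration produced by Proposition~\ref{filter}.
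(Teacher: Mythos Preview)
Your proof is correct and matches the paper's approach essentially line for line: the paper packages the unconditional $\Qp$-version of Fact~\ref{tcom} as Remark~\ref{tcom2} (citing compactness of $\Zz_p$), the resulting unconditional form of Lemma~\ref{split} as Lemma~\ref{split2}, and the passage from ``each $\tau_d$ is split'' to ``$\tp(a/\Qp)$ is definable'' as Lemma~\ref{qf-an2}, which is just your appeal to Lemma~\ref{qf-def} repackaged. One trivial slip in your filtration description: the inequality should read $\val_{\tau_d}(v) \ge \val_{\tau_d}(w)$ iff there exists $\lambda \in \Zz_p$ with $v - \lambda w \in W_{i-1}$, since $v \equiv \lambda w$ in the quotient with $\lambda \in \Oo$ forces $\val(v) = \val(\lambda) + \val(w) \ge \val(w)$.
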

We give a self-contained proof of Fact~\ref{delon} in the appendix (Theorem~\ref{delon2}).

\section{Application to \textit{dfg} groups} \label{group-app}
Recall the notion of groups with definable f-generics (\textit{dfg}) from Definition~\ref{dfg}.
\begin{theorem} \label{quot}
  In a model of $\pCF$, suppose $G$ is an interpretable \textit{dfg} group and $X$ is
  a definable set with a definable action of $G$.  Then the quotient
  space $X/G$ (the set of orbits) is definable.  More precisely, the
  interpretable set $X/G$ is in interpretable bijection with a
  definable set.
\end{theorem}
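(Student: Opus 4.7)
My plan to prove Theorem~\ref{quot} is to attach to each $x \in X$ a definable type $p_x$ on $X$ via pushforward of the dfg type $q$, and then use Theorem~\ref{code0} to turn the assignment $x \mapsto \ulcorner p_x \urcorner$ into a $G$-invariant, real-valued, orbit-distinguishing interpretable map, realizing $X/G$ as a definable set.

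Concretely, let $q$ be a global $M_0$-definable type on $G$ witnessing dfg. For each $x \in X$, define $\alpha_x : G \to X$ by $\alpha_x(g) := g \cdot x$, and set $p_x := (\alpha_x)_* q$, the pushforward of $q$ under $\alpha_x$. Each $p_x$ is a definable type on $X$ concentrated on the orbit $G \cdot x$, and the defining schema of $p_x$ depends on $x$ uniformly definably, so $x \mapsto p_x$ is an interpretable map $X \to S_X^{\df}(\Mm)$. By Theorem~\ref{code0}, each code $\ulcorner p_x \urcorner$ can be taken to be a real tuple; by the uniformity of the construction (essentially the content of Theorem~\ref{pro-d-literal0}), the assignment $f(x) := \ulcorner p_x \urcorner$ gives a genuinely interpretable function $f : X \to \Mm^\omega$ landing in real tuples.

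Next I would show that the fibers of $f$ are precisely the $G$-orbits of $X$. One direction is free: if $G \cdot x \ne G \cdot y$, the types $p_x$ and $p_y$ concentrate on disjoint definable sets, hence $p_x \ne p_y$ and $f(x) \ne f(y)$. The converse — orbit-invariance of $f$ — is the delicate point. Writing $y = hx$, one computes $p_y = (\alpha_{hx})_* q = (\alpha_x \circ R_h)_* q = (\alpha_x)_*((R_h)_* q)$, where $R_h$ is right-multiplication by $h$, so $p_x = p_y$ amounts to $q$ and the right-translate $(R_h)_* q$ inducing the same pushforward under $\alpha_x$. Since $\alpha_x^{-1}(D)$ is always a union of left cosets of $\mathrm{Stab}_G(x)$, this reduces to a form of right-translation invariance of $q$ modulo stabilizers, which must be extracted from the bounded-left-orbit property given by dfg — possibly by replacing the naive $p_x$ with a refined $G$-invariant variant (for instance by choosing $q$ with additional symmetry, or by identifying $G \cdot x$ with the coset space $G/\mathrm{Stab}_G(x)$ and using a canonical invariant type on that homogeneous space).

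Once orbit-invariance is in place, $f$ descends to an interpretable injection $\bar f : X/G \to \Mm^\omega$. A final uniform-coding step — using that, for each formula $\varphi$ on $X$, the defining formula of $p_x$ is controlled by a single formula in $(x,z)$-variables coming from the definable type $q$ — should yield that the image of $\bar f$ already lies in some finite Cartesian power $\Mm^n$, so that $X/G$ is in interpretable bijection with a definable subset of $\Mm^n$. The main obstacle I expect is the orbit-invariance step: the pushforward $p_x$ is only manifestly controlled under left-translations of $q$, while the compatibility we need involves right-translations, so dfg must be used in an essential and possibly nontrivial way to bridge this gap.
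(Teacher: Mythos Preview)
Your setup is exactly the paper's: push the dfg type $q$ forward along $\alpha_x(g) = g\cdot x$ to get a definable type $p_x$ on $X$, concentrated on the orbit $G\cdot x$, and use Theorem~\ref{code0} to make $\ulcorner p_x\urcorner$ a real tuple. Where your proposal diverges is in trying to make $x\mapsto p_x$ \emph{constant} on orbits. As you compute, for $y = hx$ one has $p_y = (\alpha_x)_*(R_h)_*q$, and dfg only guarantees that the set $\{(R_h)_*q : h\in G\}$ is \emph{bounded}, not that it is a singleton. So in general $p_x$ genuinely varies over the orbit, and the ``refined $G$-invariant variant'' you hope for does not exist without further input. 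This is a real gap, not a missing technicality.

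The paper's move is to abandon orbit-invariance and settle for inter\emph{algebraicity}. Write $e = \ulcorner G\cdot x\urcorner \in X/G$. Since $p_x$ concentrates on $G\cdot x$, one has $e\in\dcl^{\eq}(M_0,\ulcorner p_x\urcorner)$. In the other direction, any $\sigma\in\Aut(\Mm/M_0 e)$ sends $x$ to some $hx$, hence sends $p_x$ to $(\alpha_x)_*(R_h)_*q$; by dfg there are only boundedly many such types, so $\ulcorner p_x\urcorner\in\acl^{\eq}(M_0 e)$. Thus each $e\in X/G$ is interalgebraic over $M_0$ with a real tuple. Now invoke Lemma~\ref{interdef}(\ref{id2}) (which uses definable Skolem functions and coding of finite sets in fields) to upgrade interalgebraicity to interdefinability with a \emph{different} real tuple. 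A standard compactness argument then turns ``every element of $X/G$ is interdefinable over $M_0$ with a real tuple'' into ``$X/G$ is in interpretable bijection with a definable set''. Your hoped-for finite-tuple step is absorbed into this compactness argument; note that by Proposition~\ref{long} the code $\ulcorner p_x\urcorner$ itself can be an honestly infinite tuple, so one cannot simply read off a finite-dimensional target from the schema of $q$.
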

\begin{proof}
  Work in a monster model $\Mm$.  Fix a definable type $q \in G$ with boundedly
  many right translates.  Let $M_0$ be a small model over which $X, G,
  q$ are defined.  It suffices to show that each element of $X/G$ is
  interdefinable over $M_0$ with a real tuple (possibly infinite).  By
  Lemma~\ref{interdef} and Remark~\ref{interdef2}, it suffices to show that each element of
  $X/G$ is \emph{interalgebraic} over $M_0$ with a real tuple.  For $g
  \in G$, let $\rho_g : G \to G$ be the function $\rho_g(x) = x \cdot
  g$.  For $a \in X$, let $\tau_a : G \to X$ be the function
  $\tau_a(x) = x \cdot a$.
  
  Take any $e \in X/G$.  We claim that $e$ is interalgebraic over
  $M_0$ with a real tuple, possibly infinite.  Take any $a \in X$
  lifting $e \in X/G$.  Consider the global definable type
  $\tau_{a,\ast} q$ obtained by pushing forward $q$ along $\tau_a$.
  Then $\tau_{a,\ast} q$ is concentrated on the definable set $X$, so
  it is a definable $n$-type for some $n$ and $\ulcorner \tau_{a,\ast}
  q \urcorner$ is a real tuple by Theorem~\ref{tp-code}.  The
  range of $\tau_a$ is the orbit $G \cdot a$ coded by $e$, so the
  definable type $\tau_{a,\ast} q$ is concentrated on this orbit $G
  \cdot a$ and therefore $e \in \dcl^\eq(\ulcorner \tau_{a,\ast} q
  \urcorner)$.
  \begin{claim}
    If $\sigma \in \Aut(\Mm/eM_0)$, then $\sigma(\tau_{a,\ast} q)$ is
    $\tau_{a,\ast} \rho_{g,\ast} q$ for some $g \in G$.
  \end{claim}
  \begin{claimproof}
    As $\sigma$ fixes $e$, we have $\sigma(a) = g \cdot a$ for some $g
    \in G$.  As $q$ is $M_0$-definable, $\sigma(q) = q$, and so
    $\sigma(\tau_{a,\ast} q) = \tau_{\sigma(a),\ast} q$.  But
    $\tau_{\sigma(a)}(x) = x \cdot \sigma(a) = x \cdot g \cdot a =
    \tau_a(\rho_g(x))$.  Therefore,
    \begin{equation*}
      \sigma(\tau_{a,\ast} q) = \tau_{\sigma(a),\ast} q =
      \tau_{a,\ast} \rho_{g,\ast} q. \qedhere
    \end{equation*}
  \end{claimproof}
  By choice of $q$, there are only a small number of $\rho_{g,\ast} q$
  as $g$ varies, and therefore there are only a small number of
  $\sigma(\tau_{a,\ast} q)$ as $\sigma$ varies in $\Aut(\Mm/e)$.  It
  follows that $\ulcorner \tau_{a,\ast} q \urcorner \in \acl^\eq(e)$.
  Thus $e$ is interalgebraic with the real tuple $\ulcorner
  \tau_{a,\ast} q \urcorner$.
\end{proof}
The following corollary is immediate:
\begin{corollary}
  Let $G$ be a definable group and let $H$ be a definable normal
  subgroup.  If $H$ has \textit{dfg}, then $G/H$ is definable.
\end{corollary}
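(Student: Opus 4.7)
The plan is to invoke Theorem~\ref{quot} directly, with a relabeling of the variables. In the theorem, the interpretable \textit{dfg} group is called $G$ and the definable set on which it acts is called $X$; in the corollary, the roles are played by $H$ (the \textit{dfg} group) and by $G$ (the definable set on which $H$ will act).

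First I would exhibit the definable action. The group $H$ acts on $G$ by, say, right multiplication: the map $H \times G \to G$, $(h,g) \mapsto gh$. This is a definable action of $H$ on $G$ because both the ambient group operation of $G$ and the subset $H \subseteq G$ are definable. The orbits of this action are exactly the left cosets $gH$, so the orbit space in the sense of Theorem~\ref{quot} is precisely the set-theoretic quotient $G/H$.

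Then I would apply Theorem~\ref{quot} to conclude that $G/H$ is in interpretable bijection with a definable set, which gives the desired definability statement. Note that the normality of $H$ plays no role in the \emph{definability} of the quotient as a set; it is only needed to endow $G/H$ with a group structure, which can then be transported across the bijection to a definable group operation on the chosen definable representative. Consequently, there is no substantive obstacle: the corollary is essentially an immediate specialization of Theorem~\ref{quot}, and the only thing to verify is that right multiplication by $H$ is a definable group action on $G$, which is transparent.
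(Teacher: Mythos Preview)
Your proposal is correct and matches the paper, which simply declares the corollary immediate from Theorem~\ref{quot} without further comment. One cosmetic point: the map $(h,g)\mapsto gh$ you wrote is a \emph{right} action rather than a left one; either note that Theorem~\ref{quot} is indifferent to the side of the action, or use $h\cdot g = gh^{-1}$ (a genuine left action with the same orbits $gH$), or use left multiplication $h\cdot g = hg$ and invoke normality to identify $H\backslash G$ with $G/H$.
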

We can also strengthen some results of Pillay and Yao:
\begin{corollary} \label{dim0-quot}
  Let $G$ be a definable group and let $H$ be a definable subgroup
  with \textit{dfg}.  If $\dim(H) = \dim(G)$, then $H$ has finite index in $G$.
\end{corollary}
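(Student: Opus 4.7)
The plan is to reduce the statement to a dimension computation via Theorem~\ref{quot}. Consider the action of $H$ on $G$ by left multiplication $(h,g) \mapsto hg$. Since $H$ has \textit{dfg} and $G$ is a definable set on which $H$ acts definably, Theorem~\ref{quot} applies and tells us that the space of orbits $H \backslash G$ (the set of right cosets $Hg$) is in interpretable bijection with a genuinely definable set. Call this definable set $Q$, and let $\pi : G \to Q$ be the quotient map (now a definable surjection between definable sets).

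Next I would apply additivity of $p$-adic dimension to $\pi$. The fibers of $\pi$ are precisely the right cosets $Hg$, and for each $g \in G$ the map $h \mapsto hg$ is a definable bijection $H \to Hg$, so every fiber has dimension $\dim(H)$. By the fiber-dimension formula in $\pCF$, we obtain
\begin{equation*}
  \dim(G) = \dim(Q) + \dim(H).
\end{equation*}
Combining with the hypothesis $\dim(H) = \dim(G)$ yields $\dim(Q) = 0$. A definable set of dimension zero in a model of $\pCF$ is finite, so $Q$ is finite, which is to say $[G:H] < \infty$.

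The one delicate point is making sure the fiber-dimension formula is applied in the correct setting; but this is precisely why we first invoke Theorem~\ref{quot} to move from the interpretable quotient to a definable set, after which the formula is the standard additivity of $\dim$ along a definable surjection with equidimensional fibers in $\pCF$. Everything else is essentially bookkeeping, so the whole argument should fit in a few lines.
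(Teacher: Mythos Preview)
Your proof is correct and essentially identical to the paper's: apply Theorem~\ref{quot} to the action of $H$ on $G$ to see that the coset space is definable, then use the fiber-dimension formula to conclude it has dimension zero and hence is finite. The only cosmetic difference is that you use left multiplication (right cosets $H\backslash G$) while the paper writes $G/H$; this is immaterial.
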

\begin{proof}
  The quotient space $G/H$ is a definable set, and $\dim(G/H) +
  \dim(H) = \dim(G)$, so $\dim(G/H) = 0$, which implies $G/H$ is
  finite.
\end{proof}
In particular, if $H$ is an open subgroup of $G$ (with respect to
  the Pillay topology on $G$), and $H$ has \textit{dfg}, then $H$ has finite index.  This answers Question 2 in \cite{pillay-yao}.

\section{Directed families and definable types in $\pCF$} \label{ss}

Let $K$ be a model of $\pCF$. 

\begin{fact} \label{fact:simon-star}
Let $\Mm \succeq K$ be a monster model. Let $\varphi(x,b)$ with $b\in \Mm$ be non-forking over $K$ (or equivalently by~\cite{kap-cher12}, non-dividing over $K$). Then $\varphi(x,b)$ extends to a global $K$-definable type.
\end{fact}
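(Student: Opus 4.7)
The plan is to proceed in two stages: first extend $\varphi(x,b)$ to a global complete type that does not fork over $K$, and then upgrade mere $K$-invariance of that type to honest $K$-definability.

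For the first stage, since $\pCF$ is NIP and $K$ is a model, the theorem of Chernikov and Kaplan \cite{kap-cher12} gives that non-forking and non-dividing over $K$ coincide, and a standard compactness argument then shows that $\varphi(x,b)$ is contained in some complete global type $p \in S_n(\Mm)$ that does not fork over $K$.  A classical NIP result (see Simon's NIP book) guarantees that any global non-forking type over a model is automatically $K$-invariant, i.e., fixed by every $\sigma \in \Aut(\Mm/K)$.  So after this stage we have a global $K$-invariant type $p$ extending $\varphi(x,b)$.

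For the second stage, we apply the main theorem of Simon and Starchenko \cite{simon_star_14}: in a dp-minimal theory with definable Skolem functions, every global type invariant over a model $K$ is $K$-definable.  The theory $\pCF$ is dp-minimal (in fact of dp-rank $1$) and has definable Skolem functions in the Macintyre expansion, so this applies to $p$, yielding that $p$ is a global $K$-definable type extending $\varphi(x,b)$, as required.

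The main obstacle is the passage from $\Aut(\Mm/K)$-invariance to genuine $K$-definability in the second stage.  Automorphism-invariance only tells us that each set $\{c \in \Mm^y : \psi(x,c) \in p\}$ is a union of complete types over $K$, which is in general much weaker than $K$-definability.  Simon and Starchenko close this gap by slicing invariant sets along $1$-dimensional definable curves and exploiting dp-minimality, together with definable Skolem functions to make uniform choices, to assemble an explicit definition schema.  Everything else in the argument is either standard NIP machinery or a compactness exercise.
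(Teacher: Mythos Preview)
The paper does not give its own proof of this statement; it is stated as a Fact and simply attributed to Simon and Starchenko \cite{simon_star_14}, with the one-line remark that they proved it for a large class of dp-minimal theories including those with definable Skolem functions.  Your two-stage sketch (extend to a global non-forking type, use NIP over a model to get $K$-invariance, then invoke Simon--Starchenko to upgrade invariance to definability) is correct and is essentially the content of the cited reference, so there is nothing to compare.
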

This was shown for a large class of dp-minimal theories, including those with definable Skolem functions, by Simon and Starchenko~\cite{simon_star_14}.
By a compactness argument Fact~\ref{fact:simon-star} can be restated, in a slightly weaker form, as follows. Recall that $S^\df_n(K)$ denotes the space of definable $n$-types over $K$.
\begin{corollary}\label{cor:sim-star-fact}
If $\mathcal{F}\subseteq \mathcal{P}(K^n)$ is a definable family of sets in $K$ with the finite intersection property, then $\mathcal{F}$ can be partitioned into finitely many subfamilies, each of which extends to a definable type in $S^{\df}_n(K)$.
\end{corollary}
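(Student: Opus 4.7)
The plan is to combine Fact~\ref{fact:simon-star} with a compactness argument. Write $\mathcal{F} = \{D_t\}_{t \in T}$ with $T$ and $\{(x,t) : x \in D_t\}$ both $K$-definable, and let $\varphi(x,t)$ denote the formula $x \in D_t$. Fix a sufficiently saturated monster model $\Mm \succeq K$.

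First, I claim that for every $t \in T(\Mm)$ the formula $\varphi(x,t)$ does not divide over $K$. Given a $K$-indiscernible sequence $(t_i)_{i < \omega}$ with $t_0 = t$, each $t_i$ lies in $T(\Mm)$, so $\{D_{t_i} : i < \omega\}$ is a subfamily of $\mathcal{F}$ in $\Mm$. The FIP of $\mathcal{F}$ is a countable list of first-order statements about its defining parameters (one per finite cardinality), hence transfers from $K$ to $\Mm$ by elementarity, and so $\{\varphi(x, t_i) : i < \omega\}$ is consistent. This establishes non-dividing. By Fact~\ref{fact:simon-star}, every such $\varphi(x,t)$ extends to a global $K$-definable type, which I identify with an element $p_t \in S^\df_n(K)$.

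For each $p \in S^\df_n(K)$ set
\[
E_p := \{t \in T : \varphi(x,t) \in p\},
\]
which is $K$-definable by the defining schema of $p$. By the previous paragraph $T(\Mm) = \bigcup_{p \in S^\df_n(K)} E_p(\Mm)$, so the partial $K$-type $\{t \in T\} \cup \{t \notin E_p : p \in S^\df_n(K)\}$ is inconsistent. By compactness there exist $p_1, \ldots, p_m \in S^\df_n(K)$ with $T \subseteq E_{p_1} \cup \cdots \cup E_{p_m}$. Partition $T$ into pieces $T_i \subseteq E_{p_i}$; then for each $t \in T_i$ we have $D_t \in p_i$, so the subfamily $\{D_t : t \in T_i\}$ extends to $p_i$, as required.

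The only non-formal step is the verification of non-dividing, which works uniformly for all $t \in T(\Mm)$ rather than only for $t \in T(K)$; this uniformity is what allows Fact~\ref{fact:simon-star} to produce a global definable type for each parameter in the monster, which in turn is what makes the compactness reduction to finitely many definable types possible.
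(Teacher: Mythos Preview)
Your proof is correct and follows essentially the same route as the paper's: show that $\varphi(x,t)$ does not divide over $K$ for every $t\in T(\Mm)$, invoke Fact~\ref{fact:simon-star} to land each such $t$ in some $K$-definable set $E_p$, and then use compactness to reduce to finitely many $p_i$. The only difference is that you spell out the non-dividing argument via indiscernible sequences and elementarity of the FIP, whereas the paper simply asserts it.
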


\begin{proof}
Let $\Mm \succeq K$ be a monster model. Let $\varphi(x,y)$ and $\psi(y)$ be $\Ldiv(K)$-formulas such that $\mathcal{F}=\{\varphi(K,a) : a\in \psi(K)\}$ has the finite intersection property.  If $q \in S^{\df}_n(K)$, let $q^\Mm$ denote the extension of $q$ by the same definition scheme to a global $K$-definable type.  For each $q \in S^{\df}_n(K)$, let $D_q = \{a \in \psi(\Mm) : q^\Mm(x) \vdash \varphi(x,a)\}$.  Each $D_q$ is $K$-definable.  If $a \in \psi(\Mm)$, then $\varphi(x,a)$ doesn't divide over $K$ because $\mathcal{F}$ has the finite intersection property.  By Fact~\ref{fact:simon-star}, $\varphi(x,a) \in q^\Mm(x)$ for some $q \in S_n^{\df}(K)$.  Therefore $\psi(\Mm) \subseteq \bigcup_{q \in S^{\df}_n(K)} D_q$.  By compactness, there are finitely many $q_1,\ldots,q_m \in S_n^{\df}(K)$ with $\psi(\Mm) \subseteq \bigcup_{i = 1}^m D_{q_i}$. This means that for every $a \in \psi(\Mm)$, there is some $i$ such that $\varphi(x,a) \in q_i^{\Mm}(x)$.  Therefore, for every $a \in \psi(K)$, there is some $i$ such that $\varphi(x,a) \in q_i(x)$.  
\end{proof}

\begin{remark}\label{rem:pq}
A family of sets $\mathcal{S}$ has the \emph{$(m,n)$-property}, for integers $m\geq n\geq 1$, if the sets in $\mathcal{S}$ are non-empty and, for any $m$ distinct sets in $\mathcal{S}$, there exists $n$ among them with non-empty intersection. 

Let $\Mm \succeq K$ be a monster model. In Appendix B (Corollary~\ref{cor:pq-divide} and Fact~\ref{fct:vc}) we show that, for any $\Ldiv(K)$-formulas $\varphi(x,y)$ and $\psi(y)$, if the family $\mathcal{F}=\{\varphi(K,a) : a\in \psi(K)\}$ has the $(m,2|x|)$-property, for some $m\geq 2|x|$, then $\varphi(x,a)$ does not divide over $K$ for any $a\in \psi(\Mm)$. It follows that, by minimally adapting its proof, Corollary~\ref{cor:sim-star-fact} can be strengthened to the following statement:
\begin{quote}
    Let $\mathcal{F} \subseteq \mathcal{P}(K^n)$ be a definable family of sets in $K$ with the $(m,2n)$-property for some $m\geq 2n$. Then $\mathcal{F}$ can be partitioned into finitely many subfamilies, each of which extends to a definable type in $S^{\df}_n(K)$.
\end{quote}
\end{remark}
Say that a family of sets $\mathcal{F}$ is \emph{downward directed} if $\mathcal{F}$ is non-empty, and for any $X, Y \in \mathcal{F}$ there is $Z \in \mathcal{F}$ such that $Z \subseteq X \cap Y$.  
\begin{lemma}\label{lem:down-dir}
Let $\mathcal{F}$ be a definable downward directed family of sets in a structure $M$ and $p_1,\ldots,p_n$ be types over $M$ such that, for every $D\in \mathcal{F}$, there is some $i\leq n$ such that $D\in p_i$. Then there exists some $i\leq n$ such that $\mathcal{F}\subseteq p_i$.
\end{lemma}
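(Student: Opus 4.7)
The plan is to argue by contradiction, using the downward directedness to produce a single set $D \in \mathcal{F}$ that is small enough to force a contradiction with the hypothesis.

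Suppose toward a contradiction that no single $p_i$ contains the whole family, i.e., for each $i \le n$ there exists $D_i \in \mathcal{F}$ with $D_i \notin p_i$. I would then apply downward directedness inductively to $D_1, \ldots, D_n$: since $\mathcal{F}$ is downward directed, one first picks $E_2 \in \mathcal{F}$ with $E_2 \subseteq D_1 \cap D_2$, then $E_3 \in \mathcal{F}$ with $E_3 \subseteq E_2 \cap D_3$, and so on, obtaining some $D \in \mathcal{F}$ with $D \subseteq D_1 \cap D_2 \cap \cdots \cap D_n$. By hypothesis $D \in p_j$ for some $j \le n$. But recall that $D \in p_j$ unpacks to $p_j(x) \vdash x \in D$, so the inclusion $D \subseteq D_j$ upgrades this to $p_j(x) \vdash x \in D_j$, i.e.\ $D_j \in p_j$, contradicting our choice of $D_j$.

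There is essentially no obstacle here: the argument is purely combinatorial and does not actually require definability of $\mathcal{F}$ or completeness of the $p_i$, only the closure of each $p_i$ under supersets of definable sets and the fact that $\mathcal{F}$ is closed downward modulo refinement. The key move is simply that downward directedness lets finitely many ``bad witnesses'' be compressed into a single element of $\mathcal{F}$, and any $p_i$ containing that element must then contain each of the alleged witnesses.
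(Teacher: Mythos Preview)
Your proof is correct and is essentially identical to the paper's: both argue by contradiction, pick witnesses $D_i \notin p_i$, use downward directedness to find a single $D \in \mathcal{F}$ below all of them, and then observe this contradicts the hypothesis (the paper phrases the last step as ``$D \notin p_i$ for every $i$'' while you phrase it contrapositively via closure under supersets, but the content is the same).
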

\begin{proof}
Towards a contradiction suppose that, for every $i\leq n$, there exists some $D_i \in \mathcal{F}$ with $D_i \notin p_i$. By downward directedness there exists some $D\in \mathcal{F}$ with $D\subseteq \bigcap_{i\leq n} D_i$, which will satisfy $D\notin p_i$ for every $i\leq n$.   
\end{proof}

Recall the notion of ``$\Gamma$-family'' from Definition~\ref{gfam}.
\begin{proposition} \label{refine}
  Let $\mathcal{F}$ be a definable downward directed family of non-empty sets in $K$.
  \begin{enumerate}
  \item \label{ref1} $\mathcal{F}$ extends to a definable type.
  \item \label{ref2} $\mathcal{F}$ is refined by some $\Gamma$-family.
  \end{enumerate}
\end{proposition}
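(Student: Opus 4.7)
The plan is to derive both parts almost directly from the pieces already set up in this section, using Corollary~\ref{cor:sim-star-fact}, Lemma~\ref{lem:down-dir}, and Proposition~\ref{refine-g}. The strategy is: prove (\ref{ref1}) first, and then deduce (\ref{ref2}) from (\ref{ref1}) via the heir-extension argument of Proposition~\ref{refine-g}.

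For (\ref{ref1}), I would first observe that $\mathcal{F}$ has the finite intersection property. Given $X_1,\ldots,X_k \in \mathcal{F}$, a short induction using downward directedness yields some $Y \in \mathcal{F}$ with $Y \subseteq X_1 \cap \cdots \cap X_k$; since members of $\mathcal{F}$ are non-empty, $X_1 \cap \cdots \cap X_k \supseteq Y \neq \varnothing$. Now Corollary~\ref{cor:sim-star-fact} applies and partitions $\mathcal{F}$ into finitely many subfamilies, each extending to a definable type $p_i \in S^{\df}_n(K)$, for $i = 1,\ldots,m$. In particular, every $D \in \mathcal{F}$ lies in some $p_i$. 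Applying Lemma~\ref{lem:down-dir} (with $M = K$ and the finite list $p_1,\ldots,p_m$), downward directedness of $\mathcal{F}$ forces $\mathcal{F} \subseteq p_i$ for some single $i$. This $p_i$ is a definable type extending $\mathcal{F}$.

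For (\ref{ref2}), let $p$ be a definable type extending $\mathcal{F}$, as produced in the previous paragraph. Since $\mathcal{F}$ is a definable family, Proposition~\ref{refine-g} applies directly: there exists a $\Gamma$-family $\{X_\gamma\}_{\gamma \in \Gamma}$ that is extended by $p$ and that refines $\mathcal{F}$. This gives the required $\Gamma$-family refinement.

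There is essentially no serious obstacle given the tools already developed. The only small subtlety is ensuring that the hypotheses of Corollary~\ref{cor:sim-star-fact} match: one must remember that $\mathcal{F}$ is assumed to be a \emph{definable} family (not merely a family of definable sets), so it can be written in the parameterized form $\{\varphi(K,a) : a \in \psi(K)\}$ needed to invoke that corollary, and then the finite intersection property derived above hands us the hypothesis. After that, the implication $(\ref{ref1}) \Rightarrow (\ref{ref2})$ is a one-line appeal to Proposition~\ref{refine-g}.
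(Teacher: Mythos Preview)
Your proposal is correct and follows exactly the same route as the paper's proof: part~(\ref{ref1}) is obtained by noting the finite intersection property, applying Corollary~\ref{cor:sim-star-fact}, and then using Lemma~\ref{lem:down-dir} to select a single $p_i$; part~(\ref{ref2}) is then an immediate application of Proposition~\ref{refine-g} to the type produced in part~(\ref{ref1}).
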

\begin{proof}
  \begin{enumerate}
  \item $\mathcal{F}$ has the finite intersection property.  Apply
    Corollary~\ref{cor:sim-star-fact} and Lemma~\ref{lem:down-dir}.
  \item By part (\ref{ref1}), there is a definable type $p$ refining
    $\mathcal{F}$.  Apply Proposition~\ref{refine-g}. \qedhere
  \end{enumerate}
\end{proof}

\begin{theorem} \label{tri}
  Let $\mathcal{F}$ be a definable family of sets in $K$.
  The following are equivalent:
  \begin{enumerate}
  \item \label{tr1} $\mathcal{F}$ extends to a definable type $p \in
    S^{\df}_n(K)$.
  \item \label{tr2} $\mathcal{F}$ is refined by a $\Gamma$-family.
  \item \label{tr3} $\mathcal{F}$ is refined by a downward directed definable
    family of non-empty sets.
  \end{enumerate}
\end{theorem}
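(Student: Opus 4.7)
The plan is to prove the three-way equivalence as a cycle $(\ref{tr1}) \Rightarrow (\ref{tr2}) \Rightarrow (\ref{tr3}) \Rightarrow (\ref{tr1})$, each step being a short application of previously established machinery. The substantive content is already packaged into Propositions~\ref{refine-g} and~\ref{refine}; what remains is essentially definition-chasing.

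For $(\ref{tr1}) \Rightarrow (\ref{tr2})$ I would invoke Proposition~\ref{refine-g} directly: given any definable type $p$ extending $\mathcal{F}$, that proposition produces a $\Gamma$-family refining $\mathcal{F}$. For $(\ref{tr2}) \Rightarrow (\ref{tr3})$ I observe that any $\Gamma$-family $\{X_\gamma\}_{\gamma \in \Gamma}$ is itself a downward directed definable family of non-empty sets: non-emptiness is built into Definition~\ref{gfam}, and the monotonicity clause $\gamma' \ge \gamma \implies X_{\gamma'} \subseteq X_\gamma$ together with the total order on $\Gamma$ gives downward directedness via $X_{\max(\gamma_1,\gamma_2)} \subseteq X_{\gamma_1} \cap X_{\gamma_2}$. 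A family refines itself trivially, so the same $\Gamma$-family that witnesses (\ref{tr2}) also witnesses (\ref{tr3}).

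For $(\ref{tr3}) \Rightarrow (\ref{tr1})$, let $\mathcal{F}'$ be a downward directed definable family of non-empty sets refining $\mathcal{F}$. By Proposition~\ref{refine}(\ref{ref1}), $\mathcal{F}'$ extends to some definable type $p \in S^\df_n(K)$. The refinement condition (Definition~\ref{refdef}) says that for every $X \in \mathcal{F}$ there is $Y \in \mathcal{F}'$ with $Y \subseteq X$; since $p(x) \vdash x \in Y$, we conclude $p(x) \vdash x \in X$, so $p$ extends $\mathcal{F}$ and witnesses (\ref{tr1}).

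There is no real obstacle here. The one point to be careful about is keeping the direction of refinement straight---a refinement is \emph{smaller}, not larger---so that being extended by a type transfers \emph{from} $\mathcal{F}'$ \emph{to} $\mathcal{F}$ when $\mathcal{F}'$ refines $\mathcal{F}$, rather than the other way around.
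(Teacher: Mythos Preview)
Your proposal is correct and matches the paper's proof essentially verbatim: the same cycle $(\ref{tr1})\Rightarrow(\ref{tr2})\Rightarrow(\ref{tr3})\Rightarrow(\ref{tr1})$, with the same three invocations of Proposition~\ref{refine-g}, the observation that $\Gamma$-families are downward directed, and Proposition~\ref{refine}(\ref{ref1}). Your write-up is in fact slightly more explicit than the paper's, spelling out why downward directedness holds and why the type extending the refining family $\mathcal{F}'$ also extends $\mathcal{F}$.
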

\begin{proof}
  \begin{description}
  \item[$(\ref{tr1})\Rightarrow(\ref{tr2}).$] Proposition~\ref{refine-g}.
  \item[$(\ref{tr2})\Rightarrow(\ref{tr3}).$] $\Gamma$-families are downward directed.
  \item[$(\ref{tr3})\Rightarrow(\ref{tr1}).$] Proposition~\ref{refine}(\ref{ref1}). \qedhere
  \end{description}
\end{proof}

\section{Spaces of definable types in $\pCF$} \label{prodsec}

In this section, we review the notions of pro-definable and strict
pro-definable sets from \cite{kamensky} and \cite[Section~2.2]{HL}, as
well as the results of Cubides Kovacsics, Hils, and Ye \cite{cubides-ye,
  cubides-ye-hils} on the strict pro-definability of the space of
definable types in certain theories including $\pCF$.

Let $\Mm$ be a monster model of some theory.  Recall that a
\emph{$\ast$-type} is a partial type in infinitely (or finitely) many
variables.  A \emph{pro-definable set} is a set defined by a (small)
$\ast$-type\footnote{More precisely, these should be called
``$\ast$-definable sets'' rather than ``pro-definable sets.''
However, the categories of pro-definable sets and $\ast$-definable
sets are equivalent:  This is mentioned in \cite[Section~2.2]{HL},
and easy to see from the explicit description of the
pro-definable category in \cite[Corollary~8]{kamensky}.  We believe
$\ast$-definable sets are conceptually simpler than genuine
pro-definable sets.}.  For example, definable sets and type-definable
sets (in finitely many variables) are pro-definable.  Infinite
products of definable sets are pro-definable.  If $X, Y$ are
pro-definable sets, then a function $f : X \to Y$ is \emph{pro-definable} if
the graph of $f$ is pro-definable as a subset of $X \times Y$.  This
yields a category of pro-definable sets.  If $X$ is
pro-definable, a subset $D \subseteq X$ is \emph{relatively definable}
(in $X$) if $D$ is defined as a subset of $X$ by an $\mathcal{L}(\Mm)$-formula (mentioning only finitely many variables).  By compactness, this is
equivalent to $D$ and $X \setminus D$ both being pro-definable.

Any pro-definable set $X$ sits inside a product $X \subseteq \prod_{i
  \in I} D_i$ of definable sets, where $I$ is small, but possibly
infinite (for example, we can take $D_i$ to be the sort of the $i$th
variable in the tuple of variables).  For $J \subseteq I$, let $\pi_J : \prod_{i
  \in I} D_i \to \prod_{i \in J} D_i$ be the projection.  Following
Hrushovski and Loeser \cite[Section~2.2]{HL}, one says that $X$ is
\emph{strictly pro-definable} if the following equivalent conditions
hold:
\begin{enumerate}
\item \label{c1} For any finite $J \subseteq I$, the set $\pi_J(X)$ is a
  definable subset of $\prod_{i \in J} D_i$.
\item \label{c2} For any definable set $Y$ and pro-definable function $f : X \to
  Y$, the image $f(X)$ is definable.
\end{enumerate}

The first condition is more practical to work with, while the second
condition shows that strictness is an intrinsic property.  The
equivalence of (\ref{c1}) and (\ref{c2}) is left as an exercise to the
reader.
A \emph{(strictly) pro-interpretable} set is a (strictly)
pro-definable set in $\Mm^\eq$.  For future use, we record a
useful fact about strict pro-definability:
\begin{remark}[Quantification over strictly pro-definable sets] \label{quant}
  Let $X, Y$ be pro-definable sets.  Let $R \subseteq X \times Y$ be
  relatively definable.  If $X$ is strictly pro-definable, then the sets
  \begin{gather*}
    R_\exists = \{b \in Y : (\exists a \in X) R(a,b)\} \\
    R_\forall = \{b \in Y : (\forall a \in X) R(a,b)\} 
  \end{gather*}
  are relatively definable subsets of $Y$.
\end{remark}
\begin{proof}
  Let $\varphi(x,y)$ be an $\mathcal{L}(\Mm)$-formula defining $R$.  Then $\varphi(x,y)$
  uses only finitely many variables from $x$, so it is equivalent to
  $\varphi'(\pi(x),y)$ where $\pi$ is a coordinate projection onto
  finitely many coordinates.  Replacing $X$ with $\pi(X)$ and
  $\varphi$ with $\varphi'$, we may assume $X$ is definable, rather than pro-definable.  Then
  $R_\exists$ and $R_\forall$ are defined by the first-order $\mathcal{L}(\Mm)$-formulas
  $\exists x \in X~ \varphi(x,y)$ and $\forall x \in X ~
  \varphi(x,y)$.
\end{proof}

Recall that $S^{\df}_n(M)$ denotes the space of definable types over a model
$M$.  Recall that an $\mathcal{L}$-theory $T$ has \emph{uniform
definability of definable types\footnote{Called ``uniform definability
of types'' in \cite{cubides-ye}.}} (UDDT) if definable types are uniformly
definable in models of $T$: for any $\mathcal{L}$-formula
$\varphi(x,y)$ there is an $\mathcal{L}$-formula $\psi(y,z)$ such that
if $M \models T$ and $p \in S^{\df}_x(M)$, then there is
$c_{p,\varphi} \in M^z$ such that
\begin{equation*}
  \{b \in M^y : \varphi(x,b) \in p(x)\} = \psi(M,c_{p,\varphi}).
\end{equation*}

Replacing $\psi$ with an $\mathcal{L}^\eq$-formula, we may assume that
\begin{equation*}
  \psi(M,c) = \psi(M,c') \implies c = c',
\end{equation*}
and then $c_{p,\varphi}$ is uniquely determined.  Fix such a formula $\psi = d(\varphi)$ for each $\varphi$. For any variable tuple $x$, let $\mathcal{L}(x)$ denote the set of partitioned $\mathcal{L}$-formulas of the form $\varphi(x;y)$ (with varying $y$). Note that the map
\begin{equation*}
  p \mapsto (c_{p,\varphi} : \varphi \in \mathcal{L}(x)) \tag{$\dag$}
\end{equation*}
is injective on $S^{\df}_x(M)$.
Now, suppose the model $M$ is a monster model $\Mm$.  If $X \subseteq \Mm^x$ is definable,
let $X^{\df}$ be the image of $S^{\df}_X(\Mm)$ under the map ($\dag$).
\begin{fact} \label{pro-d}
  Assuming UDDT, $X^{\df}$ is pro-interpretable.
\end{fact}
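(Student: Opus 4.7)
The plan is to exhibit $X^\df$ as a $\ast$-definable (equivalently, pro-interpretable) subset of $\prod_{\varphi \in \mathcal{L}(x)} D_\varphi$, where $D_\varphi$ is the imaginary sort of $z$ in $\psi_\varphi(y, z) := d(\varphi)$. Since the map $(\dag)$ is injective with image $X^\df$ by hypothesis, the task reduces to giving a small collection of first-order conditions on the variables $(z_\varphi)_{\varphi \in \mathcal{L}(x)}$ whose solutions in $\prod_\varphi D_\varphi$ are exactly the tuples arising from definable types $p \in S^\df_X(\Mm)$.

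For a tuple $(c_\varphi)_\varphi$, associate the partial type
\[ p_{(c_\varphi)}(x) := \{\varphi(x, b) : \varphi \in \mathcal{L}(x), ~ b \in \psi_\varphi(\Mm, c_\varphi)\}. \]
I would impose two families of axioms:
\begin{itemize}
\item[(a)] (Completeness) For each $\varphi(x; y) \in \mathcal{L}(x)$:
\[ \forall y \, \bigl( \psi_{\neg\varphi}(y, z_{\neg\varphi}) \leftrightarrow \neg \psi_\varphi(y, z_\varphi) \bigr). \]
\item[(b)] (Finite consistency on $X$) For each finite list $\varphi_1(x; y_1), \ldots, \varphi_n(x; y_n)$ from $\mathcal{L}(x)$:
\[ \forall y_1 \cdots y_n \, \left( \bigwedge_{i=1}^n \psi_{\varphi_i}(y_i, z_{\varphi_i}) \to \exists x \, \Bigl( x \in X \wedge \bigwedge_{i=1}^n \varphi_i(x, y_i) \Bigr) \right). \]
\end{itemize}
Each axiom mentions only finitely many of the $z_\varphi$, and the total number of axioms is at most $|\mathcal{L}|$, so the collection forms a small partial type in $\Mm^\eq$.

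The forward direction is immediate: if $(c_\varphi) = (c_{p,\varphi})$ for some $p \in S^\df_X(\Mm)$, then (a) encodes completeness of $p$, and (b) encodes finite consistency of $p$ together with its concentration on $X$. For the reverse, suppose $(c_\varphi)$ satisfies (a) and (b). Axiom (a) makes $p_{(c_\varphi)}$ complete; axiom (b) at size $n$ states that any $n$-element subset of $p_{(c_\varphi)}$ is realized in $X$, so $p_{(c_\varphi)}$ is consistent; and applying (b) with $\varphi(x) := (x \notin X)$ forces $\psi_\varphi(z_\varphi)$ to be false, whence by (a) the formula $x \in X$ belongs to $p_{(c_\varphi)}$, giving concentration on $X$. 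By construction $p_{(c_\varphi)}$ is definable via the scheme $\{\psi_\varphi\}_\varphi$; and since $\psi_\varphi(\Mm, c_\varphi) = \{b : \varphi(x,b) \in p_{(c_\varphi)}\}$ by fiat, the uniqueness clause $\psi(y,c) = \psi(y,c') \Rightarrow c = c'$ forces the canonical parameter to satisfy $c_{p_{(c_\varphi)}, \varphi} = c_\varphi$. Hence $(c_\varphi)$ lies in the image of $(\dag)$.

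The main obstacle is the apparently infinitary nature of finite consistency: a priori, finite consistency of $p_{(c_\varphi)}$ is a quantification over all finite subsets of a type, which is not first-order in $(c_\varphi)$. This is resolved by axiom (b), which quantifies externally over finite tuples of \emph{formulas in the language} (boundedly many, yielding one axiom per tuple) and internally over instantiations by elements of $\Mm$ via $\forall y_1 \cdots y_n$. A minor bookkeeping issue is that different formulas $\varphi$ use distinct $y$-variables, but this is accommodated without difficulty.
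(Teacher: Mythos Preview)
Your argument is correct and is essentially the standard Hrushovski--Loeser argument that the paper cites; note that the paper does not itself prove Fact~\ref{pro-d} but merely refers to \cite[Proposition~4.1]{cubides-ye} and \cite[Lemma~2.4.1]{HL}. One small point of imprecision: when you argue concentration on $X$ by taking ``$\varphi(x) := (x \notin X)$'', bear in mind that $X$ may be defined with parameters, so strictly speaking you should write $X = \chi(\Mm,e)$ for an $\mathcal{L}$-formula $\chi(x,y)$ and apply axiom (b) with $\varphi_1 = \neg\chi$ and $y_1 = e$; the conclusion is the same.
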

This fact is proved by Cubides Kovacsics and Ye
\cite[Proposition~4.1]{cubides-ye} building off an argument of
Hrushovski and Loeser \cite[Lemma~2.4.1]{HL}.  Fact~\ref{pro-d} lets
us identify $S^{\df}_X(\Mm)$ with the pro-interpretable set $X^{\df}$.
Restricting to $\Aut(\Mm/M)$-invariant points, we can identify
$S^{\df}_X(M)$ with $X^{\df}(M)$ for any small model $M$.
\begin{fact}[{Cubides-Ye~\cite{cubides-ye}}] \label{uddt}
  The theory $\pCF$ has UDDT.
\end{fact}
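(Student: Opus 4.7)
The plan is to reduce UDDT to the quantifier-free case using Macintyre's quantifier elimination in the enriched language $\mathcal{L}_\chi$, and then to read off uniform definability from the VVS analysis of quantifier-free definable types developed in Section~\ref{qftp-tp}. Since $\mathcal{L}_\chi$ is a definitional expansion of $\Ldiv$ in which $\pCF$ has quantifier elimination, every $\Ldiv$-formula $\varphi(x;y)$ is $\pCF$-equivalent to a Boolean combination of atomic $\mathcal{L}_\chi$-formulas of three kinds: polynomial equations $P(x;y)=0$, valuation inequalities $\val(P(x;y))\le \val(Q(x;y))$, and Macintyre predicates $P_{m,c}(P(x;y))$ with $P,Q$ polynomials over $\Zz$. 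Uniform definability of the schema $p \mapsto d_p \varphi$ is closed under Boolean combinations, so it suffices to treat each atomic case.

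For the first two atomic types, fix $d$ exceeding the $x$-degrees of all polynomials appearing in $\varphi$. For a definable type $p\in S^\df_x(K)$, Lemma~\ref{qf-an} shows that whether $P(x;b)=0$ or $\val(P(x;b))\le \val(Q(x;b))$ lies in $p$ is determined by the pair $(I_d,\tau_d)$: the kernel $I_d\subseteq K[x]_{<d}$ of evaluation at a realization of $p$ and the induced VVS structure $\tau_d$, which is split by Lemma~\ref{split}. By Proposition~\ref{split-code} together with \cite[Lemma~4.3]{johnson}, $(I_d,\tau_d)$ is coded uniformly in $p$ by a real tuple $z(p)\in K$. Since $b\mapsto P(x;b)$ is a $0$-definable map into $K[x]_{<d}$, the two membership conditions then become $\Ldiv(z(p))$-formulas in $b$ whose $\Ldiv$-content is independent of $p$, giving uniform definability in these cases.

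For the Macintyre predicates $P_{m,c}(P(x;y))$, I would augment $(I_d,\tau_d)$ with a bounded amount of $\rv$-data. Using Lemma~\ref{mac-rv} to factor $\chi_m$ through $\rv_{f(m)}$, I record, for each element $v$ of a fixed splitting basis of $(K[x]_{<d}/I_d,\tau_d)$, the value $\rv_{f(m)}(v)$ computed in any realization of $p$. This augmented datum is again uniformly coded in $K$: $\RV_n$ sits in a short exact sequence with $\Gamma$ and $(\Oo/p^{n+1}\Oo)^\times$, both of which interpret in $K$. The enhanced datum determines $\chi_m$ on every element of $K[x]_{<d}/I_d$, so $\{b : P_{m,c}(P(x;b))\in p\}$ is uniformly definable in the enlarged code. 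Taking Boolean combinations of the three cases yields a single $\Ldiv$-formula $\psi(y;z)$ witnessing UDDT for $\varphi$. The main obstacle is precisely this Macintyre-predicate case, because the basic VVS data $(I_d,\tau_d)$ captures only $\Ldiv$-information and does not on its own determine $\chi_m$-values; Lemma~\ref{mac-rv} supplies exactly the extra $\rv$-data needed, and the structure of $\RV_n$ ensures the enlargement remains coded in the field sort uniformly in $p$.
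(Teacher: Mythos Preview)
Your approach differs from the paper's, which (in Theorem~\ref{uddt2}) argues nonconstructively by compactness: Lemma~\ref{kn} (via Lemma~\ref{qf-an2}) shows that the class of triples $(M,K,a)$ with $K\preceq M\models\pCF$ and $\tp(a/K)$ definable is elementary, and then for each $\varphi$ compactness produces finitely many candidate schemata $\psi$ that together define all $\varphi$-types.  Your route is more explicit, building the schema directly from Macintyre quantifier elimination and the VVS analysis; when it works it yields a concrete description of $d_p\varphi$, which the compactness argument does not.

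The first two atomic cases are fine, but there is a genuine gap in the Macintyre-predicate case.  You propose to record $\rv_{f(m)}(v_i(a))$ for each $v_i$ in a splitting basis, claiming this is coded in $K$ because ``$\RV_n$ sits in a short exact sequence with $\Gamma$ and $(\Oo/p^{n+1}\Oo)^\times$, both of which interpret in $K$''.  But the defining property of a splitting basis is that the $\val(v_i(a))$ lie in \emph{distinct} cosets of $\Gamma(K)$ inside $\Gamma(M)$; so, apart from at most one index, $\val(v_i(a))\notin\Gamma(K)$ and hence $\rv_{f(m)}(v_i(a))\notin\RV_{f(m)}(K)$.  These elements are genuinely not codable in $K$.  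The repair is to record instead the finite data $\chi_m(v_i(a))\in Q_m$, which lives in a fixed finite set independent of $K$.  This suffices: writing $P(x;b)=\sum_i c_i(b)v_i(x)$ in the splitting basis and setting $i_0=\max\{i:c_i(b)\ne 0\}$, the coset separation gives $\val(c_i(b)v_i(a))-\val(c_{i_0}(b)v_{i_0}(a))>\Gamma(K)$ for each $i<i_0$ with $c_i(b)\ne 0$, whence $P(a;b)/(c_{i_0}(b)v_{i_0}(a))\in 1+p^{f(m)+1}\Oo_M$ and so $\chi_m(P(a;b))=\chi_m(c_{i_0}(b))\cdot\chi_m(v_{i_0}(a))$ by Lemma~\ref{mac-rv}.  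With this correction your argument goes through.
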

Consequently, the pro-interpretable set $X^\df$ exists for any definable set $X$.
\begin{fact}[{Cubides, Hils, Ye~\cite{cubides-ye-hils}}] \label{strict}
  In $\pCF$, the pro-interpretable set $X^{\df}$ is strictly pro-interpretable.
\end{fact}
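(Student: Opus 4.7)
The plan is to realize $X^\df$ concretely as a strictly pro-definable set using the quantifier-free analysis developed in Section~\ref{tools}.  By Theorem~\ref{tp-code} (together with Lemma~\ref{qf-def-2}), every definable $n$-type is interdefinable with its quantifier-free part, so the map $q \mapsto \hat q$ gives a definable bijection $X^\df \to \widehat{X}^\df$ onto the space of definable quantifier-free $n$-types.  Since the relevant codes are real tuples, it suffices to prove that $\widehat{X}^\df$ is strictly pro-definable, and this simultaneously recovers Theorem~\ref{pro-d-literal}.

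By Lemma~\ref{qf-an}, each definable quantifier-free type $\hat q$ is encoded by its compatible sequence $(I_d, \tau_d)_{d \ge 1}$, where $I_d$ is a $K$-subspace of $K[x]_{<d}$ and $\tau_d$ is a definable split VVS structure on $K[x]_{<d}/I_d$.  Let $\mathcal{S}_d$ denote the set of such pairs $(I, \tau)$.  By Proposition~\ref{filter}, split VVS structures correspond bijectively to complete filtrations of the quotient, which are finite subspace data; together with Proposition~\ref{split-code}, this makes $\mathcal{S}_d$ a definable set.  The compatibility between $(I_d, \tau_d)$ and $(I_{d'}, \tau_{d'})$ for $d < d'$ (namely, that the inclusion $K[x]_{<d} \hookrightarrow K[x]_{<d'}$ respects both the subspaces and the VVS structures) is first-order.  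Hence $\widehat{X}^\df$ sits inside $\prod_d \mathcal{S}_d$ as a pro-definable subset cut out by first-order compatibility.

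For strict pro-definability, I would show that for each $D$, the projection image $\widehat{X}^\df \to \prod_{d \le D} \mathcal{S}_d$ is definable by arguing that this image equals the definable set of compatible finite sequences $(I_d, \tau_d)_{d \le D}$.  The nontrivial direction is that any such compatible finite sequence extends to a definable quantifier-free type, which I would prove by induction on $D$: given a definable split $\tau_D$ on $K[x]_{<D}/I_D$, construct a definable split extension $\tau_{D+1}$ on $K[x]_{<D+1}/I_{D+1}$ for a suitable $I_{D+1}$, and verify that the resulting data is realized by some $a \in \Mm^n$ with $\qftp(a/K)$ definable.  The main obstacle is this realization step: the key ingredients would be Lemma~\ref{split} (which ensures that VVS structures arising from definable types are always split) and Fact~\ref{tcom} (truncation), applied iteratively in the style of the proof of Lemma~\ref{split} to eliminate unwanted ties between the new valuation and $\Gamma(K)$, thereby producing a tuple $a$ whose quantifier-free type is definable and realizes the prescribed extension.
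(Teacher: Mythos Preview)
There is a genuine gap at the first step.  You claim that $q \mapsto \hat q$ is a bijection from $X^\df$ onto the space of definable quantifier-free types, citing the interdefinability of $\ulcorner q\urcorner$ and $\ulcorner \hat q\urcorner$ from Theorem~\ref{tp-code}.  But interdefinability of codes does \emph{not} give injectivity of this map: it only says $\ulcorner q\urcorner \in \dcl(\ulcorner \hat q\urcorner)$, and nothing prevents distinct definable types from sharing the same quantifier-free part.  Concretely, over $\Qp$ take an infinitesimal square $a$ (with $\val(a) > \Zz$); then $a$ and $pa$ have the same quantifier-free type over $\Qp$, yet $P_2(a)$ holds while $P_2(pa)$ fails since $\val(pa)$ has the opposite parity.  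By Fact~\ref{delon} both types are definable.  So the passage to $\widehat{X}^\df$ loses information, and strict pro-definability there would not transfer back to $X^\df$.  Even for $\widehat{X}^\df$ itself, the realization step is underspecified: your ``compatibility'' only enforces that the inclusions $K[x]_{<d} \hookrightarrow K[x]_{<d'}$ respect $I$ and $\tau$, but data arising from an actual tuple must also satisfy multiplicative constraints tying the cut of $\val(x^i x^j)$ to those of $\val(x^i)$ and $\val(x^j)$, and the tools you cite (Lemma~\ref{split}, Fact~\ref{tcom}) produce split VVS structures from definable types rather than the reverse.

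The paper's proof (Theorem~\ref{strict3} in the appendix) takes a completely different route that avoids the quantifier-free analysis.  Working directly with the UDDT presentation, it reduces to showing that $D = \{c_{p,\varphi} : p \in S^\df_k(\Mm)\}$ is definable for a single formula $\varphi$ closed under complementation.  Membership in $D$ is then characterized by two conditions: a consistency condition on complements, and the condition that the family $\{\varphi(\Mm,b) : b \in \psi(\Mm,c)\}$ is refined by a definable downward-directed family of non-empty sets.  The equivalence of the latter with ``extends to a definable type'' is precisely Theorem~\ref{tri}, whose proof rests on the Simon--Starchenko result (Fact~\ref{fact:simon-star}).  This makes $D$ $\vee$-definable; since $D$ is also type-definable as a coordinate projection of $X^\df$, it is definable by compactness.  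Thus the key external input is the Simon--Starchenko machinery of Section~\ref{ss}, not the VVS analysis of Section~\ref{tools}.
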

To be self-contained, we give proofs of Facts~\ref{uddt} and \ref{strict} in the appendix (Theorems~\ref{uddt2} and \ref{strict3}).

The existence of real codes for definable types implies the following:
\begin{theorem} \label{pro-d-literal}
  Work in a monster model $\Mm$ of $\pCF$.
  If $X$ is a definable set, then the pro-interpretable set $X^{\df}$
  is pro-definable.  (More precisely, it is in pro-interpretable
  bijection with a pro-definable set.)
\end{theorem}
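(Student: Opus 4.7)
The plan is to convert the proof of Theorem~\ref{tp-code} into a uniform construction, yielding a pro-interpretable bijection between $X^{\df}$ and a pro-definable set. The first step: for each $p \in X^{\df}$ and each $d < \omega$, form $I_d(p) := \{P \in \Mm[x]_{<d} : (P(x) = 0) \in p(x)\}$ and $\tau_d(p)$, the VVS structure on $\Mm[x]_{<d}/I_d(p)$ induced by $p$, as in Lemma~\ref{qf-an}. By Lemma~\ref{qf-an}(\ref{qan3})--(\ref{qan4}), $\tau_d(p)$ is definable and split. Using \cite[Lemma~4.3]{johnson} and Proposition~\ref{split-code} (encoding $\tau_d(p)$ via the lift of its associated filtration to a flag in $\Mm[x]_{<d}$ refining $I_d(p)$), the pair $(I_d(p),\tau_d(p))$ has a real code $e_d(p) \in \Mm^{r_d}$, with $r_d$ depending only on $d$ and $n$. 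Moreover $p \mapsto e_d(p)$ is $\eq$-definable, since $I_d(p)$ and $\tau_d(p)$ can be read off from the definition scheme of $p$ on atomic $\Ldiv$-formulas of degree $<d$. Setting $Z := \prod_{d < \omega} \Mm^{r_d}$, this gives a pro-interpretable function $\phi : X^{\df} \to Z$, which is injective by the interdefinability of $p$ with $\hat{p}$ established in the proof of Theorem~\ref{tp-code}.

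The next step is to characterize the image $Y := \phi(X^{\df})$. For $z \in Z$ decoding to a sequence $(I_d, \tau_d)_{d<\omega}$, let $\hat{p}_z(x)$ denote the quantifier-free $\Ldiv$-partial type with real parameters encoded in $z$ asserting $P(x) = 0$ for $P \in I_d$, together with the valuation relations dictated by $\tau_d$. I would show $z \in Y$ if and only if $\hat{p}_z(x) \cup \{x \in X\}$ is consistent. One direction is immediate; for the converse, by saturation take $a \in X(\Mm)$ realizing $\hat{p}_z$, note $\qftp(a/\Mm)$ has $(I_d, \tau_d)$-data equal to $z$, invoke Lemma~\ref{qf-an} (together with realness of $z$) to deduce $\qftp(a/\Mm)$ is definable, then apply Lemma~\ref{qf-def} to extend to a definable complete $p \in X^{\df}$ with $\phi(p) = z$.

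The last step, which I expect to be the main obstacle, is to argue that consistency of $\hat{p}_z(x) \cup \{x \in X\}$ is a pro-definable (as opposed to merely pro-interpretable) condition on $z$. By compactness this reduces to showing that, for each finite fragment of the partial type, satisfiability in $\Mm$ is expressible by a first-order $\Ldiv$-formula in the real coordinates of $z$. Any such fragment involves finitely many polynomial equations $Q_i(x) = 0$ from bases of finitely many $I_d$'s, finitely many valuation (in)equalities from the $\tau_d$'s (with a finite choice of $\gamma \in \Gamma$ witnesses for the $\Gamma + \val$ clauses), and the condition $x \in X$. Each ingredient can be expressed first-order in real parameters definable from the relevant coordinates of $z$, so the full consistency statement becomes a partial $\Ldiv(\Mm)$-type defining $Y$ inside $Z$, establishing pro-definability. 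The crux is that Theorem~\ref{tp-code} furnishes real codes $e_d(p)$ in the first place; without it the defining partial type of $Y$ would inevitably involve imaginary parameters, breaking pro-definability.
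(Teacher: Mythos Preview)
Your map $\phi$ is not injective, and this is a genuine gap. You claim injectivity follows from ``the interdefinability of $p$ with $\hat{p}$ established in the proof of Theorem~\ref{tp-code}'', but that proof only shows $\ulcorner p \urcorner \in \dcl^\eq(\ulcorner \hat{p} \urcorner)$ and conversely; it does \emph{not} show that the map $p \mapsto \hat{p}$ is injective on definable types. Indeed it is not. Take $b \in \Mm$ with $\val(b) < \Gamma(K)$ for a small model $K$, and set $a = b^2$. Then $a$ and $pa$ have the same quantifier-free $\Ldiv$-type over $K$ (for any polynomial $P$ of degree $d$ with leading coefficient $c_d$, $\val(P(a))$ and $\val(P(pa))$ are both governed by $d$ and $\val(c_d)$ in the same way), yet $a$ is a square while $pa$ is not. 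Both $\tp(a/K)$ and $\tp(pa/K)$ are definable (indeed $\varnothing$-definable). Over the monster, the corresponding global heirs are distinct definable types sent by your $\phi$ to the same point of $Z$. Your encoding $(I_d,\tau_d)_d$ captures only the quantifier-free part, and Proposition~\ref{qf-count} guarantees at most $2^{\aleph_0}$ completions---not a unique one.

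The paper avoids this by arguing abstractly rather than via an explicit coordinate map. It observes (from Theorem~\ref{tp-code}) that each point of $X^{\df}$, viewed as the tuple $(c_{p,\varphi})_\varphi$, is interdefinable over a fixed small set $A$ with some real tuple, and then proves the general claim that any $A$-pro-interpretable set whose every point is $A$-interdefinable with a real tuple admits an $A$-pro-interpretable bijection onto a pro-definable set. The trick is to take as target the product over \emph{all} $A$-pro-interpretable maps from $X^\df$ into $A$-definable sets; joint injectivity of this family is then checked pointwise using the interdefinability hypothesis. This sidesteps the need for a canonical or uniform choice of real code. To repair your approach you would need to encode, alongside $(I_d,\tau_d)_d$, the data of $p \restriction K_0$ where $K_0 = \dcl(\ulcorner \hat{p}\urcorner)$ (cf.\ Lemma~\ref{qf-def-2}(\ref{qd2})); but $K_0$ varies with $p$, making a uniform real encoding far from obvious.
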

\begin{proof}
  Fix a small set $A$ over which $X$ is definable.
  If $q \in S^{\df}_X(\Mm)$, then the tuple $c = (c_{q,\varphi} : \varphi
  \in \Ldiv(x)) \in X^{\df}$ is a code for $q$.  By
  Theorem~\ref{tp-code}, $c$ is interdefinable with a real tuple.  It
  follows that
  \begin{quote}
    Every point $c$ in $X^{\df}$ is interdefinable over $A$ with a real tuple.
  \end{quote}
  The following general fact completes the proof:
  \begin{claim}
    If $X$ is pro-interpretable over $A$ and every point of $X$ is interdefinable over $A$ with a real tuple, then there is a bijection $f : X \to Y$ where $Y$ is pro-definable over $A$ and $f$ is pro-interpretable over $A$.
  \end{claim}
  The proof of this fact is subtle, so we include it.
    It suffices to find an $A$-pro-interpretable injection $f : X \to \prod_{i \in I} D_i$ where the $D_i$ are $A$-definable, as we can then take $Y$ to be the $A$-pro-definable image of $f$.  Equivalently, we need to find a jointly injective family $\mathcal{F} = \{f_i\}_{i \in I}$ of $A$-pro-interpretable maps $f_i : X \to D_i$ where the sets $D_i$ are $A$-definable.  We may as well take $\mathcal{F}$ to be the collection of \emph{all} $A$-pro-interpretable maps $f : X \to D$ with $A$-definable $D$.
    
    Fix distinct $x, x' \in X$.  We must find $f : X \to D$ in $\mathcal{F}$ separating $x$ and $x'$.  If $x \not\equiv_A x'$, then there is a relatively $A$-interpretable set $U \subseteq X$ distinguishing $x$ and $x'$, and so we can find an $A$-pro-interpretable function $f : X \to \{0,1\}$ separating $x$ and $x'$.  Next suppose that $x \equiv_A x'$.  Take $\sigma \in \Aut(\Mm/A)$ with $\sigma(x) = x'$.  By assumption, there is a real tuple $y$ interdefinable with $x$ over $A$.  Then $\sigma(y) \ne y$.  Let $z$ be one of the coordinates of $y$ such that $\sigma(z) \ne z$.  Then $z \in \dcl^\eq(x)$, so we can write $z$ as $f_0(x_0)$ for some finite subtuple of $x_0$ and some $A$-interpretable function $f_0$.  Let $\pi$ be the coordinate projection sending $x$ to $x_0$.  Extending $f_0$ by zero, we may assume the domain of $f_0$ contains $\pi(X)$.  Let $f = f_0 \circ \pi$.  Then $f$ is an $A$-pro-interpretable function from $X$ to the home sort, and $f(x) = z$.  Finally, \[ f(x') = f(\sigma(x)) = \sigma(f(x)) = \sigma(z) \ne z = f(x),\]
    and so $f$ separates $x$ and $x'$ as desired.
\end{proof}

For future use, we make two observations on arbitrary theories with UDDT:

\begin{remark}[Assuming UDDT] \label{deffam}
  Let $\varphi(x,y)$ be an $\mathcal{L}$-formula and let $X$ be a definable set.
  Under the identification of $S^{\df}_X(\Mm)$ with $X^{\df}$, the set
  \begin{equation*}
    \{(b,p) \in \Mm^y \times X^{\df} : \varphi(\Mm,b) \in p\}
  \end{equation*}
  is relatively definable
  in $\Mm^y \times X^{\df}$, essentially by
  construction.\footnote{It's defined by $\psi(y,\pi(z))$, where
  $\psi$ is the uniform definition of $\varphi$ and $\pi$ is the
  coordinate projection on $X^{\df}$ picking out the coordinate
  $c_{p,\varphi}$ from the infinite tuple $(c_{p,\varphi} : \varphi
  \in \mathcal{L}(x))$.}  It follows that if $\{D_b\}_{b \in Y}$ is a
  definable family of subsets of $X$, then the following set is
  relatively definable in $Y \times X^{\df}$:
  \begin{equation*}
    \{(b,p) \in Y \times X^{\df} : D_b \in p\}.
  \end{equation*}
\end{remark}
\begin{lemma}[Assuming UDDT] \label{strange}
  Let $\{\mathcal{F}_i\}_{i \in I}$ be a small collection of definable
  families.  Suppose that for each finite $I_0 \subseteq I$, the
  definable family $\bigcup_{i \in I_0} \mathcal{F}_i$ extends to a
  definable type.  Then $\bigcup_{i \in I} \mathcal{F}_i$ extends to a
  definable type.
\end{lemma}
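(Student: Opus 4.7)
The plan is to reinterpret the statement as a compactness argument in the pro-interpretable space $X^{\df}$ (where $X$ is the common ambient definable set on which the families $\mathcal{F}_i$ live). For each $i\in I$, fix a formula $\varphi_i(x,y)$ and a definable set $Y_i$ so that $\mathcal{F}_i=\{\varphi_i(\Mm,b):b\in Y_i\}$, and define
\[
U_i := \{p\in X^{\df} : \mathcal{F}_i\subseteq p\}.
\]
By Remark~\ref{deffam}, the relation ``$\varphi_i(x,b)\in p(x)$'' is relatively definable in $(b,p)\in Y_i\times X^{\df}$. Concretely, writing $\psi_i=d(\varphi_i)$ for the uniform defining schema provided by UDDT, the set $U_i$ is cut out by the single condition $Y_i\subseteq\psi_i(\Mm,c_{p,\varphi_i})$, a first-order formula in the single coordinate $c_{p,\varphi_i}$ of $p$ (with parameters defining $Y_i$ and $\varphi_i$). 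Hence each $U_i$ is relatively definable in $X^{\df}$.

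Next, I would translate the hypothesis into the finite intersection property for the collection $\{U_i\}_{i\in I}$: for any finite $I_0\subseteq I$, a definable type extending $\bigcup_{i\in I_0}\mathcal{F}_i$ is exactly a point of $\bigcap_{i\in I_0}U_i$, so this intersection is non-empty. Since $X^{\df}\subseteq\prod_\varphi D_\varphi$ is pro-interpretable, it is cut out by some small partial type $\Sigma$ over a small parameter set. Each $U_i$ contributes a single $\mathcal{L}^{\eq}(\Mm)$-formula $\theta_i$, so $\Sigma\cup\{\theta_i:i\in I\}$ is a small partial type over $\Mm$ with the finite intersection property. By saturation of $\Mm$ it is realized by some $p$, which under the identification $X^{\df}\cong S^{\df}_X(\Mm)$ corresponds to a definable type extending $\bigcup_{i\in I}\mathcal{F}_i$.

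The main subtlety is ensuring relative definability of each $U_i$ (by a single formula in finitely many coordinates), rather than merely type-definability; were the $U_i$ cut out by infinitely many formulas, small-index FIP would not survive intersection via saturation. The collapse to a single formula per $U_i$ is exactly the content of UDDT, which as packaged in Remark~\ref{deffam} allows a universal quantifier over $b\in Y_i$ to be swapped into a single formula in one coordinate of $p$.
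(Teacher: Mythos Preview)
Your proof is correct and follows essentially the same approach as the paper: define the sets $U_i$ (the paper's $D_i$) of definable types extending $\mathcal{F}_i$, observe via Remark~\ref{deffam} and UDDT that each is relatively definable in $X^{\df}$, and then conclude by compactness/saturation that the finite intersection property yields non-empty total intersection. Your write-up is slightly more explicit about why UDDT collapses each $U_i$ to a single formula in one coordinate, but the argument is the same.
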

\begin{proof}
  For each $i \in I$, let $D_i \subseteq X^\df$ be the set of $p \in S^{\df}_n(\Mm)$
  extending $\mathcal{F}_i$.  We claim that $D_i$ is relatively
  definable in $X^{\df}$.  Indeed, if $\mathcal{F}_i$ is $\{Y_c\}_{c
    \in Z}$, then
  \begin{equation*}
    D_i = \{p \in X^{\df} : (\forall c \in Z) ~ Y_c \in p\},
  \end{equation*}
  and this condition is first-order by Remark~\ref{deffam}.

  The assumption is that any finite intersection of $D_i$'s is
  non-empty, and the conclusion is that the intersection of all
  $D_i$'s is non-empty.  As the $D_i$'s are relatively definable
  subsets of the pro-interpretable set $X^{\df}$, the conclusion holds by
  compactness.
\end{proof}

\section{Definable types in $\pCF^\eq$} \label{imaginary}

In this section we extend results from Sections~\ref{analysis}, \ref{ss} and \ref{prodsec} from $\pCF$ to $\pCF^\eq$. We use these in Section~\ref{topsec} to prove results about interpretable topological spaces. If one only cares about \emph{definable} topological spaces, this section can be skipped.

\begin{lemma}[{like Fact~\ref{delon}}] \label{delon-lift}
  Every type over $\Qp^\eq$ is definable.
\end{lemma}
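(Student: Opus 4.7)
The plan is to reduce to the home-sort statement (Fact~\ref{delon}) by two elementary coding maneuvers: first lift the tuple from imaginary sorts to real sorts, and second lift the parameter set from $\Qp$ to $\Qp^\eq$.

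For the first step, let $p = \tp(a/\Qp^\eq)$ with $a$ a tuple of imaginaries. Since every imaginary is a class of a real tuple under some $\varnothing$-interpretable equivalence relation, I can write $a = f(b)$ for some real tuple $b$ and some $\varnothing$-interpretable function $f$. A pushforward of a definable type along a $\varnothing$-interpretable function is definable (the definition schema is obtained by precomposing each formula with $f$ and applying the original schema). So it suffices to show that $q := \tp(b/\Qp^\eq)$ is definable.

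For the second step, apply Fact~\ref{delon} to the real tuple $b$ to get that $q_0 := \tp(b/\Qp) = q \restriction \Qp$ is definable. Let $\psi(x,z)$ be any $\mathcal{L}^\eq$-formula with $z$ ranging in an imaginary sort $S = U/E$ where $U$ is a real sort and $E$ a $\varnothing$-definable equivalence relation. Pulling back along the quotient map $U \to S$, the formula $\psi(x,z)$ corresponds to an $\mathcal{L}$-formula $\tilde\psi(x,y)$ with $y \in U$ real, which is $E$-invariant in $y$. The set
\[
\widetilde D := \{ y \in U(\Qp) : \tilde\psi(x,y) \in q_0 \}
\]
is $\Qp$-definable (by definability of $q_0$) and $E$-invariant (because $\tilde\psi$ is). Its image $\widetilde D/E$ under the quotient is therefore $\Qp^\eq$-interpretable, and it equals $\{ z \in S(\Qp^\eq) : \psi(x,z) \in q\}$. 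This produces a $\Qp^\eq$-interpretable defining set for each formula $\psi$, so $q$ is definable, completing the proof.

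I do not expect any genuine obstacle: both steps are general coding manipulations that work in any theory once one has the home-sort statement. The only point requiring care is observing that $\mathcal{L}^\eq$-formulas on imaginary sorts pull back to $E$-invariant $\mathcal{L}$-formulas on real sorts, which is what allows the $\Qp$-definable defining set to descend to a $\Qp^\eq$-interpretable one. No specifically $p$-adic input is needed beyond Fact~\ref{delon} itself.
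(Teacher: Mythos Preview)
Your proof is correct and follows essentially the same approach as the paper: lift the imaginary tuple to a real tuple via a $\varnothing$-interpretable surjection, apply Fact~\ref{delon} to the real tuple, and push forward. The paper compresses your two steps into one sentence (noting that the pushforward $S_{\tilde X}(\Qp) \to S_X(\Qp)$ is surjective and that definability is preserved under pushforward), leaving the passage from $\Qp$ to $\Qp^\eq$ implicit since $\Qp^\eq = \dcl^\eq(\Qp)$; your second step just makes that passage explicit.
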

\begin{proof}
  If $f : \tilde{X} \to X$ is an interpretable surjection with $\tX$
  definable, then we can lift types in $X$ to types in $\tilde{X}$:
  the pushforward map $S_{\tX}(\Qp) \to S_X(\Qp)$ is surjective.  Every type in $S_{\tX}(\Qp)$ is definable by Fact~\ref{delon}, and definable types push forward to definable types.
\end{proof}

\subsection{Lifting definable types from $K^\eq$ to $K$}

Work in a model $K \models \pCF$.  Fix an interpretable surjection $\pi : \tX \to X$ where $\tX$ is definable.  Let $S^{\df}_X(K)$ and
$S^{\df}_{\tX}(K)$ be the spaces of $K$-definable types on $X$
and $\tX$, respectively, and let $\pi_* : S^{\df}_{\tX}(K) \to
S^{\df}_X(K)$ be the pushforward map.

\begin{lemma} \label{s0}
  Let $\mathcal{F}$ be an interpretable family of subsets of $X$ with
  the finite intersection property.
  Then there are finitely
  many definable types $q_1, \ldots, q_n \in S^{\df}_{\tX}(K)$ such
  that each set $D \in \mathcal{F}$ belongs to at least one $\pi_*
  q_i$.
\end{lemma}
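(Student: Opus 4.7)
The plan is to imitate the proof of Corollary~\ref{cor:sim-star-fact}, applied to the pullback family $\tilde{\mathcal{F}} := \{\pi^{-1}(D) : D \in \mathcal{F}\}$. Write $\mathcal{F} = \{D_b\}_{b \in Y}$ for the given interpretable parameterization, and $\tilde{\varphi}(\tilde{x}, y)$ for the $\mathcal{L}^\eq$-formula cutting out the family $\{\pi^{-1}(D_b)\}_{b \in Y}$ of subsets of the definable set $\tX$. Since $\pi$ is surjective, the finite intersection property for $\mathcal{F}$ over $K$ transfers to $\tilde{\mathcal{F}}$ over $K$, hence to $\tilde{\mathcal{F}}$ over any elementary extension $\Mm \succeq K$. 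In particular, for each $b \in Y(\Mm^\eq)$, the set $\pi^{-1}(D_b) \subseteq \tX$ is non-empty and does not divide over $K$.

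Next I would apply Fact~\ref{fact:simon-star} to lift each such set to a $K$-definable global type $q_b \in S^{\df}_{\tX}(K)$ with $\pi^{-1}(D_b) \in q_b^\Mm$. The key observation is that, although the family $\tilde{\mathcal{F}}$ is only interpretably parameterized, each individual member $\pi^{-1}(D_b)$ is a \emph{definable} subset of the home-sort set $\tX$: interpretable subsets of a home-sort definable set in $\pCF$ are themselves $\mathcal{L}$-definable over real parameters, because definable subsets of $\Mm^n$ admit real codes (quantifier elimination in Macintyre's language). Hence Fact~\ref{fact:simon-star}, originally stated for real parameters, applies directly to $\pi^{-1}(D_b)$.

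The proof concludes by a compactness argument identical in shape to the one in Corollary~\ref{cor:sim-star-fact}. For each $q \in S^{\df}_{\tX}(K)$, the set
\[ E_q := \{b \in Y : \pi^{-1}(D_b) \in q^\Mm\} \]
is $K$-interpretable, via the definition schema of $q$ applied to $\tilde{\varphi}$; the lifting step shows $\bigcup_q E_q = Y$ in $\Mm^\eq$. Hence the partial $\mathcal{L}^\eq(K)$-type asserting $b \in Y$ and $b \notin E_q$ for every $q \in S^{\df}_{\tX}(K)$ is inconsistent, and compactness extracts finitely many $q_1, \ldots, q_n$ whose $E_{q_i}$ already cover $Y$. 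Unwinding, each $D \in \mathcal{F}$ belongs to at least one $\pi_\ast q_i$, as required. The main obstacle is legitimizing Fact~\ref{fact:simon-star} when $b$ is imaginary, handled as above; crucially, one does not need uniformity in $b$ of the real-parameter descriptions of the sets $\pi^{-1}(D_b)$ for the argument to go through.
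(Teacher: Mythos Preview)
Your proposal is correct and follows essentially the same route as the paper. The paper's proof is a one-liner: pull the family back along $\pi$ and apply Corollary~\ref{cor:sim-star-fact} to $\{\pi^{-1}(D) : D \in \mathcal{F}\}$; your write-up simply unpacks this, making explicit why the imaginary parameterization causes no trouble (each $\pi^{-1}(D_b)$ is a subset of the home-sort set $\tX$ and hence has an $\Ldiv$-definition with real parameters, so Fact~\ref{fact:simon-star} applies, and the sets $E_q$ are $K$-interpretable so compactness in $K^\eq$ finishes). One minor remark: the fact that interpretable subsets of $\Mm^n$ are already $\Ldiv(\Mm)$-definable is a general feature of passing to $\Mm^\eq$ and does not rely on Macintyre quantifier elimination or the existence of real codes in any special sense; your conclusion is right, only the attribution is slightly off.
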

\begin{proof}
  Note that $D \in \pi_* q_i \iff \pi^{-1}(D) \in q_i$.  Apply
  Corollary~\ref{cor:sim-star-fact} to the family $\{\pi^{-1}(D) : D
  \in \mathcal{F}\}$.
\end{proof}

The proof of Lemma~\ref{s1} below resembles the proof of Lemma~\ref{lem:down-dir}, except we rely on distal cell decomposition in place of downward directedness.

\begin{lemma} \label{s1}
  Let $\mathcal{F}$ be an interpretable family of subsets of $X$.  If
  $\mathcal{F}$ extends to a definable type, then $\mathcal{F}$
  extends to a definable type of the form $\pi_* q$ for some $q \in
  S^{\df}_{\tX}(K)$.
\end{lemma}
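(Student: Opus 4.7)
The plan is to mimic the proof of Lemma~\ref{lem:down-dir}, with distal cell decomposition playing the role of downward directedness. Since $\mathcal{F} \subseteq p$, the family has FIP, and Lemma~\ref{s0} already furnishes finitely many $q_i$ whose pushforwards \emph{cover} $\mathcal{F}$; the work is in showing that a single $q_i$ suffices. I will apply Lemma~\ref{s0} to an enlarged interpretable family that includes certain distal cells in $p$, and then use the cell decomposition to force a contradiction when each $q_i$ misses some $D_{b_i}$.

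In detail, $\pCF$ is distal, hence so is $\pCF^{\eq}$, so the formula $\phi(x,b) = (x \in D_b)$ (writing $\mathcal{F} = \{D_b\}_{b \in Y}$) admits a distal cell decomposition formula $\theta(x;\bar z)$. Let
\[
\mathcal{F}' = \mathcal{F} \;\cup\; \{\theta(X;\bar c) : \bar c \in Y^s,\; \theta(X;\bar c) \in p\}.
\]
Since $p$ is $K$-definable, the index set of $\mathcal{F}'$ is $K$-interpretable, so $\mathcal{F}'$ is an interpretable family of subsets of $X$; and $\mathcal{F}' \subseteq p$, so it has FIP. Applying Lemma~\ref{s0} to $\mathcal{F}'$ produces $q_1,\ldots,q_n \in S^{\df}_{\tX}(K)$ such that every element of $\mathcal{F}'$ lies in some $\pi_* q_i$.

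To conclude, suppose for contradiction that no $\pi_* q_i$ extends $\mathcal{F}$, and pick $b_i \in Y$ with $D_{b_i} \notin \pi_* q_i$. Apply distal cell decomposition with parameter set $A = \{b_1,\ldots,b_n\}$ to get finitely many cells $\theta(X;\bar a_j)$ with $\bar a_j \in A^s$ that cover $X$ and are $\phi$-complete with respect to $A$. Completeness of $p$ forces some cell $C = \theta(X;\bar a^*)$ to lie in $p$. For each $b_j \in A$, $\phi$-completeness gives $C \subseteq D_{b_j}$ or $C \cap D_{b_j} = \varnothing$; the second option is ruled out because $C, D_{b_j} \in p$. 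Hence $C \subseteq D_{b_j}$ for every $j$. But $C \in \mathcal{F}'$, so some $q_i$ has $\pi^{-1}(C) \in q_i$; then $\pi^{-1}(D_{b_i}) \supseteq \pi^{-1}(C) \in q_i$, contradicting the choice of $b_i$.

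The main obstacle is confirming that distal cell decomposition is available for formulas on the interpretable sorts in $\pCF^{\eq}$—that is, that distality of $\pCF$ transfers to $\pCF^{\eq}$ in the form we need. This should be standard (distality is typically preserved under naming imaginary sorts), but merits an explicit citation; once that is in hand, everything else above is formal.
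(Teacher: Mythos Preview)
Your proposal is correct and takes essentially the same approach as the paper: both proofs invoke distal cell decomposition for the family $\mathcal{F}$, apply Lemma~\ref{s0} to the interpretable subfamily of cells lying in the given definable type, and then argue by contradiction exactly as in Lemma~\ref{lem:down-dir} (choose bad $D_{b_i}$, find a cell in the type contained in all of them, and derive a contradiction). Your flagged obstacle---that distality and distal cell decomposition transfer to $\pCF^{\eq}$---is indeed needed (the paper cites \cite[Fact~2.5]{reglem} without further comment), and it is standard: distality passes to $T^{\eq}$ since the imaginary sorts are quotients of the home sort by definable equivalence relations.
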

\begin{proof}
  Fix a type $r \in S^{\df}_{X}(K)$ extending $\mathcal{F}$.  By
  distal cell decomposition \cite[Fact~2.5]{reglem}, there is an
  interpretable family $\mathcal{G}$ such that any finite intersection
  of sets in $\mathcal{F}$ is a finite union of sets in $\mathcal{G}$.
  Let $\mathcal{G} \cap r$ be the set of $D \in \mathcal{G}$ with $D
  \in r$.  Then $\mathcal{G} \cap r$ is an interpretable family with
  the finite intersection property.  By Lemma~\ref{s0}, there are finitely many $q_1, \ldots, q_n
  \in S^{\df}_{\tX}(K)$ such that each set in $\mathcal{G} \cap r$
  is in $\pi_* q_i$ for at least one $i$.

  We claim that some $\pi_* q_i$ extends $\mathcal{F}$.  Otherwise,
  for $1 \le i \le n$ take $D_i \in \mathcal{F}$ with $D_i \notin
  \pi_* q_i$.  Each $D_i$ is in $r$, and $r$ is a complete type, so
  the intersection $\bigcap_{i = 1}^n D_i$ is in $r$.  We can write
  this intersection as $\bigcup_{j = 1}^m D'_j$ with $D'_j \in
  \mathcal{G}$.  As $r$ is a complete type, some $D'_j$ is in $r$, and
  therefore in $\mathcal{G} \cap r$.  By choice of the $q_i$, there is
  some $i$ such that $D'_j \in \pi_* q_i$.  But $D'_j \subseteq D_i$,
  so then $D_i \in \pi_* q_i$, contradicting the choice of $D_i$.
\end{proof}

\begin{lemma} \label{s2}
  If $K$ is sufficiently saturated, then the pushforward map $\pi_* : S^{\df}_{\tX}(K) \to S^{\df}_X(K)$
  is surjective.
\end{lemma}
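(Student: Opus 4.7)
The plan is to apply Lemma~\ref{strange} to a collection of interpretable families on $\tX$ built from $p$. Fix $p \in S^\df_X(K)$; since $p$ is complete, it suffices to produce $q \in S^\df_{\tX}(K)$ with $\pi^{-1}(D) \in q$ for every $K$-interpretable $D \subseteq X$ with $D \in p$. Because $\tX$ is definable, every such $\pi^{-1}(D)$ is $\mathcal{L}(K)$-definable with real parameters (imaginary parameters may be replaced by real representatives and imaginary quantifiers eliminated), so there is an $\mathcal{L}$-formula $\psi(\tilde{x}, z)$ with $z$ real and $b \in K^z$ with $\pi^{-1}(D) = \psi(\tX, b)$. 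Accordingly, for each $\mathcal{L}$-formula $\psi(\tilde{x}, z)$ with $z$ real, I would let
\[
  Z_\psi := \{b : \psi(\tX, b) \text{ is a union of } \pi\text{-fibers and } \pi(\psi(\tX, b)) \in p\};
\]
this is $K$-interpretable because $p$ is a definable type on the interpretable set $X$. Setting $\mathcal{F}_\psi := \{\psi(\tX, b) : b \in Z_\psi\}$, the union $\bigcup_\psi \mathcal{F}_\psi$ is exactly the collection of preimages we need $q$ to extend.

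To apply Lemma~\ref{strange}, I would check finite consistency. Given $\psi_1,\ldots,\psi_n$, the interpretable family
\[
  \mathcal{G} := \bigcup_i \{\pi(\psi_i(\tX, b)) : b \in Z_{\psi_i}\}
\]
on $X$ is contained in $p$, hence extends to the definable type $p$. Lemma~\ref{s1} supplies $q_0 \in S^\df_{\tX}(K)$ with $\pi_* q_0 \supseteq \mathcal{G}$, so $q_0$ extends $\bigcup_i \mathcal{F}_{\psi_i}$. Applying Lemma~\ref{strange} to $\{\mathcal{F}_\psi\}_{\psi \in \mathcal{L}}$ then produces $q \in S^\df_{\tX}(K)$ extending $\bigcup_\psi \mathcal{F}_\psi$, whence $\pi_* q \supseteq p$ and $\pi_* q = p$ by completeness of $p$.

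The main obstacle I expect is that $\mathcal{F}_\psi$ is only an interpretable family (indexed by the interpretable set $Z_\psi$), while Lemma~\ref{strange} and its supporting Remark~\ref{deffam} are stated for definable families. The proof, however, adapts: the condition ``$\psi(\tX, b) \in q$'' on $(b, q) \in \Mm^z \times \tX^\df$ is relatively definable by Remark~\ref{deffam}, so intersecting with the $K$-interpretable condition $b \in Z_\psi$ and then universally quantifying over $b$ (via Remark~\ref{quant}, using strict pro-interpretability of $\tX^\df$ from Fact~\ref{strict}) yields the relatively interpretable subset of $\tX^\df$ required by the proof of Lemma~\ref{strange}. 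The ``sufficiently saturated'' hypothesis is what lets $K$ play the role of the monster in Lemma~\ref{strange}; concretely, $K$ must be saturated enough to realize partial $*$-types of size $|\mathcal{L}|$ in $\tX^\df(K)$, for instance by being $|\mathcal{L}|^+$-saturated.
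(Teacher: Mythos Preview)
Your proof is correct and follows essentially the same route as the paper's: decompose the target type into countably many uniformly definable families, handle finite subcollections via Lemma~\ref{s1}, and conclude by Lemma~\ref{strange} with the sufficiently saturated $K$ playing the role of the monster. One minor point: your appeal to Fact~\ref{strict} is unnecessary, since in your adaptation of Lemma~\ref{strange} the universal quantifier ranges over $b \in K^z$ (a definable set, hence trivially strictly pro-definable in the sense needed for Remark~\ref{quant}), not over $\tX^{\df}$.
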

\begin{proof}
  Fix $r \in S^{\df}_X(K)$.  As $r$ is definable, it is generated by
  a small union $\bigcup_{i \in I} \mathcal{F}_i$ of interpretable
  families $\mathcal{F}_i$.  For each $i \in I$, let
  $\tilde{\mathcal{F}}_i$ be the definable family $\{\pi^{-1}(D) : D
  \in \mathcal{F}_i\}$.  By Lemma~\ref{s1}, for any finite $I_0
  \subseteq I$ there is $q \in S^{\df}_{\tX}(K)$ such that
  $\bigcup_{i \in I_0} \mathcal{F}_i \subseteq \pi_* q$, or
  equivalently, $\bigcup_{i \in I_0} \tilde{\mathcal{F}}_i \subseteq
  q$.  By Lemma~\ref{strange}, there is $q \in S^{\df}_{\tX}(K)$
  with $\bigcup_{i \in I} \tilde{\mathcal{F}}_i \subseteq q$, or
  equivalently, $\bigcup_{i \in I} \mathcal{F}_i \subseteq \pi_*q$.
  Then $\pi_* q$ must be $r$.
\end{proof}

\subsection{Generation by $\Gamma$-families}
\begin{proposition}[{like Proposition~\ref{refine-g}}] \label{refine-g3}
  Suppose $K \models \pCF$ and $q$ is a definable type in $K^\eq$.
  Let $\mathcal{F}$ be an interpretable family such that $q$ extends
  $\mathcal{F}$.  Then there is an interpretable $\Gamma$-family
  $\mathcal{G}$ such that $q$ extends $\mathcal{G}$ and
  $\mathcal{G}$ refines $\mathcal{F}$.
\end{proposition}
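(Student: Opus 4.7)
The plan is to reduce to the non-imaginary Proposition~\ref{refine-g} by lifting $q$ along an interpretable surjection from a definable cover, applying the non-imaginary result there, and then pushing everything back down.

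Let $X$ denote the interpretable set on which $q$ concentrates, and fix an interpretable surjection $\pi : \tX \to X$ with $\tX$ a definable set. First I would assume $K$ is sufficiently saturated (the general case is handled below). By Lemma~\ref{s2}, lift $q$ to a definable type $\tilde{q} \in S^\df_\tX(K)$ with $\pi_* \tilde{q} = q$. The pullback $\tilde{\mathcal{F}} := \{\pi^{-1}(D) : D \in \mathcal{F}\}$ is a definable family of sets in $\tX$, and $\tilde{q}$ extends $\tilde{\mathcal{F}}$ because $q = \pi_* \tilde{q}$ extends $\mathcal{F}$.

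Next, apply Proposition~\ref{refine-g} to $\tilde{q}$ and $\tilde{\mathcal{F}}$ to obtain a definable $\Gamma$-family $\tilde{\mathcal{G}} = \{\tilde{X}_\gamma\}_{\gamma \in \Gamma}$ refining $\tilde{\mathcal{F}}$ such that $\tilde{q}$ extends $\tilde{\mathcal{G}}$. Push forward by setting $X_\gamma := \pi(\tilde{X}_\gamma)$ and $\mathcal{G} := \{X_\gamma\}_{\gamma \in \Gamma}$, which is an interpretable family; non-emptiness and monotonicity transfer immediately under taking images. For extension by $q$: if $a \models \tilde{q}$, then $a \in \tilde{X}_\gamma$ forces $\pi(a) \in X_\gamma$, and since $\pi(a) \models q$ and $q$ is complete, $q \vdash x \in X_\gamma$. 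For refinement: if $\tilde{X}_\gamma \subseteq \pi^{-1}(D)$ then $X_\gamma = \pi(\tilde{X}_\gamma) \subseteq D$.

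To remove the saturation assumption on $K$, pass to a sufficiently saturated $K^* \succeq K$ and replace $q$ by its extension $q^*$ over $K^*$ via the defining schema. The construction above yields a $K^*$-interpretable $\Gamma$-family witnessed by some formula $\varphi(x, \gamma, z)$ and parameter $c^* \in (K^*)^\eq$. For this fixed $\varphi$, the statements that $\{\varphi(K, \gamma, c)\}_{\gamma \in \Gamma(K)}$ is a non-empty monotone family extended by $q$ and refining $\mathcal{F}$ are all first-order conditions on $c$ (using the defining schemas of $q$ and $\mathcal{F}$), so by $K^\eq \preceq (K^*)^\eq$ there is a suitable $c \in K^\eq$ producing the desired $K$-interpretable $\Gamma$-family. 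The main obstacle is the saturation required by Lemma~\ref{s2}, which is precisely what the absoluteness step at the end resolves.
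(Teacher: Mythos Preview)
Your proof is correct and follows essentially the same approach as the paper: lift $q$ to a definable type on a definable cover via Lemma~\ref{s2}, apply Proposition~\ref{refine-g} upstairs, push the resulting $\Gamma$-family forward along $\pi$, and descend from the saturated model by noting that the existence of a suitable parameter is a first-order condition (the paper carries out this last step in a footnote, using a real parameter and $K \preceq K'$ rather than your $K^\eq \preceq (K^*)^\eq$, but this is cosmetic).
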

\begin{proof}
  Passing to an elementary extension $K' \succeq K$, we may assume $K$
  is $\aleph_1$-saturated.\footnote{To return from $K'$ to $K$, let
  $b_0$ be a tuple in $K'$ defining the $\Gamma$-family $\mathcal{G}$,
  and write $\mathcal{G}$ as $\mathcal{G}_{b_0}$ to make the
  dependence on $b_0$ explicit.  Let $D$ be the set of $b \in K'$ such
  that $\mathcal{G}_b$ is a $\Gamma$-family refining $\mathcal{F}$ and
  extended by $q$.  As $q$ is $K$-definable, the set $D$ is
  $K$-definable.  As $K \preceq K'$, we can replace $b_0 \in D$ with a
  $K$-definable tuple $b_1 \in D$, getting a $K$-definable
  $\Gamma$-family.} Let $X$ be the interpretable set where $q$ lives,
  and let $\tX$ be a definable set with an interpretable surjection
  $\pi : \tX \to X$.  By $\aleph_1$-saturation and Lemma~\ref{s2}, there is $\tilde{q} \in S_{\tX}^{\df}(K)$ such
  that $q = \pi_* \tilde{q}$.  Let $\tilde{\mathcal{F}} =
  \{\pi^{-1}(D) : D \in \mathcal{F}\}$. Then $\tilde{\mathcal{F}}$ is
  a definable family extended by the definable type $\tilde{q}$.  By
  Proposition~\ref{refine-g}, there is a definable $\Gamma$-family
  $\mathcal{G}_0$ such that $\mathcal{G}_0$ refines
  $\tilde{\mathcal{F}}$, and $\tilde{q}$ extends $\mathcal{G}_0$.  Let
  \begin{equation*}
    \mathcal{G} = \{\pi(D) : D \in \mathcal{G}_0\}.
  \end{equation*}
  Then $\mathcal{G}$ is an interpretable $\Gamma$-family in $X$.  If
  $D \in \mathcal{G}_0$, then $D \in \tilde{q}$ (because $\tilde{q}$
  extends $\mathcal{G}_0$), so $\pi^{-1}(\pi(D)) \in \tilde{q}$
  (because $\pi^{-1}(\pi(D)) \supseteq D$), and then $\pi(D) \in \pi_*
  \tilde{q} = q$.  Therefore $q$ extends $\mathcal{G}$.  It remains to
  show that $\mathcal{G}$ refines $\mathcal{F}$.  Take some $D \in
  \mathcal{F}$.  Then $\pi^{-1}(D) \in \tilde{\mathcal{F}}$, so there
  is $E \in \mathcal{G}_0$ with $E \subseteq \pi^{-1}(D)$ because
  $\mathcal{G}_0$ refines $\tilde{\mathcal{F}}$. Then $\pi(E) \subseteq D$ and $\pi(E) \in \mathcal{G}$.
  This shows that $\mathcal{G}$ refines $\mathcal{F}$.
\end{proof}
This yields an analogue of Theorem~\ref{tp-gen}.
\begin{theorem} \label{tp-gen1}
  Suppose $K \models \pCF$ and $q$ is a definable type in $K^\eq$.
  Then $q$ is generated by a union of countably many $\Gamma$-families.
\end{theorem}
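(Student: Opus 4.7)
The plan is to mirror the reduction used implicitly in Theorem~\ref{tp-gen}, but with Proposition~\ref{refine-g3} (the imaginary analogue of Proposition~\ref{refine-g}) taking the place of the home-sort generation step. Since the language $\mathcal{L}^\eq$ of $\pCF^\eq$ is countable, I enumerate the $\emptyset$-definable partitioned $\mathcal{L}^\eq$-formulas $\varphi_n(x;y_n)$ in the sort of $q$. For each $n$, definability of $q$ provides an interpretable set $B_n = \{b : \varphi_n(x,b) \in q\}$, so that the family
\[
\mathcal{F}_n := \{\varphi_n(\Mm,b) : b \in B_n\}
\]
is interpretable and (by construction) extended by $q$.

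Applying Proposition~\ref{refine-g3} to each $\mathcal{F}_n$ furnishes an interpretable $\Gamma$-family $\mathcal{G}_n$ which refines $\mathcal{F}_n$ and is extended by $q$. The claim is then that $\bigcup_{n<\omega} \mathcal{G}_n$ generates $q$. To verify this, let $E$ be an arbitrary interpretable set in $q$. Writing $E = \varphi_n(\Mm,b)$ for some $n$ and some imaginary tuple $b$, the condition $E \in q$ forces $b \in B_n$, hence $E \in \mathcal{F}_n$. Since $\mathcal{G}_n$ refines $\mathcal{F}_n$, there exists $D \in \mathcal{G}_n$ with $D \subseteq E$. Thus $q$ is generated (in fact by single members, not merely finite intersections) by the countable union $\bigcup_{n<\omega} \mathcal{G}_n$, which is a union of countably many $\Gamma$-families.

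The conceptual obstacle, already dispatched by Proposition~\ref{refine-g3}, is that one cannot port over the proof of Theorem~\ref{tp-gen} verbatim: the home-sort argument splits $q$ into its quantifier-free part (generated by $\Gamma$-families via Proposition~\ref{qftp-gen}) plus a countable restriction to a small elementary submodel obtained from the real code (Theorem~\ref{tp-code}), but in $K^\eq$ there is no useful ``quantifier-free part,'' and definable types need not be coded by real tuples. Proposition~\ref{refine-g3} bypasses this by lifting through a definable cover $\pi:\tX \to X$ and invoking Lemma~\ref{s2}, so at the present stage all that is needed is the formula-by-formula enumeration above.
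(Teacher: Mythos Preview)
Your proof is correct and follows essentially the same approach as the paper: enumerate the countably many formulas, use definability of $q$ to get an interpretable family $\mathcal{F}_n$ for each formula, apply Proposition~\ref{refine-g3} to obtain a refining $\Gamma$-family $\mathcal{G}_n$ extended by $q$, and observe that $\bigcup_n \mathcal{G}_n$ generates $q$. Your explicit description of the $\mathcal{F}_n$ via the sets $B_n$, and your closing paragraph explaining why the home-sort argument of Theorem~\ref{tp-gen} does not transfer directly, simply make explicit what the paper leaves implicit.
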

\begin{proof}
  As a definable type, $q$ is generated by a union of countably many
  interpretable families $\bigcup_{i = 1}^\infty \mathcal{F}_i$.
  (Indeed, $q$ \emph{is} a union of countably many interpretable
  families, one for each formula in the language.)  For each $i$, take
  an interpretable $\Gamma$-family $\mathcal{G}_i$ refining
  $\mathcal{F}_i$ and extended by $q$.  Then $\bigcup_{i = 1}^\infty
  \mathcal{G}_i$ generates $q$.
\end{proof}

\subsection{Directed families and definable types}

Let $K$ be a model of $\pCF$.

\begin{lemma}[{like Corollary~\ref{cor:sim-star-fact}}] \label{ssf2}
  If $\mathcal{F}$ is an interpretable family of sets in $K^\eq$ with
  the finite intersection property, then $\mathcal{F}$ can be
  partitioned into finitely many subfamilies, each of which extends to
  a definable type.
\end{lemma}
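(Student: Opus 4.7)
The plan is to reduce the problem to the already-established definable case (Corollary~\ref{cor:sim-star-fact}) by passing through an interpretable surjection. Let $X$ be the interpretable set in which $\mathcal{F}$ lives, and fix an interpretable surjection $\pi : \tilde{X} \to X$ where $\tilde{X}$ is definable.

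First I would apply Lemma~\ref{s0} to $\mathcal{F}$, which produces finitely many definable types $q_1,\ldots,q_n \in S^{\df}_{\tilde{X}}(K)$ with the property that every $D \in \mathcal{F}$ belongs to at least one of the pushforwards $\pi_\ast q_i$. Note that the hypothesis of Lemma~\ref{s0} — namely that $\mathcal{F}$ has the finite intersection property — is exactly what we are given.

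Next, set $p_i := \pi_\ast q_i \in S^{\df}_X(K)$ for $1 \le i \le n$, and for each $D \in \mathcal{F}$ let
\[
 i(D) := \min\{\, i : D \in p_i \,\},
\]
which is well-defined by the previous step. Then the sets
\[
 \mathcal{F}_i := \{\, D \in \mathcal{F} : i(D) = i \,\}, \qquad 1 \le i \le n,
\]
form a partition $\mathcal{F} = \mathcal{F}_1 \sqcup \cdots \sqcup \mathcal{F}_n$ into finitely many subfamilies, and by construction $\mathcal{F}_i \subseteq p_i$, so $\mathcal{F}_i$ extends to the definable type $p_i$.

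The real content is all in Lemma~\ref{s0}, which already handles the transit through the interpretable surjection $\pi$ by applying Corollary~\ref{cor:sim-star-fact} to the pullback family $\{\pi^{-1}(D) : D \in \mathcal{F}\}$ and using that $D \in \pi_\ast q_i \iff \pi^{-1}(D) \in q_i$. Consequently there is no real obstacle to the proof of Lemma~\ref{ssf2}; once Lemma~\ref{s0} is in hand, the only additional ingredient is the trivial bookkeeping step of turning a covering by finitely many types into an honest partition.
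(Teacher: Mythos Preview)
Your proof is correct and follows essentially the same approach as the paper: set up the interpretable surjection $\pi : \tilde{X} \to X$ and apply Lemma~\ref{s0}. The paper's proof is even terser, simply saying ``apply Lemma~\ref{s0}'' and leaving the covering-to-partition bookkeeping implicit, just as in Corollary~\ref{cor:sim-star-fact}.
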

\begin{proof}
Let $X$ be the sort where the sets in $\mathcal{F}$ live, let $\tX$ be a 0-definable set with a 0-interpretable surjection to $X$, and apply Lemma~\ref{s0}.
\end{proof}

\begin{proposition}[{like Proposition~\ref{refine}}] \label{refine-i}
  Let $\mathcal{F}$ be an interpretable downward directed family of non-empty sets in $K$.
  \begin{enumerate}
  \item $\mathcal{F}$ extends to a definable type.
  \item \label{refi2} $\mathcal{F}$ is refined by some $\Gamma$-family of non-empty
    sets.
  \end{enumerate}  
\end{proposition}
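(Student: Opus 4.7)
The plan is to follow exactly the strategy used in Proposition~\ref{refine}, replacing each ingredient by its interpretable analogue that we have already developed in this subsection.

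For part (1), I would argue as follows. Since $\mathcal{F}$ is downward directed and every member is non-empty, $\mathcal{F}$ has the finite intersection property. By Lemma~\ref{ssf2}, $\mathcal{F}$ can be partitioned into finitely many subfamilies $\mathcal{F}_1, \ldots, \mathcal{F}_n$, each of which extends to a definable type $p_i$ in $K^\eq$. So for every $D \in \mathcal{F}$ there is some $i \le n$ with $D \in p_i$. Now invoke Lemma~\ref{lem:down-dir}, which is stated in enough generality (for types over an arbitrary structure) to apply directly in $K^\eq$: there must exist some single $i \le n$ with $\mathcal{F} \subseteq p_i$. This $p_i$ is then a definable type extending $\mathcal{F}$.

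For part (2), I would apply part (1) to obtain a definable type $q$ extending $\mathcal{F}$, and then apply Proposition~\ref{refine-g3} to $q$ and $\mathcal{F}$ to produce an interpretable $\Gamma$-family $\mathcal{G}$ refining $\mathcal{F}$ and extended by $q$. Since $q$ extends $\mathcal{G}$, every set in $\mathcal{G}$ contains a realization of $q$ and is therefore non-empty, which handles the ``non-empty sets'' clause of the conclusion.

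I do not expect any real obstacle: the non-trivial content has already been packaged into Lemma~\ref{ssf2} and Proposition~\ref{refine-g3}, and the remaining step of passing from ``covered by finitely many types'' to ``extended by one type'' goes through Lemma~\ref{lem:down-dir} exactly as in the definable case. The only mild subtlety is making sure Lemma~\ref{lem:down-dir} applies to types over $K^\eq$; this is fine because it is proved for types over an arbitrary structure $M$, and we may simply take $M = K^\eq$ (or, equivalently, view the $p_i$ as complete types in the appropriate sort of $K^\eq$ and carry out the same downward-directed diagonalization).
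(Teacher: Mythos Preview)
Your proposal is correct and is exactly the argument the paper gives: it says ``Like the proof of Proposition~\ref{refine}, using Lemma~\ref{ssf2} instead of Corollary~\ref{cor:sim-star-fact}, and Proposition~\ref{refine-g3} instead of Proposition~\ref{refine-g},'' which is precisely what you spell out. Your remark that Lemma~\ref{lem:down-dir} applies by taking $M = K^\eq$ is the right way to handle the one mild subtlety.
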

\begin{proof}
  Like the proof of Proposition~\ref{refine}, using Lemma~\ref{ssf2}
  instead of Corollary~\ref{cor:sim-star-fact}, and
  Proposition~\ref{refine-g3} instead of Proposition~\ref{refine-g}.
\end{proof}

\begin{theorem}[{like Theorem~\ref{tri}}] \label{tri1}
  Let $\mathcal{F}$ be an interpretable family of sets in $K$.
  The following are equivalent:
  \begin{enumerate}
  \item $\mathcal{F}$ extends to a definable type.
  \item $\mathcal{F}$ is refined by a $\Gamma$-family.
  \item $\mathcal{F}$ is refined by a downward directed interpretable
    family of non-empty sets.
  \end{enumerate}
\end{theorem}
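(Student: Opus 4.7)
The plan is to mimic the proof of Theorem~\ref{tri} verbatim, replacing each ingredient with its interpretable counterpart developed in this section. The three implications form a cycle $(1) \Rightarrow (2) \Rightarrow (3) \Rightarrow (1)$, and in each case the appropriate tool has already been proved.

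For $(1) \Rightarrow (2)$, suppose $\mathcal{F}$ extends to a definable type $q$ in $K^\eq$. I would invoke Proposition~\ref{refine-g3}, which is precisely the interpretable analogue of Proposition~\ref{refine-g}: it produces an interpretable $\Gamma$-family $\mathcal{G}$ that refines $\mathcal{F}$ and is extended by $q$. In particular $\mathcal{G}$ refines $\mathcal{F}$, which is what (2) asserts.

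For $(2) \Rightarrow (3)$, the implication is immediate from the definition: a $\Gamma$-family $\{X_\gamma\}_{\gamma \in \Gamma}$ consists of non-empty sets (by Definition~\ref{gfam}), and for any $\gamma, \gamma' \in \Gamma$ we have $X_{\max(\gamma,\gamma')} \subseteq X_\gamma \cap X_{\gamma'}$, so it is downward directed. Any $\Gamma$-family witnessing (2) therefore witnesses (3) directly, with no additional refinement needed.

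For $(3) \Rightarrow (1)$, suppose $\mathcal{F}$ is refined by a downward directed interpretable family $\mathcal{F}'$ of non-empty sets. Apply Proposition~\ref{refine-i}(1) to $\mathcal{F}'$ to obtain a definable type $q$ extending $\mathcal{F}'$. Since $\mathcal{F}'$ refines $\mathcal{F}$, every set in $\mathcal{F}$ contains a set in $\mathcal{F}'$ and is therefore in $q$, so $q$ extends $\mathcal{F}$ as well. This establishes (1).

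I do not expect any genuine obstacle: all the hard work is in Propositions~\ref{refine-g3} and \ref{refine-i}, where the lifting argument from $K^\eq$ to a definable cover (via Lemma~\ref{s2}) was used to reduce to the home-sort case already handled in Section~\ref{ss}. The proof of Theorem~\ref{tri1} itself is just a three-line citation, structurally identical to that of Theorem~\ref{tri}.
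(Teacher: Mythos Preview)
Your proposal is correct and matches the paper's proof exactly: the paper simply says to follow the proof of Theorem~\ref{tri}, replacing Proposition~\ref{refine-g} by Proposition~\ref{refine-g3} and Proposition~\ref{refine} by Proposition~\ref{refine-i}, which is precisely the cycle $(1)\Rightarrow(2)\Rightarrow(3)\Rightarrow(1)$ you describe.
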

\begin{proof}
  Like the proof of Theorem~\ref{tri}, using
  Proposition~\ref{refine-g3} instead of Proposition~\ref{refine-g} and
  Proposition~\ref{refine-i} instead of Proposition~\ref{refine}.
\end{proof}

\subsection{Strict pro-definability in $\pCF^\eq$}
Work in a monster model $\Mm \models \pCF$.

\begin{theorem} \label{uddt-i}
  $\pCF^\eq$ has uniform definability of definable types (UDDT).
\end{theorem}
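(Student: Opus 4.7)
The plan is to reduce UDDT for $\pCF^\eq$ to UDDT for $\pCF$ (Fact~\ref{uddt}) using the lifting Lemma~\ref{s2}. Fix an $\mathcal{L}^\eq$-formula $\varphi(x,y)$ with $x$ a tuple from an interpretable sort $X$ and $y$ a tuple from an interpretable sort $Y$. Pick $0$-interpretable surjections $\pi_X : \tilde X \to X$ and $\pi_Y : \tilde Y \to Y$ with $\tilde X, \tilde Y$ real $0$-definable sets, and form the $\mathcal{L}$-formula $\tilde\varphi(\tilde x, \tilde y) := \varphi(\pi_X(\tilde x), \pi_Y(\tilde y))$. Applying Fact~\ref{uddt} to $\tilde\varphi$ yields an $\mathcal{L}^\eq$-formula $\tilde\psi(\tilde y, z)$ with the uniqueness property that uniformly defines the $\tilde\varphi$-definitions of all definable types on $\tilde X$. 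Take as candidate uniform scheme
\[
  \psi(y, z) \;:=\; \exists \tilde y \in \tilde Y\, \bigl(\pi_Y(\tilde y) = y \,\wedge\, \tilde\psi(\tilde y, z)\bigr),
\]
and, replacing $\psi$ by its canonical-parameter reformulation in $\mathcal{L}^\eq$, arrange that $\psi$ too has the uniqueness property.

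To verify $\psi$ works, fix $M \models \pCF^\eq$ and $p \in S^\df_X(M)$. Embed $M$ in a monster $\Mm^\eq$ of $\pCF^\eq$, and let $p^\Mm \in S^\df_X(\Mm^\eq)$ be the $M$-definable global extension of $p$. Since $\Mm$ is sufficiently saturated, Lemma~\ref{s2} applied to $\pi_X$ supplies $\tilde p \in S^\df_{\tilde X}(\Mm)$ with $\pi_{X,*}\tilde p = p^\Mm$. Let $c \in \Mm^z$ be the unique code for the $\tilde\varphi$-definition of $\tilde p$. The key observation is that, for any $b \in Y(\Mm)$ and any lift $\tilde b \in \pi_Y^{-1}(b)$, the identity $\tilde\varphi(\tilde x, \tilde b) = \varphi(\pi_X(\tilde x), b)$ combined with $\pi_{X,*}\tilde p = p^\Mm$ gives $\tilde\varphi(\tilde x, \tilde b) \in \tilde p \iff \varphi(x, b) \in p^\Mm$. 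Consequently, the image under $\pi_Y$ of the $\tilde\varphi$-definition of $\tilde p$ is exactly the $\varphi$-definition of $p^\Mm$, which by construction of $\psi$ equals $\psi(\Mm^\eq, c)$.

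It remains to descend $c$ from $\Mm^\eq$ into $M$. The $\varphi$-definition of $p^\Mm$ is $M$-definable (as $p$ is), so it is fixed setwise by every $\sigma \in \Aut(\Mm^\eq/M)$; hence $\psi(\Mm^\eq, \sigma(c)) = \psi(\Mm^\eq, c)$, and the uniqueness property of $\psi$ forces $\sigma(c) = c$. Therefore $c \in \dcl^\eq(M) = M$, and taking $c_{p,\varphi} := c$ completes the proof. The main subtlety is precisely this descent: the existential quantifier in the definition of $\psi$ threatens to destroy uniqueness of codes, and without uniqueness the invariance argument would collapse. However, uniqueness can be restored at no cost by passing to canonical parameters in $\mathcal{L}^\eq$, exactly the move that the paper sanctions immediately after the definition of UDDT.
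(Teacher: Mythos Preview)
Your proof is correct and follows essentially the same route as the paper's: lift types from the interpretable sort $X$ to a definable cover $\tilde X$ via Lemma~\ref{s2}, then invoke UDDT for $\pCF$ (Fact~\ref{uddt}) on the pulled-back formula. The differences are purely presentational. First, you explicitly lift the parameter variable $y$ through $\pi_Y$, whereas the paper silently applies Fact~\ref{uddt} to $\varphi(\pi(w),y)$; strictly speaking your version is needed, since Fact~\ref{uddt} is stated for $\Ldiv$-formulas with real $y$. Second, you perform the descent from $\Mm$ to $M$ by passing to canonical parameters and using an invariance argument; the paper instead works only over the monster, tacitly using that once the $\varphi$-definition of $p^{\Mm}$ is a $\psi$-instance over $\Mm$, the $M$-definability of this set and $M\preceq\Mm$ immediately yield a $\psi$-instance over $M$ by Tarski--Vaught. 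One small notational wrinkle: after you replace $\psi$ by its canonical-parameter version (changing the $z$-variable), you continue to write $\psi(\Mm^\eq,c)$ with the old $\tilde\psi$-code $c$; what you need is the image $c'$ of $c$ under the canonicalization map, and the descent argument then runs with $c'$. This is a trivial fix and the argument is sound.
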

\begin{proof}
  (Compare with \cite[Remark~2.4.10]{cubides-ye-hils}.)
  Let $X$ be any product of
  sorts in $\Mm^\eq$.  Let $\varphi(x,y)$ be an
  $\Ldiv^\eq$-formula where $x$ lives in the sort $X$.  We must
  bound the complexity of the sets
  \begin{equation*}
    \{b \in \Mm^y : \varphi(x,b) \in r(x)\}
  \end{equation*}
  as $r$ ranges over $S^{\df}_X(\Mm)$.  By Lemma~\ref{s2}, this is the
  same as bounding the complexity of
  \begin{equation*}
    \{b \in \Mm^y : \varphi(x,b) \in (\pi_* q)(x)\}
  \end{equation*}
  as $q$ ranges over $S^{\df}_{\tX}(\Mm)$, where $\pi : \tX \to X$ is a $0$-interpretable surjection from a definable set $\tX$.  But
  \begin{equation*}
    \{b \in \Mm^y :\varphi(x,b) \in (\pi_* q)(x)\} = \{b \in \Mm^y :
    \varphi(\pi(w),b) \in q(w)\},
  \end{equation*}
  so this follows by UDDT in $\pCF$ applied to the formula
  $\varphi(\pi(w),y)$.
\end{proof}
By UDDT and Fact~\ref{pro-d}, there is a pro-interpretable set
$X^{\df}$ associated to any interpretable set $X$.
\begin{theorem} \label{strict2}
  If $X$ is an interpretable set, then $X^{\df}$ is strictly
  pro-interpretable.
\end{theorem}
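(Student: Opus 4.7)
The plan is to reduce strict pro-interpretability of $X^\df$ to the already-known strict pro-interpretability of $\tX^\df$ for definable $\tX$ (Fact~\ref{strict}), via a surjective pushforward. Concretely, I would fix a $0$-interpretable surjection $\pi \colon \tX \to X$ with $\tX$ definable, and consider the pushforward map $\pi_\ast \colon \tX^\df \to X^\df$ sending $p(w)$ to the type defined by $\varphi(x,b) \in \pi_\ast p \iff \varphi(\pi(w),b) \in p(w)$. Each defining coordinate of $\pi_\ast p$ is computed from defining coordinates of $p$ in a uniformly interpretable way, so $\pi_\ast$ is a pro-interpretable map between pro-interpretable sets. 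By Lemma~\ref{s2} applied in the monster model $\Mm$, the map $\pi_\ast$ is surjective on $\Mm$-points.

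I would then invoke the following formal closure property: a pro-interpretable surjective image of a strictly pro-interpretable set is itself strictly pro-interpretable. Using characterization~(\ref{c2}) of strictness, let $g \colon X^\df \to Y$ be any pro-interpretable map with $Y$ interpretable. The composite $g \circ \pi_\ast \colon \tX^\df \to Y$ is pro-interpretable, so its image is interpretable because $\tX^\df$ is strictly pro-interpretable by Fact~\ref{strict}. Surjectivity of $\pi_\ast$ on $\Mm$-points identifies this image with $g(X^\df)$; hence $g(X^\df)$ is interpretable, which is condition~(\ref{c2}) for $X^\df$.

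The substantive work has already been packed into Lemma~\ref{s2} — lifting definable types across an interpretable surjection from a definable cover — together with the previously established Fact~\ref{strict}. Once those two inputs are on the table, the theorem is a purely formal consequence of the closure property above, and there is no real obstacle remaining. The only point worth checking is that the surjectivity supplied by Lemma~\ref{s2} holds on $\Mm$-points, which is precisely what characterization~(\ref{c2}) of strict pro-interpretability requires in this setup.
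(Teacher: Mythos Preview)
Your proposal is correct and follows essentially the same route as the paper: both take a definable cover $\pi:\tX\to X$, apply Lemma~\ref{s2} in the monster model to get surjectivity of $\pi_*:\tX^\df\to X^\df$, and then transfer strict pro-interpretability from $\tX^\df$ (Fact~\ref{strict}) to $X^\df$ along this surjection. You spell out the closure property (a pro-interpretable surjective image of a strict pro-interpretable set is strict, via characterization~(\ref{c2})) that the paper leaves implicit, but the argument is the same.
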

\begin{proof}
  Take a definable set $\tX$ and an interpretable surjection $\pi : \tX
  \to X$.  By Lemma~\ref{s2}, the map $\tX^{\df} \to X^{\df}$ is
  surjective.  The strict pro-definability of $\tX^{\df}$
  (Fact~\ref{strict}) then implies strict pro-definability for
  $X^{\df}$.
\end{proof}
\begin{theorem} \label{lifts}
  Let $f : X \to Y$ be an interpretable surjection.  Then $f_* : X^{\df} \to
  Y^{\df}$ is surjective.
\end{theorem}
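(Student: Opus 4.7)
The plan is to reduce Theorem~\ref{lifts} to Lemma~\ref{s2}, which already handles the case of an interpretable surjection whose domain is a definable (as opposed to merely interpretable) set. The only obstacle is the fact that $X$ is interpretable rather than definable, and this is removed by covering $X$ by a definable set.

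First I would pick a definable set $\tilde{X}$ together with an interpretable surjection $\tau : \tilde{X} \to X$; such $\tilde{X}$ exists because every interpretable set is a quotient of a definable one. Next I would form the composition $g := f \circ \tau : \tilde{X} \to Y$, which is an interpretable surjection from a \emph{definable} set $\tilde{X}$ onto the interpretable set $Y$. This is exactly the setup of Lemma~\ref{s2}, so (using that $\Mm$ is a monster model, hence sufficiently saturated) the pushforward
\[
  g_* : \tilde{X}^{\df}(\Mm) \longrightarrow Y^{\df}(\Mm)
\]
is surjective.

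Given $q \in Y^{\df}(\Mm)$, I would then choose $r \in \tilde{X}^{\df}(\Mm)$ with $g_* r = q$, and set $p := \tau_* r \in X^{\df}(\Mm)$. Since pushforward is functorial, we have
\[
  f_* p \;=\; f_* \tau_* r \;=\; (f \circ \tau)_* r \;=\; g_* r \;=\; q,
\]
which proves the theorem. The only substantive ingredient is Lemma~\ref{s2}, whose proof ultimately relies on the countable generation of definable types by $\Gamma$-families (Theorem~\ref{tp-gen}) together with the compactness argument in Lemma~\ref{strange} that rests on strict pro-interpretability; the reduction step itself is purely formal.
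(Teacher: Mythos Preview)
Your proof is correct and follows the same route as the paper: cover $X$ by a definable set $\tilde X$, apply Lemma~\ref{s2} to the composite $f\circ\tau:\tilde X\to Y$, and deduce surjectivity of $f_*$ by functoriality of pushforward. One small note on your closing commentary (not the argument itself): Lemma~\ref{strange} needs only pro-interpretability of $\tilde X^{\df}$ via UDDT, not \emph{strict} pro-interpretability, and the proof of Lemma~\ref{s2} does not invoke Theorem~\ref{tp-gen}.
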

\begin{proof}
  Take a definable set $W$ and interpretable surjection $g : W \to X$.
  Applying Lemma~\ref{s2} to the interpretable surjection $f \circ g :
  W \to Y$, we see that $f_* \circ g_* : W^{\df} \to Y^{\df}$ is
  surjective, which implies $f_* : X^{\df} \to Y^{\df}$ is surjective.
\end{proof}
\begin{remark}
  The analogue of Theorem~\ref{lifts} for \emph{definable}
  surjections is trivial, because any definable surjection $f : X \to
  Y$ has a section $g : Y \to X$ by definable Skolem functions, and
  then $g_* : Y^{\df} \to X^{\df}$ witnesses that $f_* : X^{\df} \to
  Y^{\df}$ is surjective.
\end{remark}

\begin{question}\label{open-q}
  If the surjection $f : X \to Y$ of Theorem~\ref{lifts} is
  interpretable over a small model $M$, is the induced map $X^{\df}(M)
  \to Y^{\df}(M)$ surjective?  Equivalently, does every $M$-definable
  type in $Y$ lift to an $M$-definable type in $X$?  This doesn't
  follow formally from Theorem~\ref{lifts}, as there is a
  surjection of strictly pro-interpretable sets $f : X \to Y$ over
  $\Qp$ such that $X(\Qp) \to Y(\Qp)$ is not surjective.
  Specifically, take $X$ to be the value group and $Y$ to be the set of
  tuples $(x_0,x_1,x_2,\ldots) \in \{0,1\}^\omega$ satisfying $x_0 \le
  x_1 \le x_2 \ldots$.  Let $f : \Gamma \to Y$ be the map sending
  $\gamma$ to $(x_0,x_1,x_2,\ldots)$, where $x_i$ is $0$ if $i <
  \gamma$ and $1$ if $i \ge \gamma$.  Over the monster model, $f$ is
  surjective, but no $\gamma \in \Gamma(\Qp) = \Zz$ maps to
  $(0,0,0,\ldots) \in Y(\Qp)$.  On the other hand, when $M = \Qp$ the specific map $X^{\df}(M) \to Y^{\df}(M)$ \emph{is} surjective, because Lemma~\ref{delon-lift} ensures all types are definable.
\end{question}

\section{Definable and interpretable topological spaces} \label{topsec}

Throughout let $K\models \pCF$.

\begin{definition}
  If $Z \subseteq K^n$ is a definable set, a \emph{definable topology}
  on $Z$ is a topology $\tau$ with a definable basis of open sets.  That is, there is a definable family $\mathcal{B}$ of subsets of $Z$ that is a basis of opens for $\tau$.  A
  \emph{definable topological space} is a definable set with a
  definable topology.  An \emph{interpretable topological space} is a
  definable topological space in $K^\eq$.
\end{definition}

An analogue of the following result was proved in o-minimal expansions of ordered fields in~\cite[Corollary 40 (2)]{atw1}.

\begin{proposition}[Definable first countability]\label{prop:pCF_1st_countability}
Let $(Z,\tau)$ be an interpretable topological space in $K$. For any $z\in Z$, there exists a basis of open neighborhoods of $z$ given by a $\Gamma$-family (a $\Gamma$-basis).
\end{proposition}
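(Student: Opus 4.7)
The plan is to produce the $\Gamma$-basis by working one level up, in the parameter space of an interpretable basis of $\tau$, and then pushing the resulting $\Gamma$-family down to $Z$ by taking unions of basic opens. This detour is forced on us because the $\Gamma$-families supplied by Proposition~\ref{refine-i} need not consist of open sets, nor, a priori, contain the point $z$.

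First, I would fix an interpretable basis $\mathcal{B} = \{B_a\}_{a \in A}$ of $\tau$, set $A_z := \{a \in A : z \in B_a\}$, and consider the interpretable family $\mathcal{G} := \{S_b\}_{b \in A_z}$ of subsets of $A_z$, where
\[
S_b := \{a \in A_z : B_a \subseteq B_b\}.
\]
Each $S_b$ is non-empty (it contains $b$), and $\mathcal{G}$ is downward directed: for $b_1, b_2 \in A_z$ the intersection $B_{b_1} \cap B_{b_2}$ is an open neighborhood of $z$, so some $b_3 \in A_z$ satisfies $B_{b_3} \subseteq B_{b_1} \cap B_{b_2}$, whence $S_{b_3} \subseteq S_{b_1} \cap S_{b_2}$.

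Next, I would apply Proposition~\ref{refine-i}(1) to obtain a definable type $r$ (on the sort of $A$) extending $\mathcal{G}$, and then Proposition~\ref{refine-g3} to extract a $\Gamma$-family $\{Y_\gamma\}_{\gamma \in \Gamma}$ refining $\mathcal{G}$ such that $r$ also extends $\{Y_\gamma\}$. Because $r$ contains each $S_b$ and each $S_b \subseteq A_z$, the type $r$ concentrates on $A_z$, so $Y_\gamma \cap A_z$ lies in $r$ and in particular is non-empty; replacing $Y_\gamma$ by $Y_\gamma \cap A_z$ preserves non-emptiness, preserves the decreasing condition in $\gamma$, preserves the refinement of $\mathcal{G}$, and forces $Y_\gamma \subseteq A_z$. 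Finally, I would set $U_\gamma := \bigcup_{a \in Y_\gamma} B_a$. Each $U_\gamma$ is open as a union of basic opens and contains $z$ because $Y_\gamma$ is a non-empty subset of $A_z$; the family $\{U_\gamma\}_{\gamma \in \Gamma}$ is interpretable and decreasing in $\gamma$, so it is a $\Gamma$-family in the sense of Definition~\ref{gfam}. To see it is a neighborhood basis at $z$, one takes any open $V \ni z$, picks $b \in A_z$ with $B_b \subseteq V$, and uses the refinement of $\mathcal{G}$ by $\{Y_\gamma\}$ to find $\gamma$ with $Y_\gamma \subseteq S_b$; every $a \in Y_\gamma$ then satisfies $B_a \subseteq B_b \subseteq V$, so $U_\gamma \subseteq V$.

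The step I expect to be the main obstacle is forcing each member of the eventual $\Gamma$-basis to be open and to contain $z$. Applying Proposition~\ref{refine-i}(2) directly to the family $\{B_a : a \in A_z\}$ of basic open neighborhoods of $z$ yields a $\Gamma$-family in $Z$ whose members need not be open and need not contain $z$; this is why I instead reformulate the problem on the index set $A$, where, once a refining $\Gamma$-family of indices is in hand, a single union operation turns it into open neighborhoods of $z$. Arranging $Y_\gamma \subseteq A_z$ so that $z$ ends up inside every $U_\gamma$ is the other subtle point, and is what forces the combined use of Propositions~\ref{refine-i}(1) and~\ref{refine-g3} rather than just Proposition~\ref{refine-i}(2).
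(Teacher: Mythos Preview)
Your proof is correct and follows essentially the same route as the paper: pass to the index set of a basis, form the downward directed family $\{S_b\}_{b \in A_z}$, refine it by a $\Gamma$-family, and take unions of basic opens. The only difference is that the paper applies Proposition~\ref{refine-i}(\ref{refi2}) directly rather than unpacking it into Proposition~\ref{refine-i}(1) plus Proposition~\ref{refine-g3}; your extra step of intersecting with $A_z$ is harmless but not actually needed, since the $\Gamma$-family $\{Y_\gamma\}$ is decreasing and some $Y_{\gamma_0}$ already lies inside $S_{b_0} \subseteq A_z$ by refinement, so every $Y_\gamma \supseteq Y_{\gamma_0}$ meets $A_z$ and hence $z \in U_\gamma$ automatically.
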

\begin{proof}
Let $\mathcal{B}=\{B_{y} : y\in Y\}$ be an interpretable basis for $\tau$ and fix $z\in Z$. Let $Y_z=\{ y\in Y : z\in B_{y}\}$. Note that $\{B_{y} : y\in Y_z\}$ is an interpretable
basis of open neighborhoods of $z$. 

For any $y\in Y_z$, let
\[
Y(y)=\{ x\in Y : z\in B_{x} \subseteq B_{y}\}.
\]
The family $\{Y(y) : y\in Y_z\}$ of non-empty sets is interpretable and downward directed. We apply Proposition~\ref{refine}(\ref{ref2}) or Proposition~\ref{refine-i}(\ref{refi2}) to find a $\Gamma$-family $\{X_\gamma\}_{\gamma\in\Gamma}$ that refines it.

For any $\gamma\in \Gamma$, let 
\[
A_\gamma=\bigcup_{y\in X_\gamma} B_y. 
\]
Then $\{A_\gamma: \gamma\in \Gamma\}$ is a $\Gamma$-basis of open neighborhoods of $z$.
\end{proof}
Note that Proposition~\ref{prop:pCF_1st_countability} shows in particular that any interpretable topological space in $\Qp$ is first countable (i.e. every point has a countable basis of neighborhoods). 

Given a topological space $(Z,\tau)$ and a subset $Y \subseteq Z$, we let $\cl(Y)$ denote the closure of $Y$. We regard $K$ as a definable topological space via the usual valuation topology.

\begin{definition} \label{curve}
  Let $(Z,\tau)$ be an interpretable topological space in $K$.  An
  \emph{interpretable curve on $Z$} is an interpretable map $f : D \to Z$,
  where $D \subseteq K$ is definable with $0 \in \cl(D) \setminus D$, where $\cl(D)$ denotes the closure with respect to the valuation topology.
  \emph{We do not assume $f$ is continuous.} We say that $f$
  \emph{converges to $z \in Z$} if $\lim_{x \to 0} f(x) = z$, in the
  sense that for any neighborhood $U$ of $z$ there exists
  $\gamma_0\in \Gamma$ such that, for any $x \in D$ with $\val(x) \ge
  \gamma_0$, $f(x) \in U$.
\end{definition} 

\begin{remark}
  The theory $\pCF$ has definable Skolem functions.  Therefore, any definable surjection $f : X \to Y$ has a definable section, that is, a definable map $s : Y \to X$ with $f \circ s = \id_Y$.  The same does not hold in $\pCF^\eq$.  In other words, interpretable surjections need not have interpretable sections.  For example the valuation map $\val : K^\times \to \Gamma$ has no interpretable section.  However, if $f : X \to Y$ is an interpretable surjection and $Y$ is \emph{definable}, then $f$ has an interpretable section $s : Y \to X$.  To see this, take an interpretable surjection $\pi : \tilde{X} \to X$ with $\tilde{X}$ definable, and apply definable Skolem functions to the definable surjection $f \circ \pi : \tilde{X} \to Y$ to get a section $s_0 : Y \to \tilde{X}$. Then $f \circ \pi \circ s_0 = \id_Y$, so $\pi \circ s_0$ is an interpretable section of the original map $f$.  This argument is formally related to the proof of Lemma~\ref{interdef}(\ref{id1}).
  
  In what follows, when we apply ``definable Skolem functions'' to choose a function $s : Y \to X$, the domain $Y$ will always be definable. 
\end{remark}
Definable first countability (Proposition~\ref{prop:pCF_1st_countability}) and definable Skolem functions allow us to prove a form of definable curve selection.

\begin{lemma}[Definable curve selection]\label{curve-selection}
Let $(Z,\tau)$ be an interpretable topological space in $K$. For any interpretable set $Y\subseteq Z$ and any $z\in \cl(Y)$ there exists an interpretable curve in $Y$ converging to $z$.
\end{lemma}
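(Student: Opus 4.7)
The plan is to combine definable first countability with definable Skolem functions to parametrize a choice of points by $K^\times$. The argument has three steps.

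First, apply Proposition~\ref{prop:pCF_1st_countability} to produce a $\Gamma$-basis $\{A_\gamma\}_{\gamma\in\Gamma}$ of open neighborhoods of $z$. Since $z\in\cl(Y)$ and each $A_\gamma$ is open, every intersection $A_\gamma\cap Y$ must be non-empty; otherwise $Z\setminus A_\gamma$ would be a closed set containing $Y$ and missing $z$. Thus $\{A_\gamma\cap Y\}_{\gamma\in\Gamma}$ is an interpretable $\Gamma$-family of non-empty subsets of $Y$.

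Second, reparametrize from $\Gamma$ to a definable subset of $K$. Take $D=K\setminus\{0\}$, so that $0\in\cl(D)\setminus D$ as required in Definition~\ref{curve}, and consider
\begin{equation*}
  R \;=\; \{(x,y)\in D\times Y : y\in A_{\val(x)}\cap Y\}.
\end{equation*}
This set is interpretable, and its projection $R\to D$ is surjective because $\val:K^\times\to\Gamma$ is surjective and every $A_\gamma\cap Y$ is non-empty. Since the codomain $D$ is \emph{definable}, the remark preceding the lemma applies: an interpretable surjection onto a definable set admits an interpretable section (use definable Skolem functions after pulling back through an interpretable surjection from a definable cover of $R$). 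Let $s:D\to R$ be such a section and let $f:D\to Y$ be its $Y$-component; then $f$ is an interpretable curve in $Y$ with $f(x)\in A_{\val(x)}$ for all $x\in D$.

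Third, verify convergence. Given any neighborhood $U$ of $z$, pick $\gamma_0\in\Gamma$ with $A_{\gamma_0}\subseteq U$ using that $\{A_\gamma\}_\gamma$ is a neighborhood basis. For $x\in D$ with $\val(x)\geq\gamma_0$, the $\Gamma$-family condition gives $A_{\val(x)}\subseteq A_{\gamma_0}$, and hence $f(x)\in A_{\val(x)}\subseteq U$. Thus $f$ converges to $z$ in the sense of Definition~\ref{curve}.

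The only delicate point is the construction of the section in the second step, where one must be careful that definable Skolem functions apply to interpretable surjections only when the base is definable; this is exactly why we index by $D\subseteq K$ rather than directly by $\Gamma$. Otherwise the proof is a routine assembly of first countability plus Skolem.
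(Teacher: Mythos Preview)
Your proof is correct and follows essentially the same approach as the paper: apply definable first countability to obtain a $\Gamma$-basis $\{A_\gamma\}$ at $z$, then use definable Skolem functions on the definable domain $K\setminus\{0\}$ to choose $f(x)\in A_{\val(x)}\cap Y$. The paper's version is just a compressed form of your three steps, so you have nothing substantive to change.
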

\begin{proof}
Let $z \in \cl(Y)$ and, by Lemma~\ref{prop:pCF_1st_countability}, let $\{A_\gamma : \gamma\in \Gamma\}$ be a $\Gamma$-basis of neighborhoods of $z$. By definable Skolem functions let $f:K\setminus \{0\} \rightarrow Y$ be an interpretable map satisfying $f(x)\in A_{\val(x)} \cap Y$ for every $x\in K\setminus\{0\}$. Then clearly $f$ is an interpretable curve in $Y$ converging to $z$.
\end{proof}

Definable curve selection can be used to characterize continuity of interpretable functions in terms of whether limits of interpretable curves are maintained. We make this explicit in the following proposition, whose proof we omit, pointing the reader to~\cite[Chapter 6, Lemma 4.2]{dries-book} and~\cite[Proposition 42]{atw1} for analogous results in the o-minimal setting.

\begin{proposition}
Let $(Z,\tau)$ and $(Y,\mu)$ be interpretable topological spaces in $K$. Let $h:(Z,\tau)\rightarrow (Y,\mu)$ an interpretable map. Then, for any $z\in Z$, the map $h$ is continuous at $z$ if and only if, for every interpretable curve $f$ in $Z$, if $f$ converges to $z$ then the curve $h\circ f$ in $Y$ converges to $f(z)$.
\end{proposition}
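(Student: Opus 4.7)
The plan is to prove the two directions separately; the forward direction is formal, while the converse uses Lemma~\ref{curve-selection}.

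For the forward direction, assume $h$ is continuous at $z$ and let $f : D \to Z$ be an interpretable curve converging to $z$. Given an open neighborhood $V$ of $h(z)$, continuity provides an open neighborhood $U$ of $z$ with $h(U) \subseteq V$; convergence of $f$ to $z$ then yields some $\gamma_0 \in \Gamma$ such that $f(x) \in U$ whenever $\val(x) \ge \gamma_0$, so $h(f(x)) \in V$ for all such $x$. This shows $h \circ f$ converges to $h(z)$. (We read the ``$f(z)$'' in the statement as a typo for ``$h(z)$''.)

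For the converse I argue by contrapositive. Using Proposition~\ref{prop:pCF_1st_countability}, fix a $\Gamma$-basis $\{B_\gamma\}_{\gamma \in \Gamma}$ of open neighborhoods of $h(z)$ in $Y$. If $h$ is not continuous at $z$, then since the $B_\gamma$ form a neighborhood basis there must exist some $\gamma_0 \in \Gamma$ such that $h^{-1}(B_{\gamma_0})$ is not a neighborhood of $z$: otherwise every neighborhood $V$ of $h(z)$ would contain some $B_\gamma$, and $h^{-1}(B_\gamma) \subseteq h^{-1}(V)$ would be a neighborhood of $z$, making $h$ continuous at $z$. Hence $z \in \cl(W)$ for the interpretable set $W := Z \setminus h^{-1}(B_{\gamma_0})$. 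By Lemma~\ref{curve-selection} there is an interpretable curve $f$ in $W$ converging to $z$, and since $h(f(x)) \notin B_{\gamma_0}$ throughout the domain of $f$, the composition $h \circ f$ cannot converge to $h(z)$.

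The essential tool is definable curve selection (Lemma~\ref{curve-selection}), which in turn relies on definable first countability and definable Skolem functions. The rest of the argument mirrors the classical topological proof for first-countable spaces, and the only point requiring care is that we choose the ``bad'' basic open from the $\Gamma$-basis so that $W$ remains interpretable; this is automatic given Proposition~\ref{prop:pCF_1st_countability}.
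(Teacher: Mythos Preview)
Your proof is correct. The paper in fact omits the proof of this proposition entirely, merely pointing to analogous o-minimal results; your argument via definable curve selection (Lemma~\ref{curve-selection}) is exactly the intended one, and your identification of ``$f(z)$'' as a typo for ``$h(z)$'' is right. One minor simplification: invoking Proposition~\ref{prop:pCF_1st_countability} in the converse is not strictly necessary --- any basic open $B$ from the given interpretable basis of $(Y,\mu)$ with $h(z)\in B$ and $h^{-1}(B)$ not a neighborhood of $z$ already makes $W=Z\setminus h^{-1}(B)$ interpretable, so curve selection applies directly --- but your version is perfectly valid.
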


Recall that if $\{x_i\}_{i < \omega}$ is a sequence in a topological space  $X$, then a point $p \in X$ is a \emph{cluster point} if every neighborhood of $p$ contains infinitely many terms in the sequence.  If $X$ is first-countable, then $p$ is a cluster point if and only if some subsequence of $\{x_i\}_{i < \omega}$ converges to $p$.  (This need not hold in a general topological space.)  The
analogous equivalence holds in our setting, thanks to definable first
countability:
\begin{proposition} \label{prop:cluster-point}
  Let $f : D \to Z$ be an interpretable curve in an interpretable
  topological space $(Z,\tau)$ in $K$.  Let $z \in Z$ be a point.  The following are equivalent:
  \begin{enumerate}
  \item \label{cp1} $z$ is a cluster point of $f$, in the sense that, for every $\gamma$,
     \begin{equation*}
      z \in \cl\{ f(x) : x \in D, \, \val(x) \ge \gamma\}.
    \end{equation*}
  \item \label{cp2} There is an interpretable curve $g$ that is a restriction of $f$ and converges to $z$.
  \end{enumerate}
\end{proposition}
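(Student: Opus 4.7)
For the easy direction $(\ref{cp2}) \Rightarrow (\ref{cp1})$, I would just unwind the definitions: given a restriction $g = f|_{D'}$ converging to $z$ with $D' \subseteq D$ definable and $0 \in \cl(D') \setminus D'$, for every $\gamma$ and every neighborhood $U$ of $z$, convergence together with unboundedness of $D'$ at $0$ produces $x \in D'$ with $\val(x) \ge \gamma$ and $f(x) \in U$, so $U$ meets $\{f(x) : x \in D, \val(x) \ge \gamma\}$.

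For the nontrivial direction $(\ref{cp1}) \Rightarrow (\ref{cp2})$, the plan is to construct $D'$ explicitly using definable first countability combined with a Presburger-theoretic ``upper inverse.'' First I would apply Proposition~\ref{prop:pCF_1st_countability} to fix a $\Gamma$-basis $\{A_\gamma\}_{\gamma \in \Gamma}$ of neighborhoods of $z$. The cluster point hypothesis is then equivalent to the non-emptiness, for every $\gamma$, of the interpretable set
\[
  E_\gamma := \{x \in D : \val(x) \ge \gamma, \ f(x) \in A_\gamma\}.
\]
Exploiting that $\Gamma$ is a $\Zz$-group (stably embedded in $\pCF^\eq$), every non-empty interpretable subset of $\Gamma$ bounded below has a minimum, so I would define the interpretable non-decreasing function $\beta(\gamma) := \min\{\val(x) : x \in E_\gamma\}$ (which satisfies $\beta(\gamma) \ge \gamma$) and its upper inverse $\beta^{-1}(\delta) := \min\{\gamma : \beta(\gamma) \ge \delta\}$. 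Routine checks show $\beta^{-1}$ is non-decreasing, $\beta^{-1}(\delta) \le \delta$, and tends to $\infty$ as $\delta \to \infty$ (otherwise $\beta$ would be bounded, contradicting $\beta(\gamma) \ge \gamma$).

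I would then set
\[
  D' := \{x \in D : f(x) \in A_{\beta^{-1}(\val(x))}\}.
\]
Unboundedness of $D'$ near $0$ falls out of the construction: for each $\gamma$, picking $x \in E_\gamma$ attaining $\val(x) = \beta(\gamma)$ gives $\beta^{-1}(\val(x)) \le \gamma$, whence $A_\gamma \subseteq A_{\beta^{-1}(\val(x))}$ by the $\Gamma$-family property, so $x \in D'$ and $\val(x) \ge \gamma$. Convergence of $g := f|_{D'}$ to $z$ is immediate from monotonicity: given any $\gamma_1$, choosing $\gamma_0$ with $\beta^{-1}(\gamma_0) \ge \gamma_1$ forces $f(x) \in A_{\beta^{-1}(\val(x))} \subseteq A_{\gamma_1}$ for every $x \in D'$ with $\val(x) \ge \gamma_0$.

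The step I expect to require the most care is verifying that $D'$ is actually \emph{definable}, not merely interpretable, which is needed because the domain of an interpretable curve must be a definable subset of $K$. I plan to discharge this by invoking the general fact that every interpretable subset of the home sort is definable: any imaginary parameter appearing in the defining $\mathcal{L}^\eq$-formula can be replaced by a representative real tuple modulo a $0$-definable equivalence relation, yielding an equivalent $\mathcal{L}$-formula with real parameters.
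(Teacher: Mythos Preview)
Your argument follows essentially the same route as the paper's: both invoke Proposition~\ref{prop:pCF_1st_countability} to obtain a $\Gamma$-basis $\{A_\gamma\}$, form the sets $E_\gamma$ (written $J_\gamma$ in the paper) and the minimum-valuation function $\beta$ (written $\mu$), and then carve out a convergent restriction. The only difference is the final step: rather than introducing an inverse, the paper takes the restricted domain to be $C := \bigcup_\gamma J_\gamma^{\min}$, where $J_\gamma^{\min}$ is the set of points of $J_\gamma$ of minimal valuation $\mu(\gamma)$, and checks directly that $f|_C$ converges to $z$.

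The paper's packaging sidesteps a small well-definedness issue in yours: the upward-closed set $\{\gamma : \beta(\gamma) \ge \delta\}$ can equal all of $\Gamma$ (e.g.\ if $D = \Oo \setminus \{0\}$ and $f \equiv z$, then $\beta(\gamma) = \max(\gamma,0)$ and there is no minimum when $\delta \le 0$), so $\beta^{-1}(\delta)$ need not exist for every $\delta \in \val(D)$. The fix is routine---replace $D$ by $\{x \in D : \val(x) > \beta(\gamma_0)\}$ for any fixed $\gamma_0$ before defining $\beta^{-1}$---but it should be said. Your closing concern about definability of $D'$ is correctly handled and deserves less emphasis than you give it; the paper does not even comment on the analogous point for $C$.
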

\begin{proof}
  $(\ref{cp2})\Rightarrow(\ref{cp1})$ is easy.  We prove $(\ref{cp1})\Rightarrow(\ref{cp2})$.  Using Lemma~\ref{prop:pCF_1st_countability}, let $\{A_\gamma\}_{\gamma\in \Gamma}$ be a $\Gamma$-basis of open neighborhoods of $z$.  

For each $\gamma\in \Gamma$, set
\[
 J_{\gamma} := \{x \in D : \val(x) \ge \gamma, ~ f(x) \in A_{\gamma}\}.
\]
Observe that $\{J_\gamma\}_{\gamma\in \Gamma}$ is a $\Gamma$-family.

For each $\gamma\in \Gamma$, let $J_\gamma^{\min}$ denote the set of $x\in J_\gamma$ of minimum valuation. (Any non-empty bounded-below interpretable subset of $\Gamma$ has a minimum, because this holds in the standard model $\Qp$).
Set
\[
C:=\bigcup_{\gamma\in\Gamma} J_\gamma^{\min} \subseteq D \subseteq K. 
\]
By definition of the sets $J_\gamma$ it clearly holds that $0\in \cl(C)$, where the closure $\cl(C)$ is with respect to the valuation topology on $K$.  For any $\gamma$, let $\mu(\gamma)$ be the valuation of points in $J_\gamma^{\min}$, or equivalently, the minimum valuation of points in $J_\gamma$.  Note that
\[
\gamma \ge \gamma' \implies J_{\gamma} \subseteq J_{\gamma'} \implies \mu(\gamma) \ge \mu(\gamma'). \tag{$\ast$}
\]
We show that $f|_C$ converges to $z$.  Fix a basic neighborhood $A_\gamma \ni z$.  We claim that if $x \in C$ and $\val(x) > \mu(\gamma)$ then $f(x) \in A_\gamma$.  Fix $\gamma'$ such that $x \in J_{\gamma'}^{\min}$.  Then $\mu(\gamma) < \val(x) = \mu(\gamma')$.  By ($\ast$), $\gamma < \gamma'$.  By definition of $J_{\gamma'}$, $f(x) \in A_{\gamma'} \subseteq A_\gamma$.  This proves the claim.
\end{proof}

\begin{definition} \label{spec}
Fix an interpretable topological space $(Z,\tau)$ in $K$.  If $q \in S_Z(K)$ and $z \in Z$, then $q$ \emph{specializes} to $z$ (equivalently $z$ is a \emph{specialization} of $q$) if the following equivalent conditions hold:
\begin{enumerate}
    \item \label{sp1} For every interpretable open set $U \ni z$, we have $U \in q$.
    \item \label{sp2} For every interpretable closed set $C\in q$, we have $z \in C$.
    \item For every interpretable set $Y\in q$, we have $z \in \cl(Y)$.
\end{enumerate}
\end{definition}
\begin{remark} \label{basic-closed}
  Fix an interpretable basis of open sets on $(Z,\tau)$.
  In conditions (\ref{sp1}) and (\ref{sp2}) of Definition~\ref{spec}, it suffices to consider \emph{basic} open sets and \emph{basic} closed sets, respectively.  Here, a ``basic closed set'' is the complement of a basic open set.  Note that when $q$ is a definable type, the family of basic closed sets (or basic open sets) in $q$ is an interpretable family.
\end{remark}

\begin{definition}
An interpretable topological space $(Z,\tau)$ in $K$ is 
\begin{enumerate}
    \item \emph{directed-compact} if every downward directed interpretable family of non-empty closed subsets of $Z$ has non-empty intersection.
    \item \emph{curve-compact} if for every interpretable curve $f$ in $Z$ there exists another interpretable curve $g$ that is a restriction of $f$ and that converges.  By Proposition~\ref{prop:cluster-point}, we could equivalently say: every interpretable curve $f$ in $Z$ has a cluster point.
    \item \emph{type-compact} if every definable type concentrating on $Z$ specializes to a point in $Z$.
\end{enumerate}
\end{definition}

\begin{theorem}\label{thm:compactness}
Let $(Z,\tau)$ be an interpretable topological space in $K$. The following are equivalent. 
\begin{enumerate}
    \item \label{tc1} $(Z,\tau)$ is directed-compact.
    \item \label{tc1.5} Any $\Gamma$-family of closed sets in $Z$ has non-empty intersection.
    \item \label{tc2} $(Z,\tau)$ is curve-compact. 
    \item \label{tc3} $(Z,\tau)$ is type-compact. 
    \item \label{tc4} Any interpretable family of closed sets $\{C_y\}_{y\in Y}$ in $Z$ with the finite intersection property has a finite transversal, i.e. there exists a finite set $T$ with $T\cap C_y\neq \varnothing$ for every $y\in Y$.
\end{enumerate}
\end{theorem}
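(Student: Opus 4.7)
My plan is to establish the equivalence via the cycle $(1) \Rightarrow (2) \Rightarrow (3) \Rightarrow (4) \Rightarrow (5) \Rightarrow (1)$, leveraging the tools already developed in Sections~\ref{analysis}--\ref{imaginary}.

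The implication $(1) \Rightarrow (2)$ is immediate since a $\Gamma$-family $\{X_\gamma\}_{\gamma \in \Gamma}$ is automatically downward directed: $X_{\max(\gamma_1,\gamma_2)} \subseteq X_{\gamma_1} \cap X_{\gamma_2}$, and its sets are non-empty by definition. For $(2) \Rightarrow (3)$, given an interpretable curve $f : D \to Z$, I would set $C_\gamma = \cl\{f(x) : x \in D, \, \val(x) \ge \gamma\}$ and verify this is a $\Gamma$-family of non-empty closed sets (non-emptiness follows from $0 \in \cl(D) \setminus D$, and nestedness is clear). By (2) the intersection is non-empty; any $z$ in it is a cluster point of $f$, so Proposition~\ref{prop:cluster-point} yields a converging interpretable restriction.

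The main obstacle is $(3) \Rightarrow (4)$, since I need to convert an abstract definable type $q \in S_Z(K)$ into a concrete curve. My plan is to apply Proposition~\ref{refine-g3} to the interpretable family of basic closed sets in $q$ (in the sense of Remark~\ref{basic-closed}), obtaining an interpretable $\Gamma$-family $\{X_\gamma\}_{\gamma \in \Gamma}$ that refines it and is still extended by $q$. Since $\val : K^\times \to \Gamma$ is an interpretable surjection with definable domain, definable Skolem functions provide an interpretable map $f : K^\times \to Z$ with $f(x) \in X_{\val(x)}$ for every $x \in K^\times$. As $0 \in \cl(K^\times) \setminus K^\times$, this is an interpretable curve, and by (3) it has a cluster point $z \in Z$. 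For any basic closed set $C \in q$, fix $\gamma$ with $X_\gamma \subseteq C$; then $f(x) \in C$ whenever $\val(x) \ge \gamma$, so $z \in \cl(C) = C$. By Remark~\ref{basic-closed}, this shows $q$ specializes to $z$.

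For $(4) \Rightarrow (5)$, given an interpretable family $\{C_y\}_{y \in Y}$ of closed sets with the finite intersection property, Lemma~\ref{ssf2} partitions $Y$ into finitely many pieces $Y_1, \ldots, Y_n$ such that each subfamily $\{C_y : y \in Y_i\}$ extends to a definable type $q_i$; taking $z_i$ to be a specialization of $q_i$, the set $T = \{z_1, \ldots, z_n\}$ is the desired transversal, since $y \in Y_i$ forces $C_y \in q_i$ and hence $z_i \in C_y$. Finally, for $(5) \Rightarrow (1)$, let $\{D_i\}_{i \in I}$ be a downward directed interpretable family of non-empty closed sets, and let $T$ be a finite transversal given by (5). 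If $\bigcap_i D_i = \varnothing$, then for each $t \in T$ I choose $i_t \in I$ with $t \notin D_{i_t}$, and by iterated downward directedness select $i^* \in I$ with $D_{i^*} \subseteq \bigcap_{t \in T} D_{i_t}$, forcing $T \cap D_{i^*} = \varnothing$ and contradicting the transversal property.
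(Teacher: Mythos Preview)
Your proposal is correct and follows essentially the same cycle $(1)\Rightarrow(2)\Rightarrow(3)\Rightarrow(4)\Rightarrow(5)\Rightarrow(1)$ as the paper, with the same key ingredients at each step (Proposition~\ref{prop:cluster-point} for $(2)\Rightarrow(3)$, Proposition~\ref{refine-g3}/Theorem~\ref{tri1} plus definable Skolem functions for $(3)\Rightarrow(4)$, Lemma~\ref{ssf2} for $(4)\Rightarrow(5)$, and the downward-directedness argument of Lemma~\ref{lem:down-dir} for $(5)\Rightarrow(1)$). The only cosmetic difference is that in $(3)\Rightarrow(4)$ you argue via the cluster-point characterization whereas the paper uses the converging restriction directly; these are equivalent by Proposition~\ref{prop:cluster-point}.
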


\begin{proof}

  \begin{description}
  \item[$(\ref{tc1})\Rightarrow(\ref{tc1.5}).$] $\Gamma$-families are downward directed.
  \item[$(\ref{tc1.5})\Rightarrow(\ref{tc2}).$] Let $f:D\subseteq K\rightarrow X$ be an interpretable curve. We apply (\ref{tc1.5}) to the $\Gamma$-family of non-empty sets
\[
\{f(x) : x\in D,\, \val(x)\geq \gamma\} \text{ for } \gamma\in \Gamma 
\]
to find a point in the $\tau$-closure of all of them (a cluster point of $f$). Then apply Proposition~\ref{prop:cluster-point}.
  \item[$(\ref{tc2})\Rightarrow(\ref{tc3}).$] Assume curve-compactness.  Let $q$ be a definable type on $Z$.  We claim that $q$ specializes to some point in $Z$.  Let $\{C_y : y \in Y\}$ be the interpretable family of basic closed sets in $q$.  By Remark~\ref{basic-closed}, it suffices to find a point in $\bigcap_{y \in Y} C_y$.

By the implication $(\ref{tr1})\Rightarrow(\ref{tr2})$ of Theorem~\ref{tri} (or Theorem~\ref{tri1} in the interpretable case), there is a $\Gamma$-family $\{X_\gamma: \gamma\in \Gamma\}$ that refines $\{C_y : y\in Y\}$. Applying definable Skolem functions to the family $\{X_{\val(x)} : x\in K\setminus\{0\}\}$, there is an interpretable curve $f: K\setminus \{0\} \rightarrow Z$ such that $f(x)\in X_{\val(x)}$ for every $x$. Applying the definition of curve-compactness, let $g$ be a restriction of $f$ that converges to some point $z\in Z$. Clearly $z\in C_y$ for every $y\in Y$.   
  \item[$(\ref{tc3})\Rightarrow(\ref{tc4}).$] A direct consequence of
    Corollary~\ref{cor:sim-star-fact} (or Lemma~\ref{ssf2} in the
    interpretable case).
  \item[$(\ref{tc4})\Rightarrow(\ref{tc1}).$] Assume that $(Z,\tau)$ satisfies $(\ref{tc4})$ and let $\{C_y : y\in Y\}$ be an interpretable downward directed family of non-empty closed subsets of $Z$. In particular $\{C_y : y\in Y\}$ has the finite intersection property, and so it has a finite transversal $\{z_1,\ldots, z_n\} \subset K$. In other words, for every $y\in Y$ there exists some $i\leq n$ such that $C_y \in \tp(z_i/K)$. It follows from Lemma~\ref{lem:down-dir} that there is some $i\leq n$ with $z_i\in \bigcap_{y\in Y} C_y$.  \qedhere
  \end{description}
\end{proof}

\begin{definition}\label{dfn:comp}
An interpretable topological space $(Z,\tau)$ is \emph{definably compact} if the equivalent conditions of Theorem~\ref{thm:compactness} hold.
\end{definition}

Note that in the proof of Theorem~\ref{thm:compactness} we only apply definable Skolem functions in order to prove the implication (\ref{tc2})$\Rightarrow$(\ref{tc3}). The equivalence between (\ref{tc1}), (\ref{tc1.5}), (\ref{tc3}) and (\ref{tc4}) can be derived from Proposition~\ref{refine-g}, distality, and Fact~\ref{fact:simon-star} without additional $\pCF$ machinery.

\begin{remark} \label{rem:pq2}
  Let $(Z,\tau)$ be a definable topological space in $K$, with $Z\subseteq K^n$. Then the five conditions in
  Theorem~\ref{thm:compactness} are equivalent to the following:
  \begin{enumerate}
    \setcounter{enumi}{5}
  \item \label{tc5} Any definable family of non-empty closed sets $\{C_y\}_{y \in Y}$ in
    $Z$ with the $(m,2n)$-property, for some $m \ge 2n$,
    has a finite transveral.
\end{enumerate}

To see this note that, by Remark~\ref{rem:pq}, Condition (\ref{tc3}) in Theorem~\ref{thm:compactness} implies (\ref{tc5}). Furthermore, Condition (\ref{tc5}) is stronger than Condition (\ref{tc4}) in Theorem~\ref{thm:compactness}, because the finite intersection property is stronger than the $(p,q)$-property for any $p \ge q$.
\end{remark}

\begin{remark}
Definable Skolem functions yield a definable analogue of the classical characterization of compactness in terms of nets (every net has a convergent subnet), where a definable net is understood to be an interpretable map from a definable directed set into an interpretable topological space.  That is, in a structure with definable Skolem functions (e.g.\@ a $p$-adically closed field) one may show, by direct adaptation of the proof of the classical equivalence, that an interpretable topological space is directed-compact if and only if every definable net in it has a convergent definable subnet. See~\cite[Corollary 44]{atw1} for a proof of this fact in the o-minimal group setting.
\end{remark}

\begin{theorem} \label{qp-case}
An interpretable topological space $(Z,\tau)$ in $\Qp$ is definably compact if and only if it is compact. 
\end{theorem}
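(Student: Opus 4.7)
The forward implication is essentially immediate: if $(Z,\tau)$ is topologically compact, then any interpretable downward directed family of non-empty closed sets in $Z$ satisfies the finite intersection property (downward directedness combined with non-emptiness yields the FIP), hence has non-empty intersection. So $(Z,\tau)$ is directed-compact, and thus definably compact by Theorem~\ref{thm:compactness}.

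The reverse direction is the substance. The key observation is that $\Qp^{\eq}$ is countable (because $\Qp$ is), so any interpretable basis $\{B_y\}_{y \in Y}$ for $\tau$ is a countable family, and $(Z,\tau)$ is second-countable. Given an arbitrary open cover $\{U_\alpha\}_{\alpha \in A}$ of $Z$, second-countability yields a countable subcover, which we may list as $\{U_n\}_{n < \omega}$. Assuming for contradiction that no finite subcover exists, pick $z_n \in Z \setminus \bigcup_{i \le n} U_i$ for each $n$.

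To derive the contradiction, I plan to produce a cluster point of $\{z_n\}$ via an ultrafilter limit of types. Fix a non-principal ultrafilter $\mathcal{U}$ on $\omega$ and define $q \in S_Z(\Qp^{\eq})$ by declaring that $\varphi(x) \in q$ precisely when $\{n < \omega : \Qp^{\eq} \models \varphi(z_n)\} \in \mathcal{U}$, for every formula $\varphi(x)$ over $\Qp^{\eq}$. This $q$ is a complete type concentrating on $Z$. By Lemma~\ref{delon-lift} it is definable, and the type-compactness clause of Theorem~\ref{thm:compactness} then provides $z \in Z$ to which $q$ specializes. Now $z \in U_k$ for some $k$, and the basis property lets us pick $y \in Y$ with $z \in B_y \subseteq U_k$. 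Since $B_y$ is an interpretable open containing $z$, $B_y \in q$ by specialization, so $\{n : z_n \in B_y\} \in \mathcal{U}$ is infinite. But $z_n \notin U_k \supseteq B_y$ whenever $n \ge k$, which is the desired contradiction.

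I do not anticipate a significant obstacle. The proof uses only the countability of $\Qp^{\eq}$, Lemma~\ref{delon-lift} on the definability of all types over $\Qp^{\eq}$, and the type-compactness formulation of definable compactness in Theorem~\ref{thm:compactness}; in particular no separation axioms on $\tau$ are needed.
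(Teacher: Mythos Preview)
Your argument is correct, but it takes a more roundabout route than the paper. The paper's proof for the hard direction is essentially two lines: fix an interpretable basis, note that every closed set is an intersection of basic closed sets, and then show directly that any family $\mathcal{C}$ of basic closed sets with the finite intersection property has nonempty intersection. One simply extends $\mathcal{C}$ to a complete type $q \in S_Z(\Qp)$, invokes Lemma~\ref{delon-lift} to see that $q$ is definable, and applies type-compactness to get a specialization $z$, which then lies in every member of $\mathcal{C}$. No countability, no sequences, no ultrafilters.

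Your approach instead passes through the countability of $\Qp^\eq$ to obtain second-countability, reduces an arbitrary open cover to a countable one, and then runs a sequential compactness argument, manufacturing the needed definable type as an ultrafilter limit of the realized types $\tp(z_n/\Qp)$. This is perfectly valid, and the observation that interpretable topological spaces over $\Qp$ are second-countable is worth recording in its own right. But the ultrafilter construction is doing work that the paper avoids entirely: since \emph{every} type over $\Qp$ is definable, one may as well extend an arbitrary FIP family of (basic) closed sets to a complete type and specialize immediately, rather than first arranging a countable sequence of points and then taking an ultralimit. The paper's argument also makes transparent that the only special feature of $\Qp$ being used is the definability of all types, not its cardinality per se.
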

\begin{proof}
  If $Z$ is compact, then it is clearly directed-compact,
  type-compact, and curve-compact.  Conversely, suppose $Z$ is
  type-compact.  Fix an interpretable basis for $\tau$.  Note that any intersection of closed subsets of $Z$ can be rewritten as an intersection of basic
  closed sets.  Any family of basic closed sets
  $\mathcal{C}$ with the finite intersection property extends to a
  type $q\in S_Z(\Qp)$.  The type $q$ is definable by Fact~\ref{delon}
  (or Lemma~\ref{delon-lift} in the interpretable case).  By type-compactness, $q$ has a specialization $z\in Z$, which will
  satisfy $z\in \bigcap \mathcal{C}$.
\end{proof}

\begin{theorem}\label{compactness-def}
  Let $\{(X_b,\tau_b) : b \in Y\}$ be an interpretable family of interpretable
  topological spaces.  Then $\{b \in Y : X_b \text{ is
    definably compact}\}$ is interpretable.
\end{theorem}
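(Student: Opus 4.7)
The plan is to use the characterization of definable compactness via type-compactness (condition (\ref{tc3}) of Theorem~\ref{thm:compactness}) combined with the strict pro-interpretability of the space of definable types (Theorem~\ref{strict2}) and the quantification principle of Remark~\ref{quant}. The crucial observation is that once we pass to the total space of the family, the condition ``every definable type specializes'' becomes a universally quantified statement over a strictly pro-interpretable set, whose body is relatively definable.

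First I would replace the family by its total space $X = \{(b, x) : b \in Y, \, x \in X_b\}$, which is interpretable together with the projection $\pi : X \to Y$ and the ``fiberwise basis'' for the topologies: since the family of topological spaces $\{(X_b, \tau_b)\}_{b \in Y}$ is interpretable, one can choose an interpretable family $\{B_c\}_{c \in W}$ of subsets of $X$ together with an interpretable map $c \mapsto b(c)$ into $Y$ such that $B_c \subseteq X_{b(c)}$ and such that, for each $b \in Y$, the family $\{B_c : b(c) = b\}$ is a basis for $\tau_b$. By Theorem~\ref{strict2}, the space $X^\df$ of global definable types on $X$ is strictly pro-interpretable. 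A definable type on $X_b$ is the same thing as an element $p \in X^\df$ with $X_b \in p$ (where $X_b$ is viewed as a relatively interpretable subset of $X$), and under this identification the condition that $p$ specialize to some point of $X_b$ is expressed by saying that there exists $z \in X_b$ such that $B_c \in p$ for every $c$ with $b(c) = b$ and $z \in B_c$.

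Next I would write down the first-order formula
\[
\forall p \in X^\df \; \bigl[\, X_b \in p \;\longrightarrow\; \exists z \in X_b \; \forall c \in W \;\bigl( (b(c) = b \wedge z \in B_c) \to B_c \in p \bigr) \,\bigr],
\]
and verify that its body is a relatively definable relation on $Y \times X^\df$. The conditions ``$X_b \in p$'' and ``$B_c \in p$'' are relatively definable in $Y \times X^\df$ and $W \times X^\df$, respectively, by Remark~\ref{deffam} (using Theorem~\ref{uddt-i}); the remaining ingredients (the relations $z \in B_c$, $b(c) = b$, and the quantifiers over the interpretable sets $X_b$ and $W$) are plainly first-order over $\pCF^\eq$. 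Thus the body is a relatively definable subset of $X^\df \times Y$.

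Finally I would apply Remark~\ref{quant} to the strictly pro-interpretable set $X^\df$ and this relatively definable relation. The remark tells us that the set of $b \in Y$ for which the universally quantified statement holds is a relatively interpretable subset of $Y$, which is exactly what we want. By the type-compactness clause of Theorem~\ref{thm:compactness}, this set is $\{b \in Y : X_b \text{ is definably compact}\}$. The only step that requires genuine input beyond bookkeeping is the strict pro-interpretability of $X^\df$; the main obstacle, such as it is, lies in arranging a uniform interpretable basis $\{B_c\}_{c \in W}$ for the topologies $\tau_b$ so that the specialization relation can be written by a single first-order formula, and in carefully invoking Remark~\ref{deffam} to see that ``$B_c \in p$'' and ``$X_b \in p$'' are relatively definable subsets of the relevant pro-interpretable sets.
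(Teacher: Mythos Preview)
Your proposal is correct and takes essentially the same approach as the paper: express type-compactness as a universal statement over the strictly pro-interpretable space of definable types and invoke Remark~\ref{quant}, with only cosmetic differences (you work with the total space $X$ and basic open sets, whereas the paper fixes an ambient sort $D$ and uses basic closed sets). The one step you should add is the paper's closing paragraph: the pro-interpretable set $X^{\df}$ lives over the monster model $\Mm$, so after obtaining interpretability there one must descend to an arbitrary $K \preceq \Mm$ by checking that definable compactness is preserved in elementary extensions and that the resulting set is $\Aut(\Mm/K)$-invariant.
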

\begin{proof}
First suppose that $K$ is a monster model $\Mm$.
Fix a sort $D$ in $\Mm^\eq$ such that $X_b \subseteq D$ for each $b$.  Identify $D^\df$ with $S^\df_D(\Mm)$.
  For each $b \in Y$, let $\{C_{b,i}\}_{i \in
    I_b}$ be a basis of closed sets for $X_b$.  Because $\{X_b\}_{b
    \in Y}$ is an interpretable family of interpretable topologies, we may
  assume that $\{I_b\}_{b \in Y}$ and $\{C_{b,i}\}_{b \in Y, \,i \in
    I_b}$ are interpretable families.

  By Theorem~\ref{thm:compactness}, $X_b$ is definably compact if and only if
  $X_b$ is type-compact, meaning that every $p \in S^{\df}_{X_b}(\Mm)$
  has a specialization.  By Remark~\ref{basic-closed}, $p$ specializes
  to $x \in X_b$ if and only if $x$ is in every basic closed set
  contained in $p$, i.e.,
  \begin{equation*}
    \forall i \in I_b ~ (C_{b,i} \in p \rightarrow x \in C_{b,i}).
  \end{equation*}
  Thus $X_b$ is definably compact if and only if
  \begin{equation*}
    \forall p \in X_b^{\df} ~ \exists x \in X_b ~ \forall i \in I_b ~
    (C_{b,i} \in p \rightarrow x \in C_{b,i}),
  \end{equation*}
  or equivalently
  \begin{equation*}
    \forall p \in D^\df ~ \left(X_b \in p \rightarrow \exists
    x \in X_b ~ \forall i \in I_b ~ (C_{b,i} \in p \rightarrow x \in
    C_{b,i}) \right).
  \end{equation*}
  This condition is first-order: the expressions $X_b \in p$ and
  $C_{b,i} \in p$ are first-order by Remark~\ref{deffam}, and the
  quantificiation over $D^{\df}$ is harmless by Remark~\ref{quant},
  \emph{because $D^{\df}$ is strictly pro-interpretable} by Fact~\ref{strict} (or Theorem~\ref{strict2} in the interpretable case).
  
  This completes the case when $K$ is a monster model.  For the general case, take a monster model $\Mm \succeq K$.  Let $\{(X_b(\Mm),\tau_b(\Mm)) : b \in Y(\Mm)\}$ denote the interpretable family of interpretable topological spaces in $\Mm$ defined by the same formulas as the original family $\{(X_b,\tau_b) : b \in Y\}$.  Definable compactness is preserved in elementary extensions.  (This is straightforward to see using the definitions of directed-compactness or curve-compactness.)  Therefore
  \[ (X_b(\Mm),\tau_b(\Mm)) \text{ is definably compact} \iff (X_b,\tau_b) \text{ is definably compact}\]
  for $b \in Y(K)$.  Let $Q$ be the set of $b \in Y(\Mm)$ such that $(X_b(\Mm),\tau_b(\Mm))$ is definably compact.  By the highly saturated case considered above, $Q$ is an interpretable subset of $Y(\Mm)$.  It is also $\Aut(\Mm/K)$-invariant, and therefore $K$-interpretable.  Then the set
  \[ \{b \in Y(K) : (X_b,\tau_b) \text{ is definably compact}\} = Q(K)\]
  is interpretable in the structure $K$.
\end{proof}

Theorem~\ref{compactness-def} is a substantial strengthening of \cite[Theorem~6.6]{johnson-admissible}, which proved the same result for the special case of ``strongly admissible'' interpretable topologies.
  
\section{Open problems}

A natural question to ask is whether the results of this paper
generalize to P-minimal fields \cite{p-min}.  The results of this
paper depend heavily on Sections~\ref{tools}--\ref{analysis}, which
are very specific to $\pCF$ and don't generalize in an obvious way to
P-minimal expansions of $\pCF$.

In an orthogonal direction, it would be interesting to see whether
some of the results in this paper generalize to other dp-minimal
valued fields, such as the fields of Laurent functions $\Rr((t))$ and
$\Qp((t))$.  Some things fail to generalize to ACVF, as discussed in
Section~\ref{sec:acvf}.  Perhaps the natural dividing line is
distality.  For example, does Theorem~\ref{tp-code} hold in distal
dp-minimal valued fields?

Beyond the valued field setting, we ask whether Theorem~\ref{tri} generalizes in the following sense: in distal dp-minimal structures, any definable family of sets that extends to a definable type is refined by a definable downward directed family in the type. This was proved for o-minimal structures in~\cite[Theorem A]{types-transversals}.
The reverse question, namely whether any definable downward directed family of sets extends to a definable type, is also open and would follow for all dp-minimal structures from a positive answer to Simon's conjecture~\cite[Conjecture 5.2]{simon15} that Fact~\ref{fact:simon-star} holds in the general dp-minimal setting.

Another open problem, discussed above, is whether $M$-definable types
can be lifted along interpretable surjections (Question~\ref{open-q}).

\appendix

\section{New proofs of known results}
In this appendix, we give self-contained proofs of three facts about
$\pCF$ that were used earlier in the paper:
\begin{enumerate}
\item Fact~\ref{delon}: the definability of types over $\Qp$, due to
  Delon \cite{delon}.
\item Fact~\ref{uddt}: the uniform definability of definable types in
  $\pCF$, due to Cubides Kovacsics and Ye \cite{cubides-ye}.
\item Fact~\ref{strict}: the strict pro-definability of the space of
  definable types in $\pCF$, due to Cubides Kovacsics, Hils, and Ye \cite{cubides-ye-hils}.
\end{enumerate}
Our proof of Fact~\ref{strict} is probably new.  In contrast, our
proofs of Fact~\ref{delon} and \ref{uddt} might be equivalent to the
original proofs.  Nevertheless, since the three facts follow easily
from the machinery in this paper, it seemed worthwhile to include the
proofs.

\begin{lemma} \label{qf-an2}
  Let $K, \Mm, a, K[x]_{<d}, I_d,\tau_d$ be as in Lemma~\ref{qf-an}.  The following are equivalent:
  \begin{enumerate}
  \item \label{2qa1} $\tp(a/K)$ is definable.
  \item \label{2qa2} Each $\tau_d$ is split.
  \item \label{2qa3} Each $\tau_d$ is definable.
  \end{enumerate}
\end{lemma}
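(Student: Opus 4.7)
The strategy is: $(\ref{2qa1}) \Rightarrow (\ref{2qa2})$ and $(\ref{2qa1}) \Rightarrow (\ref{2qa3})$ will follow directly from Lemma~\ref{qf-an}; then I will close the equivalence via $(\ref{2qa3}) \Rightarrow (\ref{2qa1})$ and $(\ref{2qa2}) \Rightarrow (\ref{2qa3})$.

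For the easy directions: if $\tp(a/K)$ is definable then so is its quantifier-free restriction $\qftp(a/K)$, so Lemma~\ref{qf-an}(\ref{qan4}) gives $(\ref{2qa1}) \Rightarrow (\ref{2qa2})$ and Lemma~\ref{qf-an}(\ref{qan3}) gives $(\ref{2qa1}) \Rightarrow (\ref{2qa3})$. For $(\ref{2qa3}) \Rightarrow (\ref{2qa1})$: the kernels $I_d$ are automatically $K$-definable as $K$-linear subspaces of the finite-dimensional space $K[x]_{<d}$. Combined with $K$-definability of each $\tau_d$, this forces every atomic $\Ldiv(K)$-condition $P(a) = Q(a)$ or $\val(P(a)) \le \val(Q(a))$ to translate into a $K$-definable condition on the coefficients of $P, Q$, via the identifications in the proof of Lemma~\ref{qf-an}. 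Hence $\qftp(a/K)$ is definable, and Lemma~\ref{qf-def} upgrades this to definability of $\tp(a/K)$.

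The main obstacle is $(\ref{2qa2}) \Rightarrow (\ref{2qa3})$, where one has to turn the abstract existence of a splitting basis into a $K$-definable description of the whole divisibility relation. I would fix a splitting basis $v_1, \ldots, v_n \in K[x]_{<d}$ of $(K[x]_{<d}/I_d, \tau_d)$ and use its $K$-rational coefficients to define the pulled-back filtration $\tilde V_i := \mathrm{span}_K(v_1, \ldots, v_i) + I_d \subseteq K[x]_{<d}$; each $\tilde V_i$ is $K$-definable. By Proposition~\ref{filter}, $\tau_d$ is entirely recoverable from the chain $\{\tilde V_i\}$ together with the canonical (hence $K$-definable) divisibility on each 1-dimensional quotient $\tilde V_i / \tilde V_{i-1}$. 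Concretely: given $P, Q \in K[x]_{<d}$, locate indices $i, j$ with $P \in \tilde V_i \setminus \tilde V_{i-1}$ and $Q \in \tilde V_j \setminus \tilde V_{j-1}$; if $i \ne j$, the order of $\val(P(a))$ and $\val(Q(a))$ is forced by the comparison of $i$ and $j$; if $i = j$, there is a unique $\lambda \in K^\times$ with $P - \lambda Q \in \tilde V_{i-1}$, and then $\val(P(a)) \le \val(Q(a)) \iff \val(\lambda) \le 0$. Each of these steps is expressible by a $K$-formula in the coefficients of $P, Q$, yielding $K$-definability of $\tau_d$.
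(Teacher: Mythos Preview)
Your proposal is correct and follows essentially the same cycle $(\ref{2qa1})\Rightarrow(\ref{2qa2})\Rightarrow(\ref{2qa3})\Rightarrow(\ref{2qa1})$ as the paper. The only packaging differences are: for $(\ref{2qa2})\Rightarrow(\ref{2qa3})$ the paper simply writes ``split VVS structures are definable'' while you spell out the argument via Proposition~\ref{filter}; and for $(\ref{2qa3})\Rightarrow(\ref{2qa1})$ you cite Lemma~\ref{qf-def} directly after observing that $\qftp(a/K)$ is definable, whereas the paper inlines the heir-counting argument from the proof of Lemma~\ref{qf-def} (showing that all heirs of $q$ over $K'\succeq K$ share the same quantifier-free part, then invoking Proposition~\ref{qf-count}). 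Your route is slightly cleaner but the content is the same.
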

\begin{proof}
  (\ref{2qa1})$\implies$(\ref{2qa2}): Lemma~\ref{qf-an}(\ref{qan4}).

  (\ref{2qa2})$\implies$(\ref{2qa3}): split VVS structures are definable.

  (\ref{2qa3})$\implies$(\ref{2qa1}): Suppose the $\tau_d$ are definable.  Note that the
  sets $I_d$ are also definable, as they are $K$-linear subspaces of
  finite-dimensional $K$-vector spaces.  Let $q = \tp(a/K)$.
  \begin{claim}
    Suppose $K' \succeq K$, and suppose $q', q'' \in S_n(K')$ are
    heirs of $q$.  Then $q'$ and $q''$ have the same quantifier-free
    part.
  \end{claim}
  \begin{claimproof}
    Let $I'_d \subseteq K'[x]_{<d}$ and $\tau'_d$ on $K'[x]_{<d}/I'_d$
    be derived from $q'$ the same way that $I_d$ and $\tau_d$ are
    derived from $q$.  Similarly, let $I''_d$ and $\tau''_d$ be
    derived from $q''$.  Because $q'$ is an heir of $q$, $I'_d$ and
    $\tau'_d$ must be definable, defined by the same $\Ldiv(K)$-formulas that
    define $I_d$ and $\tau_d$.  The same holds for $I''_d$ and
    $\tau''_d$, so $I'_d = I''_d$ and $\tau'_d = \tau''_d$.  By
    Lemma~\ref{qf-an}(\ref{qan2}), $q'$ and $q''$ have the same quantifier-free
    part.
  \end{claimproof}
  By the claim and Proposition~\ref{qf-count}, $q$ has at most
  $2^{\aleph_0}$-many heirs over any elementary extension $K' \succeq
  K$.  Therefore $q$ is definable.
\end{proof}

\begin{remark} \label{tcom2}
  Suppose $M \succeq \Qp$ and $b \in M^1$.  If $\val(b) \ge 0$, then
  there is $a \in \Qp$ such that $b - a$ is $\Qp$-infinitesimal, in
  the sense that $\val(b - a) > \Gamma(\Qp) = \Zz$.  This holds
  because $\Zz_p$ is compact.
\end{remark}

\begin{lemma} \label{split2}
  Suppose $M \succeq \Qp$ and $a$ is a tuple in $M$.
  Let $V \subseteq \Qp(a)$ be a finite-dimensional $K$-linear
  subspace, with the induced VVS structure.  Then $V$ is split.
\end{lemma}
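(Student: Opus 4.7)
The plan is to adapt the proof of Lemma~\ref{split} almost verbatim, using Remark~\ref{tcom2} in place of Fact~\ref{tcom}. The crucial point is that Remark~\ref{tcom2} provides a $\Qp$-rational approximation to any element of $M \succeq \Qp$ with non-negative valuation, without any definability hypothesis, thanks to compactness of $\Zz_p$. Consequently, nothing has to be assumed about $\tp(a/\Qp)$, and there is no need to pass to $\dcl(\Qp a)$ as in Lemma~\ref{split}.

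More specifically, I would prove by induction on $i \in \{0,1,\ldots,n\}$, where $n = \dim_{\Qp} V$, the existence of a basis $b_1,\ldots,b_n$ of $V$ such that
\[
\Gamma(\Qp) + \val(b_1) < \Gamma(\Qp) + \val(b_2) < \cdots < \Gamma(\Qp) + \val(b_i),
\]
and $\Gamma(\Qp) + \val(b_i) < \Gamma(\Qp) + \val(b_j)$ for all $j > i$. The base case $i = 0$ is trivial. For the inductive step, given a suitable basis $c_1,\ldots,c_n$ at stage $i < n$, reorder $c_{i+1},\ldots,c_n$ so that $c_{i+1}$ lies in a minimal coset among $\{\Gamma(\Qp) + \val(c_j) : j > i\}$ and additionally has minimal valuation within that coset among those $c_j$. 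Then for each $j > i+1$ we have $\val(c_j c_{i+1}^{-1}) \ge 0$, so Remark~\ref{tcom2} supplies $u_j \in \Qp$ with $\val(c_j c_{i+1}^{-1} - u_j) > \Gamma(\Qp)$, i.e.\ $\val(c_j - u_j c_{i+1}) > \Gamma(\Qp) + \val(c_{i+1})$. Replacing $c_j$ by $c_j - u_j c_{i+1}$ for each $j > i+1$ yields a suitable basis at stage $i+1$. Taking $i = n$ produces a splitting basis of $V$.

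The main obstacle is essentially cosmetic: one must verify that dropping the definability hypothesis from Lemma~\ref{split} does not break the induction. This works precisely because the only use of the definability hypothesis in that proof was to invoke Fact~\ref{tcom} at each inductive step, and Remark~\ref{tcom2} provides the required approximation unconditionally over $\Qp$. The choice of $c_{i+1}$ in a minimal coset (with minimal valuation inside that coset) is what ensures $\val(c_j c_{i+1}^{-1}) \ge 0$ for the remaining $c_j$, which is the precise hypothesis needed to apply Remark~\ref{tcom2}.
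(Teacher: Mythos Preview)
Your proposal is correct and follows exactly the approach indicated by the paper, which simply says ``Like Lemma~\ref{split}, using Remark~\ref{tcom2} instead of Fact~\ref{tcom}.'' Your description of the reordering step (minimal coset, then minimal valuation within it) is slightly more elaborate than needed---since $\Zz$ is convex in any $\Zz$-group, this is equivalent to simply choosing $c_{i+1}$ of minimal valuation among $c_{i+1},\ldots,c_n$---but it is correct and yields the same conclusion.
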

Lemma~\ref{split2} is like Lemma~\ref{split}, but instead of assuming
$\tp(a/K)$ is definable, we assume $K = \Qp$.
\begin{proof}
  Like Lemma~\ref{split}, using Remark~\ref{tcom2} instead of
  Fact~\ref{tcom}.
\end{proof}
Delon's theorem follows easily:
\begin{theorem}[{= Fact~\ref{delon}}] \label{delon2}
  Every type over $\Qp$ is definable.
\end{theorem}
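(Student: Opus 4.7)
The plan is to deduce Theorem~\ref{delon2} as an immediate corollary of the two lemmas just proved, Lemma~\ref{split2} and Lemma~\ref{qf-an2}, since almost all the work has already been done.

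First, I would fix an arbitrary type $p \in S_n(\Qp)$ and realize it as $p = \tp(a/\Qp)$ for some tuple $a$ in an elementary extension $M \succeq \Qp$. Associated to $a$, Lemma~\ref{qf-an} produces the subspaces $I_d \subseteq \Qp[x]_{<d}$ (the kernel of evaluation at $a$) together with the induced VVS structures $\tau_d$ on $\Qp[x]_{<d}/I_d$, which live as finite-dimensional $\Qp$-linear subspaces of $\Qp(a) \subseteq M$, with VVS structure inherited from $M$.

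Next, I would apply Lemma~\ref{split2} directly: every finite-dimensional $\Qp$-linear subspace of $\Qp(a)$ is split as a VVS. In particular, each $\tau_d$ is split, which is condition (\ref{2qa2}) of Lemma~\ref{qf-an2}. The implication (\ref{2qa2})$\Rightarrow$(\ref{2qa1}) of Lemma~\ref{qf-an2} then yields that $p = \tp(a/\Qp)$ is definable. Since $p$ was arbitrary, every type over $\Qp$ is definable.

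There is no real obstacle remaining—the theorem is a one-step corollary of the chain Lemma~\ref{split2} $\Rightarrow$ Lemma~\ref{qf-an2}(\ref{2qa2}) $\Rightarrow$ Lemma~\ref{qf-an2}(\ref{2qa1}). The substantive work was already carried out: Lemma~\ref{split2} (which uses compactness of $\Zz_p$ via Remark~\ref{tcom2} in place of the definability hypothesis of Lemma~\ref{split}) supplies the splittings, while Proposition~\ref{qf-count} and Lemma~\ref{qf-def} power the passage from definability of the $\tau_d$ to definability of the full type in Lemma~\ref{qf-an2}.
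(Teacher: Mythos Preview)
Your proposal is correct and follows exactly the same route as the paper's proof: realize the type as $\tp(a/\Qp)$, note that each $\tau_d$ is the induced VVS structure on a finite-dimensional $\Qp$-subspace of $\Qp(a)$, invoke Lemma~\ref{split2} to see it is split, and conclude definability via the implication (\ref{2qa2})$\Rightarrow$(\ref{2qa1}) of Lemma~\ref{qf-an2}.
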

\begin{proof}
  Take $a \in M \succeq \Qp$.  We claim $\tp(a/\Qp)$ is definable.
  Let $I_d$ and $\tau_d$ be as in Lemmas~\ref{qf-an} and \ref{qf-an2}.
  By Lemma~\ref{qf-an2}, we must show that $\tau_d$ is split.  But
  $\tau_d$ is the induced VVS structure on the image of the map
  \begin{align*}
    \Qp[x]_{<d} & \to \Qp(a) \\
    P(x) &\mapsto P(a).
  \end{align*}
  The image is a finite-dimensional subspace of $\Qp(a)$, so the VVS
  structure is split by Lemma~\ref{split2}.
\end{proof}

\begin{lemma} \label{kn}
  Let $\mathcal{K}_n$ be the class of structures $(M,K,a)$ where $M
  \models \pCF$, $K \preceq M$, $a \in M^n$, and $\tp(a/K)$ is
  definable.  Then $\mathcal{K}_n$ is elementary.
\end{lemma}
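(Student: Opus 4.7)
The plan is to exhibit a first-order axiomatization of $\mathcal{K}_n$ in a natural two-sorted language $\mathcal{L}^*$: the sorts $\mathbf{M}, \mathbf{K}$ each carry the language $\Ldiv$, there is a unary inclusion map $\mathbf{K} \hookrightarrow \mathbf{M}$, and we adjoin $n$ constants $c_1, \ldots, c_n$ for the coordinates of $a$. The axioms fall into three groups: (A) the $\Ldiv$-axioms of $\pCF$ on the sort $\mathbf{M}$; (B) the Tarski--Vaught scheme expressing $\mathbf{K} \preceq \mathbf{M}$; and (C) for each $d \geq 1$, a single sentence $\Phi_d$ stating that the VVS structure $\tau_d$ on $K[x]_{<d}/I_d$ (in the sense of Lemma~\ref{qf-an}) is split.

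The main technical step is to write $\Phi_d$ as a first-order sentence of $\mathcal{L}^*$. Fix $N = \dim_K K[x]_{<d}$, a finite integer depending only on $d$ and $n$, and a basis of monomials $e_1, \ldots, e_N$ of $K[x]_{<d}$. Then a polynomial $P \in K[x]_{<d}$ corresponds to a coefficient tuple $(\lambda_1, \ldots, \lambda_N) \in \mathbf{K}^N$, and $P(a) \in \mathbf{M}$ is a term in these coefficients and the $c_i$. The predicate ``$\val(P(a))$ and $\val(Q(a))$ lie in the same coset of $\Gamma(K)$'' (for $P(a), Q(a) \neq 0$) is expressible as
\[
\exists k \in \mathbf{K}^\times\colon \val(P(a)) \leq \val(kQ(a)) \wedge \val(kQ(a)) \leq \val(P(a)).
\]
Then $\Phi_d$ is the disjunction over $m = 0, 1, \ldots, N$ of the sentence asserting that there exist $P_1, \ldots, P_m \in K[x]_{<d}$ such that each $P_i(a) \neq 0$, the $\val(P_i(a))$ lie in pairwise distinct cosets of $\Gamma(K)$, and every $P \in K[x]_{<d}$ with $P(a) \neq 0$ has $\val(P(a))$ in some such coset. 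By Remark~\ref{abhyankar}, any witnesses satisfying the distinct-coset and covering conditions automatically form a splitting basis for $\tau_d$, so $\Phi_d$ holds in $(M,K,a)$ if and only if $\tau_d$ is split.

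The conclusion is then immediate from Lemma~\ref{qf-an2}: the conjunction of (A), (B), and all $\Phi_d$ is equivalent to ``$M \models \pCF$, $K \preceq M$, and each $\tau_d$ is split'', which by Lemma~\ref{qf-an2} is equivalent to ``$M \models \pCF$, $K \preceq M$, and $\tp(a/K)$ is definable'', i.e., to $(M, K, a) \in \mathcal{K}_n$. The only real obstacle is the explicit formulation of $\Phi_d$; the substantive content, namely that splitness of every $\tau_d$ captures definability of $\tp(a/K)$, has already been carried out in Lemma~\ref{qf-an2}.
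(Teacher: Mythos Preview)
Your overall strategy matches the paper's: reduce via Lemma~\ref{qf-an2} to expressing ``each $\tau_d$ is split,'' which the paper dismisses as ``straightforward, if tedious.'' However, your $\Phi_d$ as written is a tautology. By Remark~\ref{abhyankar} the number of $\Gamma(K)$-cosets hit by $\val$ on $V \setminus \{0\}$ (where $V = K[x]_{<d}/I_d$) is always at most $\dim_K V \le N$, so representatives $P_1,\ldots,P_m$ of all the cosets exist whether or not $\tau_d$ is split. Remark~\ref{abhyankar} only tells you that such $P_i$ are linearly \emph{independent} in $V$, not that they span; when $\dim_K V > m$ --- which is exactly the non-split case --- they are not a basis, let alone a splitting basis. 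Thus your axioms (A)+(B)+(C) actually cut out all triples $(M,K,a)$ with $K \preceq M \models \pCF$, not just $\mathcal{K}_n$.

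The repair is minor: add to the $m$th disjunct the spanning clause
\[
(\forall Q \in \mathbf{K}^N)\ (\exists \lambda_1,\ldots,\lambda_m \in \mathbf{K})\ \ Q(a) = \lambda_1 P_1(a) + \cdots + \lambda_m P_m(a),
\]
which is first-order in $\mathcal{L}^*$. With this clause, any witnesses are linearly independent (distinct cosets) and spanning, hence a genuine splitting basis, and the corrected $\Phi_d$ holds iff $\tau_d$ is split.
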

\begin{proof}
  The difficulty is expressing that $\tp(a/K)$ is definable.  By
  Lemma~\ref{qf-an2}, it suffices to express the condition ``$\tau_d$
  is a split VVS structure'' for each $d$.  This is straightforward,
  if tedious.
\end{proof}
The result of Cubides Kovacsics and Ye follows directly:
\begin{theorem}[{= Fact~\ref{uddt}}] \label{uddt2}
  The theory $\pCF$ has uniform definability of definable types (UDDT).
\end{theorem}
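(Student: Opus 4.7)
The plan is to deduce UDDT from Lemma~\ref{kn} by a two-sorted compactness argument. Fix $n \geq 1$ and work in a two-sorted expansion $\mathcal{L}'$ of $\Ldiv$ with sorts named $M$ and $K$ (each carrying the full $\Ldiv$-structure), an inclusion $K \hookrightarrow M$, and constants for an $n$-tuple $a$ from the $M$-sort. By Lemma~\ref{kn}, there is an $\mathcal{L}'$-theory $T_n$ whose models are precisely the triples $(M, K, a)$ with $M \models \pCF$, $K \preceq M$, and $\tp(a/K)$ definable.

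Fix an $\Ldiv$-formula $\varphi(x, y)$ with $|x| = n$. First I would show that \emph{some finite list} of $\Ldiv$-formulas $\psi_1(y, z_1), \ldots, \psi_r(y, z_r)$ uniformly defines the $\varphi$-component of every definable $n$-type. Assume not. For each $\Ldiv$-formula $\psi(y, z)$, let
\[
\sigma_\psi \;:=\; \forall c \in K^z \; \exists b \in K \; \bigl(\varphi(a, b) \not\leftrightarrow \psi(b, c)\bigr),
\]
an $\mathcal{L}'$-sentence saying $\psi$ does not define $\{b \in K : \varphi(a, b)\}$ with any $K$-parameter. Under the negation, for each finite $\Psi \subseteq \Ldiv$ there is a model $N_0 \models \pCF$ and a definable $n$-type $p$ over $N_0$ none of whose $\varphi$-definitions lies in $\Psi$; realizing $p$ by a tuple $a$ in some $N \succeq N_0$ yields $(N, N_0, a) \models T_n \cup \{\sigma_\psi : \psi \in \Psi\}$. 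By compactness, $T_n \cup \{\sigma_\psi : \psi \in \Ldiv\}$ has a model $(M, K, a)$; but then $\tp(a/K)$ is definable in it, so $\{b \in K : \varphi(a, b)\} = \psi^*(K, c^*)$ for some $\psi^* \in \Ldiv$ and $c^* \in K^{z^*}$, contradicting $\sigma_{\psi^*}$.

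Finally, I would amalgamate $\psi_1, \ldots, \psi_r$ into a single $\Ldiv$-formula by encoding the index as a parameter:
\[
\psi(y, w, z_1, \ldots, z_r) \;:=\; \bigvee_{i=1}^r \bigl(w = i \;\wedge\; \psi_i(y, z_i)\bigr),
\]
using the $0$-definable elements $1, \ldots, r$ of any $\pCF$-model. Any $\psi_i(y, c)$-definable set is a $\psi$-slice at $w = i$, $z_i = c$, with remaining coordinates arbitrary, so $\psi$ is the required uniform definition. The real substance lies in Lemma~\ref{kn}, which expresses ``$\tp(a/K)$ is definable'' as a first-order schema via split VVS structures; granted this, UDDT follows by the standard compactness manipulation above, and no further $\pCF$-specific input is needed.
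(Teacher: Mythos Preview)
Your proposal is correct and essentially identical to the paper's proof: both set up the elementary class $T_n$ from Lemma~\ref{kn}, then run a compactness argument over the sentences expressing that a given $\psi$ does (in the paper) or does not (in your contrapositive version) define the $\varphi$-type, extracting a finite list of $\psi$'s and then coding them into one. The only cosmetic difference is that the paper phrases it positively (every model of $T_n$ satisfies some $\alpha_\psi$, hence by compactness $T_n \vdash \bigvee_i \alpha_{\psi_i}$) while you argue by contradiction via the negations $\sigma_\psi$.
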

\begin{proof}
  Fix an $\Ldiv$-formula $\varphi(x,y)$ and let $n = |x|$.  We must find an $\Ldiv$-formula
  $\psi(y,z)$ such that for any model $M \models T$ and definable type
  $p \in S_n(M)$, the set $\{b \in M : p(x) \vdash \varphi(x,b)\}$ is
  a $\psi$-set.  Let $T_n$ axiomatize the class $\mathcal{K}_n$ of
  Lemma~\ref{kn} in the language $\mathcal{L}'$ of structures
  $(M,K,a)$.  For each $\Ldiv$-formula $\psi$, let $\alpha_\psi$
  be the $\mathcal{L}'$-sentence saying that $\tp^{\varphi}(a/K)$ is
  defined by a $\psi$-formula, in the sense that
  \begin{equation*}
    \{b \in K : M \models \varphi(a,b)\} = \psi(K,c) \text{ for some
    } c \in K.
  \end{equation*}
  Every model of $T_n$ satisfies $\alpha_\psi$ for some $\psi$. By
  compactness, there are finitely many $\Ldiv$-formulas $\psi_1,\ldots,\psi_m$
  such that $T_n \vdash \bigvee_{i = 1}^m \alpha_{\psi_i}$.  This
  means that if $K \models \pCF$ and $\tp(a/K)$ is definable, then
  $\tp^\varphi(a/K)$ is defined by a $\psi_i$-formula for some $i$.
  The usual coding tricks allow us to reduce to a single $\psi$,
  and then we have UDDT.
\end{proof}
Note that the proof of Theorem~\ref{uddt2} is really just a miniature version of the
argument in \cite{cubides-ye}.

Finally, we prove strict pro-interpretability of the space of
definable types, due to Cubides Kovacsics, Hils, and Ye \cite{cubides-ye-hils}.  Let $X$ be a definable
set and let $X^{\df}$ be the pro-interpretable set of definable types
as in Section~\ref{prodsec}.
\begin{theorem}[{= Fact~\ref{strict}}] \label{strict3}
  $X^{\df}$ is strictly pro-interpretable.
\end{theorem}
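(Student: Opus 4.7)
The plan is to combine the characterization of definable types by refinement by $\Gamma$-families (Theorem~\ref{tri1}) with a compactness argument carried out in a sufficiently saturated elementary extension of the monster. By the second equivalent definition of strict pro-interpretability given in Section~\ref{prodsec}, it suffices to show that for any finite tuple of $\Ldiv$-formulas $\varphi_1(x;y_1),\ldots,\varphi_k(x;y_k)$, the image $E$ of the projection $\pi : X^{\df} \to D_{\varphi_1} \times \cdots \times D_{\varphi_k}$ sending each $p$ to its tuple of $\varphi_i$-definition codes $(c_{p,\varphi_1},\ldots,c_{p,\varphi_k})$ (under UDDT) is interpretable.

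Let $\psi_i(y_i,z_i)$ be the uniform defining formula for $\varphi_i$ provided by UDDT (Fact~\ref{uddt}/Theorem~\ref{uddt2}). Then $c = (c_1,\ldots,c_k)$ lies in $E$ iff the interpretable family $\mathcal{F}_c$, consisting of the sets $\varphi_i(\Mm,b)$ for $b$ satisfying $\psi_i(b,c_i)$, the complements $\neg\varphi_i(\Mm,b)$ for $b$ failing $\psi_i(b,c_i)$, and the set $X$ itself, extends to a definable type on $X$. By Theorem~\ref{tri1}, this is equivalent to $\mathcal{F}_c$ being refined by some interpretable $\Gamma$-family.

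For each formula $\theta(x,w,\gamma)$, let $E_\theta$ be the interpretable set of tuples $c$ such that for some $w$, the family $\{\theta(\Mm,w,\gamma)\}_{\gamma \in \Gamma}$ is a $\Gamma$-family refining $\mathcal{F}_c$; the conditions ``is a $\Gamma$-family'' and ``refines $\mathcal{F}_c$'' are first-order in $(c,w)$, so each $E_\theta$ is interpretable. The previous paragraph gives $E = \bigcup_\theta E_\theta$ as $\theta$ ranges over $\Ldiv$-formulas. I would then argue by compactness that this union stabilizes on a finite subcollection. Assume for contradiction it does not. Passing to a sufficiently saturated elementary extension $\Mm' \succeq \Mm$ and using the pro-interpretability of $X^{\df}$ (Fact~\ref{pro-d}) to represent ``$p \in X^{\df}$'' by a small partial type in the coordinates of $p$, I would realize in $\Mm'$ the combined partial type expressing ``$p \in X^{\df}$ and $\pi(p) \notin E_\theta$'' for every $\theta \in \Ldiv$. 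This produces some $p^* \in (X^{\Mm'})^{\df}$ whose image $c^* := \pi(p^*)$ satisfies $c^* \notin E_\theta(\Mm')$ for all $\theta$. But Theorem~\ref{tri1} applied to the definable type $p^*$ inside $\Mm'$ yields a $\Gamma$-family refining $\mathcal{F}_{c^*}$, defined by some particular formula $\theta^*$, placing $c^* \in E_{\theta^*}(\Mm')$, a contradiction. Hence $E = \bigcup_{\theta \in S} E_\theta$ for some finite $S \subseteq \Ldiv$, so $E$ is interpretable.

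The main obstacle is the compactness step. The awkward point is that the condition ``$p \in X^{\df}$'' is not a first-order formula in a single variable but is the defining partial type for $X^{\df}$ as a pro-interpretable subset of $\prod_\varphi D_\varphi$. One needs to carefully combine that (parameterless) pro-interpretable partial type with the countable collection of first-order conditions $\pi(p) \notin E_\theta$, verify finite consistency from the contradiction hypothesis, and then invoke saturation of $\Mm'$ to realize the full combined partial type by an honest definable type $p^*$ on $X$ in $\Mm'$.
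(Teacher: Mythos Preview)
Your proof is correct and is essentially the paper's argument: show the projection is $\vee$-interpretable via Theorem~\ref{tri}, observe it is type-definable as a coordinate projection of the pro-interpretable set $X^\df$, and conclude by compactness. Your compactness worry is unfounded---since $\Mm$ is already saturated there is no need to pass to $\Mm'$, and the paper simply phrases this step as ``type-definable and $\vee$-definable implies definable''; the only other differences (the paper first reduces to a single $\varphi$ closed under complementation rather than throwing the $\neg\varphi_i$-sets into $\mathcal{F}_c$, and it invokes condition~(\ref{tr3}) of Theorem~\ref{tri} rather than condition~(\ref{tr2})) are cosmetic.
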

\begin{proof}
  We easily reduce to the case where $X$ is $\Mm^k$.  (If $X$ is a
  definable subset of $\Mm^k$, then $X^{\df}$ is relatively definable
  in $(\Mm^k)^{\df}$.)
  
  Let $\varphi_1,\ldots,\varphi_n$ be $\Ldiv$-formulas.  We must show that the
  image of the following map on $S^{\df}_k(\Mm)$ is definable (and not
  just type-definable) in $\Mm^\eq$:
  \begin{equation*}
    f(p) = (c_{p,\varphi_i} : 1 \le i \le n).
  \end{equation*}
  Take an $\Ldiv$-formula $\varphi$ such that (1) for $1 \le i \le n$, every
  $\varphi_i$-set is a $\varphi$-set, and (2) the negation of a
  $\varphi$-set is a $\varphi$-set.  Then the $\varphi$-definition of $p$
  determines the $\varphi_i$-definition of $p$, for $p \in
  S^{\df}_k(\Mm)$, and so the map $f$ factors through $p \mapsto
  c_{p,\varphi}$.  Replacing $\varphi_1,\ldots,\varphi_n$ with $\varphi$, we may assume $n = 1$ and we need to
  show definability of
  \begin{equation*}
    D := \{c_{p,\varphi} : p \in S^{\df}_k(\Mm)\}.
  \end{equation*}
  Let $\psi$ be the $\Ldiv$-formula uniformly defining $\varphi$, so that
  \[ \{b \in \Mm : \varphi(x,b) \in p(x)\} = \psi(\Mm,c_{p,\varphi}) \]
  \begin{claim}
    $c \in D$ if and only if the following two conditions hold.
    \begin{enumerate}
        \item For any
    $b, b' \in \Mm^y$ such that $\varphi(\Mm,b)$ and $\varphi(\Mm,b')$
    are complementary, exactly one of $b, b'$ is in $\psi(\Mm,c)$.
    \item The family of sets $\{\varphi(\Mm,b) : b \in
    \psi(\Mm,c)\}$ is refined by a definable downward directed family
    of non-empty sets.
    \end{enumerate}
  \end{claim}
  \begin{claimproof}
    Suppose $c \in D$, so $c = c_{p,\varphi}$ for some $p$.  If $\varphi(\Mm,b)$ is
    complementary to $\varphi(\Mm,b')$, then exactly one of
    $\varphi(x,b), \varphi(x,b')$ is in $p(x)$, and so exactly one of
    $b, b'$ is in $\psi(\Mm,c)$.  Moreover, the family of
    sets $\{\varphi(\Mm,b) : b \in \psi(\Mm,c)\}$ extends to the
    definable type $p$, and therefore is refined by a definable
    downward directed family of non-empty sets by the implication
    (\ref{tr1})$\implies$(\ref{tr3}) of Theorem~\ref{tri}.

    Conversely, suppose $c$ satisfies the two listed conditions.
    There is a definable type $p \in S^{\df}_k(\Mm)$ extending the
    family $\{\varphi(\Mm,b) : b \in \psi(\Mm,c)\}$ by the implication
    (\ref{tr3})$\implies$(\ref{tr1}) of Theorem~\ref{tri}.  Then
    \begin{equation*}
      \psi(\Mm,c) \subseteq \{b \in \Mm^y : \varphi(x,b) \in p(x)\}.
    \end{equation*}
    If equality holds, then $c = c_{\varphi,p} \in D$ as desired.
    Otherwise, take $b \notin \psi(\Mm,c)$ such that $\varphi(x,b) \in
    p(x)$.  Take $b'$ such that $\varphi(\Mm,b')$ is the complement of
    $\varphi(\Mm,b)$.  By assumption, exactly one of $b, b'$ is in
    $\psi(\Mm,c)$, and so $b' \in \psi(\Mm,c)$, implying that
    $\varphi(x,b') \in p(x)$.  Then $p(x)$ contains the contradictory
    formulas $\{\varphi(x,b),\varphi(x,b')\}$, a contradiction.
  \end{claimproof}
  The conditions in the claim are $\vee$-definable, so $D$ is
  $\vee$-definable in $\Mm^\eq$.  On the other hand, $D$ is the image
  of the pro-definable set $X^{\df}$ under a coordinate projection, so
  $D$ is type-definable in $\Mm^\eq$.  By compactness, $D$ is
  definable in $\Mm^\eq$.
\end{proof}

\section{An application of the $(p,q)$-theorem}
In this appendix we show how the Alon-Kleitman-Matousek $(p,q)$-theorem~\cite[Theorem 4]{matousek04} can be used to study the notion of dividing in the NIP setting, and in particular in $\pCF$, with applications to Remarks~\ref{rem:pq} and \ref{rem:pq2}. While the $(p,q)$-theorem has already seen strong applications in the NIP setting by Simon~\cite{cher_sim_15, simon15} and Chernikov~\cite{cher_sim_15}, they rely on a weaker form of the theorem (in terms of dual VC-dimension in place of VC-codensity) that does not yield our results.

Let $\mathcal{F}$ be a family of 
subsets of some set $U$.
For any finite $\mathcal{S} = \{X_1,\ldots,X_n\} \subseteq
\mathcal{F}$, let $\sim_{\mathcal{S}}$ be the equivalence relation on
$U$ defined by
\begin{equation*}
  x \sim_{\mathcal{S}} y \iff (\forall i) ~ (x \in X_i \iff y \in X_i).
\end{equation*}
We call each equivalence class of $\sim_{\mathcal{S}}$ a \emph{Boolean atom} of $\mathcal{S}$.
One defines
$\pi^*_{\mathcal{F}}(n)$ to be the maximum $k$ ($\leq 2^n$) such that there exist
$X_1,\ldots,X_n \in \mathcal{F}$ with $k$ Boolean atoms.  The
function $\pi^*_{\mathcal{F}}(-)$ is called the \emph{dual shatter
function} of $\mathcal{F}$.  The \emph{VC-codensity} of $\mathcal{F}$
is the infimum of positive real numbers $\alpha$ such that
$\pi^*_{\mathcal{F}}(n)$ is $O(n^\alpha)$, or $\infty$ if no such
$\alpha$ exists.  We write the VC-codensity of $\mathcal{F}$ as
$\vc^*(\mathcal{F})$.
\begin{definition}
  Let $p \ge q \ge 1$ be integers.  The family $\mathcal{F}$ has the
  \emph{$(p,q)$-property} if $\varnothing \notin \mathcal{F}$ and, for any $p$ distinct sets in $\mathcal{F}$, there
  are $q$ among them with non-empty intersection.
\end{definition}
\begin{fact}[{Alon-Kleitman-Matou\v{s}ek $(p,q)$-theorem~\cite[Theorem 4]{matousek04}}]
  \label{pq-fact}
  Fix $p \ge q$.  Suppose $\vc^*(\mathcal{F}) < q$.  Then there is an
  integer $N$ such that that, if $\mathcal{S}$ is a finite subfamily of
  $\mathcal{F}$ with the $(p,q)$-property, then there exists a set of size $N$ intersecting every set in $\mathcal{S}$.
\end{fact}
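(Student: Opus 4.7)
The plan is to follow Matoušek's strategy, which combines a fractional Helly theorem (derivable from the VC-codensity bound) with a transversal-rounding argument via weak $\epsilon$-nets (derivable from the bound on the dual shatter function). The overall shape is: (i) show that any finite subfamily with the $(p,q)$-property contains many $q$-wise intersecting tuples; (ii) use fractional Helly to turn ``many intersecting $q$-tuples'' into ``some point hits a positive fraction of sets''; (iii) iterate and round to obtain a transversal of bounded size.

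For step (i), I would argue combinatorially: if $\mathcal{S} \subseteq \mathcal{F}$ is finite with $|\mathcal{S}| = n \geq p$, then every $p$-subset of $\mathcal{S}$ yields at least one $q$-subset with non-empty intersection. A double-counting argument then shows that at least $\binom{n}{q}/\binom{p}{q}$ of the $q$-subsets of $\mathcal{S}$ have a common point, so at least a constant fraction $\alpha = \alpha(p,q) > 0$ of all $q$-tuples are intersecting.

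The main step is a fractional Helly theorem in this abstract setting: if $\vc^*(\mathcal{F}) < q$ and $\mathcal{S} \subseteq \mathcal{F}$ is finite with at least $\alpha \binom{n}{q}$ intersecting $q$-subsets, then some point of the ambient set lies in at least $\beta n$ members of $\mathcal{S}$, for some $\beta = \beta(\alpha,q) > 0$. I would prove this by a kernel/sunflower-type argument on the dual: form the hypergraph $H$ on $\mathcal{S}$ whose edges are the intersecting $q$-subsets; density $\alpha$ in $H$, together with a Kővári–Sós–Turán bound governed by the dual shatter function $\pi^*_{\mathcal{F}}(n) = O(n^{q-\epsilon})$, forces a large ``Boolean atom'' in the dual, i.e.\ a point contained in many of the $X_i$'s. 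This is where the assumption $\vc^*(\mathcal{F}) < q$ (rather than merely $\vc^*(\mathcal{F}) < \infty$) enters essentially, because one needs the exponent in the shatter-function bound to be strictly below $q$ to beat the density of the intersection hypergraph.

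Combining (i) and (ii) gives a fractional transversal: weights on points of $U$ summing to $O(1)$ such that every set in $\mathcal{S}$ carries total weight $\geq 1$. By LP duality this is equivalent to a fractional cover by sets of $\mathcal{F}$ of bounded size, and one rounds it to an integral transversal of size $N = N(p,q,\vc^*(\mathcal{F}))$ by applying the weak $\epsilon$-net theorem (whose hypothesis is precisely the polynomial bound on $\pi^*_{\mathcal{F}}$). Since $N$ depends only on $p,q$ and $\vc^*(\mathcal{F})$, not on $\mathcal{S}$, we obtain the uniform bound claimed in the statement.

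The main obstacle is step (ii), the fractional Helly theorem for abstract set systems of bounded VC-codensity; the Kővári–Sós–Turán step is delicate because one must extract, from a dense \emph{$q$-uniform} intersection hypergraph, a single point lying in a linear number of sets, and the standard proof requires careful use of the dual shatter function rather than the easier primal VC-dimension. The remaining ingredients (counting $q$-tuples from the $(p,q)$-property, LP duality, and weak $\epsilon$-nets) are by now standard once fractional Helly is in hand.
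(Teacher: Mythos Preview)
The paper does not prove this statement at all: Fact~\ref{pq-fact} is quoted as a black box from Matou\v{s}ek's paper, with no argument given. So there is no ``paper's own proof'' to compare against; the paper simply \emph{uses} the result in the proof of Lemma~\ref{2.4like}.

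Your outline is a faithful sketch of Matou\v{s}ek's argument in the cited reference, so in that sense it matches what the paper defers to. One small inaccuracy: the fractional Helly step (your step (ii)) in Matou\v{s}ek's paper is not proved via K\H{o}v\'ari--S\'os--Tur\'an. The mechanism is more direct: given $n$ sets $X_1,\ldots,X_n$ with $\alpha\binom{n}{q}$ intersecting $q$-tuples, each such $q$-tuple is witnessed by some Boolean atom of $\{X_1,\ldots,X_n\}$, and an atom contained in exactly $k$ of the $X_i$ can witness at most $\binom{k}{q}$ such tuples. Since there are only $\pi^*_{\mathcal{F}}(n) = o(n^q)$ atoms, a convexity/averaging argument forces some atom to have $k \ge \beta n$, i.e.\ a point lying in $\beta n$ of the sets. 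The rest of your sketch (the double-counting for step (i), LP duality, and weak $\epsilon$-nets for the rounding) matches the standard proof.
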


If $M$ is an $L$-structure and $\varphi(x,y)$ is an $L(M)$-formula, then
$\vc^*(\varphi)$ denotes $\vc^*(\mathcal{F})$ where $\mathcal{F} =
\{\varphi(M,b) : b \in M^y\}$. The formula $\varphi(x,y)$ is NIP iff
$\vc^*(\varphi) < \infty$ \cite[Section 6.1]{NIPguide}.  Note that if $\varphi(x,y)$ has no parameters then $\vc^*(\varphi)$ is determined by the theory of $M$.

Suppose $M$ sits inside a monster model $\Mm$ and $b \in \Mm^{y}$.  By
unwinding the definitions~\cite[Definition 2.8]{kap-cher12}, we see that $\varphi(x,b)$ $k$-divides over
$M$ if and only if,
for every $m \ge k$, the family $\{\varphi(\Mm,b') : b'
\equiv_M b\}$ does not have the $(m,k)$-property.  Thus, $\varphi(x,b)$
does \emph{not} $k$-divide over $M$ if and only if
there is $m \ge k$
such that $\{\varphi(\Mm,b') : b' \equiv_M b\}$ has the $(m,k)$-property.

The following lemma, which we prove using Fact~\ref{pq-fact}, is an improvement of~\cite[Lemma 2.4]{simon15}, which states similar results in terms of dual VC-dimension in place of VC-codensity. 

\begin{lemma} \label{2.4like}
  Let $M, \Mm, \varphi(x,y)$ be as above.  Fix an integer $k >
  \vc^*(\varphi)$.  Let $D \subseteq \Mm^y$ be type-definable over
  $M$. The following are equivalent:
  \begin{enumerate}
  \item \label{xyz1} For any $b \in D$, the formula $\varphi(x,b)$ does not $k$-divide
    over $M$.
  \item \label{xyz2} There is $m \ge k$ such that the family $\{\varphi(\Mm,b) : b \in D\}$
    has the $(m,k)$-property.
  \item \label{xyz3} For any $b \in D$, the formula $\varphi(x,b)$ does not divide
    over $M$.
  \item \label{xyz4} For any $k'$, there is $m' \ge k'$ such that $\{\varphi(\Mm,b) :
    b \in D\}$ has the $(m',k')$-property.
  \end{enumerate}
\end{lemma}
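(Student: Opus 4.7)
The plan is to prove the cycle $(1) \Rightarrow (2) \Rightarrow (4) \Rightarrow (3) \Rightarrow (1)$. Of these, $(3) \Rightarrow (1)$ is immediate from the definitions. For $(4) \Rightarrow (3)$ observe that $D$ is type-definable over $M$, so for each $b \in D$ the subfamily $\{\varphi(\Mm,b') : b' \equiv_M b\}$ is contained in the full family $\{\varphi(\Mm,b) : b \in D\}$; the former therefore inherits the $(m',k')$-property from the latter, so $\varphi(x,b)$ does not $k'$-divide over $M$ for any $k'$, i.e.\ does not divide over $M$.

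For $(1) \Rightarrow (2)$ I would argue by contradiction. If (2) fails, then for each $m$ there exist distinct $b_1,\ldots,b_m \in D$ such that every $k$-subfamily of $\{\varphi(\Mm,b_i)\}_{i=1}^{m}$ has empty intersection. Compactness in $\Mm$ produces an infinite sequence $(b_i)_{i<\omega}$ in $D$ with the same $k$-inconsistency. Applying Ramsey together with the saturation of $\Mm$, I would extract an $M$-indiscernible sequence $(b'_i)_{i<\omega}$ in $\Mm$ realizing the EM-type of $(b_i)$ over $M$. Both the $k$-inconsistency formulas $\neg \exists x \, \bigwedge_{j \in J} \varphi(x,y_j)$ for $|J|=k$ and each formula of the partial type $\Sigma(y)$ defining $D$ lie in this EM-type, so they transfer to $(b'_i)$. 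Then $(b'_i)$ witnesses that $\varphi(x,b'_1)$ $k$-divides over $M$, with $b'_1 \in D$, contradicting (1).

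For $(2) \Rightarrow (4)$ I would invoke Fact~\ref{pq-fact}. Given the $(m,k)$-property with $k > \vc^*(\varphi)$, every finite subfamily of $\{\varphi(\Mm,b) : b \in D\}$ also enjoys the $(m,k)$-property, so by the $(p,q)$-theorem it admits a transversal of uniform size $N = N(m,k,\vc^*(\varphi))$. Saturation of $\Mm$ then yields a single global transversal $T = \{t_1,\ldots,t_N\}$, since the partial type in variables $t_1,\ldots,t_N$ asserting $\bigvee_{i=1}^{N} \varphi(t_i,b)$ for each $b \in D$ is finitely satisfiable and hence realized. Given any $k'$ I set $m' = N(k'-1)+1$; for any $m'$ distinct $b_1,\ldots,b_{m'} \in D$, pigeonhole assigns some $t_i$ to at least $k'$ of them, so the corresponding $k'$ sets share $t_i$, giving the $(m',k')$-property.

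The main obstacle is the indiscernible extraction in $(1) \Rightarrow (2)$: one must check carefully that the extracted sequence both lies in $D$ and retains the $k$-inconsistency, which amounts to verifying that both properties are genuinely encoded in the EM-type over $M$. The $(p,q)$-theorem step is conceptually the heart of the equivalence but, with Fact~\ref{pq-fact} as a black box, reduces to a clean compactness-plus-pigeonhole argument.
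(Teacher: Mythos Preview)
Your cycle and the arguments for $(3)\Rightarrow(1)$, $(4)\Rightarrow(3)$, and $(1)\Rightarrow(2)$ are fine (in the last you should also dispose of the case $\varphi(\Mm,b)=\varnothing$ for some $b\in D$, where both $(1)$ and $(2)$ fail directly).  The genuine gap is in $(2)\Rightarrow(4)$: your appeal to saturation of $\Mm$ to produce a \emph{global} transversal $T\subseteq\Mm$ is not justified, because the partial type $\{\bigvee_{i\le N}\varphi(t_i,b):b\in D\}$ is over the parameter set $D$, and $D$ can have cardinality $|\Mm|$.  For a concrete failure, work in DLO with $M=\Qq$, let $D=\{b\in\Mm:b>q\text{ for all }q\in\Qq\}$ (type-definable over $M$), and take $\varphi(x,y)$ to be $x>y$.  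Then $\vc^*(\varphi)=1$, the family $\{(b,\infty):b\in D\}$ has the $(m,2)$-property for every $m\ge 2$, and every finite subfamily has a one-point transversal; yet no finite global transversal exists in $\Mm$, since $D$ is cofinal.

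The fix is immediate and is exactly what the paper does: drop the global transversal.  Given $k'$, set $m'=(k'-1)N+1$; for any $m'$ distinct sets in the family, apply Fact~\ref{pq-fact} to \emph{that particular} finite subfamily to obtain its own size-$N$ transversal, and then run your pigeonhole argument on that local transversal.  (Alternatively, your route can be salvaged by realizing the transversal type in a larger model $\Mm'\succ\Mm$ and transferring the nonemptiness of the resulting $k'$-fold intersection back to $\Mm$ by elementarity, but this detour is unnecessary.)
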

\begin{proof}
  The implication (\ref{xyz4})$\implies$(\ref{xyz3}) follows by unwinding the definitions as above and the implication (\ref{xyz3})$\implies$(\ref{xyz1}) is trivial.  The implication (\ref{xyz1})$\implies$(\ref{xyz2}) follows by a standard compactness and Ramsey argument as in the proof of \cite[Lemma~2.4, $(3)\Rightarrow(2)$]{simon15}.  In more detail, suppose (\ref{xyz2}) fails. If $\varphi(\Mm,b)=\varnothing$ for some $b\in D$ then $\varphi(x,b)$ $k$-divides.  Otherwise, for any $m \ge k$ let $b_1, \ldots, b_m \in D$ be such that $\{\varphi(x,b_i) : 1 \le i \le m\}$ is $k$-inconsistent.  By compactness and Ramsey (or really, compactness and the pigeonhole principle), there are $b_1, b_2, b_3, \ldots \in D$ such that $\{\varphi(x,b_i): 1 \le i < \omega\}$ is $k$-inconsistent and $b_i \equiv_M b_j$ for any $i,j$.  Then $\varphi(x,b_1)$ $k$-divides over $M$.  
  
  Finally, we show (\ref{xyz2})$\implies$(\ref{xyz4}).  Let $\mathcal{F}$ be the family
  $\{\varphi(\Mm,b) : b \in D\}$.  Then $\vc^*(\mathcal{F}) \le
  \vc^*(\varphi) < k$.  Let $m$ be as in (\ref{xyz2}).  Then $\mathcal{F}$ and
  all its finite subfamilies have the $(m,k)$ property.  Let $N$ be
  given by Fact~\ref{pq-fact}.  Then for any finite subfamily
  $\mathcal{S} \subseteq \mathcal{F}$, there is a set of size $N$
  intersecting every element of $\mathcal{S}$. Fix any $k'$ and let $m' = (k'-1)N+1$.  We claim that $\mathcal{F}$ has the
  $(m',k')$-property.  Let $\mathcal{S} \subseteq \mathcal{F}$ be a
  subfamily of size $m'$.  Then there is a set $X$ of size $N$
  intersecting every element of $\mathcal{S}$.  As $|\mathcal{S}| >
  (k'-1)N$, by the pigeonhole principle there is some point $p \in X$
  such that at least $k'$ elements of $\mathcal{S}$ contain $p$.  That
  is, there are distinct $Y_1,\ldots,Y_{k'} \in \mathcal{S}$ all
  containing $p$.  Then $\bigcap_{i = 1}^{k'} Y_i \supseteq \{p\} \ne
  \varnothing$, proving the $(m',k')$-property.
\end{proof}

\begin{corollary} \label{low}
  If $M \preceq \Mm$ and $b \in \Mm$ and $\vc^*(\varphi) < k$, then $\varphi(x,b)$ divides
  over $M$ iff $\varphi(x,b)$ $k$-divides over $M$.
\end{corollary}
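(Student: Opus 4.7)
The plan is to derive the corollary as a direct application of the equivalence (\ref{xyz1})$\Leftrightarrow$(\ref{xyz3}) in Lemma~\ref{2.4like}. The direction ``$k$-dividing implies dividing'' is built into the definitions: if $\{\varphi(x,b_i) : i < \omega\}$ with $b_i \equiv_M b$ witnesses $k$-inconsistency, then the same sequence witnesses that $\varphi(x,b)$ divides over $M$. So the work is entirely in the converse.

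For the converse, first I would set $D = \{b' \in \Mm^y : b' \equiv_M b\}$, which is type-definable over $M$ (as the realization set of $\tp(b/M)$). Next, I would observe that whether $\varphi(x,b')$ divides---respectively, $k$-divides---over $M$ depends only on $\tp(b'/M)$, simply because the notion of dividing is formulated in terms of $M$-indiscernible sequences starting with $b'$. Consequently, when Lemma~\ref{2.4like} is applied to this particular $D$, condition~(\ref{xyz1}) reduces to ``$\varphi(x,b)$ does not $k$-divide over $M$'' and condition~(\ref{xyz3}) reduces to ``$\varphi(x,b)$ does not divide over $M$.''

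Because $k > \vc^*(\varphi)$, Lemma~\ref{2.4like} applies, and its equivalence (\ref{xyz1})$\Leftrightarrow$(\ref{xyz3}) yields the corollary by contraposition. There is essentially no obstacle at this stage: all the genuine work, namely the use of the $(p,q)$-theorem (Fact~\ref{pq-fact}) to upgrade an $(m,k)$-property into an $(m',k')$-property for arbitrarily large $k'$, has already been absorbed into the proof of Lemma~\ref{2.4like}. The only subtlety worth double-checking is that the reduction of conditions~(\ref{xyz1}) and~(\ref{xyz3}) to single-parameter statements is legitimate; this is an elementary consequence of the $\Aut(\Mm/M)$-invariance of the notions involved.
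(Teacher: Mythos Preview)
Your proposal is correct and follows exactly the paper's approach: apply Lemma~\ref{2.4like} with $D$ equal to the set of realizations of $\tp(b/M)$, noting that conditions~(\ref{xyz1}) and~(\ref{xyz3}) then collapse to the single-parameter statements about $b$ by $\Aut(\Mm/M)$-invariance. The paper's proof is the one-line version of what you wrote.
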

\begin{proof}
  Apply Lemma~\ref{2.4like} with $D$ equal to the set of realizations
  of $\tp(b/M)$.
\end{proof}
Corollary~\ref{low} is related to ``lowness''; see
\cite[Proposition~5.50]{NIPguide}.

\begin{corollary}\label{cor:pq-divide}
  Let $\varphi(x,y), \psi(y)$ be $L(M)$-formulas.  Suppose that
  $\{\varphi(M,b) : b \in \psi(M)\}$ has the $(m,k)$-property for some
  $m \ge k > \vc^*(\varphi)$.  Then $\varphi(x,b)$ does not divide
  over $M$ for any $b \in \psi(\Mm)$.
\end{corollary}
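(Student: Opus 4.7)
The plan is to reduce directly to Lemma~\ref{2.4like} applied with $D = \psi(\Mm)$. Since $\psi(y)$ is an $L(M)$-formula, $D$ is $M$-definable and hence type-definable over $M$, so the lemma is applicable; the hypothesis $k > \vc^*(\varphi)$ is precisely what it requires. If I can verify that condition~(\ref{xyz2}) of the lemma holds for this $D$, namely that the family $\mathcal{F}_\Mm := \{\varphi(\Mm,b) : b \in \psi(\Mm)\}$ has the $(m,k)$-property, then condition~(\ref{xyz3}) of the lemma yields immediately that $\varphi(x,b)$ does not divide over $M$ for any $b \in \psi(\Mm)$, which is the desired conclusion.

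The only substantive step, then, is transferring the $(m,k)$-property from the family over $M$ to the family over $\Mm$ by elementarity. The failure of the $(m,k)$-property for $\mathcal{F}_\Mm$ is captured by a first-order $L(M)$-sentence: there exist $b_1,\ldots,b_m$ satisfying $\psi$ such that the sets $\varphi(\cdot,b_i)$ are pairwise distinct (i.e., for each $i \neq j$ there is some $x$ lying in exactly one of $\varphi(\cdot,b_i)$ and $\varphi(\cdot,b_j)$) and, for every $k$-element subset $I \subseteq \{1,\ldots,m\}$, the formula $\neg \exists x\, \bigwedge_{i \in I} \varphi(x,b_i)$ holds. Likewise, the containment $\varnothing \in \mathcal{F}_\Mm$ is expressed by the first-order sentence $\exists b\, (\psi(b) \wedge \forall x\, \neg \varphi(x,b))$. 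If either failure occurred in $\Mm$, it would pull back to $M$ via $M \preceq \Mm$, contradicting the hypothesis on $\{\varphi(M,b) : b \in \psi(M)\}$. Hence $\mathcal{F}_\Mm$ does have the $(m,k)$-property.

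There is no real obstacle here: the entire technical content of the corollary is already inside Lemma~\ref{2.4like} (which in turn rests on the Alon–Kleitman–Matou\v{s}ek theorem, Fact~\ref{pq-fact}), and the present statement is essentially that lemma's equivalence $(\ref{xyz2})\Rightarrow(\ref{xyz3})$ repackaged for a definable index set $D = \psi(\Mm)$, with the only additional ingredient being the routine elementarity transfer described above.
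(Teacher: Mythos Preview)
Your proof is correct and follows essentially the same approach as the paper: transfer the $(m,k)$-property from $M$ to $\Mm$ by elementarity, then apply Lemma~\ref{2.4like} with $D = \psi(\Mm)$ to conclude via the implication (\ref{xyz2})$\Rightarrow$(\ref{xyz3}). The paper's proof is more terse (simply noting that the $(m,k)$-property is expressed by a first-order formula), but you have spelled out the same idea in more detail, including the handling of set-distinctness and non-emptiness.
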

\begin{proof}
  The $(m,k)$-property is expressed by a first-order formula, so
  $\{\varphi(\Mm,b) : b \in \psi(\Mm)\}$ has the $(m,k)$-property.
  Then Lemma~\ref{2.4like} shows that $\varphi(x,b)$ does not divide
  for any $b \in \psi(\Mm)$.
\end{proof}

In the specific case of a $p$-adically closed field, we know bounds on the VC-codensity of formulas in terms of the number of object variables. 

\begin{fact}[\cite{vc_density}, Theorem~1.2]\label{fct:vc}
Let $K$ be a model of $\pCF$ and $\varphi(x,y)$ be a $\Ldiv(K)$-formula. Then $\vc^*(\varphi) \le 2|x|-1$.
\end{fact}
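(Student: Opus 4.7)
The plan is to combine Macintyre's quantifier elimination with $p$-adic cell decomposition and induct on $k := |x|$. Working in the Macintyre language $\mathcal{L}_\chi$ introduced in Section~\ref{qftp-tp}, any $\Ldiv(K)$-formula $\varphi(x,y)$ is equivalent to a quantifier-free $\mathcal{L}_\chi(K)$-formula. Because Boolean combinations behave well with respect to VC-codensity, namely $\vc^*(\varphi_1 \wedge \varphi_2) \le \vc^*(\varphi_1) + \vc^*(\varphi_2)$, $\vc^*(\varphi_1 \vee \varphi_2) \le \vc^*(\varphi_1) + \vc^*(\varphi_2)$, and $\vc^*(\neg\varphi) = \vc^*(\varphi)$, it suffices to handle the atomic pieces: polynomial equalities $P(x,y) = 0$, valuation comparisons $\val(P(x,y)) \le \val(Q(x,y))$, and Macintyre predicates $P_{n,c}(P(x,y))$.

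For the base case $k = 1$, I would invoke Denef's $p$-adic cell decomposition to write the definable set $\{(x_1,y) \in K \times K^{|y|} : \varphi(x_1,y)\}$ as a finite disjoint union of cells, where each $y$-fiber is a union of ``intervals'' (defined by valuation bounds on $x_1 - c(y)$ for $\mathcal{L}_\chi(K)$-terms $c$) intersected with cosets of the Macintyre predicates. Given $n$ parameter instances $\varphi(x_1, b_1), \ldots, \varphi(x_1, b_n)$, the resulting cell boundaries partition $K$ into $O(n)$ pieces, and within each piece only boundedly many Macintyre patterns arise, so $\pi^*_\varphi(n) = O(n)$ and hence $\vc^*(\varphi) \le 1 = 2 \cdot 1 - 1$.

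For the inductive step, write $x = (x_1, x')$ with $|x'| = k-1$ and apply cell decomposition in the variable $x_1$ with parameters in $(x', y)$. Each cell is governed by a lower and an upper valuation bound in $x_1$ involving centers $c_\ell(x', y), c_u(x', y)$, a Macintyre condition on a shifted power of $x_1$, and an auxiliary condition purely on $(x', y)$. The purely parametric condition yields a formula with $k-1$ object variables, contributing a bound $\vc^* \le 2(k-1) - 1 = 2k - 3$ by induction; each boundary and Macintyre condition in $x_1$ contributes $\vc^* \le 1$ by the base case applied to the appropriately composed formulas. A Fubini-type combination, converting the fiberwise shatter function bounds into a global bound via Sauer--Shelah, yields $\pi^*_\varphi(n) = O(n^{2k-1})$.

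The main obstacle will be extracting the \emph{sharp} constant $2|x|-1$ rather than a coarser polynomial bound. A naive iteration of the cell decomposition could easily produce exponential blow-up or an overcount where each coordinate contributes a factor of degree larger than two. The correct accounting requires that in each inductive step, the newly peeled variable $x_1$ contributes exactly linearly (one for the valuation boundary and one for the Macintyre coset, giving the coefficient $2$) and that the residual parametric condition already absorbs one of these contributions at the top of the recursion (yielding the $-1$). Making this precise is the crux of the Aschenbrenner--Dolich--Haskell--Macpherson--Starchenko argument in \cite{vc_density}, whose careful cell bookkeeping is what delivers exactly the exponent $2|x|-1$.
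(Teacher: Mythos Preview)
The paper does not prove this statement: it is quoted as a \emph{Fact} from \cite{vc_density}, and the only argument supplied is the one-sentence remark immediately following the fact, namely that the bound, stated in \cite{vc_density} for $\Qp$ and for parameter-free formulas, transfers to any $K\models\pCF$ (because VC-codensity of a parameter-free formula depends only on the theory) and to $\Ldiv(K)$-formulas with parameters (because absorbing parameters into the $y$-variables can only increase $\vc^*$). That is the entire content of the paper's ``proof.''

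Your proposal instead attempts to reprove the underlying theorem of Aschenbrenner--Dolich--Haskell--Macpherson--Starchenko. As a sketch it has the right ingredients (cell decomposition, induction on $|x|$), but the inductive step as you describe it does not work: the subadditivity bounds $\vc^*(\varphi_1\wedge\varphi_2)\le\vc^*(\varphi_1)+\vc^*(\varphi_2)$ applied to the cell pieces will not recover the sharp exponent $2k-1$---a naive combination gives something closer to $2k$ or worse, and your appeal to a ``Fubini-type combination'' and the hand-waved accounting of ``the residual parametric condition already absorbs one of these contributions'' is exactly the hard part, not an afterthought. The actual argument in \cite{vc_density} does not proceed by this kind of direct inductive cell-counting; it goes through their $\mathrm{vc}^T$ machinery and a careful analysis of the number of types over finite parameter sets. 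If you want to supply a proof here, the honest thing is simply to cite \cite{vc_density} and add the transfer remark, as the paper does.
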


Although Theorem~1.2 in~\cite{vc_density} is stated for $\Qp$, recall that the VC-codensity of a formula without paramters is determined by the underlying theory, and moreover note that, for any formula $\varphi(x,y)$, with $y=(y_1,y_2)$, and parameters $c\in K^{y_2}$, clearly $\vc^*(\varphi(x,y_1,c))\leq \vc^*(\varphi(x,y))$.

Fact~\ref{fct:vc} yields $p$-adic versions of the results in this appendix, by substituting $\vc^*(\varphi)$ with $2|x|-1$. In particular, the $p$-adic version of Corollary~\ref{cor:pq-divide} can be applied to the proof of Corollary~\ref{cor:sim-star-fact} to yield Remark~\ref{rem:pq}, which in turn is used to characterize $p$-adic definable compactness in Remark~\ref{rem:pq2}.

\bibliographystyle{alpha} \bibliography{references}{}

\newcommand{\etalchar}[1]{$^{#1}$}
\begin{thebibliography}{AGTW21}

\bibitem[ADH{\etalchar{+}}16]{vc_density}
Matthias Aschenbrenner, Alf Dolich, Deirdre Haskell, Dugald Macpherson, and
  Sergei Starchenko.
\newblock Vapnik-{C}hervonenkis density in some theories without the
  independence property, {I}.
\newblock {\em Trans. Amer. Math. Soc.}, 368(8):5889--5949, 2016.

\bibitem[AG21]{types-transversals}
Pablo And\'ujar~Guerrero.
\newblock Types, transversals, and definable compactness in o-minimal
  structures.
\newblock {arXiv:2111.03802v1 [math.LO]}, 2021.

\bibitem[AGTW21]{atw1}
Pablo And\'ujar~Guerrero, Margaret Thomas, and Erik Walsberg.
\newblock Directed sets and topological spaces definable in o-minimal
  structures.
\newblock {\em J. London Math. Soc. (2)}, 104(3):989--1010, 2021.

\bibitem[CK12]{kap-cher12}
Artem Chernikov and Itay Kaplan.
\newblock Forking and dividing in {${\rm NTP}_2$} theories.
\newblock {\em J. Symbolic Logic}, 77(1):1--20, 2012.

\bibitem[CKHY21]{cubides-ye-hils}
Pablo Cubides~Kovacsics, Martin Hils, and Jinhe Ye.
\newblock Beautiful pairs, 2021.
\newblock arXiv:2112.00651 [math.LO].

\bibitem[CKY21]{cubides-ye}
Pablo Cubides~Kovacsics and Jinhe Ye.
\newblock Pro-definability of spaces of definable types.
\newblock {\em Proceedings of the American Mathematical Society, Series B},
  8:173--188, June 2021.

\bibitem[CS15]{cher_sim_15}
Artem Chernikov and Pierre Simon.
\newblock Externally definable sets and dependent pairs {II}.
\newblock {\em Trans. Amer. Math. Soc.}, 367(7):5217--5235, 2015.

\bibitem[CS18]{reglem}
Artem Chernikov and Sergei Starchenko.
\newblock Regularity lemma for distal structures.
\newblock {\em J. Eur. Math. Soc.}, 20(10):2437--2466, 2018.

\bibitem[Del89]{delon}
Fran{\c{c}}oise Delon.
\newblock D{\'e}finissabilit{\'e} avec param{\`e}tres ext{\'e}rieurs dans
  $\mathbb{Q}_p$ et $\mathbb{R}$.
\newblock {\em Proceedings of the American Mathematical Society},
  106(1):193--198, May 1989.

\bibitem[HL16]{HL}
Ehud Hrushovski and Fran\c{c}ois Loeser.
\newblock {\em Non-Archimedean Tame Topology and Stably Dominated Types}.
\newblock Annals of Mathematics Studies. Princeton University Press, March
  2016.

\bibitem[HM97]{p-min}
Deirdre Haskell and Dugald Macpherson.
\newblock A version of $o$-minimality for the $p$-adics.
\newblock {\em J. Symbolic Logic}, 62(4):1075--1092, December 1997.

\bibitem[HP11]{HP}
Ehud Hrushovski and Anand Pillay.
\newblock On {NIP} and invariant measures.
\newblock {\em J. Eur. Math. Soc.}, 13(4):1005--1061, 2011.

\bibitem[Hru14]{hrushovski}
Ehud Hrushovski.
\newblock Imaginaries and definable types in algebraically closed valued
  fields.
\newblock {\em preprint}, 2014.
\newblock arXiv:1403.7326.

\bibitem[Joh20]{johnson}
Will Johnson.
\newblock On the proof of elimination of imaginaries in algebraically closed
  valued fields.
\newblock {\em Notre Dame Journal of Formal Logic}, 61(3):363 -- 381, 2020.

\bibitem[Joh21]{johnson-fsg}
Will Johnson.
\newblock A note on \textit{fsg} groups in $p$-adically closed fields.
\newblock {arXiv:2108.06092v1 [math.LO]}, 2021.

\bibitem[Joh22]{johnson-admissible}
Will Johnson.
\newblock Topologizing interpretable groups in $p$-adically closed fields.
\newblock {arXiv:2205.00749v1 [math.LO]}, 2022.

\bibitem[JY22a]{jy-abelian}
Will Johnson and Ningyuan Yao.
\newblock Abelian groups definable in $p$-adically closed fields.
\newblock {arXiv:2206.14364v1 [math.LO]}, 2022.

\bibitem[JY22b]{johnson-yao}
Will Johnson and Ningyuan Yao.
\newblock On non-compact $p$-adic definable groups.
\newblock {\em J. Symbolic Logic}, 87(1):188--213, 2022.

\bibitem[Kam07]{kamensky}
Moshe Kamensky.
\newblock Ind- and pro- definable sets.
\newblock {\em Ann. Pure Appl. Logic}, 147(3):180--186, 2007.

\bibitem[Mac76]{macintyre}
Angus Macintyre.
\newblock On definable subsets of $p$-adic fields.
\newblock {\em J. Symbolic Logic}, 41(3):605--610, September 1976.

\bibitem[Mat04]{matousek04}
Ji\v{r}\'{\i} Matou\v{s}ek.
\newblock Bounded {VC}-dimension implies a fractional {H}elly theorem.
\newblock {\em Discrete Comput. Geom.}, 31(2):251--255, 2004.

\bibitem[OP08]{O-P}
A.~Onshuus and A.~Pillay.
\newblock Definable groups and compact $p$-adic {L}ie groups.
\newblock {\em Journal of the London Mathematical Society}, 78(1):233--247,
  2008.

\bibitem[Pil89]{pillay}
A.~Pillay.
\newblock On fields definable in $\mathbb{Q}_p$.
\newblock {\em Arch. Math. Logic}, 29:1--7, 1989.

\bibitem[PY19]{pillay-yao}
Anand Pillay and Ningyuan Yao.
\newblock Definable $f$-generic groups over $p$-adic numbers.
\newblock {arXiv:1911.01833v1 [math.LO]}, 2019.

\bibitem[Sim13]{distal}
Pierre Simon.
\newblock Distal and non-distal {NIP} theories.
\newblock {\em Annals of Pure and Applied Logic}, 164:294--318, 2013.

\bibitem[Sim15a]{NIPguide}
Pierre Simon.
\newblock {\em A guide to NIP theories}.
\newblock Lecture Notes in Logic. Cambridge University Press, July 2015.

\bibitem[Sim15b]{simon15}
Pierre Simon.
\newblock Invariant types in {NIP} theories.
\newblock {\em J. Math. Log.}, 15(2):1550006, 26, 2015.

\bibitem[SS14]{simon_star_14}
Pierre Simon and Sergei Starchenko.
\newblock On forking and definability of types in some {DP}-minimal theories.
\newblock {\em J. Symb. Log.}, 79(4):1020--1024, 2014.

\bibitem[vdD98]{dries-book}
Lou van~den Dries.
\newblock {\em Tame {T}opology and {O}-minimal {S}tructures}.
\newblock Number 248 in London Mathematical Society Lecture Note Series.
  Cambridge University Press, 1998.

\end{thebibliography}

\end{document}